\newtheorem{theorem}{Theorem}[section]
\newtheorem{corollary}[theorem]{Corollary}
\newtheorem{proposition}[theorem]{Proposition}
\newtheorem{lemma}[theorem]{Lemma}
\newtheorem{definition}[theorem]{Definition}
\newtheorem{remark}[theorem]{Remark}
\numberwithin{equation}{section}
\def \bC {\mathbb C}
\def \bD {\mathbb D}
\def \bH {\mathbb H}
\def \bN {\mathbb N}
\def \bO {\mathbb O}
\def \bR {\mathbb R}
\def \bS {\mathbb S}
\def \bR {\mathbb R}
\def \bR {\mathbb R}
\def \bZ {\mathbb Z}
\def \cD {\mathcal D}
\def \cF {\mathcal F}
\def \cG {\mathcal G}
\def \cH {\mathcal H}
\def \cL {\mathcal L}
\def \cP {\mathcal P}
\def \cQ {\mathcal Q}
\def \cR {\mathcal R}
\def \cS {\mathcal S}
\def \cR {\mathcal R}
\def \cR {\mathcal R}
\def \fa {\mathfrak a}
\def \fh {\mathfrak h}
\def \fk {\mathfrak k}
\def \fl {\mathfrak l}
\def \fn {\mathfrak n}
\def \fo {\mathfrak o}
\def \fp {\mathfrak p}
\def \fr {\mathfrak r}
\def \fs {\mathfrak s}
\def \ft {\mathfrak t}
\def \fu {\mathfrak u}
\def \fv {\mathfrak v}
\def \fz {\mathfrak z}
\def \fS {\mathfrak S}
\def \fU {\mathfrak U}
\def \fX {\mathfrak X}
\def \su {\mathfrak {su}}
\def \RE {\text{\rm Re}\,}
\def \IM {\text{\rm Im}\,}
\def \al {\alpha}
\def \la {\lambda}
\def \ph {\varphi}
\def \del {\delta}
\def \lan {\langle}
\def \ran {\rangle}
\def \de {\partial}
\def \trans{\,{}^t\!}
\def \half{\frac12}
\def \inv{^{-1}}
\def \supp {\text{\rm supp\,}}
\def \dim {\text{\rm dim\,}}
\def \span {\text{\rm span\,}}
\def \tr {\text{\rm tr\,}}
\newcommand{\gt}{\mathfrak}
\begin{document}

\selectlanguage{english}
%\selectlanguage{italian}

\title[]
{Nilpotent Gelfand pairs\\ and spherical transforms of Schwartz functions\\ III. Isomorphisms between Schwartz spaces\\ under Vinberg's condition}

\author[]
{V\'eronique Fischer, Fulvio Ricci, Oksana Yakimova}

\address {Universit\`a degli Studi di Padova\\DMMMSA\\ via Trieste 63\\ 35121 Padova\\ Italy}
\email{fischer@dmsa.unipd.it}

\address {Scuola Normale Superiore\\ Piazza dei Cavalieri 7\\ 56126 Pisa\\ Italy}
\email{fricci@sns.it}

\address {Mathematisches Institut \\
Friedrich-Schiller-Universit\"at Jena \\ 07737 Jena \\ Germany}
\email{oksana.yakimova@uni-jena.de}

\subjclass[2010]{Primary: 13A50, 43A32; Secondary:  43A85, 43A90}                         

\keywords{Gelfand pairs, Spherical transform, Schwartz functions, Invariants}

\begin{abstract}
 Let$(N,K)$ be a nilpotent Gelfand pair, i.e., $N$ is a nilpotent Lie group, $K$ a compact group of automorphisms of $N$, and the algebra $\bD(N)^K$ of left-invariant and $K$-invariant differential operators on $N$ is commutative. In these hypotheses, $N$ is necessarily of step at most two.
We say that $(N,K)$ satisfies Vinberg's condition if $K$ acts irreducibly on $\fn/[\fn,\fn]$, where $\fn={\rm Lie}(N)$.

Fixing a  system $\cD$ of $d$ formally self-adjoint generators of $\bD(N)^K$, the Gelfand spectrum of the commutative convolution algebra $L^1(N)^K$ can be canonically identified with a closed subset $\Sigma_\cD$ of $\bR^d$.
We prove that, on a nilpotent Gelfand pair satisfying Vinberg's condition, the spherical transform $\cG: L^1(N)^K\longmapsto C_0(\Sigma_\cD)$ 
 establishes an isomorphism from the space $\cS(N)^K$ of $K$-invariant Schwartz functions on $N$ and the space $\cS(\Sigma_\cD)$ of restrictions to $\Sigma_\cD$ of functions in $\cS(\bR^d)$.
\end{abstract}

\maketitle

\makeatletter
\renewcommand\l@subsection{\@tocline{2}{0pt}{3pc}{5pc}{}}
\makeatother

\tableofcontents

\section{Introduction}

\bigskip

Let $N$ be a connected, simply connected nilpotent Lie group and $K$  a compact group of automorphisms of $N$.
We say that $(N,K)$ is a {\it nilpotent Gelfand pair} (n.G.p. in short)\footnote{When dealing with specific pairs, we will find it convenient to identify $N$ with its Lie algebra $\fn$ and write $(\fn,K)$ instead of $(N,K)$.} if either of the following  equivalent condition is satisfied:
\begin{enumerate}
\item[(i)] the convolution algebra $L^1(N)^K$ of $K$-invariant integrable functions on $N$ is commutative;
\item[(ii)] the algebra $\bD(N)^K$ of left-invariant and $K$-invariant differential operators on $N$ is commutative.
\end{enumerate}

 According to the common terminology, this is the same as saying that $(K\ltimes N,K)$ is a Gelfand pair. The expression ``commutative nilmanifold'' is used for $(K\ltimes N)/K$ in \cite{W}.

The relevance of  nilpotent Gelfand pairs in the class of general  Gelfand pairs is emphasized by
Vinberg's structure theorem \cite[Th. 5]{V1}. 

 \medskip

According to the Gelfand theory of commutative Banach algebras, harmonic analysis on Gelfand pairs is based on the notions of spherical function and spherical transform \cite{Fa}, \cite[Ch. IV]{He2}. For nilpotent pairs, spherical functions can be defined as the joint $K$-invariant eigenfunctions $\ph$ of all operators in $\bD(N)^K$ which take value 1 at the identity. The spherical transform of a function $F\in L^1(N)^K$ is
\begin{equation}\label{transform}
\cG F(\ph)=\int_N F(x)\ph(x\inv)\,dx\ ,
\end{equation}
defined on the {\it Gelfand spectrum} of the pair, $\Sigma=\Sigma(N,K)$, i.e., the space of bounded spherical functions endowed with the compact-open topology.  Then
$$
\cG:L^1(N)^K\longrightarrow C_0(\Sigma)
$$
and is continuous. 

The Gelfand spectrum $\Sigma$ admits natural embedding in Euclidean spaces. Let
$$
\cD=(D_1,\dots,D_d)\ ,
$$ 
be a $d$-tuple of essentially self-adjoint operators which generate $\bD(N)^K$ as an algebra. Every bounded spherical function $\ph$ is identified by the $d$-tuple $\xi=\xi(\ph)=\big(\xi_1(\ph),\dots,\xi_d(\ph)\big)$ of eigenvalues of $\ph$ relative to $D_1,\dots,D_d$ respectively.
The $d$-tuples $\xi(\ph)$ form a closed subset $\Sigma_\cD$ of $\bR^d$ which is homeomorphic to $\Sigma$ \cite{FeRu}. Hence the spherical transform $\cG F$ in \eqref{transform} can be viewed as a function on $\Sigma_\cD$.

In the case where $N=\bR^n$ and $K$ is trivial, $\bD(\bR^n)^K$ is the algebra of all constant coefficient differential operators, and the bounded spherical functions are the unitary characters $\ph_\la(x)=e^{i\la\cdot x}$, for $\la\in\bR^n$. Taking
$$
\cD=\big(i\inv\de_{x_1},\dots,i\inv\de_{x_n}\big)\ ,
$$
we have $\xi(\ph_\la)=\la$, so that  $\Sigma_\cD=\bR^n$, and $\cG F=\hat F$ is the ordinary Fourier transform.

It has been conjectured in~\cite{FR} and \cite{FRY1}, that the invariance under Fourier transform of the Schwartz space $\cS(\bR^n)$, a fundamental fact in Fourier analysis, has an analogue on nilpotent Gelfand pairs, in the sense that the spherical transform gives a bijective correspondence between $K$-invariant Schwartz functions on $N$ and restrictions to $\Sigma_\cD$ of Schwartz functions on $\bR^d$.

 To make the statement precise, denote by $\cS(N)^K$ the space of $K$-invariant Schwartz function on $N$ and by
$$
\cS(\Sigma_\cD)\overset{\rm def}=\cS(\bR^d)/\{f:f_{|{\Sigma_\cD}}=0\}
$$
the space of restrictions to $\Sigma_\cD$ of Schwartz functions on $\bR^d$, with the quotient topology. 
The conjectured property, for any n.G.p. $(N,K)$, is as follows:
\begin{equation*}
\text{\it The spherical transform $\cG$ maps the space $\cS(N)^K$ isomorphically onto $\cS(\Sigma_\cD)$.}
\tag{S}
\end{equation*}

The problem is well posed because the answer does not depend on the choice of~$\cD$ \cite{ADR2} and~\cite{FR}.
\smallskip

Property (S) has been proved to hold in several cases.
For ``abelian pairs'', i.e., with $N=\bR^n$ and $K\subset{\rm GL}_n(\bR)$ compact, it has been shown in \cite{ADR2} that Property (S) follows from G.~Schwarz's extension  \cite{Schw}  of Whitney's theorem \cite{Wh} to general linear actions of compact groups on $\bR^n$.

For nonabelian $N$, Property (S) has been proved in the following cases:
\begin{enumerate}
\item[(i)]  pairs in which $N$ is a Heisenberg group or a complexified Heisenberg group \cite{ADR1, ADR2};
\item[(ii)]   the pair $(\bH^n\oplus\IM\bH,{\rm Sp}_n)$ \cite{ADR1};
\item[(iii)]  ``rank-one'' pairs, where $[\fn,\fn]=\fz$, the centre of $\fn$, and the $K$-orbits in $\fz$ are full spheres \cite{FR, FRY1}.
\end{enumerate}
\smallskip

Part of the statement is a matter of functional calculus on Rockland operators on graded groups (i.e., with a graded Lie algebra). It was proved in \cite{H} that if $L$ is a Rockland operator and $g$ is a Schwartz function on the line, then the operator $g(L)$ is given by convolution with a Schwartz kernel. This statement has been later extended to commuting families of $d$ Rockland operators and  $g\in\cS(\bR^d)$, in \cite{Ven} in a special case, and in \cite[Th. 5.2]{ADR2}  in general.  

Since on any n.G.p. we always have a system $\cD$ consisting of Rockland operators \cite{ADR2} and 
$$
g(D_1,\dots,D_d)f=f*K\ \Longleftrightarrow\ \cG K=g_{|_{\Sigma_\cD}}\ ,
$$
this has the following consequence.

\begin{theorem}[\cite{ADR2, FR}]\label{hulanicki}
Let $(N,K)$ be a nilpotent Gelfand pair, and $\cD$, $\Sigma_\cD\subset\bR^d$  as  above. Given any Schwartz function $g$ on $\bR^d$, there is a $K$-invariant Schwartz function $F$ on $N$, depending continuously on $m$, such that $\cG F=g_{|_{\Sigma_\cD}}$.
\end{theorem}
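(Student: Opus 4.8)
The plan is to realise $F$ as a joint functional-calculus kernel of the operators in $\cD$ and then to identify that kernel through the spherical transform. Since the conclusion does not depend on the choice of $\cD$ \cite{ADR2, FR}, it suffices to work with one generating system, and by \cite{ADR2} we may take the formally self-adjoint generators $D_1,\dots,D_d$ of $\bD(N)^K$ to be mutually commuting, homogeneous with respect to the automorphic dilations of $N$ (these exist since $N$, having step at most two, is graded), and Rockland operators. As essentially self-adjoint commuting operators they have a joint spectral resolution on $L^2(N)$, so for $g\in\cS(\bR^d)$ the operator $g(D_1,\dots,D_d)$ is well defined and bounded on $L^2(N)$; since each $D_j$ is left-invariant, this operator commutes with left translations and is therefore given by convolution from the right by a tempered distribution $F$.

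The decisive input is that $F$ is in fact a Schwartz function depending continuously on $g$: this is precisely the multivariate extension of Hulanicki's theorem to commuting families of $d$ Rockland operators and $g\in\cS(\bR^d)$, \cite[Th. 5.2]{ADR2} (following \cite{H} for $d=1$ and \cite{Ven} in a special case), and it is the only genuinely hard step -- everything else is bookkeeping. Granting it, $F\in\cS(N)$. Moreover $F$ is $K$-invariant: each $D_j$ commutes with the $K$-action on $L^2(N)$, hence so do its joint spectral resolution and $g(D_1,\dots,D_d)$, and since the convolution kernel of a left-invariant operator is unique, $F\in\cS(N)^K$.

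It remains to see that $\cG F=g_{|_{\Sigma_\cD}}$, which is the equivalence
\[
g(D_1,\dots,D_d)h=h*F\ \Longleftrightarrow\ \cG F=g_{|_{\Sigma_\cD}}
\]
recorded above, applied with the kernel $F$ just constructed. Conceptually it reflects that a bounded spherical function $\varphi$ is a joint $K$-invariant eigenfunction of $\cD$, with $D_j\varphi=\xi_j(\varphi)\varphi$ and $\varphi(e)=1$, so that $\varphi*F=\cG F(\varphi)\,\varphi$ while $g(D_1,\dots,D_d)$ multiplies $\varphi$ by $g(\xi(\varphi))$; the rigorous version, which must cope with the fact that spherical functions need not lie in $L^2(N)$, is obtained by pairing both sides against the Plancherel measure of the pair, whose support is contained in $\Sigma_\cD$. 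This yields $\cG F=g_{|_{\Sigma_\cD}}$ with $F$ depending continuously on $g$, as claimed. Finally, if $g$ vanishes on $\Sigma_\cD$ then it vanishes on the support of the Plancherel measure, so $g(D_1,\dots,D_d)=0$ and $F=0$; hence the construction factors through a continuous linear right inverse $\cS(\Sigma_\cD)\to\cS(N)^K$ of $\cG$.
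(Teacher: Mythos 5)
Your proposal is correct and follows essentially the same route the paper sketches immediately before stating the theorem: choose a Rockland generating system $\cD$, invoke the multivariate Hulanicki theorem \cite[Th.~5.2]{ADR2} to get a Schwartz convolution kernel $F$ of $g(D_1,\dots,D_d)$ depending continuously on $g$, observe that $K$-invariance of the $D_j$ forces $K$-invariance of $F$, and then use the equivalence $g(D_1,\dots,D_d)h=h*F\iff\cG F=g_{|_{\Sigma_\cD}}$. The paper itself only cites \cite{ADR2,FR} without a written proof, and your sketch fills in exactly the details those references supply.
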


In other words, we always have the continuous inclusion
$$
\cG\big(\cS(N)^K\big)\supseteq \cS(\Sigma_\cD)\ .
$$

So the proof of Property (S)  reduces to proving the opposite inclusion, i.e., that {\it the spherical transform of any function in $\cS(N)^K$ admits a Schwartz extension to $\bR^d$}.
\medskip

For all pairs with nonabelian $N$ studied in \cite{ADR1, ADR2, FR, FRY1, FRY2}, the proof contains a bootstrapping argument (see below), which makes the validity of Property (S) at a given stage a necessary requirement for proving it in more complex or general situations. In a more systematic way, the results of this paper are proved via an inductive procedure, which involves a similar bootstrapping, and also relies on the validity of Property (S) in the above mentioned cases (i)-(iii).

It turns out very useful in this process to have at hand a classification of all nilpotent Gelfand pairs. Their knowledge makes it possible to develop a general strategy of proof  and simplify the most technical parts.

A first classification of nilpotent Gelfand pairs was obtained by E.\,Vinberg in \cite{V1,V2}, under the following assumption, which we call {\it Vinberg's condition}: 
\begin{equation*}
\text{\it $K$ acts irreducibly on $\fn/[\fn,\fn]$}.
\tag{V}
\end{equation*}

The list of all pairs satisfying Vinberg's condition is in \cite[Table 3]{V1} (with an inaccuracy corrected in \cite{Y2}). The  classification of all nilpotent Gelfand pairs was completed by O.~Yakimova \cite{Y1, Y2}, see also \cite[Ch. 13, 15]{W}.

Vinberg's list can also be found in the Appendix of \cite{FRY1}, where families of fundamental invariants are obtained for each case.

We can state now our main theorem.

\begin{theorem}\label{main}
Property (S) holds for all nilpotent Gelfand pairs satisfying condition (V).
\end{theorem}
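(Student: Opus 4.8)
The plan is to establish the missing inclusion $\cG(\cS(N)^K)\subseteq\cS(\Sigma_\cD)$, since Theorem~\ref{hulanicki} already gives the reverse inclusion and continuity in one direction. The strategy is an induction that exploits the classification of pairs satisfying condition (V) from \cite[Table 3]{V1}, together with the fundamental invariants listed in the Appendix of \cite{FRY1}. The base cases are precisely the pairs (i)--(iii) quoted above (Heisenberg and complexified Heisenberg groups, $(\bH^n\oplus\IM\bH,\Sp_n)$, and rank-one pairs): for those Property (S) is known, so the induction has something to stand on. The inductive step is a bootstrapping argument: one wants to deduce Property (S) for a given pair $(N,K)$ from its validity for ``smaller'' pairs --- typically the pair associated with the quotient of $\fn$ by part of its centre, or the pair obtained by restricting to a subgroup $K'\subset K$ that is a normaliser of a generic isotropy subgroup, or the Gelfand pair attached to a generic coadjoint orbit.

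The concrete mechanism I would use: given $F\in\cS(N)^K$, partial Fourier transform in the central directions $\fz=[\fn,\fn]$ writes $F$ as a field of functions $F^\mu$ on $N/\exp\fz$ parametrised by $\mu\in\fz^*$, with each $F^\mu$ invariant under the stabiliser $K_\mu$. For $\mu$ in a Zariski-open set the pair $(N_\mu,K_\mu)$ --- where $N_\mu$ is the appropriate Heisenberg-type quotient --- is again a nilpotent Gelfand pair, typically one for which Property (S) is already available (Heisenberg case, or a smaller pair handled earlier in the induction). Applying the spherical transform fibrewise and using the known Schwartz isomorphism there, one obtains that $\cG F$, restricted to the part of $\Sigma_\cD$ lying over generic $\mu$, is the restriction of a Schwartz function. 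The serious work is then to control the behaviour of this family as $\mu$ degenerates --- as the orbit type jumps, as eigenvalues collapse, and as one approaches the ``boundary strata'' of $\Sigma_\cD$ where the dimension of the generic $K_\mu$-orbit drops. One must show the pieces glue into a single function on $\Sigma_\cD$ that extends smoothly, with all derivatives, to a Schwartz function on $\bR^d$; this is where Whitney-type extension theory (à la \cite{Schw,Wh}), semialgebraic geometry of $\Sigma_\cD$, and careful uniform estimates on the spherical functions and their derivatives near the singular set enter. A key technical tool will be expressing spherical functions via the known explicit fundamental invariants, so that the eigenvalue map $\xi$ and the geometry of $\Sigma_\cD$ are made fully explicit case by case.

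I also anticipate needing to reduce the general condition-(V) pair to a manageable shape before the fibration argument applies: when the generic isotropy group $K_z$ (for $z\in\fz$ generic) is nontrivial, one replaces $(N,K)$ by a pair built from $K_z$ acting on a subalgebra, checks it is again a nilpotent Gelfand pair (here Vinberg's structure results and Yakimova's analysis of the classification are essential), and verifies it sits earlier in the induction. Threading this reduction so that every pair encountered is strictly simpler --- in a well-founded ordering, e.g.\ by $\dim N$ or by the pair's position in Vinberg's list --- is the organisational backbone of the proof.

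The step I expect to be the main obstacle is the gluing/extension across the singular strata of $\Sigma_\cD$: proving that the fibrewise-constructed pieces of $\cG F$ agree on overlaps and together admit a genuine Schwartz extension to all of $\bR^d$, with norm estimates continuous in $F$. This requires uniform control of spherical functions and their $\xi$-derivatives as the coadjoint parameter degenerates, and a quantitative Whitney extension adapted to the semialgebraic set $\Sigma_\cD$; the explicit invariants from \cite{FRY1} and the functional-calculus input of Theorem~\ref{hulanicki} are what make this tractable, but handling every family in Vinberg's list --- including the sporadic ones --- uniformly is the heart of the matter.
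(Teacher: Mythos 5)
Your overall strategy -- reducing to the inclusion $\cG(\cS(N)^K)\subseteq\cS(\Sigma_\cD)$, inducting over the classified pairs with the known cases (i)--(iii) as base, analysing $\cG F$ fibrewise over the central parameter via quotient pairs $(N_\mu,K_\mu)$ on the generic set, and finishing with a Whitney-type extension at the singular strata -- is indeed the same skeleton as the paper's argument. But as written it has two genuine gaps, and they sit exactly where the paper has to do its real work. First, on the most singular stratum (the points of $\Sigma_\cD$ lying over the $K$-fixed part $\check\fz$ of the centre) you cannot reduce to any proper quotient pair, so "Whitney-type extension theory enters" is not yet an argument: one needs to actually produce the Whitney jet, i.e.\ the full family of transverse derivatives of a would-be extension of $\cG F$ along that stratum. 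The paper does this by identifying the stratum with the Gelfand spectrum of $(\check N,K)$, $\check\fn=\fn/\fz_0$ (a Heisenberg or quaternionic Heisenberg pair, for which (S) is known), and by a Hadamard-type formula (Proposition \ref{hadamard}, resting on Proposition \ref{prop_FRY2}) whose proof requires nontrivial representation-theoretic input -- multiplicity-free decompositions of certain tensor products, checked case by case. Nothing in your plan supplies this step; uniform estimates on spherical functions near the degenerate parameter do not by themselves yield the jet.

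Second, your well-founded induction "by $\dim N$ or by the pair's position in Vinberg's list" does not close within the class of pairs satisfying (V): for the pairs in the third block of Table \ref{vinberg} (lines 7--10) the proper quotient pairs $(N_t,K_t)$ fail condition (V), so they are not "earlier in the list" and Property (S) for them is not covered by the theorem being proved nor by the base cases. The paper must treat these separately (Section \ref{third-block}), running the whole regular/singular paradigm again through two further generations of quotient pairs, with new Hadamard-type lemmas (e.g.\ Lemmas \ref{hadamard3} and \ref{last-hadamard}) proved by explicit highest-weight computations for ${\rm Sp}_n$ and ${\rm Spin}_7$-related actions. Your reduction step ("check it is again a nilpotent Gelfand pair... and verify it sits earlier in the induction") would fail at exactly this point, so the organisational backbone you describe needs to be replaced by an argument that proves (S) for these non-(V) quotient pairs directly. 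A smaller remark: the gluing on the regular part is handled in the paper not by semialgebraic geometry but by slices and radialisation, which give that the local identifications with quotient spectra are restrictions of explicitly controlled smooth maps, together with a dilation-homogeneous partition of unity; your version of this step is plausible but would still need that quantitative input.
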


In the course of the proof, it will be sufficient to limit our analysis to pairs in a much more restricted list. In fact, we can disregard the pairs in (i)-(iii) above, as well as many others according to the following principles: 
\begin{enumerate} 
\item[(a)] if Property (S) holds for $(N,K)$, and $K$ is normal in a larger compact group $K^\#$ of automorphisms of $N$, then it  also holds for  $(N,K^\#)$ ({\it normal extension});
\item[(b)] if Property (S) holds for $(N,K)$, and the centre $\fz$ of $\fn$ has a nontrivial proper $K$-invariant subspace $\fs$,  then it  also holds for $(N/\exp\fs,K)$ ({\it central reduction}).
\end{enumerate}

The resulting reduced list of pairs to work on is given in Table \ref{vinberg}. 

We also mention here a third principle which will be used in the course of the proof:
\begin{enumerate} 
\item[(c)] if Property (S) holds for two n.G.p., $(N_1,K_1)$ and $(N_2,K_2)$, it also holds for the {\it product pair} $(N_1\times N_2,K_1\times K_2)$.
\end{enumerate}

The proof of (a) is contained in \cite{FRY1}, and (b), (c) will be proved in Section \ref{section-reductions}.
\medskip

 In order to present the main ideas in this paper, we first sketch the scheme used in the previous papers \cite{ADR1, ADR2, FR, FRY1, FRY2} on this subject. 

Inside $\Sigma_\cD$ one can identify an open ``regular'' set and a complementary ``singular'' set. 
The regular set has the property that any point $\xi$ in it has a neighbourhood $U_\xi$ which is homeomorphic to an open neighbourhood $V_\eta$ of some point in the Gelfand spectrum of another n.G.p., which is  ``simpler'' in the sense that it is already known to satisfy Property (S). More precisely, there is a natural homeomorphism which induces, by composition, a correspondence between $C^\infty$-functions on $U_\xi$ and $C^\infty$-functions on $V_\eta$.
Using a partition of unity, this argument is sufficient to imply the existence of Schwartz extensions to $\bR^d$ when the spherical transform $\cG F$ of a function $F\in\cS(N)^K$ vanishes of infinite order on the singular set. This is the ``bootstrapping'' part of the argument.

 Such reduction to simpler pairs is not possible on the singular set.  However, the singular set  itself is identified with the Gelfand spectrum of a n.G.p. for which Property (S) is known to hold. At this point, a Hadamard-type formula for $K$-invariant smooth functions on $N$ produces, for any given $F\in\cS(N)^K$, a Whitney jet of infinite order on the singular set, i.e., it determines the derivatives that any smooth extension of $\cG F$ must have on the singular set.  By Whitney extension theorem \cite{Wh2}, there is a Schwartz function $g$ on $\bR^d$ with the prescribed derivatives on the singular set. Applying Theorem \ref{hulanicki}, we find a function $G\in\cS(N)^K$ having $g$ as its spherical transform. Since the difference $\cG (F-G)$ vanishes of infinite order on the singular set,  the bootstrapping argument allows to conclude the proof.
\vskip.2cm

One novelty of this paper is that the notions of regular and singular set must be refined by taking into account the ``higher-rank'' nature of the action of $K$ on the centre $\fz$ of $\fn$, which produces different levels of singularity of points in $\Sigma_\cD$. 

Taking into account that  in every n.G.p. the group $N$ has step at most two \cite{BJR90}, and that $[\fn,\fn]=\fz$ for all pairs in Vinberg's list, we can associate to each bounded spherical function, i.e., to each point in the Gelfand spectrum, a conjugacy class, modulo the action of $K$, of irreducible unitary representations of $N$, and hence a $K$-orbit in $\fz^*\cong\fz$ (cf. Section~\ref{section-spherical}). 

Let $\ph$ be a bounded spherical function and $t$ any point in the corresponding orbit $O_\ph\subset\fz$. Denoting by $N_t$ the quotient group of $N$ with Lie algebra $\fn/T_tO_\ph$ and by $K_t$  the stabilizer of $t$ in $K$,  $\ph$ projects to $N_t$ as a spherical function $\ph^t$ for the n.G.p. $(N_t,K_t)$.

The {\it quotient pair} $(N_t,K_t)$ of $(N,K)$ measures the level of singularity of $\ph$, or of $\xi(\ph)$ as a point of $\Sigma_\cD$. The highest level of singularity occurs when $t$ is fixed by $K$. In this case $(N_t,K_t)=(N,K)$. In all other cases, $(N_t,K_t)$ is a {\it proper} quotient pair.

For instance, consider  the pairs at line 2 of Table \ref{vinberg}, where
\begin{itemize}
\item $\fn=\bC^n\oplus\fu_n$ with Lie bracket $\big[(v,z),(v',z')\big]=\big(0,v{v'}^*-v'v^*\big)$,
\item $K= {\rm U}_n$ acts on $\fn$ by $k\cdot(v,z)=(kv,kzk^*)$.
\end{itemize}

Then the action of $K$ on $\fz=\fu_n$ is the adjoint action. In this case, our notion of singularity of a spherical function matches with the notion of singularity of a point in a Cartan subalgebra of $\fu_n$, with the level of singularity measured by the set of positive roots annihilating it.
In this example, if the orbit associated to a given spherical function contains the element $t={\rm diag}(t_1I_{p_1},\dots,t_kI_{p_k})$ with different $t_j$'s, the quotient pair $(N_t,K_t)$ is a product of  pairs $(\bC^{p_j}\oplus\fu_{p_j}, {\rm U}_{p_j})$ (cf. Section \ref{line2}). This is a proper quotient pair unless $t$ is a scalar multiple of the identity matrix.

In Section~\ref{quotient}, we prove that the map $\ph\longmapsto \ph^t$ gives a local homeomorphism of a neighbourhood of 
$\ph$ in $\Sigma$ onto a neighbourhood of $\ph^t$ in the Gelfand spectrum $\Sigma^t$ of  $(N_t,K_t)$. The proof is based on the existence of slices transversal to  $K$-orbits in $\fz$, which allows local extensions near $t$ of smooth $K_t$-invariant functions on $N_t$ to smooth $K$-invariant functions on $N$ ({\it radialisation}). In Section~\ref{relations-spectra}, we prove that,  given two realisations $\Sigma_\cD$, $\Sigma^t_{\cD_t}$ of the two spectra in  Euclidean spaces, this homeomorphism induces a local identification, near the points corresponding to $\ph$ and $\ph^t$ respectively, of the two spaces $\cS(\Sigma_\cD)$ and 
$\cS(\Sigma^t_{\cD_t})$. 

This result allows to prove the following version of the bootstrapping argument, cf. Section~\ref{sec_towards}.
Denote by $\check \Sigma_\cD$ the set of ``most singular points'' in $\Sigma_\cD$, i.e., those for which $(N_t,K_t)=(N,K)$, and {\it assume that Property {\rm (S)} holds for all proper quotient pairs of $(N,K)$}. If   the spherical transform $\cG F$ of a function $F\in\cS(N)^K$ vanishes of infinite order on $\check\Sigma_\cD$, then it can be extended to a Schwartz function on $\bR^d$.

Let us assume for a moment that all proper quotient pairs of $(N,K)$ satisfy Property~(S). 
Following the general pattern, we show in Section \ref{section_checkN} that $\cG F$ determines a Whitney jet on $\check\Sigma_\cD$. This is a consequence of two facts.

The first fact is that $\check\Sigma_\cD$ is naturally identified with the Gelfand spectrum of $(\check N, K)$, where the Lie algebra of $\check N$ is $\fn/\fz_0$, 
 where $\fz_0$ is the component of $\fz$ on which $K$ acts nontrivially, cf. \eqref{z_0}. 
More precisely, for an appropriate choice of the system $\cD$ of generators of $\bD(N)^K$, we construct a system $\check\cD$ of $d'<d$ generators of $\bD(\check N)^K$ such that 
$$
\check\Sigma_\cD=\Sigma_\cD\cap\big(\bR^{d'}\times\{0\}\big)=\Sigma_{\check\cD}\times\{0\}\ .
$$

The second fact is the Hadamard-type formula of Proposition \ref{hadamard}. Once read on the other side of the spherical transform, it has the form of the inductive step for a Taylor development of $\cG F$,  for $F\in\cS(N)^K$, centred on  $\bR^{d'}\times\{0\}$, in the remaining $d-d'$ variables. This provides the desired Whitney jet, once we observe, by case by case inspection,  that  $(\check N, K)$ is  one of the pairs for which Property (S) is already known to hold (e.g., in the above example, $\check N$ is a Heisenberg group). 

The proof of Proposition \ref{hadamard} relies on the preliminary Proposition \ref{prop_FRY2}, which has already appeared in~\cite{FRY2}. In this paper we only show how Proposition \ref{prop_FRY2} implies Proposition \ref{hadamard}. We remark that the proof of Proposition \ref{prop_FRY2} given in \cite{FRY2} is based on the fact that certain tensor products of irreducible representations of $K$ decompose without multiplicities, and this requires a case by case analysis.
\smallskip

It remains to answer the question if Property~(S) is satisfied by all proper quotient pairs $(N_t,K_t)$ generated by a pair $(N,K)$ in Table \ref{vinberg}. The list of such quotient pairs is given in Section \ref{appendix}.
One can notice that the pairs in the first two blocks of Table \ref{vinberg} form a self-contained family, in the sense that the  quotient pairs that they generate  are products of pairs in the same family with lower dimensional groups. This allows an inductive argument using principle (c) above.

The situation is different for the pairs in the third block, since the quotient pairs they generate do not satisfy condition (V). For them we must provide an {\it ad-hoc} adaptation of the previous argument, with the inductive procedure replaced by  analysis of three consecutive generations of quotient pairs. This  is done in Section \ref{third-block}. We are confident that this extra work on isolated cases will turn out to be useful in view of a future proof of Property (S) for  general pairs not satisfying Vinberg's condition.

\vskip0.5ex

\noindent
{\bf Acknowledgments.}  This work started when the third author was a long-term visitor at Centro di Ricerca Matematica Ennio de Giorgi in Pisa. Part of it was later carried out 
at the Max-Planck-Institute f\"ur Mathematik (Bonn), where all the three authors have been short-term guests. 
We would like to thank these institutes for their warm hospitality and for  providing an excellent stimulating environment.  

The first author acknowledges the support of Scuola Normale Superiore, Pisa, and of the London Mathematical Society Grace Chisholm fellowship held at King's College, London.
The third author also wishes to thank Scuola Normale Superiore for regular invitations 
and D.\,Timashev for bringing the book of Bredon to her attention.

\vskip1cm
\section{Generalities on nilpotent Gelfand pairs}
\bigskip

Let $N$ be a nilpotent, connected and simply connected Lie group, and let $K$ be a compact group of automorphisms of $N$.

\begin{definition}\label{nGp}
 $(N,K)$ is a {\it nilpotent Gelfand pair} (n.G.p. in short) if either of the following equivalent conditions is satisfied:
\begin{enumerate}
\item[\rm(i)]  the convolution algebra $L^1(N)^K$ of integrable $K$-invariant functions on $N$ is commutative;
\item[\rm(ii)] the algebra $\bD(N)^K$ of left-invariant and $K$-invariant differential operators on $N$ is commutative;
\item[\rm(iii)] if $\pi$ is an irreducible unitary representation of $N$ and $K_\pi$ is the stabilizer in $K$ of the equivalence class of $\pi$, then the representation space $\cH_\pi$ decomposes under $K_\pi$ without multiplicities;
\item[\rm(iv)] same as {\rm (iii)}, for $\pi$ generic.
\end{enumerate}
\end{definition}

This is the same as  saying that $(K\ltimes N,K)$ is a Gelfand pair. With $\fn$ denoting the Lie algebra of $N$, we often write 
$(\fn,K)$ instead of $(N,K)$.

In any nilpotent Gelfand pair, $N$ has step at most 2 \cite{BJR90}. We can then split $\fn$ as the direct sum $\fv\oplus[\fn,\fn]$, where $[\fn,\fn]$ is the derived algebra and $\fv$ a $K$-invariant complement of it. For  pairs satisfying Vinberg's condition, the derived algebra coincides with the centre $\fz$, and it will henceforth be denoted by this symbol. 

We regard the Lie bracket on $\fn$ as a skew-symmetric bilinear map from $\fv\times\fv$ to $\fz$. We split $\check\fz$ as
\begin{equation}\label{z_0}
\fz=\fz_0\oplus\check\fz\ ,
\end{equation}
where $\check\fz$ denotes the subspace of $K$-fixed elements of $\fz$, and  $\fz_0$ its (unique) $K$-invariant complement in $\fz$.

We constantly use exponential coordinates on $N$ to identify elements of $N$ with elements of $\fn$. In particular, the product on $N$ is expressed as an operation on $\fv\oplus\fz$, via the Baker-Campbell-Hausdorff formula
$$
(v,z)\cdot(v',z')=\Big(v+v',z+z'+\half[v,v']\Big)\ .
$$

Let $\la=\la_N$ be the standard symmetrisation operator from the symmetric algebra $\fS(\fn)$ onto the universal enveloping algebra $\fU(\fn)$, which is linear and satisfies the identity $\la(X^n)=X^n$ for every $X\in\fn$ and $n\in\bN$. As usual, we regard $\fS(\fn)$ as the space $\cP(\fn^*)$ of polynomials on the dual space $\fn^*$.  When  the elements of $\fU(\fn)$ are regarded as left-invariant differential operators on $N$, we use the notation $\bD(N)$.

Following \cite[Sect. 2.2]{FRY1}, we will use a modified symmetrisation $\la'_N:\cP(\fn^*)\longrightarrow\bD(N)$, which maps the polynomial $p\in\cP(\fn)$ to the differential operator
\begin{equation}\label{modsym}
\la'(p)F=p(i\inv\nabla_{v'},i\inv\nabla_{z'})_{|_{v'=z'=0}} F\big((v,z)\cdot(v',z')\big)\ .
\end{equation}
i.e., $\la'(p)=\la\big(p(i\inv\cdot)\big)$, in terms of the standard symmetrisation $\la$. When it is necessary to specify the group $N$, we write $\la'_N$ instead of $\la'$. The advantage of this modification is that  polynomials with real coefficients are transformed by $\la'$ into formally self-adjoint differential operators\footnote{The first two authors take this opportunity to correct an error in the formulation of Proposition 3.1 in \cite{FR}: it applies to operators $D=\la'(p)$ with $p$ real.}.
Clearly, $\la'(p)$ is $K$-invariant if and only if $p$ is $K$-invariant.

 Introducing a  $K$-invariant  inner product $\lan\ ,\ \ran$  on $\fv\oplus\fz$ under which $\fv\perp\fz$, we identify $\fn^*$ with $\fn$ throughout the paper.

\bigskip
\subsection{Spherical functions, representations of $N$ and $K$-orbits in $\fz$}\label{section-spherical}\quad
\medskip

Let $(N,K)$ be a n.G.p. The spherical functions are the $K$-invariant joint eigenfunctions $\ph$ of all operators in $\bD(N)^K$ normalised by the condition $\ph(0)=1$.  Given $D\in\bD(N)^K$ and a spherical function~$\ph$, we denote by $\xi(D,\ph)$ the corresponding eigenvalue.

We are interested in the bounded spherical functions,  which play the main r\^ole in the Fourier-Godement analysis of Gelfand pairs.

It has been proved in \cite{BJR90} that all bounded spherical functions of  $(N,K)$ are of positive type, hence they are in one-to-one correspondence with (equivalence classes of) irreducible unitary representations of $K\ltimes N$  admitting non-trivial $K$-invariant vectors. 
However, we prefer to avoid representations of the semidirect product and express bounded spherical functions  as partial traces of irreducible unitary representations of $N$.

For $\zeta\in\fz$, denote by $\fr_\zeta\subseteq\fv$ the radical of the bilinear form $B_\zeta(v,v')=\lan\zeta,[v,v']\ran$ and set $\fv_\zeta=\fr_\zeta^\perp$.
The following statement is a direct consequence of the Stone-von Neumann theorem and we omit its proof.

\begin{lemma}\label{representations}
For each $\zeta\in\fz$ there is a unique, up to equivalence, irreducible unitary representation $\pi_\zeta$ of $N$ such that $d\pi_\zeta(0,z)=i\lan\zeta,z\ran I$ for all $z\in \fz$ and $d\pi_\zeta(v,0)=0$ for all $v\in \fr_\zeta$. 

For each $\zeta\in\fz$ and $\omega\in\fr_\zeta$ there is a unique
 irreducible unitary representation $\pi_{\zeta,\omega}$ such that
 \begin{equation}\label{zeta-omega}
d\pi_{\zeta,\omega}(v,z)=d\pi_\zeta(v,z)+i\lan v,\omega\ran I\ .
\end{equation}
Every irreducible unitary representation of $N$ is equivalent to one, and only one, $\pi_{\zeta,\omega}$.
\end{lemma}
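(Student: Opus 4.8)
\textbf{Proof plan for Lemma \ref{representations}.}

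The plan is to deduce everything from the Stone--von Neumann theorem applied to the Heisenberg-type group attached to a fixed $\zeta\in\fz$. First I would fix $\zeta$ and consider the skew-symmetric bilinear form $B_\zeta(v,v')=\lan\zeta,[v,v']\ran$ on $\fv$, with radical $\fr_\zeta$ and nondegeneracy space $\fv_\zeta=\fr_\zeta^\perp$. Since $N$ has step at most $2$, the map $(v,z)\mapsto\lan\zeta,z\ran$ together with the form $B_\zeta$ turns $\fv_\zeta\oplus\bR$ (with bracket inherited from $\fn$) into a Heisenberg Lie algebra, while $\fr_\zeta$ is central for that structure and $\fz$ itself acts by the fixed scalar $\lan\zeta,\cdot\ran$. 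The Stone--von Neumann theorem then gives, up to unitary equivalence, exactly one irreducible unitary representation $\pi_\zeta$ of the corresponding group on which the centre $\exp\fz$ acts by $e^{i\lan\zeta,z\ran}$ and $\fr_\zeta$ acts trivially, and this lifts to an irreducible unitary representation of $N$ with the stated infinitesimal properties $d\pi_\zeta(0,z)=i\lan\zeta,z\ran I$, $d\pi_\zeta(v,0)=0$ for $v\in\fr_\zeta$. Uniqueness follows because any representation with these properties must, when restricted to $\exp(\fv_\zeta\oplus\fz)$, be a multiple of the Schr\"odinger representation by Stone--von Neumann, and triviality on $\exp\fr_\zeta$ forces irreducibility of the extension to all of $N$.

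Next, for $\omega\in\fr_\zeta$, I would verify that the prescription \eqref{zeta-omega}, $d\pi_{\zeta,\omega}(v,z)=d\pi_\zeta(v,z)+i\lan v,\omega\ran I$, indeed defines a Lie algebra homomorphism into skew-adjoint operators: the extra term $i\lan v,\omega\ran I$ is a scalar-valued linear functional on $\fv$, and since $\omega\in\fr_\zeta$ annihilates $[\fv,\fv]$ under $\lan\cdot,\cdot\ran$-pairing via $B_\zeta$, one checks the bracket relation $[d\pi_{\zeta,\omega}(v,z),d\pi_{\zeta,\omega}(v',z')]=d\pi_{\zeta,\omega}([\,(v,z),(v',z')\,])$ holds --- the added scalars commute and the cross terms vanish because $\lan v,\omega\ran$ is a character of the abelian quotient $\fn/\fn_\zeta$ where $\fn_\zeta=\fv_\zeta\oplus\fz$. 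Exponentiating (using simple connectedness of $N$) gives a unitary representation $\pi_{\zeta,\omega}$, irreducible because it differs from the irreducible $\pi_\zeta$ only by a unitary character, hence has the same commutant.

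Finally, for the surjectivity and injectivity of $(\zeta,\omega)\mapsto\pi_{\zeta,\omega}$ onto $\widehat N$, I would invoke the Kirillov orbit method or argue directly: an arbitrary irreducible unitary representation $\pi$ of $N$ restricts on the centre $\exp\fz$ to a character $e^{i\lan\zeta,\cdot\ran}$ for a unique $\zeta\in\fz$ (by Schur's lemma, since $\exp\fz$ is central); then $\pi$ factors through the quotient of $N$ by the kernel of that central character and, modulo the Heisenberg part $\fv_\zeta$, is determined by a character of the abelian group $\exp\fr_\zeta$, which is $v\mapsto e^{i\lan v,\omega\ran}$ for a unique $\omega\in\fr_\zeta$; comparing with $\pi_{\zeta,\omega}$ and using Stone--von Neumann uniqueness on the Heisenberg part shows $\pi\cong\pi_{\zeta,\omega}$, and the pair $(\zeta,\omega)$ is forced, giving both existence and uniqueness. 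The main obstacle is the bookkeeping in the step-two setting --- making sure the decomposition $\fn=\fr_\zeta\oplus\fv_\zeta\oplus\fz$ is handled correctly so that $\fr_\zeta$ genuinely behaves like an extra abelian factor and the Heisenberg structure on $\fv_\zeta\oplus\fz$ is clean --- but since the excerpt explicitly says this is ``a direct consequence of the Stone--von Neumann theorem,'' no serious difficulty is expected and the proof is omitted in the paper.
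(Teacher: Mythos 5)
Your proof is correct and follows exactly the route the authors have in mind when they state the lemma ``is a direct consequence of the Stone--von Neumann theorem'' and omit its proof: pass to the Heisenberg quotient determined by $B_\zeta$, invoke Stone--von Neumann for existence and uniqueness of $\pi_\zeta$, twist by characters of $\exp\fr_\zeta$ to obtain the $\pi_{\zeta,\omega}$, and classify all irreducibles via restriction to $\exp\fz$ and $\exp\fr_\zeta$. One small clarification: the verification that \eqref{zeta-omega} defines a Lie algebra homomorphism uses only that $\lan[v,v'],\omega\ran=0$, which holds for \emph{every} $\omega\in\fv$ simply because $[\fv,\fv]\subseteq\fz$ and $\fz\perp\fv$; it does not involve $B_\zeta$ or the hypothesis $\omega\in\fr_\zeta$. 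That hypothesis serves only to make the parametrization injective: for a general $\omega\in\fv$ one still gets a unitary irreducible representation, but it is equivalent to $\pi_{\zeta,\omega_r}$, where $\omega_r$ is the orthogonal projection of $\omega$ onto $\fr_\zeta$, since the two restrict to the same character of $\exp\fr_\zeta$ and Stone--von Neumann identifies them on the Heisenberg part.
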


We denote by $\cH_\zeta$ the representation space of the representations $\pi_{\zeta,\omega}$. 
The stabilizer $K_{\zeta,\omega}\subset K$ of the point $\omega+\zeta\in\fn$ also stabilizes the equivalence class of $\pi_{\zeta,\omega}$, inducing a unitary representation\footnote{In general, this operation leads to a projective representation of the stabilizer in $K$. In our case we obtain true representations, since restriction of the metaplectic representation of ${\rm Sp}(\fr_\zeta^\perp,B_\zeta)$ to a compact subgroup can be linearized \cite{F}.}  $\sigma$ of $K_{\zeta,\omega}$ on $\cH_\zeta$. By \cite{C, V1}, the fact that $(N,K)$ is a n.G.p. is equivalent to saying that, for each $\zeta,\omega$, $\cH_\zeta$ decomposes without multiplicities into irreducible components under the action of $K_{\zeta,\omega}$, namely,
\begin{equation}\label{multiplicity-free}
\cH_\zeta=\sum_{\mu\in \fX_{\zeta,\omega}}V(\mu)\ ,
\end{equation}
with $\fX_{\zeta,\omega}\subseteq \widehat{K_{\zeta,\omega}}$. To each $\mu\in\fX_{\zeta,\omega}$ we can associate the spherical function
\begin{equation}\label{trace}
\ph_{\zeta,\omega,\mu}(v,z)=\frac1{\dim V(\mu)}\int_K \tr\big(\pi_{\zeta,\omega}(kv,kz)_{|_{V(\mu)}}\big)\,dk\ .
\end{equation}

For given $k\in K$, we have $\fX_{k\zeta,k\omega}=\fX_{\zeta,\omega}$, under the natural identification of the dual object $\widehat{K_{\zeta,\omega}}$ of $K_{\zeta,\omega}$ with the dual object of $K_{\pi_{\zeta,\omega}^k}=k\inv K_{\pi_{\zeta,\omega}} k$, and
$$
\ph_{k\zeta,k\omega,\mu}=\ph_{\zeta,\omega,\mu}\ .
$$

\bigskip

\subsection{Spectra and their immersions in $\bR^d$}\quad
\medskip

Given a n.G.p. $(N,K)$, we denote by $\Sigma$, or $\Sigma(N,K)$, the Gelfand spectrum of $L^1(N)^K$, i.e. the set of bounded $K$-spherical functions on $N$ with the compact-open topology.

By a {\it  homogeneous Hilbert basis}, or a {\it fundamental system of invariants}, we mean a $d$-tuple ${\boldsymbol\rho}=(\rho_1,\dots, \rho_d)$ of real, $K$-invariant polynomials on $\fn$ which generate the $K$-invariant polynomial algebra $\cP(\fn)^K$ over $\fn$ and with each $\rho_j$  homogeneous in the $\fv$-variables and in the $\fz$-variables separately, i.e., belonging to $\cP^{r_j}(\fv)\otimes\cP^{s_j}(\fz)$ for some  $r_j,s_j$.

We set $D_j=\la'(\rho_j)$ and $\cD=(D_1,\dots,D_d)$.  Then $\cD$ generates $\bD(N)^K$. We call it a {\it homogeneous basis} of $\bD(N)^K$.

By Proposition 3.1 of \cite{FR}, the $D_j$ are essentially self-adjoint on $\cS(N)$ and their closures admit a joint spectral resolution. 

Given $\ph\in\Sigma$, denote by $\xi_j(\ph)\in\bR$ the eigenvalue $\xi(D_j,\ph)$ of $\ph$ under $D_j$.
Then
\begin{equation}\label{Sigma_D}
\Sigma_\cD=\big\{\xi(\ph)=\big(\xi_1(\ph),\dots,\xi_d(\ph)\big):\ph\in\Sigma\big\}\subset\bR^d
\end{equation}
is closed and homeomorphic to $\Sigma$, cf. \cite{FeRu}. Moreover, $\Sigma_\cD$ is the joint $L^2$-spectrum of  $\cD$ as a family of strongly commuting self-adjoint operators \cite{FR}.

If $\tilde{\boldsymbol\rho}=(\tilde\rho_1,\dots, \tilde\rho_{\tilde d})$ is another real, homogeneous Hilbert basis of $\cP(\fn)^K$ and $\tilde\cD$ is the corresponding homogeneous basis of $\bD(N)^K$,  there are polynomials $P_k$, $\tilde P_j$ such that
\begin{equation}\label{changeD}
\tilde D_k=P_k(D_1,\dots,D_d)\ ,\qquad D_j=\tilde P_j(\tilde D_1,\dots,\tilde D_{\tilde d})\ ,
\end{equation}
for $1\le j\le d$, $1\le k\le \tilde d$. Hence $\Sigma_{\tilde \cD}=P(\Sigma_\cD)$ and  $\Sigma_\cD=\tilde P(\Sigma_{\tilde \cD})$.

The following statement is proved in \cite{ADR2} (cf. Lemma 3.1 and Corollary 3.2 therein).

\begin{proposition}\label{indipendence}
The validity of Property (S) is independent of the choice of ${\boldsymbol\rho}$ (i.e. of $\cD$).
\end{proposition}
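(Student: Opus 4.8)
\textbf{Proof proposal for Proposition \ref{indipendence}.}

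The plan is to reduce everything to the relations \eqref{changeD} between two homogeneous bases and to track how a polynomial change of coordinates on $\bR^d$ behaves on the Schwartz level. Suppose Property (S) holds for the basis $\cD$; I want to deduce it for $\tilde\cD$. By Theorem \ref{hulanicki} the inclusion $\cG_{\tilde\cD}\big(\cS(N)^K\big)\supseteq\cS(\Sigma_{\tilde\cD})$ always holds, so only the opposite inclusion needs proof, i.e.\ that for every $F\in\cS(N)^K$ the function $\cG_{\tilde\cD}F$ on $\Sigma_{\tilde\cD}\subset\bR^{\tilde d}$ extends to a function in $\cS(\bR^{\tilde d})$. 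The key observation is the compatibility of the two spherical transforms: since $\tilde D_k=P_k(D_1,\dots,D_d)$ as operators and the $D_j$ admit a joint spectral resolution whose joint spectrum is $\Sigma_\cD$, for each bounded spherical function $\ph$ we have $\xi_k(\ph)=P_k\big(\xi_1(\ph),\dots,\xi_d(\ph)\big)$; that is, the polynomial map $P=(P_1,\dots,P_{\tilde d}):\bR^d\to\bR^{\tilde d}$ restricts to the canonical homeomorphism $\Sigma_\cD\to\Sigma_{\tilde\cD}$, and likewise $\tilde P:\bR^{\tilde d}\to\bR^d$ restricts to its inverse. Consequently $\cG_{\tilde\cD}F=(\cG_\cD F)\circ(\tilde P_{|\Sigma_{\tilde\cD}})$ on $\Sigma_{\tilde\cD}$.

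Now take $F\in\cS(N)^K$. By the hypothesis that (S) holds for $\cD$, there is $g\in\cS(\bR^d)$ with $g_{|\Sigma_\cD}=\cG_\cD F$. I would then like to set $\tilde g = g\circ\tilde P$; the problem is that $\tilde P$ is polynomial and hence $g\circ\tilde P$ need not be Schwartz on $\bR^{\tilde d}$ (polynomials grow). This is the main obstacle, and it is handled exactly as in the reference \cite{ADR2}: one multiplies by a cutoff. Choose $\chi\in C_c^\infty(\bR^{\tilde d})$ with $\chi\equiv 1$ on a neighbourhood of the origin, and for $R>0$ write $\chi_R(\cdot)=\chi(\cdot/R)$; then $g\circ\tilde P\cdot\chi_R\in C_c^\infty(\bR^{\tilde d})\subset\cS(\bR^{\tilde d})$. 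The point is to recover $\cG_{\tilde\cD}F$ on the (possibly unbounded) set $\Sigma_{\tilde\cD}$ from a compactly supported extension. For this one uses the rapid decay of $\cG_\cD F$ together with a control of $\Sigma_\cD$ and $\tilde P$ at infinity: because $\tilde D_k=P_k(\cD)$ and the $\rho_j$, $\tilde\rho_k$ are homogeneous in the $\fv$- and $\fz$-variables, the two proper maps $P$, $\tilde P$ are homogeneous in a bigraded sense, so on $\Sigma_\cD$ one has $|\xi|\to\infty$ iff $|P(\xi)|\to\infty$; hence the missing part of $\Sigma_{\tilde\cD}$ where $\chi_R$ fails to be $1$ lies in a region where $\cG_{\tilde\cD}F$ is already rapidly small. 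Making this quantitative — a Schwartz seminorm estimate showing that a suitable sum $\sum_j (g\circ\tilde P)\cdot(\chi_{2^{j+1}}-\chi_{2^j})$ converges in $\cS(\bR^{\tilde d})$, or equivalently a single telescoping cutoff argument — yields $\tilde g\in\cS(\bR^{\tilde d})$ with $\tilde g_{|\Sigma_{\tilde\cD}}=\cG_{\tilde\cD}F$.

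Finally, for the topological (isomorphism) part: Property (S) for $\cD$ gives that $\cG_\cD:\cS(N)^K\to\cS(\Sigma_\cD)$ is a topological isomorphism, and the maps $f\mapsto \tilde P^*f$ and $f\mapsto P^*f$ induce mutually inverse continuous linear maps between $\cS(\Sigma_\cD)$ and $\cS(\Sigma_{\tilde\cD})$ — continuity in the quotient topologies follows from the cutoff construction above applied with parameters depending continuously on the data, exactly as in Lemma 3.1 and Corollary 3.2 of \cite{ADR2}. Composing, $\cG_{\tilde\cD}=\tilde P^*\circ\cG_\cD$ (up to the identification) is a topological isomorphism onto $\cS(\Sigma_{\tilde\cD})$. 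Since the roles of $\cD$ and $\tilde\cD$ are symmetric, Property (S) for one basis is equivalent to Property (S) for the other. I expect the only genuinely delicate point to be the uniform Schwartz estimate making the cutoffs converge, i.e.\ quantifying the bigraded properness of $P$ and $\tilde P$ on the spectra; everything else is formal.
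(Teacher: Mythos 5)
The paper does not reproduce a proof of this proposition; it refers to \cite{ADR2}, Lemma 3.1 and Corollary 3.2. So what can be assessed is whether your reconstruction is sound. The skeleton is right: reducing to the polynomial relations \eqref{changeD}, observing that $P$ and $\tilde P$ restrict to mutually inverse homeomorphisms of $\Sigma_\cD$ and $\Sigma_{\tilde\cD}$, writing $\cG_{\tilde\cD}F=(\cG_\cD F)\circ\tilde P$ on $\Sigma_{\tilde\cD}$, and recognizing that the whole problem is to damp $g\circ\tilde P$ at infinity so as to make it Schwartz without disturbing its values on the spectrum.

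Your damping mechanism, however, has a genuine gap. A radial cutoff cannot work: if $\sum_j(g\circ\tilde P)(\chi_{2^{j+1}}-\chi_{2^j})$ converged in $\cS(\bR^{\tilde d})$ its limit would be $g\circ\tilde P$ itself, which is not Schwartz in general. Already for $N=\bR$, $K$ trivial, $\boldsymbol\rho=(x)$, $\tilde{\boldsymbol\rho}=(x,x^2)$ one has $\tilde P(\eta_1,\eta_2)=\eta_1$ and $g\circ\tilde P(\eta)=g(\eta_1)$, which does not decay in $\eta_2$. The observation you invoke — that $\cG_{\tilde\cD}F$ is rapidly small far out on $\Sigma_{\tilde\cD}$ — is true but beside the point; what fails is the decay of $g\circ\tilde P$ \emph{off} $\Sigma_{\tilde\cD}$, in precisely the directions where $\tilde P$ is not proper, and these are the directions a radial $\chi_R$ cannot kill. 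The correct damping is transversal to the spectrum, not radial: since $P\circ\tilde P=\mathrm{id}$ on $\Sigma_{\tilde\cD}$ (an immediate consequence of \eqref{changeD}), the polynomial $q(\eta)=|P(\tilde P(\eta))-\eta|^2$ vanishes there, and one sets $\tilde g=(g\circ\tilde P)\cdot(\theta\circ q)$ with $\theta\in C_c^\infty(\bR)$, $\theta(0)=1$. Then $\tilde g$ still restricts to $\cG_{\tilde\cD}F$ on $\Sigma_{\tilde\cD}$, while on $\mathrm{supp}\,\tilde g$ one has $|\eta|\le|P(\tilde P(\eta))|+C\le C'(1+|\tilde P(\eta)|)^m$, hence $|\tilde P(\eta)|\gtrsim|\eta|^{1/m}$, and the rapid decay of $g$ and its derivatives at $\tilde P(\eta)$ beats the polynomial growth coming from $\tilde P$, $P\circ\tilde P$ and $\theta\circ q$. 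This already gives the seminorm bounds needed for the continuity claim, and does not use homogeneity of the $\rho_j$ at all — only the existence of a two-sided polynomial inverse on the spectrum.
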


On $N$, as well as on its Lie algebra,  we consider the automorphic dilations $\del\cdot(v,z)=(\del^\half v,\del z)$, defined for $\del>0$. If $p\in \cP(\fn)$ is %%a polynomial on $\fn$ 
homogeneous of degree $\nu'$ in the $\fv$-variables and of degree $\nu''$ in the $\fz$-variables, i.e., $p\in\cP^{\nu'}(\fv)\otimes\cP^{\nu''}(\fz)$, then the operator $D=\la'(p)$ is homogeneous of degree $\nu=\frac{\nu'}2+\nu''$ with respect to the automorphic dilations, i.e., it satisfies the identity
\begin{equation}\label{homogeneity}
D\big(F(\del\cdot)\big)(v,z)=\del^\nu(DF)\big(\del\cdot(v,z)\big)\ .
\end{equation}

 If, in particular, $\ph\in\Sigma$, then also $\ph^\del(v,t)=\ph\big(\del(v,z)\big)$ is in $\Sigma$. If $\cD=\la'(\boldsymbol\rho)$  and $\nu_j$ is the homogeneity degree of $D_j\in\cD$,
 $$
 \xi(D_j,\ph^\del)=\del^{\nu_j}\xi(D_j\ph)\ .
 $$
 
Hence $\Sigma_\cD$ is invariant under the dilations on $\bR^d$
\begin{equation}\label{dilations}
\xi=(\xi_1,\dots,\xi_d)\longmapsto \big(\del^{\nu_1}\xi_1,\dots,\del^{\nu_d}\xi_d\big)=D(\del)\xi\ ,\qquad (\del>0)\ .
\end{equation}

 On $\bR^d$ we introduce the homogeneous norm, compatible with the dilations $D(\del)$,
\begin{equation}\label{norm}
\|\xi\|=\sum_{j=1}^d|\xi_j|^{\frac1{\nu_j}}\ .
\end{equation}

\bigskip

\subsection{Special Hilbert bases}\label{special}\quad
\medskip

We will privilege homogeneous Hilbert bases ${\boldsymbol\rho}$  which split as ${\boldsymbol\rho}=({\boldsymbol\rho}_{\fz_0},{\boldsymbol\rho}_{\check\fz},{\boldsymbol\rho}_\fv,{\boldsymbol\rho}_{\fv,\fz_0})$, where, keeping in mind \eqref{z_0},
\begin{enumerate}
\item[(i)] ${\boldsymbol\rho}_{\fz_0}$ is a homogeneous Hilbert basis  of $\cP(\fz_0)^K$; 
\item [(ii)] ${\boldsymbol\rho}_{\check\fz}$ is a system  of coordinate functions on $\check\fz$; 
\item[(iii)] ${\boldsymbol\rho}_\fv$ is a homogeneous Hilbert basis  of $\cP(\fv)^K$;
\item[(iv)] ${\boldsymbol\rho}_{\fv,\fz_0}$ contains polynomials in $\big(\cP^{\nu'}(\fv)\otimes\cP^{\nu''}(\fz_0)\big)^K$ with $\nu',\nu''>0$.
\end{enumerate}

Denoting by $d_{\fz_0}$, $d_{\check\fz}$, $d_\fv$, $d_{\fv,\fz_0}$  the number of elements in each subfamily, we split $\bR^d$ as
$$
\bR^d=\bR^{d_{\fz_0}}\times\bR^{d_{\check\fz}}\times\bR^{d_{\fv}}\times\bR^{d_{\fv,\fz_0}}\ ,
$$
and set
$$
\xi(\ph)=\big(\xi_{\fz_0}(\ph),\xi_{\check\fz}(\ph),\xi_\fv(\ph),\xi_{\fv,\fz_0}(\ph)\big)\ .
$$

Whenever a unified notation for all invariants on $\fz$ is preferable, we set 
$$
{\boldsymbol\rho}_\fz=({\boldsymbol\rho}_{\fz_0},{\boldsymbol\rho}_{\check\fz})=\big(\rho_1(z),\dots,\rho_{d_\fz}(z)\big)\ ,\qquad \xi_\fz(\ph)=\big(\xi_{\fz_0}(\ph),\xi_{\check\fz}(\ph)\big)\ .
$$

 Since $\la'(q)=q(i\inv\nabla_z)$ for every polynomial $q$ on $\fz$, we have 
\begin{equation}\label{rho_z}
\xi_\fz(\ph)={\boldsymbol\rho}_\fz(\zeta)\ ,
\end{equation}
for every bounded spherical function $\ph=\ph_{\zeta,\omega,\mu}$.
This gives the following.

\begin{lemma}\label{Pi}
The canonical projection $\Pi$ from $\bR^d$ onto $\bR^{d_\fz}$ restricts to a surjective map
$$
\Pi_{|_{\Sigma_\cD}}:\Sigma_\cD\longrightarrow {\boldsymbol\rho}_\fz(\fz)\ ,
$$
and, for $\xi_\fz\in{\boldsymbol\rho}_\fz(\fz)$, 
$$
\Pi_{|_{\Sigma_\cD}}\inv(\xi_\fz)=\big\{\xi(\ph_{\zeta,\omega,\mu}):{\boldsymbol\rho}_\fz(\zeta)=\xi_\fz\big\}\ .
$$

In particular, the map which assigns to a spherical function $\ph_{\zeta,\omega,\mu}\in\Sigma(N,K)$ the value ${\boldsymbol\rho}_\fz(\zeta)\in \bR^{d_\fz}$ is continuous.
\end{lemma}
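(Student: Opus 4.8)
The plan is to derive Lemma~\ref{Pi} directly from the identity \eqref{rho_z}, which is the substantive input; the rest is essentially unwinding definitions and invoking the homeomorphism $\Sigma\cong\Sigma_\cD$ of \eqref{Sigma_D}. First I would recall that, by Lemma~\ref{representations} and formula \eqref{trace}, every bounded spherical function is of the form $\ph_{\zeta,\omega,\mu}$ for some $\zeta\in\fz$, $\omega\in\fr_\zeta$, $\mu\in\fX_{\zeta,\omega}$, so that the parametrisation $(\zeta,\omega,\mu)\mapsto\ph_{\zeta,\omega,\mu}$ is onto $\Sigma$. Composing with $\xi$ and then with the coordinate projection $\Pi:\bR^d\to\bR^{d_\fz}$ onto the block of $\fz$-invariants, \eqref{rho_z} gives $\Pi(\xi(\ph_{\zeta,\omega,\mu}))={\boldsymbol\rho}_\fz(\zeta)$. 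This already shows that $\Pi$ maps $\Sigma_\cD$ \emph{into} ${\boldsymbol\rho}_\fz(\fz)$, and surjectivity onto ${\boldsymbol\rho}_\fz(\fz)$ follows because for any $\zeta\in\fz$ one can pick (say) $\omega=0$ and any $\mu\in\fX_{\zeta,0}$ — the set $\fX_{\zeta,0}$ being nonempty by \eqref{multiplicity-free} — to produce a point of $\Sigma_\cD$ lying above ${\boldsymbol\rho}_\fz(\zeta)$.

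Next I would identify the fibre. If $\xi\in\Sigma_\cD$ with $\Pi(\xi)=\xi_\fz$, then $\xi=\xi(\ph)$ for a unique $\ph\in\Sigma$ (since $\xi$ is injective on $\Sigma$, being the realisation homeomorphism), and writing $\ph=\ph_{\zeta,\omega,\mu}$ we get ${\boldsymbol\rho}_\fz(\zeta)=\xi_\fz$ from \eqref{rho_z}. Conversely, every $\ph_{\zeta,\omega,\mu}$ with ${\boldsymbol\rho}_\fz(\zeta)=\xi_\fz$ yields a point $\xi(\ph_{\zeta,\omega,\mu})\in\Sigma_\cD$ with $\Pi$-image $\xi_\fz$, again by \eqref{rho_z}. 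This gives the claimed equality
$$
\Pi_{|_{\Sigma_\cD}}\inv(\xi_\fz)=\big\{\xi(\ph_{\zeta,\omega,\mu}):{\boldsymbol\rho}_\fz(\zeta)=\xi_\fz\big\}\ .
$$

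For the final assertion — continuity of $\ph_{\zeta,\omega,\mu}\mapsto{\boldsymbol\rho}_\fz(\zeta)$ on $\Sigma(N,K)$ — I would simply observe that this map factors as $\Sigma\xrightarrow{\ \xi\ }\Sigma_\cD\xrightarrow{\ \Pi\ }\bR^{d_\fz}$, since $\Pi(\xi(\ph_{\zeta,\omega,\mu}))={\boldsymbol\rho}_\fz(\zeta)$ by \eqref{rho_z}. The first arrow is a homeomorphism by \eqref{Sigma_D} and the second is a coordinate projection, hence continuous; the composition is therefore continuous. (One should note in passing that this makes the map well defined even though $(\zeta,\omega,\mu)$ is not uniquely determined by $\ph_{\zeta,\omega,\mu}$ — the value ${\boldsymbol\rho}_\fz(\zeta)$ depends only on the $K$-orbit of $\zeta$, which is pinned down by $\ph$, consistently with $\ph_{k\zeta,k\omega,\mu}=\ph_{\zeta,\omega,\mu}$ and the $K$-invariance of ${\boldsymbol\rho}_\fz$.)

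Honestly, there is no real obstacle here: the lemma is a formal consequence of \eqref{rho_z} together with the already-cited facts that $\xi:\Sigma\to\Sigma_\cD$ is a homeomorphism and that every bounded spherical function has the form $\ph_{\zeta,\omega,\mu}$. The only point requiring a modicum of care is bookkeeping the identification $\fn^*\cong\fn$ and the block decomposition of ${\boldsymbol\rho}$, so that "$\Pi$ onto $\bR^{d_\fz}$" genuinely extracts the components $\xi_\fz(\ph)$ and matches $\la'(q)=q(i\inv\nabla_z)$ for $q\in\cP(\fz)$; this is exactly what \eqref{rho_z} encodes, so I would quote it and be done.
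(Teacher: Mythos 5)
Your proof is correct and takes essentially the same route as the paper, which simply presents the lemma as a direct consequence of the identity \eqref{rho_z} (note the phrase ``This gives the following'' preceding the lemma, with no separate proof supplied). Your write-up merely makes explicit the bookkeeping that the paper leaves implicit: the surjectivity of $(\zeta,\omega,\mu)\mapsto\ph_{\zeta,\omega,\mu}$ onto $\Sigma$, the nonemptiness of $\fX_{\zeta,0}$, the injectivity of $\xi:\Sigma\to\Sigma_\cD$, and the factorisation through $\Pi$ for continuity, all of which are unobjectionable.
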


 We observe that ${\boldsymbol\rho}_\fz(\fz)$ is homeomorphic to the orbit space $\fz/K$.

\bigskip

\subsection{Dominant coordinates in $\Sigma_\cD$}\quad
\medskip

\begin{lemma}\label{dominant}
Let $\xi=(\xi_{\fz_0},\xi_{\check\fz},\xi_\fv,\xi_{\fv,\fz_0})$ be a point in $\Sigma_\cD$. 
We have the following inequalities:
\begin{enumerate}
\item[\rm(i)] if $\rho_j\in\cP^{\nu'_j}(\fv)\otimes\cP^{\nu''_j}(\fz_0)$, with $\nu'_j,\nu''_j>0$, then $|\xi_j|\le C\|\xi_\fv\|^{\frac{\nu'_j}2}\|\xi_{\fz_0}\|^{\nu''_j}$;
\item[\rm(ii)] $\|\xi\|\le C\|\xi_\fv\|$.
\end{enumerate}

In particular, if $\xi_{\fz_0}=0$, then also $\xi_{\fv,\fz_0}=0$.
\end{lemma}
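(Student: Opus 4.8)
The plan is to exploit the scaling structure of $\Sigma_\cD$ together with the homogeneity degrees of the generators $\rho_j$, reducing everything to a statement about the behaviour of bounded spherical functions under dilations. Recall that a point $\xi\in\Sigma_\cD$ has the form $\xi(\ph)$ for some bounded spherical function $\ph=\ph_{\zeta,\omega,\mu}$, and that $\Sigma_\cD$ is invariant under the dilations $D(\del)$ of \eqref{dilations}, with $\xi_j$ scaling by $\del^{\nu_j}$ where $\nu_j=\frac{\nu_j'}2+\nu_j''$ is the homogeneity degree of $D_j=\la'(\rho_j)$. First I would normalise: by applying a suitable dilation $\del$ I may assume $\|\xi_\fv\|=1$ (the case $\xi_\fv=0$ will be treated separately, see below), so that it suffices to prove $|\xi_j|\le C$ for the mixed invariants in (i) and $\|\xi\|\le C$ in (ii), with a constant uniform over the relevant slice of $\Sigma_\cD$.

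The heart of the matter is then a compactness/boundedness argument. The key point is that $\xi_\fv(\ph)=\boldsymbol\rho_\fv(\text{data of }\ph)$ controls the ``size'' of the orbit data: by Lemma \ref{Pi} and its analogue for $\fv$-invariants, the map sending $\ph$ to $(\xi_\fv(\ph),\xi_\fz(\ph))$ is continuous and proper enough that $\|\xi_\fv(\ph)\|=1$ confines the pair $(\zeta,\omega)$ (up to $K$) to a compact set, and in particular forces $\|\xi_{\fz_0}\|=\|\boldsymbol\rho_{\fz_0}(\zeta)\|$ to be bounded. Then for a mixed invariant $\rho_j\in\cP^{\nu_j'}(\fv)\otimes\cP^{\nu_j''}(\fz_0)$ one estimates $\xi_j(\ph)$ — which is an eigenvalue of the operator $\la'(\rho_j)$ on the spherical function, equivalently a matrix coefficient built from $\pi_{\zeta,\omega}$ — using that $\pi_{\zeta,\omega}$ and its derived representation depend on $(\zeta,\omega)$ in a way that is uniformly bounded on compact sets, yielding $|\xi_j|\le C$; unwinding the normalisation restores the homogeneous factors $\|\xi_\fv\|^{\nu_j'/2}\|\xi_{\fz_0}\|^{\nu_j''}$, which is (i). Part (ii) is essentially bookkeeping with the homogeneous norm \eqref{norm}: on the slice $\|\xi_\fv\|=1$ we need all coordinates of $\xi$ bounded. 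The $\xi_\fv$-block is bounded by construction; the $\xi_{\check\fz}$- and $\xi_{\fz_0}$-blocks are bounded by the properness remark just made; and the $\xi_{\fv,\fz_0}$-block is bounded by (i). Since $\|\xi\|=\sum|\xi_j|^{1/\nu_j}$, boundedness of each block gives $\|\xi\|\le C=C\|\xi_\fv\|$ after undoing the dilation, using that both sides are homogeneous of the same degree under $D(\del)$.

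For the degenerate case $\xi_\fv=0$, one checks directly that $\xi_\fv(\ph)=0$ forces the spherical function to factor through the quotient by the part of $\fv$ on which $K$ acts (equivalently, $\pi_{\zeta,\omega}$ is essentially trivial in the relevant directions), and hence $\zeta$ lies in $\check\fz$ and $\xi_{\fz_0}=0$; then the mixed invariants $\rho_j$ with $\nu_j',\nu_j''>0$ all vanish on such a point, both (i) and (ii) hold trivially (the right-hand side of (ii) is $0$ and so is $\|\xi\|$ apart from the $\xi_{\check\fz}$-coordinates, which one argues are also forced to vanish, or else are subsumed — here one must be a little careful about what ``$\|\xi\|\le C\|\xi_\fv\|$'' means when $\xi_\fv=0$, and the cleanest reading is that the inequality is an identity of homogeneous functions valid on the dilation-invariant set $\Sigma_\cD$, so the $\xi_\fv=0$ locus is the closure of the $\xi_\fv\neq0$ locus intersected with it and the bound passes to the limit). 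The final line, ``if $\xi_{\fz_0}=0$ then $\xi_{\fv,\fz_0}=0$'', is then immediate from (i): each mixed invariant satisfies $|\xi_j|\le C\|\xi_\fv\|^{\nu_j'/2}\|\xi_{\fz_0}\|^{\nu_j''}$ with $\nu_j''>0$, so $\xi_{\fz_0}=0$ kills it.

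The main obstacle I anticipate is the uniform-boundedness step: making precise that $\|\xi_\fv(\ph)\|=1$ really does confine the underlying representation-theoretic data $(\zeta,\omega,\mu)$ to a compact set, and that the mixed eigenvalues $\xi_j(\ph)$ then vary continuously and boundedly. This is where one genuinely uses that $\boldsymbol\rho_\fv$ is a full Hilbert basis of $\cP(\fv)^K$ (so that its zero set is just the origin and it is proper on $\fv/K$), the continuity statements of Lemma \ref{Pi} and its $\fv$-counterpart, and the explicit description \eqref{trace} of spherical functions as partial traces, together with the fact (footnoted in Section \ref{section-spherical}) that the relevant projective cocycle is trivial so that everything is a genuine, continuously varying representation. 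Once this properness is in hand, the rest is homogeneity bookkeeping.
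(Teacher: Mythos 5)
There is a genuine gap at the heart of the argument, and it concerns the very boundedness the lemma asserts.

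After normalising by a dilation to the slice $\{\|\xi_\fv\|=1\}\cap\Sigma_\cD$, your plan is to appeal to ``properness'' to conclude that this slice is compact, whence the remaining blocks $\xi_{\fz_0}$ and $\xi_{\fv,\fz_0}$ are bounded. But that compactness claim \emph{is} part (ii) of the lemma after undoing the dilation, so you are presupposing the statement you want to prove. The references you cite do not close the loop. Lemma \ref{Pi} rests on something quite specific: since $\la'(q)=q(i^{-1}\nabla_z)$ for $q\in\cP(\fz)$, the $\fz$-block of $\xi(\ph_{\zeta,\omega,\mu})$ is literally $\boldsymbol\rho_\fz(\zeta)$, which is why the projection to the $\fz$-coordinates is simple. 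There is \emph{no} such analogue for the $\fv$-invariants: the eigenvalue $\xi_\fv(\ph_{\zeta,\omega,\mu})$ of, say, the sublaplacian $L=\la'(|v|^2)$ is not a function of $(\zeta,\omega)$ alone --- it depends essentially on the discrete representation parameter $\mu$. (For $H_n$ with $K={\rm U}_n$, the eigenvalue of $L$ on $\ph_{\la,m}$ is $|\la|(2m+n)$, mixing $\la$ and $m$; boundedness of the slice in this case is equivalent to $|\la|\le 1/n$, which already requires knowing the eigenvalue formula.) So the ``continuous and proper enough'' step, the core of your argument, is not supported by the lemmas you cite and is in substance an unproved restatement of (ii).

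The paper's proof sidesteps parametrizations entirely and works at the level of operator inequalities, which then transfer automatically to pointwise inequalities on the joint spectrum. The ingredients are: the Folland--Stein hypoellipticity estimate $\|X_{v_1}\cdots X_{v_m}F\|_2\le C\|L^{m/2}F\|_2$; the Euclidean estimate $\|\de_{z_1}\cdots\de_{z_{m'}}F\|_2\le C\|\Delta_{\fz_0}^{m'/2}F\|_2$; a Cauchy--Schwarz interpolation giving $\|D_jF\|_2\le C\|L^{\nu'_j}F\|_2^{1/2}\|\Delta_{\fz_0}^{\nu''_j}F\|_2^{1/2}$ for a mixed generator; and the positivity of $\la'(|v_k|^2)$ and $\la'(|z_{0,\ell}|^2)$, whose eigenvalues on $\Sigma_\cD$ are therefore nonnegative. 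An $L^2$ operator inequality among strongly commuting self-adjoint elements of $\bD(N)^K$ yields the corresponding pointwise inequality on $\Sigma_\cD$, and that gives (i) and (ii) directly. If you want to salvage a compactness route, you would first need an independent quantitative lower bound on $\xi_\fv$ in terms of $\xi_{\fz_0}$ --- but at that point you would in effect be reproducing the operator-inequality argument.
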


\begin{proof}
Let $p(v)=|v|^2$, where $|\ |$ denotes the norm induced by a $K$-invariant inner product on $\fv$. 
Denote by $X_v$ the left-invariant vector field equal to $\de_v$ at the identity of $N$. Then $\la'(p)$ is the sublaplacian $L=-\sum X_{e_j}^2$, where $\{e_j\}$ is an orthonormal basis of $\fv$. Then $L$ is hypoelliptic and, for every $v_1,\dots,v_m\in\fv$ and $F\in\cS(N)$,
$$
\|X_{v_1}\cdots X_{v_m}F\|_2\le C_{v_1,\dots,v_m}\| L^{\frac m2}F\|_2\ ,
$$
cf. \cite{FS}.
For $z_1,\dots,z_m\in\fz_0$ and $F\in\cS(N)$, we also have, by classical Fourier analysis,
$$
\|\de_{z_1}\cdots \de_{z_m}F\|_2\le C_{z_1,\dots,z_m}\| \Delta_{\fz_0}^{\frac m2}F\|_2\ .
$$

Therefore,
$$
\begin{aligned}
\|\de_{z_1}\cdots \de_{z_{m'}}X_{v_1}\cdots X_{v_m}F\|_2^2&=\big|\lan\de_{z_1}^2\cdots \de_{z_{m'}}^2F,X_{v_1}\cdots X_{v_m}X_{v_m}\cdots X_{v_1}F\ran\big|\\
&\le C_{v_1,\dots,v_m,z_1,\dots,z_{m'}}\|L^mF\|_2\|\Delta_{\fz_0}^{m'}F\|_2\ .
\end{aligned}
$$

If $\rho_j\in\cP^{\nu'_j}(\fv)\otimes\cP^{\nu''_j}(\fz_0)$, then $D_j=\la'(\rho_j)$ is a linear combination of terms of this kind, with $m=\nu'_j$ and $m'=\nu''_j$. Therefore, for $F\in\cS(N)$,
\begin{equation}\label{control}
\|D_jF\|_2\le C\|L^{\nu'_j}F\|_2^\half\|\Delta_{\fz_0}^{\nu''_j}F\|_2^\half\ .
\end{equation}

 We may assume that ${\boldsymbol\rho}_\fv$ contains the squares $p_k=|v_k|^2$ of the norm restricted to mutually orthogonal irreducible components of $\fv$. Then $p$ is the sum of such $p_k$. The same can be said about $q(z_0)=|z_0|^2$ on $\fz_0$. 

Denote by $\xi_k$  the component of $\xi_\fv$ corresponding to $\la'(p_k)$, $\xi_\ell$  the component of $\xi_{\fz_0}$ corresponding to $\la'(q_\ell)$,  and $\xi_j$ the component of $\xi_{\fv,\fz_0}$ corresponding to $D_j$. Then, since $\la'(p_k)$ and $\la'(q_\ell)$ are positive operators, we have $\xi_k,\xi_\ell\ge0$ in $\Sigma_\cD$. Hence \eqref{control} implies that, on~$\Sigma_\cD$,
$$
|\xi_j|\le C\Big(\sum_k\xi_k\Big)^\frac{\nu'_j}2\Big(\sum_\ell\xi_\ell\Big)^\frac{\nu''_j}2\le C\|\xi_\fv\|^\frac{\nu'_j}2\|\xi_{\fz_0}\|^{\nu''_j}\ .
$$

This proves (i). To prove (ii) it suffices to prove that $\|\xi_\fz\|\le C\|\xi_\fv\|$. For this, it suffices to observe that every derivative $\de_z^\al$ in the $\fz$-variables can be expressed as a combination of products $X_{v_1}\cdots X_{v_m}$ with $m=2|\al|$. Then, for every $F\in\cS(N)$,
$$
\|\de_z^\al F\|_2\le C\|L^{|\al|}F\|_2\ .\qedhere
$$
\end{proof}

\bigskip

\subsection{Quotients in $\fz$ and Radon transforms of functions and differential operators}\label{modulo-s}\quad
\medskip

Given a subspace $\fs$  of $\fz$, denote by $\fn'$ the quotient algebra $\fn/\fs$ and by $N'$ the corresponding quotient group. 

Given a $K$-invariant inner product on $\fz$, set $\fz'=\fs^\perp$ and let ${\rm proj}$ be the orthogonal projection of $\fz$ onto $\fz'$. Then $\fn'$ can be regarded as $\fv\oplus\fz'$ with Lie bracket
$$
[v,w]_{\fn'}={\rm proj}[v,w]\ .
$$

 We denote by $K'$  the stabilizer of $\fs$ in $K$ and fix Lebesgue measures $dv$, $dz'$,  $ds$ on $\fv$, $\fz'$,  $\fs$, respectively.
 We define the {\it Radon transform}\footnote{The term ``Radon transform'' is abused here. It comes from the special case where $\fz=\bR^n$, $K={\rm SO}_n$ and $\fs=\bR^{n-1}$. 
Assuming $K$-invariance of $F$, the integrals in \eqref{radon*} give the values of integrals over all subspaces of $\fz$ obtained from $\fs$ by translations and rotations by elements of $K$. However, $\cR$  is not injective in general. It is injective if $\fz'$ intersects  almost all $K$-orbits in $\fz$.} $\cR F$ of a function $F\in \cS(N)^K$, as the function on $N'$
\begin{equation}\label{radon*}
\cR F(v,z')=\int_\fs F(v,z'+s)\,ds\ .
\end{equation}

Then $\cR F\in \cS(N')^{K'}$, and for every bounded function $G$ on $N'$,
$$
\int_{N'}\cR F(v,z')G(v,z')\,dv\,dz'=\int_N F(v,z)(G\circ{\rm proj})(v,z)\,dv\,dz\ .
$$

Accordingly, given $D\in\bD(N)^K$, we define $\cR D\in\bD(N')^{K'}$ as the operator such that
\begin{equation}\label{opradon*}
\big((\cR D)G\big)\circ{\rm proj}=D(G\circ{\rm proj})\ .
\end{equation}

Notice, however, that $(N',K')$ is not a Gelfand pair in general.

We remark here some basic properties of Radon transforms of differential operators, cf. \cite{FR}, Section 4 and \cite{FRY1}, Lemma 4.2. Further properties will be stated when needed.

\begin{proposition}\label{propradon*}
\quad

\begin{enumerate}
\item[\rm(i)] If $D=\la'_N(p)\in\bD(N)^K$, then $\cR D=\la'_{N'}(p_{|_{\fn'}})$.
\item[\rm(ii)] If $(N',K')$ is a Gelfand pair and $\ph'$ is a bounded spherical function on $N'$,  then 
\begin{equation}\label{lift-phi}
\Lambda\ph'(v,z)=\int_K(\ph'\circ{\rm proj})(kv,kz)\,dk\ ,
\end{equation} 
is a bounded spherical function on~$N$. For $D\in\bD(N)^K$,
\begin{equation}\label{radon-eigenvalues}
\xi(D,\Lambda\ph')=\xi(\cR D,\ph')\ .
\end{equation}
\end{enumerate}
\end{proposition}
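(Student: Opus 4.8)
The plan is to prove the two statements of Proposition~\ref{propradon*} in turn, the first being a direct computation and the second a consequence of it together with the averaging construction \eqref{lift-phi}.

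\medskip

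\noindent\textbf{Part (i).} First I would unwind the definition \eqref{opradon*} of $\cR D$ together with the modified symmetrisation formula \eqref{modsym}. Write $D=\la'_N(p)$. For a smooth function $G$ on $N'=\fv\oplus\fz'$, I want to compute $D(G\circ{\rm proj})$ at a point $(v,z)\in N$ and check that it only depends on $(v,z')={\rm proj}(v,z)$ and equals $\big(\la'_{N'}(p_{|_{\fn'}})G\big)(v,z')$. By \eqref{modsym},
\[
D(G\circ{\rm proj})(v,z)=p(i\inv\nabla_{v'},i\inv\nabla_{z'})_{|_{v'=z'=0}}\,(G\circ{\rm proj})\big((v,z)\cdot(v',z')\big)\ .
\]
Using the BCH product on $N$, $(v,z)\cdot(v',z')=\big(v+v',z+z'+\half[v,v']\big)$, and the fact that ${\rm proj}$ is an algebra homomorphism $\fn\to\fn'$ (so ${\rm proj}\big((v,z)\cdot(v',z')\big)=(v,z')\cdot_{\fn'}{\rm proj}(v',z')$ where $z'$ here denotes the $\fz'$-component of $z$), the argument of $G$ becomes the $N'$-product of $(v,z')$ with $\big(v',{\rm proj}(z')\big)$. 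The key point is that differentiating in the $\fz'$-variables (which lie inside $\fz$) and setting them to zero, only the $\fz'$-part of the polynomial $p$ survives — the $\fs$-directions are simply not differentiated — so that effectively $p$ gets replaced by its restriction $p_{|_{\fn'}}$ to $\fv\oplus\fz'$. Matching this against \eqref{modsym} for $N'$ gives exactly $\cR D=\la'_{N'}(p_{|_{\fn'}})$. A small technical remark I would insert: since the quadratic term $\half[v,v']$ of the BCH formula has no $\fs$-component after projection only if one is careful, I should note that the correction term is irrelevant because the $\fv$-variable $v$ of the base point is held fixed while only $v',z'$ are differentiated. I expect this to be the routine part.

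\medskip

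\noindent\textbf{Part (ii).} Assume now $(N',K')$ is a Gelfand pair and $\ph'$ a bounded spherical function on $N'$. I would first check that $\Lambda\ph'$ as defined in \eqref{lift-phi} is well-defined, $K$-invariant, bounded, and takes value $1$ at the identity; $K$-invariance is immediate from invariance of Haar measure on $K$ and the fact that $K$ permutes the integrand (note $\ph'$ is $K'$-invariant, and for $k$ not stabilising $\fs$ the function $\ph'\circ{\rm proj}$ gets moved but is reabsorbed by the $K$-average), and $\Lambda\ph'(0)=\int_K 1\,dk=1$, boundedness by $\sup|\ph'|<\infty$. The substance is to show $\Lambda\ph'$ is a joint eigenfunction of every $D\in\bD(N)^K$ with eigenvalue $\xi(\cR D,\ph')$, which simultaneously proves \eqref{radon-eigenvalues}. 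Fix $D\in\bD(N)^K$. Since $D$ is $K$-invariant and left-invariant, it commutes with the $K$-averaging in \eqref{lift-phi}, so it suffices to compute $D(\ph'\circ{\rm proj})$ and then average. By \eqref{opradon*}, $D(\ph'\circ{\rm proj})=\big((\cR D)\ph'\big)\circ{\rm proj}$. Now $\cR D\in\bD(N')^{K'}$ and $\ph'$ is spherical for $(N',K')$, so $(\cR D)\ph'=\xi(\cR D,\ph')\,\ph'$. Hence $D(\ph'\circ{\rm proj})=\xi(\cR D,\ph')\,(\ph'\circ{\rm proj})$, and averaging over $K$ gives $D(\Lambda\ph')=\xi(\cR D,\ph')\,\Lambda\ph'$. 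This holds for all $D\in\bD(N)^K$, so $\Lambda\ph'$ is a $K$-spherical function on $N$, and reading off the eigenvalue yields \eqref{radon-eigenvalues}.

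\medskip

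\noindent\textbf{Main obstacle.} The delicate point is the commutation of the $K$-average with $D$ in Part (ii): strictly speaking $D$ is left-invariant and $K$-invariant, meaning $D(F\circ k)=(DF)\circ k$ for $k\in K$, so interchanging $D$ with $\int_K(\,\cdot\,)(k\cdot)\,dk$ is legitimate, but one must make sure the integrand $(v,z)\mapsto D\big((\ph'\circ{\rm proj})(k\cdot)\big)(v,z)$ is jointly continuous in $(k,v,z)$ to differentiate under the integral sign — this follows from smoothness of $\ph'$ and compactness of $K$. A second subtlety worth a line: for $k\in K\setminus K'$, $\ph'\circ{\rm proj}\circ k$ is the lift of a spherical function on the \emph{rotated} quotient $\fn/k\inv\fs$, which is a different (though isomorphic) Gelfand pair; the eigenvalue $\xi(\cR D,\ph')$ is nonetheless the same for all these because $D$ is $K$-invariant and the rotated Radon transforms are intertwined by the $K$-action — I would either spell this out or simply invoke $K$-invariance of $D$ to sidestep it, which is cleaner.
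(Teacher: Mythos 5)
Your proposal is correct and takes essentially the same route as the paper: part (i) is unwound directly from the definition \eqref{modsym}, and part (ii) follows by interchanging $D$ with the $K$-average (justified by $K$-invariance of $D$), applying \eqref{opradon*}, and reading off the eigenvalue of $\cR D$ on $\ph'$. The side remark about rotated quotients $\fn/k\inv\fs$ is, as you yourself note, unnecessary once one commutes $D$ past the average; the paper sidesteps it exactly as you suggest.
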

 
\begin{proof}
(i) follows from \eqref{modsym}. 

For $D\in\bD(N)^K$, set $\xi=\xi(\cR D,\ph')$. Then
$$
\begin{aligned}
D(\Lambda\ph')(v,z)&=\int_KD(\ph'\circ{\rm proj})(kv,kz)\,dk\\
&=\int_K\big((\cR D)\ph'\big)\circ{\rm proj}(kv,kz)\,dk\\
&=\xi \,\Lambda\ph'(v,z)\ .
\end{aligned}
$$

This proves that $\Lambda\ph'$ is spherical and that \eqref{radon-eigenvalues} holds.
\end{proof}

Since the operation in \eqref{lift-phi} is continuous in the compact-open topology, we have the following.

\begin{corollary}\label{Sigma'->Sigma}
Assume that $(N',K')$ is a Gelfand pair.
The map $\Lambda$ is continuous from $\Sigma(N',K')$ to $\Sigma(N,K)$.
\end{corollary}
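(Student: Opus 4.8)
The claim to prove is Corollary \ref{Sigma'->Sigma}: assuming $(N',K')$ is a Gelfand pair, the map $\Lambda$ of \eqref{lift-phi} is continuous from $\Sigma(N',K')$ to $\Sigma(N,K)$.

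The plan is as follows. By Proposition \ref{propradon*}(ii), for each bounded spherical function $\ph'$ on $N'$ the function $\Lambda\ph'$ defined by \eqref{lift-phi} is a bounded spherical function on $N$, so $\Lambda$ is at least a well-defined map $\Sigma(N',K')\to\Sigma(N,K)$; it only remains to check continuity for the compact-open topologies on both spectra. First I would record the elementary fact that the averaging-over-$K$ and composition-with-${\rm proj}$ operations in \eqref{lift-phi} are jointly continuous in the appropriate sense: if $\ph'_n\to\ph'$ uniformly on compact subsets of $N'$, then $\ph'_n\circ{\rm proj}\to\ph'\circ{\rm proj}$ uniformly on compact subsets of $N$, because ${\rm proj}:N\to N'$ is continuous and maps compact sets to compact sets, so $\sup_{(v,z)\in C}|\ph'_n({\rm proj}(v,z))-\ph'({\rm proj}(v,z))|\le\sup_{C'}|\ph'_n-\ph'|$ where $C'={\rm proj}(C)$ is compact in $N'$.

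Next I would push this through the $K$-average. For a compact set $C\subset N$, the set $K\cdot C=\{k\cdot(v,z):k\in K,\ (v,z)\in C\}$ is compact (continuous image of $K\times C$), hence so is ${\rm proj}(K\cdot C)\subset N'$. Then
$$
\sup_{(v,z)\in C}\big|\Lambda\ph'_n(v,z)-\Lambda\ph'(v,z)\big|
\le\sup_{(v,z)\in C}\int_K\big|(\ph'_n-\ph')\circ{\rm proj}(k\cdot(v,z))\big|\,dk
\le\sup_{{\rm proj}(K\cdot C)}\big|\ph'_n-\ph'\big|,
$$
using that $dk$ is a probability measure. Since $\ph'_n\to\ph'$ uniformly on the compact set ${\rm proj}(K\cdot C)$, the left-hand side tends to $0$. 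As $C$ was an arbitrary compact subset of $N$, this shows $\Lambda\ph'_n\to\Lambda\ph'$ in the compact-open topology, i.e. $\Lambda$ is continuous. (If one prefers to argue with nets or with a basis of subbasic open sets rather than sequences, the same estimate applies verbatim, since the compact-open topology is generated by the seminorms $\ph\mapsto\sup_C|\ph|$ and these spectra are in fact metrizable as subsets of $C_0$ under uniform-on-compacts convergence.)

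I do not anticipate a serious obstacle here: the only substantive input is Proposition \ref{propradon*}(ii), which guarantees $\Lambda$ lands in $\Sigma(N,K)$, and after that the argument is the routine observation — essentially the remark already made in the text that ``the operation in \eqref{lift-phi} is continuous in the compact-open topology'' — that pulling back along a continuous proper-on-compacts map and averaging over a compact group with normalized Haar measure are both continuous operations for uniform convergence on compacta. The one point worth stating carefully is the passage from a compact set $C\subset N$ to the compact set ${\rm proj}(K\cdot C)\subset N'$, so that the supremum controlling $\Lambda\ph'_n-\Lambda\ph'$ on $C$ is dominated by a supremum of $\ph'_n-\ph'$ over a fixed compact subset of $N'$; this is what makes the estimate uniform and hence yields continuity rather than mere pointwise convergence.
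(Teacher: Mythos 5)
Your proof is correct and takes essentially the same approach as the paper, which simply asserts (just before the corollary) that ``the operation in \eqref{lift-phi} is continuous in the compact-open topology''; your estimate
$\sup_{C}|\Lambda\ph'_n-\Lambda\ph'|\le\sup_{{\rm proj}(K\cdot C)}|\ph'_n-\ph'|$
makes that assertion explicit, and the appeal to Proposition~\ref{propradon*}(ii) for well-definedness is exactly what the paper relies on.
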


\vskip1cm
\section{Reductions}\label{section-reductions}
\bigskip

In this section we collect three preliminary results which will be used repeatedly in the course of this paper. Moreover, Propositions \ref{normal} and \ref{central} are the ingredients that allow to reduce the full Vinberg list to Table \ref{vinberg}. 

We refer to \cite{FRY1} (which extends an argument of \cite{ADR2}), for the proof of the first result,   concerning {\it normal extensions} of $K$. 

\begin{proposition}\label{normal}
Let $K^\#$ be a compact group of automorphisms of $N$, and $K$ a normal subgroup of $K^\#$. If $(N,K)$ is a n.G.p. satisfying Property (S), then also $(N,K^\#)$ satisfies Property (S).
\end{proposition}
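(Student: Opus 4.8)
The strategy is to transfer Property~(S) from the smaller pair $(N,K)$ to the larger pair $(N,K^\#)$ by averaging over the finite-dimensional space of $K^\#/K$-orbits of invariants. The starting observation is that $(N,K^\#)$ is automatically a n.G.p.: since $K$ is normal in $K^\#$, the group $K^\#$ acts on the (commutative) algebra $\bD(N)^K$ by automorphisms, and $\bD(N)^{K^\#}=\big(\bD(N)^K\big)^{K^\#/K}$ is a subalgebra of a commutative algebra, hence commutative. First I would fix a homogeneous Hilbert basis ${\boldsymbol\rho}=(\rho_1,\dots,\rho_d)$ of $\cP(\fn)^K$; because $K^\#/K$ is a compact (indeed, by Vinberg-type finiteness, essentially finite-type) group acting linearly on the finite-dimensional graded pieces spanned by the $\rho_j$, one can, after replacing ${\boldsymbol\rho}$ by the orbit sums of its monomials, arrange that $K^\#/K$ permutes a spanning set of the $\rho_j$ up to scalars; in any case the subalgebra $\cP(\fn)^{K^\#}\subset\cP(\fn)^K$ is generated by finitely many $K^\#$-invariant polynomials ${\boldsymbol\sigma}=(\sigma_1,\dots,\sigma_{d^\#})$, each of which is a $K^\#$-average of a polynomial in the $\rho_j$, hence ${\boldsymbol\sigma}=P\circ{\boldsymbol\rho}$ for a polynomial map $P\colon\bR^d\to\bR^{d^\#}$. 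By Proposition~\ref{indipendence} the validity of Property~(S) does not depend on which Hilbert basis we use, so it suffices to prove it for the system $\cD^\#=\la'({\boldsymbol\sigma})$, with spectrum $\Sigma_{\cD^\#}=P(\Sigma_\cD)$.

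\textbf{Key steps.} The first step is the inclusion $\cG^\#\big(\cS(N)^{K^\#}\big)\supseteq\cS(\Sigma_{\cD^\#})$, which holds for \emph{every} n.G.p. by Theorem~\ref{hulanicki}; so the content is the reverse inclusion. Let $F\in\cS(N)^{K^\#}\subseteq\cS(N)^K$. By hypothesis on $(N,K)$, the spherical transform $\cG F$ (with respect to $\cD$) extends to a Schwartz function $g\in\cS(\bR^d)$, i.e. $g_{|\Sigma_\cD}=\cG F$. The second step is to \emph{symmetrise} $g$ over the action of the compact group $\Gamma:=K^\#/K$ on $\bR^d$: set $\bar g(\xi)=\int_\Gamma g(\gamma\cdot\xi)\,d\gamma$. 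Since $F$ is $K^\#$-invariant, its $K$-spherical transform is $\Gamma$-invariant on $\Sigma_\cD$ — here I use that the bounded $K$-spherical functions are permuted by $\Gamma$ in a way compatible with the $\Gamma$-action on $\bR^d$ induced on the $\rho_j$, so $\cG F$ is already $\Gamma$-invariant as a function on $\Sigma_\cD$ — hence $\bar g$ is still a Schwartz extension of $\cG F$, and now $\bar g\in\cS(\bR^d)^\Gamma$. The third step is the invariant-theoretic descent: $\bar g$ is a $\Gamma$-invariant Schwartz function on $\bR^d$, so by G.~Schwarz's theorem \cite{Schw} applied to the linear action of $\Gamma$ on $\bR^d$ (and the fact that ${\boldsymbol\sigma}=P\circ{\boldsymbol\rho}$ with $\Sigma_{\cD^\#}=P(\Sigma_\cD)$ a Hilbert map for $\Gamma$) there is $h\in\cS(\bR^{d^\#})$ with $\bar g=h\circ P$; restricting to $\Sigma_\cD$ gives $h_{|\Sigma_{\cD^\#}}\circ P_{|\Sigma_\cD}=\cG F=\cG^\#F\circ P_{|\Sigma_\cD}$, and since $P$ maps onto $\Sigma_{\cD^\#}$ we conclude $h_{|\Sigma_{\cD^\#}}=\cG^\#F$. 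This shows $\cG^\#\big(\cS(N)^{K^\#}\big)\subseteq\cS(\Sigma_{\cD^\#})$. Finally one checks that all these maps are continuous — averaging over $\Gamma$ is continuous on $\cS(\bR^d)$, Schwarz's theorem provides a continuous linear section, and the spherical transforms are continuous by the standing hypotheses — so $\cG^\#$ is a topological isomorphism, and by Proposition~\ref{indipendence} Property~(S) holds for $(N,K^\#)$ for every choice of generators.

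\textbf{Main obstacle.} The delicate point is the identification $\Sigma_{\cD^\#}=P(\Sigma_\cD)$ together with the compatibility of the $\Gamma$-action on $\bR^d$ with the action of $K^\#$ on bounded $K$-spherical functions: one must verify that conjugation by $K^\#$ carries a $K$-spherical function $\ph$ to another $K$-spherical function $\ph'$ whose eigenvalue vector $\xi(\ph')$ equals the linear $\Gamma$-image of $\xi(\ph)$ (this uses that $K^\#$ acts on $\bD(N)^K$ by algebra automorphisms permuting the $D_j=\la'(\rho_j)$ compatibly with how it permutes the $\rho_j$, since $\la'$ is equivariant). Granting this, $\Gamma$-invariance of $\cG F$ for $K^\#$-invariant $F$ is immediate from $\ph'(x)=\ph(k^{-1}x k)$ and $K^\#$-invariance of $F$, and the rest is bookkeeping. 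A secondary technical point is ensuring the chosen ${\boldsymbol\sigma}$ really is a \emph{homogeneous} Hilbert basis in the sense of Section~2.2 (separately homogeneous in $\fv$ and $\fz$); this is arranged by taking $\Gamma$-orbit sums of the separately-homogeneous monomials built from the $\rho_j$, which preserves bihomogeneity since $\Gamma$ acts through automorphisms commuting with the automorphic dilations.
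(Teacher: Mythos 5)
Your overall strategy — regard $F\in\cS(N)^{K^\#}$ as an element of $\cS(N)^K$, extend $\cG F$ to a Schwartz function on $\bR^d$ using Property (S) for $(N,K)$, average over $\Gamma=K^\#/K$, and descend via the Schwartz-space version of G.\ Schwarz's theorem — is the right one, and matches the approach of \cite{FRY1} (extending \cite{ADR2}) to which the paper refers for this proposition without reproducing the proof. However, there is a genuine gap in the descent step that needs to be closed, plus a smaller issue in the setup.

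The gap: the application of Schwarz's theorem requires the components $P_1,\dots,P_{d^\#}$ of $P$ to \emph{generate} $\cP(\bR^d)^\Gamma$, not merely that ${\boldsymbol\sigma}=P\circ{\boldsymbol\rho}$ generate $\cP(\fn)^{K^\#}$. These are different assertions. The $\Gamma$-equivariant evaluation map $\Theta:\cP(\bR^d)\to\cP(\fn)^K$, $q\mapsto q\circ{\boldsymbol\rho}$, restricts to a surjection $\cP(\bR^d)^\Gamma\to\cP(\fn)^{K^\#}$, but it is generally \emph{not} injective (its kernel is the $\Gamma$-invariant part of the ideal of relations among the $\rho_j$), so the fact that $\Theta(P_1),\dots,\Theta(P_{d^\#})$ generate the target does not by itself imply that $P_1,\dots,P_{d^\#}$ generate the source. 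Your write-up asserts that ``$P$ is a Hilbert map for $\Gamma$'' in passing, but as constructed this need not hold. The fix is to reverse the order of choices: first pick bi-homogeneous generators $P_1,\dots,P_m$ of $\cP(\bR^d)^\Gamma$ (for the linear $\Gamma$-action on $\bR^d$), then \emph{define} $\sigma_j:=P_j\circ{\boldsymbol\rho}$. These $\sigma_j$ still generate $\cP(\fn)^{K^\#}$ by surjectivity of $\Theta$ on invariants, they form a valid homogeneous Hilbert basis, and by Proposition \ref{indipendence} this is a permissible choice. With this ordering the Schwarz's-theorem step is legitimate.

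The smaller issue: making the $\Gamma$-action on $\bR^d$ linear. Your phrase ``replacing ${\boldsymbol\rho}$ by the orbit sums of its monomials'' would produce $K^\#$-invariants, not a new $K$-Hilbert basis, and ``permutes up to scalars'' is special to finite groups. What one actually does is choose, in each bi-graded piece of $\cP(\fn)^K$, a $\Gamma$-invariant complement of the decomposable invariants (possible since $\Gamma$ is compact) and take the $\rho_j$ from those complements. Then $k\cdot\rho_j$ is a \emph{linear} combination of the $\rho_i$ of the same bi-degree, which is exactly what makes the induced action on eigenvalue vectors $\xi(\ph)$ linear (via $\la'$-equivariance, as you indicate). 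Once both points are addressed the argument closes as you describe: averaging gives a $\Gamma$-invariant Schwartz extension $\bar g$ of $\cG F$, Schwarz's theorem yields $h\in\cS(\bR^{d^\#})$ with $\bar g=h\circ P$, and restriction gives $\cG^\# F=h_{|\Sigma_{\cD^\#}}$, with continuity inherited from each step.
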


\smallskip

 The second result,  concerning {\it central reductions} of $N$, has been announced in \cite{FRY2} without proof, and we give its proof below.

Assume  that $\fz$ contains  a nontrivial proper $K$-invariant subspace $\fs$. If $\fn'=\fn/\fs$, as in Subsection \ref{modulo-s}, then $K'=K$.  It is proved in \cite{V2} that $(N',K)$ is a n.G.p.

\begin{proposition}\label{central}
Suppose that Property (S) holds for the n.G.p. $(N,K)$. Then Property (S) also holds for  $(N',K)$.
\end{proposition}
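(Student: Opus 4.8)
The plan is to transfer Property (S) from $(N,K)$ to $(N',K)$ by exploiting the compatibility of the spherical transforms on the two groups with the Radon transform $\cR$ in the $\fz$-direction along $\fs$. Concretely, fix a $K$-invariant inner product on $\fz$, put $\fz'=\fs^\perp$, and choose a special homogeneous Hilbert basis ${\boldsymbol\rho}$ for $(N,K)$ in which the $\fz$-invariants are adapted to the splitting $\fz=\fs\oplus\fz'$ — more precisely, so that the restrictions ${\boldsymbol\rho}_{|_{\fn'}}$ contain a homogeneous Hilbert basis ${\boldsymbol\rho}'$ of $\cP(\fn')^K$. (Such a choice exists: start from a Hilbert basis of $\cP(\fz')^K$ and of $\cP(\fv\oplus\fz')^K$ and complete it; the extra generators involving $\fs$ will restrict to elements expressible through ${\boldsymbol\rho}'$, possibly after a polynomial change of generators, which by Proposition \ref{indipendence} is harmless.) Writing $\cD=\la'_N({\boldsymbol\rho})$, $\cD'=\la'_{N'}({\boldsymbol\rho}')$, Proposition \ref{propradon*}(i) gives $\cR D_j=\la'_{N'}((\rho_j)_{|_{\fn'}})$, and Proposition \ref{propradon*}(ii) together with Corollary \ref{Sigma'->Sigma} shows that the lift map $\Lambda:\Sigma(N',K)\to\Sigma(N,K)$ is continuous and, on the spectral side, corresponds to an affine-coordinate inclusion $\Sigma'_{\cD'}\hookrightarrow\Sigma_\cD$ obtained by restriction of coordinates; more exactly, after discarding the coordinates corresponding to generators that vanish on $\fn'$, one gets $\Sigma'_{\cD'}=\Sigma_\cD\cap(\text{coordinate subspace})$, up to the polynomial reparametrisation of \eqref{changeD}.

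Next I would set up the two directions of the isomorphism. For the easy inclusion $\cS(\Sigma'_{\cD'})\subseteq\cG'(\cS(N')^K)$ we already have Theorem \ref{hulanicki} applied to $(N',K)$ (which is a n.G.p. by \cite{V2}), so nothing is needed there. The real content is the opposite inclusion: given $F'\in\cS(N')^K$, we must produce a Schwartz extension to $\bR^{d'}$ of $\cG'F'$. The idea is to lift $F'$ to $N$. The Radon transform $\cR:\cS(N)^K\to\cS(N')^K$ is surjective and admits a continuous linear right inverse $\cR^\sharp$ (e.g. tensor $F'$ with a fixed Gaussian on $\fs$, then $K$-average; since $K$ acts on $\fs$ preserving the Gaussian, $K$-invariance is retained, and $\cR\cR^\sharp=\mathrm{id}$). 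Put $F=\cR^\sharp F'\in\cS(N)^K$. By hypothesis $(N,K)$ satisfies Property (S), so $\cG_N F$ extends to some $g\in\cS(\bR^d)$. The duality identity $\int_{N'}\cR F\cdot(G\circ\mathrm{proj})=\int_N F\cdot(G\circ\mathrm{proj})$ combined with \eqref{radon-eigenvalues} gives the compatibility $\cG'(\cR F)(\ph')=\cG_N F(\Lambda\ph')$ for every bounded spherical $\ph'$ on $N'$; rewritten in Euclidean coordinates this says $\cG'F'=(\cG_N F)\circ\iota$ where $\iota:\Sigma'_{\cD'}\hookrightarrow\Sigma_\cD$ is the coordinate inclusion identified above. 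Therefore $g\circ\iota$, i.e.\ $g$ restricted to the corresponding coordinate subspace of $\bR^d$ (pulled back through the polynomial map of \eqref{changeD}), is a Schwartz function on $\bR^{d'}$ restricting to $\cG'F'$ on $\Sigma'_{\cD'}$. This proves $\cG'(\cS(N')^K)\subseteq\cS(\Sigma'_{\cD'})$.

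Finally I would check continuity of $\cG'^{-1}$, i.e.\ that the bijection $\cS(N')^K\to\cS(\Sigma'_{\cD'})$ is a topological isomorphism. One direction, continuity of $\cG'$, is standard (it is contained in the general theory, and in the refined form it is part of Theorem \ref{hulanicki}: the solution depends continuously on the datum). For the inverse, note that every step in the construction above is continuous: $\cR^\sharp$ is continuous $\cS(N')^K\to\cS(N)^K$; by Property (S) for $(N,K)$ the map $\cG_N^{-1}$ is continuous, and by the closed graph theorem (both spaces being Fréchet) the assignment $\cG_N F\mapsto F$ together with a continuous linear section $\cS(\Sigma_\cD)\to\cS(\bR^d)$ gives a continuous lift; composing with restriction of coordinates $\cS(\bR^d)\to\cS(\bR^{d'})$ and with $\cR\cR^\sharp=\mathrm{id}$ yields continuity of $\cG'^{-1}$. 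I expect the main obstacle to be purely bookkeeping: arranging the Hilbert bases ${\boldsymbol\rho}$ and ${\boldsymbol\rho}'$ so that the spectral-side inclusion $\Sigma'_{\cD'}\hookrightarrow\Sigma_\cD$ really is a restriction of coordinates (rather than only a homeomorphism onto a closed subset), and correctly tracking the polynomial reparametrisations from \eqref{changeD} so that ``restrict a Schwartz function to a coordinate subspace'' literally delivers a Schwartz extension of $\cG'F'$; once the coordinates are aligned, the Radon-transform intertwining and the duality formula \eqref{radon-eigenvalues} do all the analytic work.
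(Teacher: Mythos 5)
Your proposal follows the paper's route: lift $F'\in\cS(N')^K$ to $N$ by tensoring with a fixed function on $\fs$ of integral $1$ (your $\cR^\sharp F'$ is the paper's $F\otimes\psi$), apply Property (S) for $(N,K)$, and restrict the resulting Schwartz extension to the coordinate subspace carrying $\Sigma_{\cD'}$. Two small clean-ups bring it exactly in line with the paper: (a) rather than appealing to \eqref{changeD}, replace each added generator $\rho_j$, $j>d'$, by $\rho_j-Q_j\big({\boldsymbol\rho}'\circ{\rm proj}\big)$ so that it vanishes identically on $\fn'$, making the embedding $\Sigma_{\cD'}\hookrightarrow\Sigma_\cD$ literal zero-padding; and (b) in your final paragraph the roles of $\cG'$ and $\cG'^{-1}$ are interchanged --- Theorem \ref{hulanicki} gives continuity of $\cG'^{-1}$, while your lift-and-restrict chain is precisely what gives continuity of $\cG'$.
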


\begin{proof}
We fix a $K$-invariant  complementary subspace $\fz'$ of $\fs$ in $\fz$ and adopt the notation of Subsection~\ref{modulo-s}.

 Given a $K$-invariant function $f$ on $N'$, the function $\tilde f=f\circ{\rm proj}$ is $K$-invariant on $N$. Hence, if $\ph$ is spherical on $N'$,  the map $\Lambda$ in Corollary \ref{Sigma'->Sigma}, reduces to composition with ${\rm proj}$, and therefore it is injective.
With suitable choices of generators for $\bD(N')^K$ and $\bD(N)^K$,
this injection can be viewed in the following way.

We fix a fundamental system $(\rho_1,\dots,\rho_{d'})$ of $K$-invariants on $\fv\oplus\fz'$, and complete it into a fundamental system $(\rho_1,\dots,\rho_{d'},\rho_{d'+1},\dots,\rho_d)$ of $K$-invariants on $\fn$, where each of the added polynomials $\rho_{d'+1}(v,z'+s),\dots,\rho_d(v,z'+s)$ vanishes for $s=0$.

Applying the symmetrisation  \eqref{modsym} on $N$ and $N'$ respectively, we obtain the generating systems of differential operators 
$$
\cD=(D_1,\dots,D_d)\ ,\qquad \cD'=(D'_1,\dots,D'_{d'})\ ,
$$
on $N$ and $N'$ respectively. 
In particular,
$$
\cR D_j=
\begin{cases}
D'_j&\text{ if } j=1,\dots,d'\ ,\\
0&\text{ if }  j=d'\!\!+\!1,\dots,d\ .
\end{cases}
$$

It follows from Proposition \ref{propradon*} that, if $\ph$ is a bounded spherical function on $N'$ represented by the point $\xi'\in\Sigma_{\cD'}\subset\bR^{d'}$, then $\tilde\ph$ is represented by the point $(\xi',0)\in\Sigma_\cD\subset\bR^d$.

Hence the injective map of Corollary \ref{Sigma'->Sigma} corresponds to the map $\xi'\longmapsto (\xi',0)$
 from $\Sigma_{\cD'}$ into $\Sigma_\cD$. The corresponding restriction operator is a continuous linear mapping
from $\cS(\Sigma_\cD)$ onto $\cS(\Sigma_{\cD'})$.

Let $\cG$ and $\cG'$ be the spherical transforms corresponding to $\Sigma_\cD$ and $\Sigma_{\cD'}$.
We fix a smooth and compactly supported function $\psi$ on $\fs$ with integral 1.

Given $F\in \cS(N')^K$,
for any bounded spherical function $\varphi$ of $(N',K)$ we have
$$
\int_{\fn'} F(v,z') \varphi(v,z') \,dv\, dz'
=
\int_{\fn} F(v,z') \psi(s) \tilde \varphi(v,z'+s) \,dv\, dz'\,ds
\ ,
$$
where $\tilde\ph=\ph\circ{\rm proj}$.
Thus, for any  $\xi' \in \Sigma_{\cD'}$,
$$
\cG'F (\xi')
=
\cG (F\otimes \psi) (\xi',0)
\ ,
$$
where  $(F\otimes\psi)(v,z'+s)= F(v,z')\psi(s)$.
We have obtained the identity
\begin{equation}
\label{Gelfand_transforms}
\cG' F=\big(\cG (F\otimes \psi)\big)_{|_{\Sigma_{\cD'}}} \ .
\end{equation}

Obviously, $F\otimes\psi \in \cS(N)^K$ and the map $F\mapsto F\otimes\psi$ is continuous  and linear from $\cS(N')^K$ to $\cS(N)^K$. 

As we are assuming that $(N,K)$ satisfies Property (S), 
$\cG:\cS(N)^K\rightarrow \cS(\Sigma_\cD)$ 
is an isomorphism of Fr\'echet spaces. By composition, the map
$$
F\longmapsto \big(\cG (F\otimes \psi)\big)_{|_{\Sigma_{\cD'}}}=\cG' F
$$
is continuous from $\cS(N')^K$ to $\cS(\Sigma_{\cD'})$. Since we already know that the inverse of $\cG'$ maps $\cS(\Sigma_{\cD'})$ into $\cS(N')^K$ continuously \cite{ADR2}, we have the conclusion.
\end{proof}

\smallskip

 Our third result concerns {\it product pairs}.

\begin{proposition}\label{product}
Let $(N_1,K_1)$, $(N_2,K_2)$ be two n.G.p. satisfying Property (S). Then also the product pair $(N_1\times N_2,K_1\times K_2)$ satisfies Property (S).
\end{proposition}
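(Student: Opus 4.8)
The plan is to reduce the statement about the product pair to the two factor pairs, using the fact that all the relevant objects -- the group, the invariants, the generators of $\bD(N)^K$, the spectrum, the spherical transform, and the Schwartz spaces -- decompose as (external) products. First I would set up notation: write $N=N_1\times N_2$, $K=K_1\times K_2$, $\fn=\fn_1\oplus\fn_2$, and note that $\fz=\fz_1\oplus\fz_2$, $\fv=\fv_1\oplus\fv_2$. Because $K_i$ acts only on $\fn_i$, the $K$-invariant polynomial algebra splits as $\cP(\fn)^K=\cP(\fn_1)^{K_1}\otimes\cP(\fn_2)^{K_2}$, so the union ${\boldsymbol\rho}={\boldsymbol\rho}^{(1)}\cup{\boldsymbol\rho}^{(2)}$ of homogeneous Hilbert bases of the factors is a homogeneous Hilbert basis of $\cP(\fn)^K$; by Proposition~\ref{indipendence} we are free to use it. Applying $\la'_N$ and using that $\la'_N(p_1\otimes 1)=\la'_{N_1}(p_1)\otimes I$ etc. (each operator from one factor acts only in that factor's variables, hence they all commute), we get $\cD=\cD_1\sqcup\cD_2$ with $d=d_1+d_2$, and correspondingly $\bR^d=\bR^{d_1}\times\bR^{d_2}$.

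The next step is to identify $\Sigma_\cD$ with $\Sigma_{\cD_1}\times\Sigma_{\cD_2}$. Every irreducible unitary representation of $N$ is an external tensor product $\pi^{(1)}\otimes\pi^{(2)}$, the stabilizer in $K$ factors as $K_{\pi^{(1)}}\times K_{\pi^{(2)}}$, and the multiplicity-free decomposition of the factors tensors up; hence a bounded spherical function of $(N,K)$ is exactly a product $\ph_1(v_1,z_1)\ph_2(v_2,z_2)$ with $\ph_i$ bounded spherical for $(N_i,K_i)$. Since $D_j\in\cD_1$ acts only in the first factor, $\xi(D_j,\ph_1\ph_2)=\xi(D_j,\ph_1)$ and symmetrically, so $\xi(\ph_1\ph_2)=(\xi(\ph_1),\xi(\ph_2))$ and therefore $\Sigma_\cD=\Sigma_{\cD_1}\times\Sigma_{\cD_2}$ as a closed subset of $\bR^{d_1}\times\bR^{d_2}$. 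Moreover the spherical transform factors: for $F_1\otimes F_2$ one has $\cG(F_1\otimes F_2)=\cG_1F_1\otimes\cG_2F_2$, and by density/continuity (or just by Fubini directly on $\cG F(\ph_1\ph_2)=\int F(x_1,x_2)\ph_1(x_1^{-1})\ph_2(x_2^{-1})\,dx_1dx_2$) this is the genuine product structure on the transform.

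Now I would run the two inclusions of Property (S). The easy direction $\cG(\cS(N)^K)\supseteq\cS(\Sigma_\cD)$ is already furnished by Theorem~\ref{hulanicki}, so the work is the opposite inclusion: given $F\in\cS(N)^K$, show $\cG F$ extends to a Schwartz function on $\bR^{d_1+d_2}$. The key observation is that $\cS(N_1\times N_2)^{K_1\times K_2}$ is (the completion of) the algebraic tensor product $\cS(N_1)^{K_1}\otimes\cS(N_2)^{K_2}$ -- this is the standard nuclearity statement for Schwartz spaces, $\cS(X\times Y)=\cS(X)\widehat\otimes\cS(Y)$, restricted to the closed invariant subspace. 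For $F=F_1\otimes F_2$ with $F_i\in\cS(N_i)^{K_i}$, apply Property (S) for each factor to get $g_i\in\cS(\bR^{d_i})$ with $g_i|_{\Sigma_{\cD_i}}=\cG_iF_i$; then $g_1\otimes g_2\in\cS(\bR^{d})$ restricts on $\Sigma_\cD=\Sigma_{\cD_1}\times\Sigma_{\cD_2}$ to $\cG_1F_1\otimes\cG_2F_2=\cG F$. This gives a Schwartz extension on the dense subspace of finite sums $\sum F_1^{(k)}\otimes F_2^{(k)}$, linearly; to pass to all of $\cS(N)^K$ one uses continuity. Here the cleanest route is to invoke that $\cG:\cS(N_i)^{K_i}\to\cS(\Sigma_{\cD_i})$ is an \emph{isomorphism} of Fr\'echet spaces (Property (S) in full), so the inverse is continuous; the map $F\mapsto$ ``canonical Schwartz extension of $\cG F$'' built above is then a continuous linear map on the dense subspace into the Fr\'echet space $\cS(\bR^d)$ composed appropriately, and since $\cG F$ is determined by $F$ (the transform is injective on $L^1(N)^K$) the extension is consistent and extends continuously; alternatively one checks directly that $\cG F$ has the requisite decay and derivative estimates via the tensor structure.

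The main obstacle I anticipate is precisely this passage from the algebraic tensor product to all of $\cS(N)^K$: one must be careful that the ``Schwartz extension'' operation $\cS(\Sigma_\cD)\to\cS(\bR^d)$ can be chosen continuous and linear (a bounded linear right inverse of the restriction map), which is exactly the content of the $\supseteq$ half of Property (S) in its topological form, so one should phrase things so that the continuous section provided by Theorem~\ref{hulanicki} (the function $F$ depending continuously on $g$) is used for the \emph{joint} variables, not reconstructed by hand from the factors. Concretely: by the two factor isomorphisms, $\cG$ identifies $\cS(N_i)^{K_i}\cong\cS(\Sigma_{\cD_i})$; completing the tensor product, $\cS(N)^K\cong\cS(\Sigma_{\cD_1})\widehat\otimes\cS(\Sigma_{\cD_2})$ with the map realized by $\cG$; and the external-product restriction map $\cS(\bR^{d_1})\widehat\otimes\cS(\bR^{d_2})=\cS(\bR^d)\twoheadrightarrow\cS(\Sigma_{\cD_1})\widehat\otimes\cS(\Sigma_{\cD_2})=\cS(\Sigma_\cD)$ is onto with a continuous section obtained as the completed tensor product of the sections for the factors. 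Chasing these identifications shows $\cG(\cS(N)^K)=\cS(\Sigma_\cD)$ with $\cG$ a topological isomorphism, which is Property (S) for the product pair. I would keep the write-up at the level of these structural identifications and relegate the nuclearity/completed-tensor-product bookkeeping to a citation, since none of it is specific to Gelfand pairs.
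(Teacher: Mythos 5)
Your structural setup is correct and matches what the paper uses: with ${\boldsymbol\rho}={\boldsymbol\rho}^{(1)}\cup{\boldsymbol\rho}^{(2)}$ and $\cD=\cD_1\sqcup\cD_2$, one indeed has $\Sigma_\cD=\Sigma_{\cD_1}\times\Sigma_{\cD_2}$, the bounded spherical functions factor as products, and $\cG(F_1\otimes F_2)=\cG_1F_1\otimes\cG_2F_2$. The inclusion $\supseteq$ is Theorem~\ref{hulanicki}. Up to this point you and the paper agree.

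The gap is in the main step, and it is twofold. First, the identification $\cS(\Sigma_\cD)\cong\cS(\Sigma_{\cD_1})\widehat\otimes\cS(\Sigma_{\cD_2})$ is \emph{not} a standard nuclearity statement. The standard fact is $\cS(\bR^{d_1})\widehat\otimes\cS(\bR^{d_2})=\cS(\bR^{d})$, a statement about the full spaces; here one must instead show that the ideal $I(\Sigma_{\cD_1}\times\Sigma_{\cD_2})$ of functions vanishing on the product set is the closed span of $I(\Sigma_{\cD_1})\otimes\cS(\bR^{d_2})+\cS(\bR^{d_1})\otimes I(\Sigma_{\cD_2})$, i.e.\ that taking completed tensor products commutes with passing to quotients by ideals of vanishing. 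That is a nontrivial ``product Whitney'' statement, not something you can cite as folklore; it is neither proved nor referenced in the paper. Second, you invoke ``a bounded linear right inverse of the restriction map'' and attribute it to ``the $\supseteq$ half of Property (S) in its topological form'' and to Theorem~\ref{hulanicki}. But neither gives that. Property (S) says $\cG:\cS(N_i)^{K_i}\to\cS(\Sigma_{\cD_i})$ is a Fr\'echet isomorphism, where $\cS(\Sigma_{\cD_i})$ carries the \emph{quotient} topology; it does not say that the quotient map $\cS(\bR^{d_i})\to\cS(\Sigma_{\cD_i})$ is complemented, i.e.\ admits a continuous linear section. In the quotient topology the only thing you get for free is a \emph{seminorm-by-seminorm} statement: for each Schwartz index $M$ there is $A_M$ and $C_M$ such that, for each $F$, there exists an extension of $\cG F$ whose $M$-norm is $\le C_M\|F\|_{A_M}$ --- but that extension may depend on $M$, and there is no claim of a single continuous linear extension operator. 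Complementation of closed subspaces of $\cS(\bR^n)$ is governed by Vogt's splitting theory and is genuinely not automatic.

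This is precisely what the paper's proof is built to avoid. It replaces the abstract topological tensor product with an explicit decomposition (Lemma~\ref{lemma_product}): any $F\in\cS(\bR^{n_1+n_2})$ is written as a series $\sum c_{l,m}\,H^{(1)}_{l_1,m_1}\otimes H^{(2)}_{l_2,m_2}$ coming from a cube partition of unity and Fourier series, with coefficients $c_{l,m}$ decaying faster than any polynomial in a way that is controlled by the Schwartz norms of $F$, and with the building blocks having polynomially growing Schwartz norms; Corollary~\ref{Kinvar} averages to make the pieces $K_\nu$-invariant. It then uses only the seminorm-by-seminorm consequence of Property (S) for the factors (the extensions $h^{(\nu,M)}_{l_\nu,m_\nu}$ carry a superscript $M$ for exactly this reason) and runs a diagonal argument over Schwartz norms: the index $M$ used in the extensions is increased across dyadic ranges of $|l|+|m|$, so that each Schwartz norm of the resulting series is finite. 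That hands-on construction is what actually closes the estimate; your version needs the tensor-quotient lemma and/or a splitting, and as stated neither is justified.
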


In order to  prove Proposition \ref{product},
we need the following property of decomposition of Schwartz functions on the product of two Euclidean spaces.
We use the following Schwartz norms on $\cS(\bR^n)$:
$$
\|f\|_{\cS(\bR^n), M}
=
\|f\|_{M}
=
\sup_{|\alpha|\leq M, x\in \bR^n} (1+|x|)^M |\partial^\alpha f(x)|
\quad , \quad M\in \bN
\ .
$$

\begin{lemma}\label{lemma_product}
Let $n_1$ and $n_2$ be two positive integers. Set $n=n_1+n_2$.

Let also $\psi_\nu$, $\nu=1,2$, be smooth function on $\bR^{n_\nu}$,
supported in $[-1, 1]^{n_\nu}$ and satisfying:
$$
0\leq \psi_\nu\leq 1
\qquad\mbox{and}\qquad
\psi_\nu=1 \;\text{\rm on }\; \big[-\frac 34, \frac34\big]^{n_\nu}
\ .
$$

For $\nu=1,2$, $l_\nu,m_\nu\in\bZ^{n_\nu}$, set
$$
H^{(\nu)}_{l_\nu,m_\nu}(x_\nu)=e^{-i(x_\nu+l_\nu)\cdot m_\nu}
\psi_\nu(x_\nu+l_\nu)
\ .
$$

Then the following properties hold.
\begin{enumerate}
\item[\rm(a)] Let $\nu=1$ or $2$.
Given $M\in \bN$ there is a constant $C_M>0$ such that,
\begin{equation}
  \label{control_Hlmi}
\forall l_\nu,m_\nu\in \bZ^{n_\nu}\ ,\qquad
\|H_{l_\nu,m_\nu}^{(\nu)}\|_{\cS(\bR^{n_\nu}), M}
\leq C_{M} (1+|l_\nu|+|m_\nu|)^{2M}
\ .  
\end{equation}
\item[\rm(b)] For any function $F\in \cS(\bR^n)$ 
there exist coefficients $c_{l,m}\in \bC$, 
$l=(l_1,l_2)$, $m=(m_1,m_2) \in \bZ^n=\bZ^{n_1}\times\bZ^{n_2}$,
such that
\begin{equation}
  \label{dec_F_sum_lm}
F=\sum_{l,m\in \bZ^n} c_{l,m} H_{l_1,m_1}^{(1)}\otimes H_{l_2,m_2}^{(2)}
\ .  
\end{equation}
\item[\rm(c)] For any $M\in\bN$, there is a constant $C_M$, independent of $F$,
such that the coefficients $c_{l,m}$ satisfy:
\begin{equation}
\label{control_clm}
\forall l,m\in \bZ^n\qquad
|c_{l,m}|\leq C_M \|F\|_{\cS(\bR^n),M} (1+ |l|+|m|)^{-M}
\ .
\end{equation}

\end{enumerate}
\end{lemma}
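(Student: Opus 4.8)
The plan is to prove the three statements essentially by producing the coefficients $c_{l,m}$ via a periodisation (Fourier series) argument applied separately in each factor, and to control their decay by transferring derivative bounds through integration by parts. First I would dispose of part (a): each $H^{(\nu)}_{l_\nu,m_\nu}$ is the product of a unimodular exponential $e^{-i(x_\nu+l_\nu)\cdot m_\nu}$ and the fixed bump $\psi_\nu(x_\nu+l_\nu)$, which is supported in the cube $\{|x_\nu+l_\nu|\le 1\}$, i.e. in the ball of radius $1$ around $-l_\nu$. On that support $(1+|x_\nu|)^M\le (2+|l_\nu|)^M$, and each derivative $\partial^\alpha$ of order $\le M$ landing on the exponential produces a factor bounded by $|m_\nu|^{|\alpha|}\le (1+|m_\nu|)^M$, while derivatives of $\psi_\nu$ are bounded by a constant depending only on $M$ and the fixed $\psi_\nu$. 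Multiplying these bounds and using $(2+|l_\nu|)^M(1+|m_\nu|)^M\le C_M(1+|l_\nu|+|m_\nu|)^{2M}$ gives \eqref{control_Hlmi}.

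For parts (b) and (c) I would work one factor at a time. Fix $\nu$ and consider, for $l_\nu\in\bZ^{n_\nu}$, the function $x_\nu\mapsto \psi_\nu(x_\nu+l_\nu)$; since $\psi_\nu\equiv 1$ on $[-\tfrac34,\tfrac34]^{n_\nu}$ and is supported in $[-1,1]^{n_\nu}$, the translates $\{\psi_\nu(\cdot+l_\nu):l_\nu\in\bZ^{n_\nu}\}$ satisfy $\sum_{l_\nu}\psi_\nu(x_\nu+l_\nu)\ge 1$ everywhere, so $\theta_\nu(x_\nu)=\psi_\nu(x_\nu)/\sum_{l_\nu}\psi_\nu(x_\nu+l_\nu)$ is a smooth partition of unity: $\sum_{l_\nu}\theta_\nu(x_\nu+l_\nu)=1$ with each summand supported in a unit cube. (One checks $\theta_\nu$ is still supported where $\psi_\nu$ is and that $\psi_\nu\theta_\nu=\theta_\nu$, so that $H^{(\nu)}_{l_\nu,m_\nu}\cdot\theta_\nu(\cdot+l_\nu)$ recovers $e^{-i(\cdot+l_\nu)\cdot m_\nu}\theta_\nu(\cdot+l_\nu)$; if one prefers one may simply rename and work with $\theta_\nu$ throughout, absorbing the difference into the coefficients.) Now, given $F\in\cS(\bR^n)$, decompose $F=\sum_{l}F(x)\,\theta_1(x_1+l_1)\theta_2(x_2+l_2)$; each piece $F_l:=F\cdot(\theta_1(\cdot+l_1)\otimes\theta_2(\cdot+l_2))$ is supported in the unit cube centred at $(-l_1,-l_2)$, so after translating by $l$ it lives in $[-1,1]^{n_1}\times[-1,1]^{n_2}$ and may be expanded in a $2$-periodic Fourier series, whose exponentials are exactly (a rescaling of) $e^{-ix\cdot m}$, $m\in\bZ^n$. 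Reassembling gives the double sum \eqref{dec_F_sum_lm}, with $c_{l,m}$ equal (up to the fixed normalising constant from the period and the factor $e^{-il\cdot m}$) to the Fourier coefficient $\int F_l(x-l)\,e^{i x\cdot m}\,dx$ of the translated piece.

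The decay estimate \eqref{control_clm} is then the heart of the matter, and it is where I expect the only real work. Write $c_{l,m}$ as an integral over the unit cube of $F(x)\,\theta_1(x_1+l_1)\theta_2(x_2+l_2)\,e^{i(x+l)\cdot m}$; integrating by parts $M$ times in each coordinate against $e^{i(x+l)\cdot m}$ moves powers of $m$ out front, i.e. $|c_{l,m}|\le C(1+|m|)^{-M}$ times the $L^1$-norm on that cube of derivatives of order $\le M\cdot n$ (or $M$ per variable, whichever bookkeeping one adopts) of $F\cdot(\theta_1\otimes\theta_2)$; by the Leibniz rule and the boundedness of all derivatives of the fixed $\theta_\nu$, this is $\le C_M\sup_{|\alpha|\le M'}\sup_{x\in\text{cube}}|\partial^\alpha F(x)|$ for a suitable $M'$. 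Finally, on the cube centred at $(-l_1,-l_2)$ one has $|x|\ge |l|-C$, so $\sup_{x}|\partial^\alpha F(x)|\le (1+|l|-C)^{-M}(1+|x|)^M|\partial^\alpha F(x)|\le C_M\|F\|_{\cS(\bR^n),M'}(1+|l|)^{-M}$; combining the $m$-decay and the $l$-decay yields \eqref{control_clm} after relabelling $M'\rightsquigarrow$ (a multiple of) $M$. The only subtlety to watch is the interplay between the number of derivatives needed for the $l$-decay and for the $m$-decay and the fixed Schwartz seminorm order on the right-hand side — one must choose the intermediate order $M'$ large enough (a fixed multiple of $M$ plus $n$) and then note that the statement only claims the bound for \emph{some} constant $C_M$ depending on $M$, so passing from $\|F\|_{M'}$ to $\|F\|_{M}$ is harmless once one is content to prove the estimate for every $M$ with its own constant, since $\|F\|_M$ is increasing in $M$; alternatively one reindexes so the claimed $M$ on the left already accounts for the loss. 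Aside from this bookkeeping, every step is a standard one-dimensional-per-coordinate Fourier-series estimate carried out on a compactly supported smooth function.
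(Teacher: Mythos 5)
Your argument for part (a) is fine, and your plan for part (c) (integrate by parts against $e^{i(x+l)\cdot m}$, use the support of the cut-off to extract $(1+|l|)^{-M}$, choose the intermediate seminorm order large enough) is essentially what the paper leaves to the reader. The overall strategy — unit-cube partition of unity, local Fourier series, reassembly — is also the paper's.

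The gap is in part (b), in the choice of partition of unity. You set $\theta_\nu=\psi_\nu/\sum_{l_\nu}\psi_\nu(\cdot+l_\nu)$, so $\supp\theta_\nu=\supp\psi_\nu\subseteq[-1,1]^{n_\nu}$, and you claim $\psi_\nu\theta_\nu=\theta_\nu$. That identity is false: it requires $\psi_\nu\equiv 1$ on $\supp\theta_\nu$, but $\psi_\nu$ is only guaranteed to equal $1$ on $[-\tfrac34,\tfrac34]^{n_\nu}$, while $\theta_\nu$ is generally nonzero in the annulus $\{\tfrac34<|x_\nu|_\infty\le 1\}$ where $\psi_\nu<1$. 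This matters because after you expand $F\cdot\theta_1(\cdot+l_1)\theta_2(\cdot+l_2)$ in a Fourier series, the resulting identity is only valid on the fundamental cube; to reassemble the pieces into a sum of the $H^{(\nu)}_{l_\nu,m_\nu}$ you must cut each periodic extension off by multiplying by $\psi_1(\cdot+l_1)\psi_2(\cdot+l_2)$, and that step is justified precisely by $\psi_\nu\cdot(\text{partition function})=\text{partition function}$. With your $\theta_\nu$, multiplying back by $\psi$ strictly shrinks the function in the annulus, and the sum no longer recovers $F$.

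The repair is what the paper actually does: choose a \emph{separate} partition of unity $\phi_\nu$ whose translates sum to $1$ and whose support lies inside $[-\tfrac34,\tfrac34]^{n_\nu}$ (where $\psi_\nu\equiv1$). Then $\psi_\nu\phi_\nu=\phi_\nu$ holds, one inserts the harmless factors $\psi_1(\cdot+l_1)\psi_2(\cdot+l_2)$ before expanding, and the local $2\pi$-periodic Fourier series (legitimate since $\tfrac34<\pi$, so no rescaling of the exponentials is needed) recombines exactly into $\sum_{l,m}c_{l,m}H^{(1)}_{l_1,m_1}\otimes H^{(2)}_{l_2,m_2}$. You cannot manufacture such a $\phi_\nu$ from $\psi_\nu$ by normalization alone; you need an independent bump with strictly smaller support.
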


\begin{proof}
For $\nu=1,2$,
we consider a partition of $\bR^{n_1}$ with the cubes $l_\nu+[-\frac12,\frac12]^{n_\nu}$, $l_\nu\in \bZ_{d_\nu}$, and the partition of unity obtained from a smooth function $\phi_\nu$ on $\bR^{n_\nu}$
supported in $[-\frac 34, \frac34]^{n_\nu}$ such that
$$
0\leq \phi_\nu\leq 1
\qquad,\qquad
\phi_\nu=1 \;\mbox{on}\; [-\frac 14, \frac14]^{n_\nu}
\qquad\mbox{and}\qquad
\forall x_\nu\in \bR^{n_\nu}\quad \sum_{l_\nu\in \bZ^{n_\nu}}\phi_\nu(x_\nu+l_\nu)=1
\ .
$$
For each $l=(l_1,l_2)\in \bZ^{n_1}\times\bZ^{n_2}$,
the function $x=(x_1,x_2)\mapsto F(x_1-l_1,x_2-l_2) \phi_1(x_1)\phi_2(x_2)$
is smooth and supported in $[-\frac 34, \frac34]^n$. 
 We keep the same notation for its $2\pi$-periodic extension in each variable; then 
its decomposition in Fourier series gives:
$$
F(x-l) \phi_1(x_1)\phi_2(x_2)
=
\sum_{m\in \bZ^n} c_{l,m} e^{-i x.m}
\quad ,\quad x\in [-\pi,\pi]^n
\ .
$$
It is easy to see that the coefficients $c_{l,m}$ satisfy \eqref{control_clm} for any $M\in \bN$.
We can write:
\begin{eqnarray*}
F(x)
&=&
\sum_{l=(l_1,l_2)\in \bZ^{n_1}\times \bZ^{n_2}}
F(x)\phi_1(x_1+l_1)\phi_2(x_2+l_2) 
\psi_1(x_1+l_1)\psi_2(x_2+l_2) 
\\
&=&
\sum_{l,m\in \bZ^n}
c_{l,m} e^{-i(x+l).m}
\psi_1(x_1+l_1)\psi_2(x_2+l_2) 
\ .
\end{eqnarray*}
 satisfy \eqref{control_Hlmi} and \eqref{dec_F_sum_lm}.
\end{proof}

\begin{corollary}\label{Kinvar}
For $\nu=1,2$, let $K_\nu$ be a compact subgroup of ${\rm GL}_{n_\nu}(\bR)$. For $\nu=1,2$, $l_\nu,m_\nu\in\bZ^{n_\nu}$, there exist $K_\nu$-invariant smooth functions $\tilde H^{(\nu)}_{l_\nu,m_\nu}$ on $\bR^{n_\nu}$ such that the conclusions of Lemma \ref{lemma_product} hold, with $\tilde H^{(\nu)}_{l_\nu,m_\nu}$ in place of $H^{(\nu)}_{l_\nu,m_\nu}$, for $F$ in $\cS(\bR^n)$ and $K_1\times K_2$-invariant.
\end{corollary}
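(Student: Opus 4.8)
The plan is to deduce Corollary \ref{Kinvar} from Lemma \ref{lemma_product} by averaging the building blocks $H^{(\nu)}_{l_\nu,m_\nu}$ over the compact groups $K_\nu$. For $\nu=1,2$ and $l_\nu,m_\nu\in\bZ^{n_\nu}$, set
$$
\tilde H^{(\nu)}_{l_\nu,m_\nu}(x_\nu)=\int_{K_\nu} H^{(\nu)}_{l_\nu,m_\nu}(k_\nu\inv x_\nu)\,dk_\nu\ ,
$$
where $dk_\nu$ is the normalised Haar measure on $K_\nu$. Each $\tilde H^{(\nu)}_{l_\nu,m_\nu}$ is then $K_\nu$-invariant and smooth; since $K_\nu\subset{\rm GL}_{n_\nu}(\bR)$ is compact, it preserves a norm on $\bR^{n_\nu}$, so the support condition (contained in a fixed dilate of the ball of radius $|l_\nu|+O(1)$) and the Schwartz-norm bound \eqref{control_Hlmi} survive the averaging, possibly with a larger constant $C_M$ depending only on $M$ and $K_\nu$: indeed $\|\tilde H^{(\nu)}_{l_\nu,m_\nu}\|_{\cS(\bR^{n_\nu}),M}\le C\,\sup_{k_\nu\in K_\nu}\|H^{(\nu)}_{l_\nu,m_\nu}(k_\nu\inv\cdot)\|_{\cS(\bR^{n_\nu}),M}$, and the chain rule together with the boundedness of $K_\nu$ in operator norm controls the right-hand side by $\|H^{(\nu)}_{l_\nu,m_\nu}\|_{\cS(\bR^{n_\nu}),M}$ up to a constant.

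Next I would exploit the $K_1\times K_2$-invariance of $F$. Starting from the decomposition \eqref{dec_F_sum_lm} provided by Lemma \ref{lemma_product}, I average both sides over $K_1\times K_2$. Since $F$ is invariant, the left side is unchanged, while on the right the average of $c_{l,m}\,H^{(1)}_{l_1,m_1}\otimes H^{(2)}_{l_2,m_2}$ becomes $c_{l,m}\,\tilde H^{(1)}_{l_1,m_1}\otimes\tilde H^{(2)}_{l_2,m_2}$, because the Haar measure on $K_1\times K_2$ factors and the averaging acts on each tensor factor separately. Interchanging the (absolutely convergent, by \eqref{control_clm}) sum with the integral over the compact group, we obtain
$$
F=\sum_{l,m\in\bZ^n} c_{l,m}\,\tilde H^{(1)}_{l_1,m_1}\otimes\tilde H^{(2)}_{l_2,m_2}\ ,
$$
with exactly the same coefficients $c_{l,m}$. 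Hence the decay estimate \eqref{control_clm} for the $c_{l,m}$ is inherited verbatim, and the analogue of \eqref{control_Hlmi} holds for the $\tilde H^{(\nu)}_{l_\nu,m_\nu}$ by the first paragraph.

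The only points needing care — and the mildest of obstacles — are the uniformity of the constants and the legitimacy of swapping sum and integral. Uniformity is not an issue because the averaging bound depends only on $M$ and on the fixed groups $K_\nu$, not on $l_\nu,m_\nu$; and the swap is justified by dominated convergence using the rapidly decaying majorant from \eqref{control_clm} times the (uniformly bounded on the relevant supports) Schwartz norms. There is no genuine difficulty here: the corollary is a routine $K$-invariant upgrade of Lemma \ref{lemma_product}, exactly parallel to the passage from Whitney-type constructions to their radialised versions used elsewhere in the paper, and its role is simply to make the building blocks in the decomposition compatible with the $K_1\times K_2$-action when proving Proposition \ref{product}.
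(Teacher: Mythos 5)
Your proposal is correct and follows essentially the same route as the paper, which simply defines $\tilde H^{(\nu)}_{l_\nu,m_\nu}$ as the $K_\nu$-average of $H^{(\nu)}_{l_\nu,m_\nu}$ and declares the conclusion obvious; you merely spell out the routine details (stability of the support and of the bound \eqref{control_Hlmi} under averaging, and the sum--integral interchange justified by \eqref{control_clm}) that the paper leaves implicit.
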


\begin{proof} Just take as $\tilde H^{(\nu)}_{l_\nu,m_\nu}$ the $K_\nu$-average of $H^{(\nu)}_{l_\nu,m_\nu}$. The conclusion is quite obvious.
\end{proof}

\begin{proof}[Proof of Proposition \ref{product}]
We fix two families, $\cD^{(1)}=(D_1^{(1)},\ldots, D_{d_1}^{(1)})$ 
and $\cD^{(2)}=(D_1^{(2)},\ldots,D_{d_2}^{(2)})$, of generators
of $\bD(N_1)^{K_1}$ and $\bD(N_2)^{K_2}$ respectively.
We keep the same notation when the operators $D_j^{(\nu)}$ 
are applied to functions on $N=N_1\times N_2$ by differentiating in the $N_\nu$-variables. 

Then 
$\cD=(D_1^{(1)},\ldots, D_{d_1}^{(1)},D_1^{(2)},\ldots,D_{d_2}^{(2)})$ is a family
of generators of $\bD(N)^K$, $K=K_1\times K_2$,
$$
\Sigma_\cD=\Sigma_{\cD_1}\times\Sigma_{\cD_2}\subset \bR^{d_1}\times\bR^{d_2}\ ,
$$
and, if $\cG_1$, $\cG_2$, $\cG$ are the corresponding Gelfand transforms, then 
$$
\cG(F_1\otimes F_2)=(\cG_1 F_1)\otimes(\cG_2F_2)\ .
$$

Identifying $N_1$ and $N_2$ with their Lie algebras,
we consider the $K_i$-invariant functions $\tilde H_{l_\nu,m_\nu}^{(\nu)}$, $\nu=1,2$, $l_\nu,m_\nu\in \bZ^{n_\nu}$, 
satisfying the properties of Corollary \ref{Kinvar}.

Given $F\in \cS(N)^K$, we decompose it as
$$
F=\sum_{l,m\in \bZ^n} c_{l,m} \tilde H_{l_1,m_1}^{(1)}\otimes \tilde H_{l_2,m_2}^{(2)}
\ ,
$$
with coefficients $c_{l,m}$ satisfying \eqref{control_clm}.
Then
$$
\cG F=\sum_{l,m\in \bZ^n} c_{l,m}\cG_1 \tilde H_{l_1,m_1}^{(1)}\otimes \cG_2\tilde H_{l_2,m_2}^{(2)}
\ .
$$

Since we are assuming that each $(N_\nu,K_\nu)$ satisfies Property (S), given any  $M\in \bN$, there are functions $h_{l_\nu,m_\nu}^{(\nu,M)}\in \cS(\bR^{d_\nu})$, for  $\nu=1,2$ and $l_\nu,m_\nu\in \bZ^{n_\nu}$, and an integer $A_M$ such that
\begin{enumerate}
\item[(i)] $h_{l_\nu,m_\nu}^{(\nu,M)}$ coincides with $\cG_\nu \tilde H_{l_\nu,m_\nu}^{(\nu)}$ on $\Sigma_{\cD_\nu}$,
\item[(ii)] $\|h_{l_\nu,m_\nu}^{(\nu,M)}\|_{\cS(\bR^{d_\nu}),M}\le C_M\|\tilde H_{l_\nu,m_\nu}^{(\nu)}\|_{\cS(N_\nu),A_M}\le C_M(1+|l_\nu|+|m_\nu|)^{2A_M}$.
\end{enumerate}

If we set
$$
h_{l,m}^{(M)}=h_{l_1,m_1}^{(1,M)}\otimes h_{l_2,m_2}^{(2,M)}
\in \bS(\bR^d)\ ,
$$
where $d=d_1+d_2$, we have, for any $l,m\in \bZ^n$, 
\begin{equation}
\label{control_hlmM}
\|h_{l,m}^{(M)}\|_{\cS(\bR^{d}),M}\leq C_M (1+|l|+|m|)^{2A_M}
\ .
\end{equation}

Combining the rapid decay of the coefficients with the polynomial growth \eqref{control_hlmM} of the $M$-th Schwartz norm of the $h_{l,m}^{(M)}$, we obtain that
\begin{equation}\label{B_M-estimate}
\sum_{l,m\in \bZ^n}|c_{l,m}|\|h_{l,m}^{(M)}\|_{\cS(\bR^{d}),M}\le C_M\|F\|_{\cS(N),B_M} \ ,
\end{equation}
with $B_M=2A_M+n+1$.

Given $M_0\in\bN$, we want to construct a Schwartz extension $f^{(M_0)}$ of $\cG F$ whose $M_0$-th Schwartz norm is controlled by a constant, independent of $F$, times $\|F\|_{\cS(N),B_{M_0}}$.

By \eqref{B_M-estimate}, for every $M> M_0$, there exists $a_M=a_{M,M_0,F}\in \bN$ such that
\begin{equation}
\label{series_2-M}
\sum_{\tiny\begin{array}{c}
l,m\in \bZ^n
\\
|l|+|m|\geq a_M
\end{array}
 }|c_{l,m}|\|h_{l,m}^{(M)}\|_{\cS(\bR^{d}),M}
\leq 2^{-M} \| F\|_{\cS(N), B_{M_0}}\ .
\end{equation}

The $a_M$ can be inductively chosen to be non-decreasing.
Then we define $f^{(M_0)}$ as
$$
f^{(M_0)}=
\sum_{\tiny\begin{array}{c}
l,m\in \bZ^n
\\
|l|+|m|< a_{M_0+1}
\end{array}}
 c_{l,m} h_{l,m}^{(M_0)}
 +
 \sum_{M=M_0+1}^\infty \!\!\!\!\!\!\!\!\!
 \sum_{\tiny\begin{array}{c}
l,m\in \bZ^n
\\
a_M\leq |l|+|m|< a_{M+1}
\end{array}}\!\!\!\!\!\!\!\!\!
c_{l,m} h_{l,m}^{(M)}
\ .
$$

Clearly, the series converges on $\Sigma_\cD$ to $\cG F$.
To show that it defines a Schwartz function on all of $\bR^d$, notice that, for every $M_1\in \bN$ with $M_1> M_0$ and every $M\ge M_1$, we have by \eqref{series_2-M} that
$$
\sum_{\tiny\begin{array}{c}
l,m\in \bZ^n
\\
a_M\leq |l|+|m|< a_{M+1}
\end{array} }\!\!\!\!\!\!\!\!\!
|c_{l,m}|\|h_{l,m}^{(M)}\|_{\cS(\bR^{d}),M_1}
\leq \!\!\!\!\!\!\!\!\!
\sum_{\tiny\begin{array}{c}
l,m\in \bZ^n
\\
a_M\leq |l|+|m|
\end{array} }\!\!\!\!\!\!\!\!\!
|c_{l,m}|\|h_{l,m}^{(M)}\|_{\cS(\bR^{d}),M}
\leq 2^{-M} \| F\|_{\cS(N), B_{M_0}}
\ .
$$

This implies that $\|f\|_{\cS(\bR^{d}), M_1}$ is finite. Moreover, combining \eqref{B_M-estimate} and \eqref{series_2-M} together, we have that
$$
\|f^{(M)}\|_{\cS(\bR^d),M_0}\le (C_{M_0}+2^{-M_0})\|F\|_{\cS(N),B_{M_0}}\ ,
$$
as required.
\end{proof}

\vskip1cm
\section{Pairs satisfying Vinberg's condition}\label{spectra}
\bigskip

Given the results of \cite{ADR1, ADR2, FR, FRY1} and the reduction arguments exhibited in Section \ref{section-reductions}, the pairs for which we need to prove  Property (S) are those listed in Table \ref{vinberg}. We recall the splitting $\fz=\fz_0\oplus\check \fz$, where $\check\fz$ is the subspace of fixed points under $K$.

\begin{table}[htdp]
\begin{center}
\begin{tabular}{|r||l|l|l|l|l|}
\hline
&$K$&$\fv$&$\fz$&notes&$\fz_0$ (if $\ne\fz$)\\
\hline
1&${\rm SO}_n$&$\bR^n$&$\fs\fo_n$&$n\ge4$&\\
2&${\rm U}_n$&$\bC^n$&$\fu_n$&$n\ge2$&$\su_n$\\
3&$\text{Sp}_n$&$\bH^n$&$HS^2_0\bH^n\oplus\IM\bH$&$n\ge2$&$HS^2_0\bH^n$ \\
\hline
4&${\rm SU}_{2n+1}$&$\bC^{2n+1}$&$\Lambda^2\bC^{2n+1}$&$n\ge2$& \\
5&${\rm U}_{2n+1}$&$\bC^{2n+1}$&$\Lambda^2\bC^{2n+1}\oplus\bR$&$n\ge1$&$\Lambda^2\bC^{2n+1}$ \\
6& ${\rm SU}_{2n}$& $\bC^{2n}$& $\Lambda^2\bC^{2n}\oplus\bR$&$n\ge2$& $\Lambda^2\bC^{2n}$\\
\hline
7&${\rm U}_2\times {\rm SU}_n$&$\bC^2\otimes\bC^n$&$ \fu_2$& $n\ge2$&$\su_2$\\
8&${\rm U}_2\times\text{Sp}_n$&$\bC^2\otimes\bC^{2n}$&$ \fu_2$ &$n\ge2$&$\su_2$\\
9&${\rm U}_1\times\text{Spin}_7$&$\bR^2\otimes\bO$&$\IM\bO\oplus\bR$&&$\IM\bO$\\
10&$\text{Sp}_2\times\text{Sp}_n$&$\bH^2\otimes\bH^n$&$\fs\fp_2$&$n\ge1$&\\
\hline
\end{tabular}
\end{center}
\bigskip
\caption{The reduced list of pairs satisfying Vinberg's condition}
\label{vinberg}
\end{table}

All pairs in Table~\ref{vinberg} admit Hilbert bases ${\boldsymbol\rho}$ which are free  of relations and satisfy the conditions of Subsection \ref{special}. They are given in  \cite[Section 7]{FRY1} and are reproduced in Table~\ref{invariants} at the end of the paper. For unexplained notation, we refer to \cite{FRY1}.
 
It is apparent in Table \ref{vinberg} that
$K=K_0\times K'$ (with $K'$ possibly trivial), where the action of $K_0$ on $\fz$ is faithful (up to a finite subgroup at most) and that of $K'$ is trivial.

We say that an element of $\fz$ is {\it regular} if its $K$-orbit has maximal dimension and {\it singular} otherwise.  Clearly, regularity of an element only depends  on its $\fz_0$-component.

\vskip1cm
\section{Quotient pairs}\label{quotient}
\bigskip

Given $t\in\fz$, we set $\ft_t=\fk\cdot t$, i.e., the tangent space in $t$ to the $K$-orbit  $K\cdot t$, and $\fz_t=(\fk \cdot t)^\perp$.  We consider the quotient algebra $\fn_t=\fn/\ft_t$, and denote the canonical projection by ${\rm proj}_t$. As in Subsection~\ref{modulo-s}, we regard $\fn_t$ as $\fv\oplus\fz_t$, with Lie bracket $[v,v']_{\fn_t}={\rm proj}_t[v,v']$. By $N_t$ we denote the quotient group $N/\exp\ft_t$.

Since $\ft_t$ and $\fz_t$ are invariant under the action of $K_t$, the stabilizer of $t$ in $K$,  passing to the quotient we obtain an action of $K_t$ on $N_t$. We call $(N_t,K_t)$ a {\it quotient pair} of $(N,K)$. By \cite{C, V1}, $(N_t,K_t)$ is a n.G.p.

The quotient pairs generated by the pairs in Table \ref{vinberg} are listed in the Appendix at the end of the paper. Here we only point out the following facts.

\begin{remark}\label{QP-sec5}
{\rm For the pairs at lines 1-6, the quotient pairs are products of pairs in the same list, possibly up to a central reduction. 
In detail, pairs at line 1 bring in pairs at line 2 plus pairs at line 1 of lower rank. Pairs at lines 2 and 3 form self-contained families. Pairs at lines 4,\,5, and 6 lead to pairs at line 3 and pairs  of lower rank in the same family. 

For the pairs at lines 7-10, we obtain instead new pairs, which are indecomposable, but do not satisfy Vinberg's condition. }
\end{remark}

\bigskip
\subsection{Slices and radialisation}\label{sec-slices}\quad
\medskip

Suppose that a compact real Lie group $K$ acts 
on a linear space $W$. Take any $x\in W$ and let 
$K_x$ be the stabiliser of $x$ in $K$. Let also 
$N_x:=(\fk\cdot x)^\perp$ be the normal space to the orbit $Kx$ at $x$.
A construction of a {\it slice} for a compact group action goes back to 
Gleason \cite{Glea}. We will need the ``linear" version of the slice theorem.

\begin{theorem}\label{Slice-Thm}
There is an open and $K_x$-invariant (Euclidean) neighbourhood $S_x$ of $0$ in $N_x$
such that the $K$-equivariant map
$$
\sigma:K\times_{K_x} S_x \longrightarrow W,
$$
given by $\sigma(k,y)= k(x+y)$, 
is
a diffeomorphism of $K\times_{K_x}S_x$ onto the open neighbourhood $K(x+S_x)$ of $Kx$.
\end{theorem}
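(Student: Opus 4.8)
The plan is to deduce the linear slice theorem from the standard (tubular neighbourhood) slice theorem for compact group actions on manifolds, exploiting the linearity of the action to make the tube explicit. First I would equip $W$ with a $K$-invariant inner product, which exists by averaging; then the orbit $Kx$ is a compact submanifold of $W$, its normal bundle in $W$ is $K$-equivariantly isomorphic to the homogeneous vector bundle $K\times_{K_x}N_x$ via the map $[k,y]\mapsto k(x+y)$ restricted fibrewise, and the exponential map of the flat metric on $W$ is just $(p,w)\mapsto p+w$. So the content to extract is: the map $\sigma\colon K\times_{K_x}N_x\to W$, $\sigma(k,y)=k(x+y)$, is a $K$-equivariant local diffeomorphism along the zero section $K\times_{K_x}\{0\}\cong Kx$, and one needs an invariant neighbourhood $S_x$ of $0$ in $N_x$ on which $\sigma$ restricts to a diffeomorphism onto an open set.

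The key steps, in order, are as follows. (1) Check that $\sigma$ is well defined on $K\times_{K_x}S_x$: this needs $S_x$ to be $K_x$-invariant, which is why we intersect any candidate neighbourhood with its $K_x$-orbit (a compact group averaging/intersection argument), and then well-definedness is immediate since $\sigma(k k_0, k_0^{-1}y)=k k_0(x+k_0^{-1}y)=k(k_0 x+y)=k(x+y)$ using $k_0 x=x$. (2) Compute the differential of $\sigma$ at a point $[k,0]$ of the zero section: identifying the tangent space to $K\times_{K_x}N_x$ there with $(\fk/\fk_x)\oplus N_x$, the differential is essentially the identity $(\fk\cdot x)\oplus N_x=W$, hence $\sigma$ is a local diffeomorphism at every point of the zero section. (3) Since $\sigma$ maps the compact zero section diffeomorphically onto $Kx$ and is a local diffeomorphism near it, a standard tubular-neighbourhood argument gives an open $K$-invariant neighbourhood $\mathcal{U}$ of the zero section on which $\sigma$ is a diffeomorphism onto an open neighbourhood of $Kx$. (4) Shrink $\mathcal{U}$ to the form $K\times_{K_x}S_x$: choose $\varepsilon>0$ so that the $\varepsilon$-ball $B_\varepsilon$ in $N_x$ has $K\times_{K_x}B_\varepsilon\subseteq\mathcal{U}$ (possible by compactness of $K$ and of the zero section), set $S_x=B_\varepsilon$, which is automatically $K_x$-invariant because the $K_x$-action on $N_x$ is orthogonal. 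Then $\sigma$ restricted to $K\times_{K_x}S_x$ is the desired diffeomorphism onto $K(x+S_x)$, and $K$-equivariance is built into the construction.

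The step I expect to be the main (though mild) obstacle is step (3): making precise that a $K$-equivariant map which is a local diffeomorphism along a compact embedded orbit and restricts to an embedding on that orbit is actually injective on some invariant neighbourhood of it. The cleanest route is to invoke the equivariant tubular neighbourhood theorem directly, or alternatively to argue by contradiction with sequences $(k_j,y_j)\neq(k_j',y_j')$ mapping to the same point with $y_j,y_j'\to 0$, using compactness of $K$ to extract convergent subsequences and then the local injectivity from step (2) to derive a contradiction; one must be a little careful that the limit points can be arranged to lie over the same orbit point. The homogeneous-bundle bookkeeping in steps (1)–(2) is routine once the invariant inner product is fixed, and equivariance is formal throughout, so the only genuine content is this local-to-global injectivity, which is exactly the classical slice theorem of Gleason cited in the statement; in a fully written proof I would simply reference that and spell out the reduction to the linear case.
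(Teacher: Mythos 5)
Your sketch is correct, and the paper itself offers no proof of this theorem at all: immediately after the statement it merely cites Bredon \cite[Ch.~2, Sec.~5, Cor.~5.2]{bre}. Your reduction to the equivariant tubular--neighbourhood theorem---fixing a $K$-invariant inner product, noting that linearity trivialises the exponential map so the tube is $[k,y]\mapsto k(x+y)$, computing that $d\sigma$ along the zero section identifies $(\fk/\fk_x)\oplus N_x$ with $\fk\cdot x\oplus N_x=W$, and then shrinking to an invariant ball $S_x=B_\varepsilon$---is exactly the standard argument underlying the cited result, and your flagged concern about passing from local injectivity along the compact orbit to global injectivity on an invariant tube is the right (and only) nontrivial point; the compactness/sequence argument you outline, with care to normalise representatives in $K\times_{K_x}N_x$ so both sequences converge to the same $[k,0]$, closes it.
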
 

We call $S_x$ a {\it slice at} $x$. The notation $K\times_{K_x} S_x$ stands for the quotient of $K\times S_x$ modulo  the action of $K_x$, i.e., $(kk',x)$ is equivalent to $(k,k'x)$ for $k'\in K_x$.

%%the equivalence relation
%%$$
%%(k,y)\cong(k',y') \Longleftrightarrow kx= k'x\text{ and } ky=k'y'\ .
%%$$

The proof of Theorem~\ref{Slice-Thm} can be found, for instance, in \cite[Ch.~2,\,Section~5]{bre}, 
in particular, see Corollary~5.2 therein. 
The theorem has the following almost immediate consequence (for part (i), see e.g. \cite[Ch.~2,\,Sections~4,~5]{bre}).
  
\begin{corollary}\label{Slice-cor}\quad
\begin{enumerate}
\item[\rm(i)] For every $y\in S_x$ we have the inclusion $K_y\subset K_x$, more explicitly  
$K_y=(K_x)_y$.
\item[\rm(ii)]  Two points in $S_x$ are conjugate under $K$ if and only if they are conjugate under~$K_x$.
\item[\rm(iii)] Suppose that $f$ is a $K_x$-invariant smooth function 
on $S_x$.  Then $f$
extends in a unique way to a smooth $K$-invariant function $f^{\rm rad}$ 
on $K(x+S_x)$.
\end{enumerate}
\end{corollary}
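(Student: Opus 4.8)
The plan is to derive all three parts as formal consequences of Theorem~\ref{Slice-Thm}, using that $\sigma\colon K\times_{K_x}S_x\to K(x+S_x)$ is a $K$-equivariant diffeomorphism, where $K$ acts on $K\times_{K_x}S_x$ by left translation in the first factor. Throughout I identify the slice with $x+S_x\subseteq W$, writing its points as $x+y$ with $y\in S_x$, so that $\sigma(e,y)=x+y$; I also use that the equivalence relation on $K\times S_x$ is generated by $(kk',y)\sim(k,k'y)$ for $k'\in K_x$, so that $[k,y]=[e,y']$ forces $k\in K_x$ and $ky=y'$.

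For (i) and (ii) I argue by injectivity of $\sigma$, writing $K_y$ for the isotropy group of the point $x+y\in W$. Suppose $k(x+y)=x+y'$ with $y,y'\in S_x$; then $\sigma([k,y])=x+y'=\sigma([e,y'])$, and injectivity of $\sigma$ gives $[k,y]=[e,y']$ in $K\times_{K_x}S_x$, hence $k\in K_x$ and $ky=y'$. Specializing to $y'=y$ yields $K_y\subseteq K_x$, and since the reverse inclusion is trivial, $K_y=(K_x)_y$; this is (i). Leaving $y,y'$ general shows that any $K$-conjugacy between two points of the slice is realized inside $K_x$, which is (ii), the converse implication being obvious.

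For (iii) I first push $f$ up to the associated bundle. Given $f$ smooth and $K_x$-invariant on $S_x$, set $\tilde f([k,y])=f(y)$ on $K\times_{K_x}S_x$; this is well defined by $K_x$-invariance of $f$, manifestly $K$-invariant for the left action, and smooth, because the quotient map $q\colon K\times S_x\to K\times_{K_x}S_x$ is a surjective submersion (a principal $K_x$-bundle) and $\tilde f\circ q$ is the smooth function $(k,y)\mapsto f(y)$. Transporting through the diffeomorphism, I then define $f^{\rm rad}=\tilde f\circ\sigma^{-1}$ on $K(x+S_x)$: it is smooth, $K$-invariant by equivariance of $\sigma$, and restricts to $f$ on $x+S_x$ since $f^{\rm rad}(x+y)=\tilde f([e,y])=f(y)$. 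Uniqueness is immediate: any $K$-invariant extension $g$ satisfies $g(k(x+y))=g(x+y)=f(y)$ on all of $K(x+S_x)$, hence agrees with $f^{\rm rad}$.

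I expect the only genuinely non-formal input to be the smoothness of $\tilde f$ (equivalently of $f^{\rm rad}$), which relies on $K\times_{K_x}S_x$ being a smooth manifold with $q$ a submersion — exactly the content of the linear slice theorem, and the reason the statement requires $S_x$ to be an open subset of the normal space $N_x$ rather than an arbitrary invariant set. Everything else is bookkeeping with the equivalence relation, so the corollary is essentially a restatement of Theorem~\ref{Slice-Thm} tailored to the two later uses: detecting conjugacy on a slice and radialising invariant functions.
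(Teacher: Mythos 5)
The paper does not actually spell out a proof of this corollary; it calls it an ``almost immediate consequence'' of Theorem~\ref{Slice-Thm} and refers to Bredon's book for part (i). Your argument correctly supplies exactly the details being omitted: (i) and (ii) are the injectivity of $\sigma$ combined with the explicit description of the equivalence relation on $K\times_{K_x}S_x$, and (iii) is the standard descent of a smooth invariant function through the principal $K_x$-bundle $q\colon K\times S_x\to K\times_{K_x}S_x$ followed by transport along the diffeomorphism $\sigma$, with uniqueness forced by $K$-invariance and surjectivity of $\sigma$. You also make the correct reading of the slightly loose notation $K_y$ as the isotropy of the point $x+y\in W$ rather than of $y\in N_x$ itself; the latter reading would make $K_y\subseteq K_x$ false in general (take $y=0$, whose isotropy is all of $K$), and your interpretation is the one consistent with the later uses of the corollary.
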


We call $f^{\rm rad}$ the {\it radialisation of} $f$. 
\smallskip

\begin{remark}
{\rm This notion of radialisation extends the one used in \cite{FR} to  general pairs, including the rank-one pairs considered in \cite{FRY1}. We notice that  the  arguments in the rest of this paper will not rely on the results of \cite{FRY1}, which in fact are being given a different proof.}
\end{remark}

We will apply Theorem~\ref{Slice-Thm} and Corollary~\ref{Slice-cor} to the action of $K$ on $\fz$.  
If 
$t\in\fz$ and $S_t$ is a slice at $t$ in $\fz$,  then 
$\fv\times S_t$ 
is a slice at $t$ for the $K$-action on $\mathfrak v{\oplus}\mathfrak z$. 
(The injectivity on the ``$\mathfrak z$"-side and zero at the ``$\mathfrak v$"-side of $t$ allow us to include $\mathfrak v$ into 
the slice.)

\bigskip
\subsection{Relations between invariants for $(N,K)$ and invariants on its quotient pairs}\label{relations-invariants}$\!\!{}$
\medskip

For $t\in\fz$, we consider the quotient pair $(\fn_t,K_t)$ defined at the beginning of this section.
\smallskip

 Let ${\boldsymbol\rho}$, resp. ${\boldsymbol\rho}^t$, be a minimal Hilbert basis for $(\fn,K)$, resp. $(\fn_t,K_t)$, whose elements are homogeneous in both the $\fv$- and the $\fz$-variables. Denote by ${\boldsymbol\rho}_{(k)}$, resp. ${\boldsymbol\rho}^t_{(k)}$ the invariants in ${\boldsymbol\rho}$, resp. ${\boldsymbol\rho}^t$, which have degree $k$ in the $\fv$-variables. Then ${\boldsymbol\rho}_{(0)}={\boldsymbol\rho}_\fz$ and ${\boldsymbol\rho}^t_{(0)}={\boldsymbol\rho}^t_{\fz_t}$.

\begin{remark}
\label{rem_cardinality}
{\rm We point out some identities concerning cardinalities of Hilbert bases for a given pair and its quotient pairs. Strictly speaking, none of these facts is  needed in the sequel. We will nevertheless assume them, both for notational convenience, and to avoid unnecessary inductions on $k$ in some of the proofs in this section. 

\begin{enumerate}
\item[(i)]  The cardinality of each of the sets ${\boldsymbol\rho}_{(k)}$,  ${\boldsymbol\rho}^t_{(k)}$ does not depend on the choice of the minimal Hilbert basis;
\item[(ii)] for every $k$, ${\boldsymbol\rho}_{(k)}$ and ${\boldsymbol\rho}^t_{(k)}$ have the same number of elements;
\item[(iii)] the different values of $k$ that appear are at most three, either $0,2$ or $0,1,2$ or $0,2,4$.
\end{enumerate}

To have a unified notation, we will denote the values of $k$ in (iii) as $0,k_0$ and, when present,~$2k_0$.

The first statement is perfectly clear, 
for example, the cardinality of ${\boldsymbol\rho}_{(k)}$ is equal to the dimension (over $\mathbb R$) of 
${\mathcal A}_k/ ({\mathcal A}_k\cap {\mathcal A}^2)$, 
where  ${\mathcal A}=(\fS^{>0}(\gt n))^K$ and   ${\mathcal A}_k=\mathcal A\cap \fS^k(\gt v){\otimes}\fS(\gt z)$.  

Since the $K$-action on $\gt n$ admits a free Hilbert basis, see \cite[Section 7]{FRY1}, this property is inherited by 
$(\fn_t,K_t)$, see e.g. \cite[Theorem~8.2]{PV}.
In particular, a minimal Hilbert basis ${\boldsymbol\rho}^t$ is also free of relations. 
Recall next that $K$-orbits in $\gt n$ (as well  as $K_t$-orbits in $\gt n_t$) of maximal dimension form a Zariski open subset, the so-called subset of {\it generic} points. 
Being a non-empty open subset, $\gt v{\times}S_t$ contains a generic point, say $\zeta$, of $\gt n_t$.
By Theorem~\ref{Slice-Thm}, $\zeta$ is also generic in $K(\fv{\times}(t+S_t))$ 
and hence in $\gt n$.  We see also that $\gt z_\zeta=(\gt z_t)_\zeta$ and  $K_\zeta=(K_t)_\zeta$. 
Since orbits of a compact group are closed and closed orbits are always separated  
by polynomial invariants, 
the number of elements in 
${\boldsymbol\rho}_{(0)}$ (resp. ${\boldsymbol\rho}^t_{(0)}$) is equal to 
the codimension of a generic $K$-orbit in $\gt z$ (resp. $K_t$-orbit in  $\gt z_t$), i.e., $\dim\gt z_\zeta$ ($\dim(\gt z_t)_\zeta$, respectively), see e.g. \cite[Sections~2.3,~4]{PV}. 
This takes care of the case $k=0$. 
For $k>0$, the statement of part~(ii) follows now from Lemma~\ref{k-numbers}  applied to both $(K,\gt n)$ and 
$(K_t,\gt n_t)$.

Statement (iii) is evident from Table~\ref{invariants} and Table~\ref{quotient-invariants} in Section~\ref{third-block}.
}\end{remark}

\begin{lemma}\label{k-numbers}
Suppose that we have a linear  action of a compact group $K$ on 
$\gt v{\oplus}\gt z$ and the algebra of invariants $\mathbb R[\gt v{\oplus}\gt z]^K$ 
has a free Hilbert basis. Then 
the number of generating bi-homogeneous  invariants of degree $k>0$ in $\gt v$ is the same as the number of generators of degree $k$ in a minimal Hilbert basis in 
$\mathbb R[\gt v]^{K_\zeta}$, where 
$\zeta\in\gt z$ and
the orbit $K\zeta\subset\gt z$ has the maximal possible dimension.
\end{lemma}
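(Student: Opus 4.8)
The plan is to compare the two graded pieces using a generic point $\zeta \in \gt z$ and the slice theorem. The key observation is that bi-homogeneous invariants of degree $k$ in the $\gt v$-variables, evaluated as polynomials in $\gt v$ for fixed $\zeta$, are exactly the source of $K_\zeta$-invariant homogeneous polynomials of degree $k$ on $\gt v$; so the count on both sides is governed by the same Hilbert series / Poincaré series identity, and the freeness hypothesis lets us isolate the degree-$k$ piece cleanly.

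\textbf{Step 1: set-up via a generic point.} First I would fix $\zeta\in\gt z$ whose orbit $K\zeta$ has maximal dimension, so that $K_\zeta$ is (up to finite index) the generic stabiliser. Since $\mathbb R[\gt v\oplus\gt z]^K$ has a free Hilbert basis ${\boldsymbol\rho}$, split according to degree in $\gt v$: write ${\boldsymbol\rho}={\boldsymbol\rho}_{(0)}\sqcup{\boldsymbol\rho}_{(k_0)}\sqcup\cdots$, and let $a_k=\#{\boldsymbol\rho}_{(k)}$. On the other side, let $b_k$ be the number of degree-$k$ generators in a minimal Hilbert basis of $\mathbb R[\gt v]^{K_\zeta}$. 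The goal is $a_k=b_k$ for $k>0$.

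\textbf{Step 2: specialise invariants at $\zeta$.} The map that sends $p(v,z)\in\mathbb R[\gt v\oplus\gt z]^K$, bi-homogeneous of degree $k$ in $\gt v$, to $p(\cdot,\zeta)\in\mathbb R[\gt v]_k^{K_\zeta}$ is well-defined (a $K$-invariant restricts to a $K_\zeta$-invariant once we freeze the $\gt z$-slot at a $K_\zeta$-fixed… here $\zeta$ is fixed by $K_\zeta$). By the slice theorem (Theorem~\ref{Slice-Thm}) applied to the $K$-action on $\gt z$ — more precisely its consequence that $\gt v\times S_\zeta$ is a slice at $\zeta$ for the action on $\gt v\oplus\gt z$, together with the fact that closed orbits are separated by invariants — restriction of $K$-invariant polynomials on $\gt v\oplus\gt z$ to the slice surjects onto the $K_\zeta$-invariant polynomials on $\gt v\times S_\zeta$ in a neighbourhood of $(0,\zeta)$, hence onto a set of generators of the algebra of $K_\zeta$-invariant polynomials on $\gt v$ after further restricting $S_\zeta$ to $\{\zeta\}$ and using homogeneity. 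Consequently the images $\{\rho(\cdot,\zeta):\rho\in{\boldsymbol\rho}_{(k)}\}$ generate $\mathbb R[\gt v]^{K_\zeta}$ over the subalgebra generated by lower-$\gt v$-degree pieces, which gives $a_k\ge b_k$.

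\textbf{Step 3: the reverse inequality via freeness and dimension count.} For the opposite bound I would use that ${\boldsymbol\rho}$ is free: $\mathbb R[\gt v\oplus\gt z]^K$ is a polynomial ring on the $\rho_j$'s, so its bigraded Hilbert series factors as $\prod_j (1-s^{r_j}t^{s_j})^{-1}$ with $r_j=\deg_{\gt v}\rho_j$. Setting $t$ to extract the appropriate coefficient and comparing with the Hilbert series of $\mathbb R[\gt v]^{K_\zeta}$ — which by the slice description equals the "fibre" of $\mathbb R[\gt v\oplus\gt z]^K$ over a generic $\gt z$-orbit, so that $\dim_{\mathbb R}\big(\mathbb R[\gt v]^{K_\zeta}_k\big)$ matches the dimension of the degree-$(k,0)$-generated-modulo-nothing piece — forces the generator counts in each $\gt v$-degree to agree. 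The cleanest route is: the slice identification gives an isomorphism of graded algebras between $\mathbb R[\gt v]^{K_\zeta}$ and $\mathbb R[\gt v\oplus\gt z]^K\big/\big({\boldsymbol\rho}_\fz-{\boldsymbol\rho}_\fz(\zeta)\big)$; since the ${\boldsymbol\rho}_\fz={\boldsymbol\rho}_{(0)}$ form part of a regular sequence (freeness), this quotient is again a polynomial ring, on the images of ${\boldsymbol\rho}_{(k)}$, $k>0$, and these images are algebraically independent — so they form a minimal Hilbert basis, giving $b_k=a_k$.

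\textbf{Main obstacle.} The delicate point is making the slice argument yield an \emph{algebra} isomorphism (or at least a generator-count equality) rather than just a local analytic statement: one must check that restriction of $K$-invariant polynomials to the slice $\gt v\times S_\zeta$ lands in $K_\zeta$-invariant polynomials and is surjective onto them, and that the ideal $({\boldsymbol\rho}_\fz-{\boldsymbol\rho}_\fz(\zeta))$ is exactly the kernel of specialisation — here one uses that $K\zeta$ is closed and separated from other orbits by the invariants ${\boldsymbol\rho}_\fz$, that $\gt z_\zeta=(\gt z_t)_\zeta$ and $K_\zeta=(K_t)_\zeta$ as already noted, and Luna-type results on quotients by reductive (here compact, hence reductive after complexification) groups. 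The homogeneity bookkeeping — transporting the grading on $\gt v$ through the slice and checking it is preserved — is routine but must be done carefully, since it is what pins the \emph{degree-}$k$ counts rather than just the total number of generators.
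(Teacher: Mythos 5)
Your proposal is correct and follows essentially the same route as the paper: specialize at a generic $\zeta$, note that $K_\zeta$ acts trivially on $\gt z_\zeta$, use surjectivity of the restriction $\mathbb R[\gt n]^K\to\mathbb R[\gt v]^{K_\zeta}$, and conclude by matching cardinalities with a dimension count. Two remarks on your phrasing versus the paper's: (1) the surjectivity you want in Step~2 is a global algebraic fact, not a local consequence of the slice theorem plus ``homogeneity'' --- the paper simply cites it from Yakimova's thesis --- and (2) the isomorphism $\mathbb R[\gt v]^{K_\zeta}\cong\mathbb R[\gt n]^K/({\boldsymbol\rho}_\fz-{\boldsymbol\rho}_\fz(\zeta))$ you lean on in Step~3 is \emph{not} handed to you by the slice picture; it is exactly what the paper's transcendence-degree bookkeeping establishes (a surjection from a polynomial ring onto a domain of the same Krull dimension is injective), so your ``main obstacle'' correctly identifies the crux, and the paper resolves it by computing ${\rm tr.deg}\,\mathbb R[\gt n]^K-{\rm tr.deg}\,\mathbb R[\gt v]^{K_\zeta}=\dim\gt z_\zeta=\#{\boldsymbol\rho}_{(0)}$ rather than by regular-sequence language, the two being equivalent here.
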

\begin{proof}
In view of Theorem~\ref{Slice-Thm}, the maximality of $\dim K\zeta$ implies that the action of $K_\zeta$ on $\gt z_\zeta$ is trivial.
Consider now the composition 
$$
\mathbb R[\gt n]^K\to \mathbb R[\gt v{\oplus}\gt z_\zeta]^{K_\zeta} \to \mathbb R[\gt v\times\{\zeta\}]^{K_\zeta}=\mathbb R[\gt v]^{K_\zeta}
$$
of two restriction morphisms. It is known to be surjective, see 
e.g. \cite[proof of Theorem~1.3]{Y2}. The elements of $\mathbb R[\gt z]$ 
are constant on $\gt v\times\{\zeta\}$. Since 
$K$-orbits are closed, ${\rm tr.deg}\,\mathbb R[\gt z]^K=\dim\gt z-\dim K\zeta$
(the codimension of a generic orbit, see e.g. \cite[Sections~2.3,~4]{PV}) and 
since the whole algebra of $K$-invariants on $\gt  n$ is free, 
there are exactly $\dim\gt z_\zeta$ generators in $\mathbb R[\gt z]^K$.  
The remaining generators of $\mathbb R[\gt n]^K$ restrict to the generators of $\mathbb R[\gt v]^{K_\zeta}$, because 
${\rm tr.deg}\,\mathbb R[\gt n]^K-{\rm tr.deg}\,\mathbb R[\gt v]^{K_\zeta}=\dim\gt z_\zeta$ 
by the same minimal codimension reasoning.
%
%%Hence all generators of positive degree in $\gt v$ must remain among the generators after the restriction (and no new ones appear, since the map is surjective). 
\end{proof}

We label the elements of ${\boldsymbol\rho}_{(k)}$ as $(\rho_{k,1},\dots,\rho_{k,d_k})$ and those of 
${\boldsymbol\rho}^t_{(k)}$ as $(\rho^t_{k,1},\dots,\rho^t_{k,d_k})$.

When we restrict an element of ${\boldsymbol\rho}$ to $\fn_t$, it can be expressed as a polynomial in the elements of 
${\boldsymbol\rho}^t$. By degree considerations, it is quite clear that
\begin{equation}\label{Q}
{\rho_{k,j}}_{|_{\fn_t}}(v,z)=
\begin{cases} 
Q_{0,j}\big({\boldsymbol\rho}^t_{(0)}(z)\big)&\text{ if }k=0\\
\displaystyle{\sum_{\ell=1}^{d_{k_0}}}Q_{k_0,j,\ell}\big({\boldsymbol\rho}^t_{(0)}(z)\big)\rho^t_{k_0,\ell}(v,z)&\text{ if }k=k_0\\
\begin{aligned}
\displaystyle{\sum_{\ell=1}^{d_{2k_0}}}&Q_{2k_0,j,\ell}\big({\boldsymbol\rho}^t_{(0)}(z)\big)\rho^t_{2k_0,\ell}(v,z)\\&+\displaystyle{\sum_{\ell,\ell'=1}^{d_{k_0}}}R_{2k_0,j,\ell,\ell'}\big({\boldsymbol\rho}^t_{(0)}(z)\big)\rho^t_{k_0,\ell}(v,z)\rho^t_{k_0,\ell'}(v,z)
\end{aligned}&\text{ if }k=2k_0\ ,
\end{cases}
\end{equation}
where the $Q$'s and the $R$'s are polynomials.
Then the map $\cQ$ from $\bR^d=\bR^{d_0+d_{k_0}+d_{2k_0}}$ to itself, whose components $Q_{k,j}$ are
\begin{equation}\label{Qcomponents}
\begin{aligned}
Q_{0,j}&=Q_{0,j}(\xi_{(0)})\ ,\\  Q_{k_0,j}&=\sum_{\ell=1}^{d_{k_0}}Q_{k_0,j,\ell}(\xi_{(0)})\xi_{k_0,\ell}\ ,\\ Q_{2k_0,j}&=\sum_{\ell=1}^{d_{2k_0}}Q_{2k_0,j,\ell}(\xi_{(0)})\xi_{2k_0,\ell}+\sum_{\ell,\ell'=1}^{d_{k_0}}R_{2k_0,j,\ell,\ell'}(\xi_{(0)})\xi_{k_0,\ell}\xi_{k_0,\ell'}\ ,
\end{aligned}
\end{equation}
 maps ${\boldsymbol\rho}^t\big(\fv\times(t+ S_t)\big)$ into ${\boldsymbol\rho}\big(\fv\times K(t+S_t)\big)$. Since the elements of ${\boldsymbol\rho}^t\big(\fv\times(t+ S_t)\big)$, resp. ${\boldsymbol\rho}\big(\fv\times K(t+S_t)\big)$, are in 1-to-1 correspondence with $K_t$-orbits in $\fv\times (t+S_t)$, resp. $K$-orbits in $\fv\times K(t+S_t)$, it follows from Corollary \ref{Slice-cor} that $\cQ$ is a bijection from ${\boldsymbol\rho}^t\big(\fv\times(t+ S_t)\big)$ onto ${\boldsymbol\rho}\big(\fv\times K(t+S_t)\big)$.
In particular, $\cQ_0=(Q_{0,j})_{1\le j\le d_{(0)}}$ is a bijection from ${\boldsymbol\rho}^t_{\fz_t}(t+S_t)$ onto ${\boldsymbol\rho}_\fz\big(K(t+S_t)\big)$. 

We show now that  $\cQ\inv$  is the restriction of a smooth map on a neighbourhood  in $\bR^d$ of ${\boldsymbol\rho}_{(0)}(t)\times\bR^{d_{k_0}+d_{2k_0}}$.

\begin{proposition}\label{Qinv}
 For $t\in\fz$, let $S_t$ be as in Corollary \ref{Slice-cor}, and let $U$ be a Euclidean neighbourhood of~$t$,  $K_t$-invariant and relatively compact in $t+S_t$. Then there exists a smooth function $\Phi$ from $\bR^d$ to $\bR^d$ which inverts $\cQ$ on ${\boldsymbol\rho}(\fn)\cap\big({\boldsymbol\rho}_{(0)}(U)\times\bR^{d_{k_0}+d_{2k_0}}\big)$. The scalar components 
$\Phi_{k,j}$ of $\Phi$ have the following form:
\begin{enumerate}
\item[\rm(i)] each $\Phi_{0,j}$ only depends on $\xi_{(0)}$;
\item[\rm(ii)] $\Phi_{k_0,j}(\xi)=\sum_{\ell=1}^{d_{k_0}}\Phi_{k_0,j,\ell}(\xi_{(0)})\xi_{k_0,\ell}$;
\item[\rm(iii)] $\Phi_{2k_0,j}(\xi)=\sum_{\ell=1}^{d_{2k_0}}\Phi_{2k_0,j,\ell}(\xi_{(0)})\xi_{2k_0,\ell}+\sum_{\ell,\ell'=1}^{d_{k_0}}\Psi_{2k_0,j,\ell,\ell'}(\xi_{(0)})\xi_{k_0,\ell}\xi_{k_0,\ell'}$.
\end{enumerate}

In particular, $\Phi_0=(\Phi_{0,j})_{1\le j\le d_\fz}$ inverts $\cQ_0$ on ${\boldsymbol\rho}_\fz(U)$.
\end{proposition}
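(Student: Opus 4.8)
The strategy is to produce $\Phi$ by inverting the map $\cQ$ in stages, exploiting the triangular structure of \eqref{Qcomponents} in the $\fv$-degree. First I would deal with the degree-$0$ block. By the discussion preceding the Proposition, $\cQ_0$ is a bijection from ${\boldsymbol\rho}^t_{\fz_t}(t+S_t)$ onto ${\boldsymbol\rho}_\fz\big(K(t+S_t)\big)$; moreover $\cQ_0$ is the polynomial map whose components are the $Q_{0,j}$. The key point is to show that its Jacobian is nonsingular at ${\boldsymbol\rho}^t_{\fz_t}(t)$ and on a neighbourhood thereof. This follows because the Hilbert basis ${\boldsymbol\rho}^t_{\fz_t}$ is free of relations (Remark~\ref{rem_cardinality}), so ${\boldsymbol\rho}^t_{\fz_t}$ has exactly $d_\fz$ algebraically independent components, and $\cQ_0$ restricted to the image of the generic locus is a bijection onto the image of the generic locus of ${\boldsymbol\rho}_\fz$, which is likewise the image of a free basis; a dominant polynomial map between irreducible affine varieties of the same dimension which is generically injective has a Jacobian that is generically nonzero, and at the generic point $\zeta\in\fv\times S_t$ used in Remark~\ref{rem_cardinality} one checks $\fz_\zeta=(\fz_t)_\zeta$ and $K_\zeta=(K_t)_\zeta$, so the two bases ``match'' infinitesimally there. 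Shrinking $U$ if necessary, the inverse function theorem then yields a smooth local inverse $\Phi_0$ of $\cQ_0$ on a neighbourhood of ${\boldsymbol\rho}_{(0)}(t)$ in $\bR^{d_\fz}$, which proves part (i) and the final assertion.

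Next I would handle the degree-$k_0$ block. By \eqref{Qcomponents}, writing $\xi_{(0)}=\cQ_0(\eta_{(0)})$ with $\eta_{(0)}=\Phi_0(\xi_{(0)})$, the relation $\xi_{k_0,j}=\sum_\ell Q_{k_0,j,\ell}(\eta_{(0)})\,\eta_{k_0,\ell}$ expresses $\xi_{k_0}$ linearly in terms of $\eta_{k_0}$ with coefficient matrix $M(\eta_{(0)}):=\big(Q_{k_0,j,\ell}(\eta_{(0)})\big)_{j,\ell}$, a square $d_{k_0}\times d_{k_0}$ matrix by Remark~\ref{rem_cardinality}(ii). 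Since $\cQ$ is a bijection on the relevant sets and the degree-$0$ piece is already inverted, $M(\eta_{(0)})$ must be invertible for $\eta_{(0)}\in{\boldsymbol\rho}^t_{\fz_t}(U)$; here I would invoke Lemma~\ref{k-numbers} (applied to the slice, i.e. to $(K_\zeta,\fv)$ at a generic $\zeta$) to see that the restriction map on degree-$k_0$ generators is an isomorphism at generic points, hence $M$ is generically invertible, hence — after possibly shrinking $U$ — invertible throughout $\overline U$ by compactness of $\overline U$ and continuity of $\det M$. Then $\Phi_{k_0,j,\ell}(\xi_{(0)}):=\big(M(\Phi_0(\xi_{(0)}))^{-1}\big)_{j\ell}$ is smooth in $\xi_{(0)}$ and setting $\Phi_{k_0,j}(\xi)=\sum_\ell \Phi_{k_0,j,\ell}(\xi_{(0)})\,\xi_{k_0,\ell}$ gives part (ii).

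For the degree-$2k_0$ block one proceeds analogously: \eqref{Qcomponents} reads $\xi_{2k_0,j}=\sum_\ell Q_{2k_0,j,\ell}(\eta_{(0)})\eta_{2k_0,\ell}+\sum_{\ell,\ell'}R_{2k_0,j,\ell,\ell'}(\eta_{(0)})\eta_{k_0,\ell}\eta_{k_0,\ell'}$. The coefficient matrix $\big(Q_{2k_0,j,\ell}(\eta_{(0)})\big)$ is again square and, by the same Lemma~\ref{k-numbers} argument together with the already-established invertibility on the two lower blocks and the bijectivity of $\cQ$, invertible on ${\boldsymbol\rho}^t_{\fz_t}(\overline U)$. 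Substituting $\eta_{(0)}=\Phi_0(\xi_{(0)})$ and $\eta_{k_0,\ell}=\Phi_{k_0,\ell}(\xi)=\sum_m\Phi_{k_0,\ell,m}(\xi_{(0)})\xi_{k_0,m}$ (which is linear in $\xi_{k_0}$), one solves linearly for $\xi_{2k_0}$ and collects terms: the $\eta_{k_0}\eta_{k_0}$ products become $\xi_{k_0,m}\xi_{k_0,m'}$ with coefficients $\Psi_{2k_0,j,m,m'}(\xi_{(0)})$ that are polynomial combinations of the $R$'s, the inverted matrix entries, and the $\Phi_{k_0,\cdot,\cdot}$'s — all smooth in $\xi_{(0)}$ — yielding exactly the shape in part (iii). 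Finally, I would extend each $\Phi_{k,j}$ from its natural domain (a neighbourhood of ${\boldsymbol\rho}_{(0)}(U)$ times $\bR^{d_{k_0}+d_{2k_0}}$) to all of $\bR^d$ by multiplying the $\xi_{(0)}$-dependent coefficient functions by a smooth cutoff supported near ${\boldsymbol\rho}_{(0)}(U)$ and equal to $1$ near ${\boldsymbol\rho}_{(0)}(t)$; this preserves the stated polynomial-in-the-higher-variables structure and does not affect the values on ${\boldsymbol\rho}(\fn)\cap\big({\boldsymbol\rho}_{(0)}(U)\times\bR^{d_{k_0}+d_{2k_0}}\big)$, where $\Phi$ inverts $\cQ$ by construction.

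\textbf{Main obstacle.} I expect the crux to be the invertibility of the Jacobian of $\cQ_0$ at ${\boldsymbol\rho}_{(0)}(t)$ (and, relatedly, of the coefficient matrices $M$ in the higher blocks) — not merely generically, but on a full neighbourhood, including at the possibly singular point $t$ itself. The generic statement is a soft consequence of freeness of the Hilbert bases plus the slice theorem identifications $\fz_\zeta=(\fz_t)_\zeta$, $K_\zeta=(K_t)_\zeta$; promoting it to a neighbourhood of $t$ requires the compactness of $\overline U\subset t+S_t$ together with the fact that $\cQ$ is a \emph{bijection} (not just dominant) between the invariant images of $\fv\times(t+S_t)$ and $\fv\times K(t+S_t)$, which rules out the Jacobian vanishing anywhere on that set. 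Making this last implication rigorous — a globally injective smooth map between the orbit-space pictures forces nonvanishing Jacobian on the smooth locus, and here every point of the relevant image is smooth because the Hilbert bases are free — is the step demanding the most care.
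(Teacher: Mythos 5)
Your proposal takes a genuinely different route from the paper, and it contains a real gap at its central step.

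The gap is in your ``main obstacle'' paragraph: you assert that a bijective polynomial map ``between the orbit-space pictures'' must have nonvanishing Jacobian on the smooth locus. That implication is false. The one-variable map $x\mapsto x^3$ is a polynomial bijection of $\bR$ with vanishing derivative at $0$, and its inverse $x^{1/3}$ is not smooth there; higher-dimensional analogues (e.g.\ $(x,y)\mapsto(x^3-xy^2,y)$ near the origin) show the same phenomenon can occur with injectivity on a compact set. Freeness of the Hilbert bases makes the images full-dimensional semialgebraic sets but does not rule out a vanishing Jacobian at the (possibly singular) point $t$, and bijectivity on those images is not the kind of hypothesis that the inverse function theorem can be bootstrapped from. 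So your argument gives no smooth $\Phi_0$ at $t$ — which is exactly the hard case the proposition is built to handle. The subsequent blocks inherit the same difficulty, since the invertibility of the matrices $M(\eta_{(0)})$ at $\eta_{(0)}={\boldsymbol\rho}^t_{\fz_t}(t)$ is justified by the same unsound reasoning.

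The paper avoids this entirely by never differentiating $\cQ_0$. Instead it runs the argument ``upstairs'' on $\fz$: take a $K_t$-invariant cutoff $\chi$ on $t+S_t$ equal to $1$ on $U$, radialise $\chi\rho^t_j$ to a smooth $K$-invariant function on $\fz$ (Corollary~\ref{Slice-cor}(iii)), and then invoke G.~Schwarz's theorem \cite{Schw} to write that smooth $K$-invariant function as a smooth function of ${\boldsymbol\rho}_\fz$. This directly \emph{produces} $\Phi_{0,j}$ without any Jacobian hypothesis; the whole content of Schwarz's theorem is precisely to handle situations where invariants are not submersive. For the $\fv$-graded blocks the paper does the same radialise-then-Schwarz step, with Proposition~2.1 of \cite{FRY2} supplying the refinement that a smooth $K$-invariant covariant of degree $k_0$ (resp.\ $2k_0$) in $v$ has the desired polynomial shape in the ${\boldsymbol\rho}_{(k_0)}$, ${\boldsymbol\rho}_{(2k_0)}$ with smooth coefficients in ${\boldsymbol\rho}_{(0)}$. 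If you want to keep a triangular/linear-algebra presentation, you must replace the inverse function theorem by this Schwarz-theorem mechanism; otherwise the proof does not go through at singular $t$.
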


\begin{proof}
Let $\chi\in C^\infty_c(t+S_t)$ be $K_t$-invariant and equal to 1 on  $U$.   For $j=1,\dots,d_\fz$, set
$u_j(z)=(\chi \rho^t_j)^{\rm rad}(z)$ on $K(t+S_t)$ and extend it to  $\fz$ as 0. By Corollary \ref{Slice-cor} (iii), $u_j$ is smooth, so that, G. Schwarz's theorem \cite{Schw}, $u_j=\Phi_{0,j}(\rho_1,\dots,\rho_{d_\fz})$, with $\Phi_{0,j}$ smooth on $\bR^{d_0}=\bR^{d_\fz}$. 

Then
\begin{equation}\label{j-le-n}
\rho^t_j=\Phi_{0,j}(\rho_1,\dots,\rho_{d_\fz})\ ,\qquad j=1,\dots,d_\fz\ ,
\end{equation}
on $U$.
Combining this with the first line of \eqref{Q}, we conclude that the map 
$$
\Phi_0=(\Phi_{0,j})_{1\le j\le d_\fz}:{\boldsymbol\rho}_{\fz}(KU)\longrightarrow {\boldsymbol\rho}^t_{\fz_t}(U)
$$ 
is the inverse of the map $\cQ_0$. 

We apply now the same construction to the elements of ${\boldsymbol\rho}_{(k_0)}$. The radialisation of $\chi\rho^t_{k_0,\ell}$, defined on $\fv\times \fz$, remains a polynomial of degree $k_0$ in the $\fv$-variable. Applying Schwarz's theorem again and using Proposition 2.1 of \cite{FRY2} (with the r\^oles of $\fv$ and $\fz$ interchanged), we have that
$$
(\chi\rho^t_{k_0,\ell})^{\rm rad}=\sum_{j'=1}^{d_{(k_0)}}(\Phi_{k_0,\ell,j'}\circ{\boldsymbol\rho}_{(0)})\rho_{k_0,j'}\ ,
$$
with $\Phi_{k_0,\ell,j'}\in C^\infty(\bR^{d_\fz})$, and this gives (ii).

In the same way we obtain, for a polynomial in ${\boldsymbol\rho}_{(2k_0)}$, that
$$
\begin{aligned}
(\chi\rho^t_{2k_0,\ell})^{\rm rad}&=\sum_{j'=1}^{d_{(2k_0)}}(\Phi_{2k_0,\ell,j'}\circ{\boldsymbol\rho}_{(0)})\rho_{2k_0,j'}\\
&\qquad +\sum_{j',j''=1}^{d_{(k_0)}}(\Psi_{k_0,\ell,j',j''}\circ{\boldsymbol\rho}_{(0)})\rho_{k_0,j'}\rho_{k_0,j''}\ ,
\end{aligned}
$$
with $\Phi_{2k_0,\ell,j'},\Psi_{k_0,\ell,j',j''}\in C^\infty(\bR^{d_\fz})$, and this gives (iii).
\end{proof}

\vskip1cm
\section{Smooth maps between spectra}\label{sec_smooth}\label{relations-spectra}\quad
\bigskip

Let $(N,K)$ be a pair in Table~\ref{vinberg}.
Following the notation of \eqref{lift-phi},
to a bounded spherical function $\ph^t$ for the pair $(N_t,K_t)$, we associate the bounded spherical function for $(N,K)$
\begin{equation} \label{Lambda^gamma}
\Lambda^t\ph^t(v,z) =\int_K(\ph^t\circ{\rm proj}_t)(kv,kz)\,dk\ .
\end{equation}

\begin{proposition}\label{epi}
Then $\Lambda^t$ is a continuous surjection of $\Sigma^t$ onto $\Sigma$.
\end{proposition}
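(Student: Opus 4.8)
\textbf{Proof proposal for Proposition \ref{epi}.}
The plan is to verify continuity and surjectivity of $\Lambda^t$ separately, using the general theory already set up for lifts $\Lambda$ of spherical functions through central quotients.

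\emph{Continuity.} Here I would simply invoke Corollary \ref{Sigma'->Sigma}. The only point requiring a remark is that $\ft_t=\fk\cdot t\subseteq\fz$ is a $K_t$-invariant subspace of the centre, so the quotient $(N_t,K_t)$ is exactly an instance of the construction in Subsection \ref{modulo-s}, with $\fs=\ft_t$, $K'=K_t$ (note $K_t$ is precisely the stabiliser of $\ft_t$ only up to a compact factor, but $\ft_t$ is $K_t$-invariant, which is all that Subsection \ref{modulo-s} needs). Since $(N_t,K_t)$ is a n.G.p.\ by \cite{C,V1}, the hypotheses of Corollary \ref{Sigma'->Sigma} are met and $\Lambda^t$, being the averaging operation \eqref{Lambda^gamma}, is continuous from $\Sigma^t$ to $\Sigma$ in the compact-open topology.

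\emph{Surjectivity.} This is the substantive part. Fix a bounded spherical function $\ph\in\Sigma$ for $(N,K)$. By the description in Subsection \ref{section-spherical}, $\ph=\ph_{\zeta,\omega,\mu}$ for some $\zeta\in\fz$, $\omega\in\fr_\zeta$, and $\mu\in\fX_{\zeta,\omega}\subseteq\widehat{K_{\zeta,\omega}}$. The orbit $O_\ph=K\cdot\zeta\subset\fz$ is attached to $\ph$; by $K$-invariance of the whole picture (the relations $\ph_{k\zeta,k\omega,\mu}=\ph_{\zeta,\omega,\mu}$ recorded after \eqref{trace}) I may assume, after replacing $\zeta$ by $k\zeta$ for a suitable $k\in K$, that $t\in O_\ph$, i.e. $\zeta=t$. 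The key geometric fact is then that $\ft_t=\fk\cdot t=T_t(K\cdot t)$ is tangent to the orbit at $t$, hence $\fz_t=\ft_t^\perp$ is a transversal (a slice direction), and $d\pi_\zeta=d\pi_t$ kills $\ft_t$ in the central directions in a way compatible with passing to $\fn_t=\fn/\ft_t$: indeed $d\pi_t(0,s)=i\langle t,s\rangle I$ vanishes for $s\in\ft_t$ precisely because $\langle t,\fk\cdot t\rangle=0$ (the inner product is $K$-invariant, so $\langle t,Xt\rangle=0$ for $X\in\fk$). Consequently $\pi_{t,\omega}$ factors through $N_t$, giving an irreducible unitary representation of $N_t$ with the same representation space $\cH_t=\cH_\zeta$ and the same stabiliser data, so that $K_{t,\omega}$ (computed in $K$) coincides with the corresponding stabiliser in $K_t$, and the multiplicity-free decomposition \eqref{multiplicity-free} is literally the same. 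Thus $\mu$ also indexes a spherical function $\ph^t:=\ph^t_{t,\omega,\mu}$ for $(N_t,K_t)$ via formula \eqref{trace} applied on $N_t$.

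It then remains to check $\Lambda^t\ph^t=\ph$. Writing $\ph^t$ as the normalised partial trace of $\pi_{t,\omega}$ viewed on $N_t$, and $\ph$ as the same partial trace of $\pi_{t,\omega}$ viewed on $N$, the identity $\ph^t\circ{\rm proj}_t(v,z)=\frac1{\dim V(\mu)}\tr\big(\pi_{t,\omega}(v,z)|_{V(\mu)}\big)$ holds for $(v,z)$ in a $K_t$-invariant set, and averaging over $K$ as in \eqref{Lambda^gamma} reproduces exactly \eqref{trace}, so $\Lambda^t\ph^t=\ph_{t,\omega,\mu}=\ph$. (Boundedness of $\ph^t$ follows from boundedness of $\ph$, since $|\ph^t|\le\|\Lambda^t\ph^t\|_\infty=\|\ph\|_\infty$ pointwise on the slice, or directly from \cite{BJR90} applied to $(N_t,K_t)$.) This proves $\Lambda^t$ is onto $\Sigma$.

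\emph{Main obstacle.} The delicate point is the bookkeeping with stabilisers and the metaplectic/projective issue: one must be sure that restricting $\pi_{t,\omega}$ to $N_t$ does not change the relevant stabiliser group nor the multiplicity-free decomposition, and that the linearisation footnote (via \cite{F}) still applies for $(N_t,K_t)$. This is true because $(N_t,K_t)$ is itself a n.G.p.\ of step at most two with $[\fn_t,\fn_t]$ central, so the same Stone--von Neumann analysis of Lemma \ref{representations} applies verbatim; the only thing to track carefully is that the symplectic form $B_t$ on $\fr_t^\perp\subseteq\fv$ is unchanged under ${\rm proj}_t$ since ${\rm proj}_t$ is the identity on $\fv$. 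Once this identification of the representation-theoretic data of $(N,K)$ at $\zeta=t$ with that of $(N_t,K_t)$ is in place, surjectivity is immediate.
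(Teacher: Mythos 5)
There is a genuine gap in your surjectivity argument, at the very first reduction. You write that, ``after replacing $\zeta$ by $k\zeta$ for a suitable $k\in K$, [you] may assume that $t\in O_\ph$, i.e.\ $\zeta=t$.'' Replacing $\zeta$ by $k\zeta$ only changes the representative inside the orbit $O_\ph=K\cdot\zeta$; it cannot arrange that this orbit contains $t$. The orbit attached to $\ph$ is intrinsic to $\ph$ and in general has nothing to do with $t$: for instance the spherical functions with $\zeta\in\check\fz$ (in particular $\zeta=0$, the ``abelian'' ones), or with $\zeta$ in any orbit other than $K\cdot t$, are simply not reached by your argument. So what you prove is only that the single stratum of $\Sigma$ lying over the orbit $K\cdot t$ is in the image of $\Lambda^t$ (which is the content of Corollary \ref{bijection}), not that $\Lambda^t$ is onto $\Sigma$.

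The correct reduction, and the actual substance of the statement, is different: every $K$-orbit in $\fz$ meets $\fz_t=(\fk\cdot t)^\perp$ (e.g.\ maximize $k\mapsto\lan k\zeta,t\ran$ over the compact group $K$ and use the $K$-invariance of the inner product to see that a maximizer $k_0\zeta$ is orthogonal to $\fk\cdot t$), so one may assume $\zeta\in\fz_t$. Then $\pi_{\zeta,\omega}$ does factor through $N_t$, but now $(K_t)_{\zeta,\omega}$ is in general a \emph{proper} subgroup of $K_{\zeta,\omega}$, so the decomposition of $\cH_\zeta$ under $(K_t)_{\zeta,\omega}$ refines \eqref{multiplicity-free}: each $V(\mu)$ breaks into several $V^t(\nu)$. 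One must then show that for any $\nu$ with $V^t(\nu)\subseteq V(\mu)$, averaging over $K$ the normalized partial trace of $\pi_{\zeta,\omega}$ on the \emph{smaller} space $V^t(\nu)$ reproduces the normalized partial trace on $V(\mu)$, i.e.\ $\Lambda^t\ph^t_{\zeta,\omega,\nu}=\ph_{\zeta,\omega,\mu}$; this is a Schur's-lemma computation (the $K$-average of a diagonal matrix coefficient over a $K_{\zeta,\omega}$-irreducible equals the normalized trace over it), and it is exactly what makes $\Lambda^t$ surjective but non-injective, cf.\ \eqref{Lambda-inv}. Your special case $\zeta=t$ trivializes this step because there the two stabilizers coincide, which is why the difficulty disappeared. (Two minor points: your continuity argument via Corollary \ref{Sigma'->Sigma} is fine; but the boundedness remark at the end is backwards, since averaging gives $\|\Lambda^t\ph^t\|_\infty\le\|\ph^t\|_\infty$ rather than the inequality you state — boundedness of $\ph^t$ is anyway immediate from \eqref{trace}, as it is a normalized partial trace of a unitary representation.)
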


\begin{proof}
Given $\zeta\in\fa$ and $\omega\in\fr_\zeta$ , consider the two decompositions of $\cH_\zeta$ into irreducible components under the action of $K_{\zeta,\omega}$ and $(K_t)_{\zeta,\omega}$ respectively,
$$
\cH_\zeta=\sum_{\mu\in\fX_{\zeta,\omega}}V(\mu)\ ,\qquad \cH_\zeta=\sum_{\nu\in\fX^t_{\zeta,\omega}}V^t(\nu)\ .
$$

Since both decompositions are multiplicity-free and $(K_t)_{\zeta,\omega}\subseteq K_{\zeta,\omega}$, each $V(\mu)$ is a finite union of $V^t(\nu)$.

The spherical function $\ph^t_{\zeta,\omega,\nu}\in\Sigma^t$ is, by \eqref{trace},  
$$
\ph^t_{\zeta,\omega,\nu}(v,z)=\frac1{\dim V^t(\nu)}\int_{K_t}\tr\big(\pi_{\zeta,\omega}(hv,hz)_{|_{V^t(\nu)}}\big)\,dh\ .
$$

By Schur's lemma, for every $\mu\in\fX_{\zeta,\omega}$ and every unit element $e\in V(\mu)$,
$$
\int_K\lan\pi_{\zeta,\omega}(kv,kz)e,e\ran\,dk=\frac1{\dim V(\mu)}\int_K\tr\big(\pi_{\zeta,\omega}(kv,kz)_{|_{V(\mu)}}\big)\,dk\ .
$$

It follows easily that, if $V^t(\nu)\subseteq V(\mu)$, then $\Lambda^t\ph^t_{\zeta,\omega,\nu}=\ph_{\zeta,\omega,\mu}$. Continuity of $\Lambda^t$ with respect to the compact-open topologies on the two spectra is obvious from \eqref{Lambda^gamma}.
\end{proof}

We point out that the proof shows the following identity:
\begin{equation}\label{Lambda-inv}
(\Lambda^t)\inv (\ph_{\zeta,\omega,\mu})=\{\ph^t_{\zeta,\omega,\nu}:V^t(\nu)\subseteq V(\mu)\}\ .
\end{equation}

This has the following consequence. 

\begin{corollary}\label{bijection}
 For $t\in\fz$,  let $S_t$ be as in Corollary \ref{Slice-cor}. If
\begin{equation}\label{tildeOmega}
\tilde S_t=\{\ph_{\zeta,\omega,\mu}:\zeta\in t+S_t\}\subset\Sigma\ ,
\end{equation}
then $\Lambda^t$ is a bijection from $(\Lambda^t)\inv(\tilde S_t)$ to $\tilde S_t$.
\end{corollary}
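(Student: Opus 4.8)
For $t\in\fz$ and $S_t$ as in Corollary~\ref{Slice-cor}, the set $\tilde S_t=\{\ph_{\zeta,\omega,\mu}:\zeta\in t+S_t\}$ has the property that $\Lambda^t$ restricts to a bijection from $(\Lambda^t)\inv(\tilde S_t)$ onto $\tilde S_t$.

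**Plan.** The surjectivity onto $\tilde S_t$ is immediate: by Proposition~\ref{epi} every point of $\tilde S_t\subset\Sigma$ is hit by $\Lambda^t$, and by definition of the preimage every such point is hit from within $(\Lambda^t)\inv(\tilde S_t)$. So the entire content is injectivity of $\Lambda^t$ restricted to $(\Lambda^t)\inv(\tilde S_t)$. Here is how I would set it up. First I would record that every bounded spherical function of $(N,K)$, resp.\ $(N_t,K_t)$, is of the form $\ph_{\zeta,\omega,\mu}$, resp.\ $\ph^t_{\zeta,\omega,\nu}$, where $\zeta$ ranges over $\fz$ (resp.\ $\fz_t$, but via ${\rm proj}_t$ one may take $\zeta\in\fz$), $\omega\in\fr_\zeta$, and $\mu$ (resp.\ $\nu$) labels an irreducible $K_{\zeta,\omega}$-summand (resp.\ $(K_t)_{\zeta,\omega}$-summand) of $\cH_\zeta$; and that, because of the homogeneity-in-$\fz$ description in Lemma~\ref{representations} together with $K$-invariance, the parameters $(\zeta,\omega)$ underlying a given spherical function are determined only up to the $K$-action on $(N,K)$ and the $K_t$-action on $(N_t,K_t)$. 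Thus a spherical function of $(N,K)$ determines a well-defined $K$-orbit $K\zeta\subset\fz$, and a spherical function of $(N_t,K_t)$ determines a well-defined $K_t$-orbit in $\fz_t$, hence (identifying $\fz_t=\ft_t^\perp\subset\fz$ and projecting) we really want to compare $K$-orbits in $\fz$ with $K_t$-orbits contained in $t+S_t$.

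The key step is the following dichotomy. Suppose $\ph^t_{\zeta,\omega,\nu}$ and $\ph^t_{\zeta',\omega',\nu'}$ both lie in $(\Lambda^t)\inv(\tilde S_t)$ and satisfy $\Lambda^t\ph^t_{\zeta,\omega,\nu}=\Lambda^t\ph^t_{\zeta',\omega',\nu'}$. By \eqref{Lambda-inv} the common image is some $\ph_{\zeta,\omega,\mu}$ with $V^t(\nu)\subseteq V(\mu)$ and, after conjugating the primed data by an element of $K$, $V^t(\nu')\subseteq V(\mu)$ as well, with the \emph{same} $\zeta,\omega,\mu$ (up to $K$). Now the constraint that $\ph^t_{\zeta,\omega,\nu}$ and $\ph^t_{\zeta',\omega',\nu'}$ both belong to $\tilde S_t$ forces $\zeta,\zeta'\in K\cdot(t+S_t)$ in the original normalization, i.e.\ after the $K$-conjugation the relevant elements of $t+S_t$ are $K$-conjugate; by Corollary~\ref{Slice-cor}(ii) they are then $K_t$-conjugate inside $t+S_t$, and by Corollary~\ref{Slice-cor}(i) the stabilizers coincide, so $K_{\zeta,\omega}=K_{\zeta',\omega'}$ and $(K_t)_{\zeta,\omega}=(K_t)_{\zeta',\omega'}$, all \emph{as the same subgroups}. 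Therefore $\mu=\mu'$ (same $K_{\zeta,\omega}$-summand of $\cH_\zeta$ containing both $V^t(\nu)$ and $V^t(\nu')$ — but a priori there could be several), and then $\nu$ and $\nu'$ are both irreducible $(K_t)_{\zeta,\omega}$-summands of the \emph{same} $\cH_\zeta$; since that decomposition is multiplicity-free (condition (iii)/(iv) of Definition~\ref{nGp} for the n.G.p.\ $(N_t,K_t)$), $V^t(\nu)$ and $V^t(\nu')$ are either equal or disjoint. I would close the argument by showing they are not merely ``the same isomorphism type'' but literally the same subspace, so that $\nu=\nu'$ and hence $\ph^t_{\zeta,\omega,\nu}=\ph^t_{\zeta',\omega',\nu'}$.

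**Main obstacle.** The delicate point is exactly this last bookkeeping: $\Lambda^t$ collapses a $V(\mu)$ into a finite union of $V^t(\nu)$'s, and two distinct such $V^t(\nu)$, $V^t(\nu')$ inside a single $V(\mu)$ \emph{do} map to the same $\ph_{\zeta,\omega,\mu}$ by \eqref{Lambda-inv}; so injectivity is genuinely false on $\Sigma^t$ and becomes true on $(\Lambda^t)\inv(\tilde S_t)$ only because the constraint ``$\zeta\in t+S_t$'' together with the slice properties pins down the parameters tightly enough. I expect the real work is to verify that the slice $S_t$ controls not only the orbit of $\zeta$ but, through Corollary~\ref{Slice-cor}, the stabilizer and hence the labelling set $\fX^t_{\zeta,\omega}$ consistently — that is, that distinct $\nu$'s inside one $V(\mu)$ cannot both be forced by the $t+S_t$ constraint, which should follow because for $\zeta\in t+S_t$ the decomposition of $\cH_\zeta$ under $(K_t)_{\zeta,\omega}\subseteq (K_t)_t=K_t$ is already the ``full'' $(N_t,K_t)$-decomposition, so each $\ph^t_{\zeta,\omega,\nu}\in\tilde S_t$ corresponds to exactly one $\nu$, with no residual ambiguity. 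Once that is pinned down, injectivity on $(\Lambda^t)\inv(\tilde S_t)$ follows, and combined with the surjectivity noted at the outset we get the asserted bijection.
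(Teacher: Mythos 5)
Your overall plan is right, and you correctly identify all the relevant ingredients (surjectivity via Proposition~\ref{epi}, the formula~\eqref{Lambda-inv}, Corollary~\ref{Slice-cor}, multiplicity-freeness), but you stop short of the one observation that closes the argument, and you say so yourself (``I would close the argument by showing\dots''). The gap is real and the hand-waving in your ``Main obstacle'' paragraph does not fill it: the phrase ``the decomposition under $(K_t)_{\zeta,\omega}$ is already the full $(N_t,K_t)$-decomposition'' is a tautology, and ``each $\ph^t_{\zeta,\omega,\nu}$ corresponds to exactly one $\nu$'' is about $\nu$ determining the spherical function, which is automatic and does not rule out two distinct $V^t(\nu), V^t(\nu')$ sitting inside a single $V(\mu)$. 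That is precisely what you need to exclude, and your argument leaves it open (``either equal or disjoint'').

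The missing step is short and is exactly what the paper does. For $\zeta\in t+S_t$, Corollary~\ref{Slice-cor}(i) gives $K_\zeta=(K_t)_\zeta$, and hence, taking stabilizers of $\omega$ on both sides, $K_{\zeta,\omega}=(K_t)_{\zeta,\omega}$ \emph{as subgroups of $K$}. Therefore the two decompositions of $\cH_\zeta$ appearing in the proof of Proposition~\ref{epi},
\[
\cH_\zeta=\sum_{\mu\in\fX_{\zeta,\omega}}V(\mu)\qquad\text{and}\qquad
\cH_\zeta=\sum_{\nu\in\fX^t_{\zeta,\omega}}V^t(\nu),
\]
are decompositions under the \emph{same} group, so they coincide: each $V(\mu)$ is itself $(K_t)_{\zeta,\omega}$-irreducible, and there is exactly one $\nu$ with $V^t(\nu)\subseteq V(\mu)$, namely $V^t(\nu)=V(\mu)$. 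By \eqref{Lambda-inv} the fibre of $\Lambda^t$ over $\ph_{\zeta,\omega,\mu}$ is then a singleton, which is injectivity on $(\Lambda^t)\inv(\tilde S_t)$. Notice that once you have $K_{\zeta,\omega}=(K_t)_{\zeta,\omega}$ you do not need the ``$\nu$ vs.\ $\nu'$'' bookkeeping at all, nor Corollary~\ref{Slice-cor}(ii); the elaborate dichotomy you set up is a detour around a one-line equality of stabilizers. One further small caution: in several places you write that $\ph^t_{\zeta,\omega,\nu}$ ``belongs to $\tilde S_t$,'' but $\tilde S_t\subset\Sigma$ whereas $\ph^t_{\zeta,\omega,\nu}\in\Sigma^t$; you mean $\ph^t_{\zeta,\omega,\nu}\in(\Lambda^t)\inv(\tilde S_t)$, and keeping that distinction straight would have pointed you toward the preimage formula and the stabilizer equality more directly.
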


\begin{proof} For $\zeta\in S_t$, $K_\zeta=(K_t)_\zeta$ by Corollary \ref{Slice-cor} (i). Hence, by \eqref{Lambda-inv} and \eqref{trace}, $\Lambda^t$ is injective.
\end{proof}

The same proof shows that the following more general statement holds.

\begin{corollary} \label{graph}
If $t,t'\in\fz$ and $K_t\subset K_{t'}$ (hence $\fz_t\subset\fz_{t'}$),  the map
\begin{equation} \label{Lambda^gamma_gamma'}
\Lambda^t_{t'}\ph^t(v,z) =\int_{K_{t'}}(\ph^t\circ{\rm proj}_t)(kv,kz)\,dk\ ,\qquad (z\in\fz_{t'})
\end{equation}
is continuous and surjective from $\Sigma^t$ to $\Sigma^{t'}$, and
\begin{equation}\label{inverse}
(\Lambda^t_{t'})\inv (\ph^{t'}_{\zeta,\omega,\nu'})=\{\ph^t_{\zeta,\omega,\nu}:V^t(\nu)\subseteq V^{t'}(\nu')\}\ .
\end{equation}

If $t,t',t''$ are such that $K_t\subset K_{t'}\subset K_{t''}$ then 
$$
\Lambda^t_{t''}=\Lambda^{t'}_{t''}\circ\Lambda^t_{t'}\ .
$$

 Moreover, $S_t$ satisfies the conditions of Corollary \ref{Slice-cor} for the action of $K_{t'}$ on $\fz_{t'}$ and $\Lambda^t_{t'}$ is invertible on $(\Lambda^t)\inv(\tilde S_t)$, with $\tilde S_t$ the set in \eqref{tildeOmega}.
\end{corollary}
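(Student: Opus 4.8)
The plan is to transcribe, essentially line by line, the proofs of Proposition~\ref{epi} and Corollary~\ref{bijection}, now reading $(N_{t'},K_{t'})$ in the role played there by $(N,K)$ and keeping $(N_t,K_t)$ as the ``finer'' pair. The one structural fact to establish first is the orthogonal splitting $\fk\cdot t=(\fk\cdot t')\oplus(\fk_{t'}\cdot t)$. Granting the inclusion $\fz_t\subset\fz_{t'}$ asserted in the statement, one has $t\in\fz_t\subset\fz_{t'}$, hence $\fk_{t'}\cdot t\subset\fz_{t'}$ (since $K_{t'}$ preserves $\fz_{t'}$), while $\fk\cdot t'\perp\fz_{t'}$ by the very definition $\fz_{t'}=(\fk\cdot t')^\perp$; so the sum is direct, and since both summands are visibly contained in $\fk\cdot t$, the dimension count $\dim\fk\cdot t'=\dim\fk-\dim\fk_{t'}$, $\dim\fk_{t'}\cdot t=\dim\fk_{t'}-\dim\fk_t$, $\dim\fk\cdot t=\dim\fk-\dim\fk_t$ forces equality. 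In particular $\fz_t=\fz_{t'}\cap(\fk_{t'}\cdot t)^\perp$ is exactly the $K_{t'}$-normal space to the orbit of $t$ inside $\fz_{t'}$.

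The representation-theoretic core is then unchanged. For $\zeta\in\fz_t$ and $\omega\in\fr_\zeta$ the irreducible $\pi_{\zeta,\omega}$ factors through both quotient groups $N_t$ and $N_{t'}$, so $\cH_\zeta$ carries the two multiplicity-free decompositions under the nested stabilizers $(K_t)_{\zeta,\omega}\subset(K_{t'})_{\zeta,\omega}$, and each $V^{t'}(\nu')$ is a finite union of spaces $V^t(\nu)$. Running the Schur's-lemma computation of Proposition~\ref{epi} verbatim --- the only change being that the outer Haar average defining $\Lambda^t_{t'}$ is over $K_{t'}$ rather than over $K$, and it still absorbs the inner average over $K_t$ by bi-invariance --- yields $\Lambda^t_{t'}\ph^t_{\zeta,\omega,\nu}=\ph^{t'}_{\zeta,\omega,\nu'}$ whenever $V^t(\nu)\subset V^{t'}(\nu')$, that is, \eqref{inverse}; continuity of $\Lambda^t_{t'}$ in the compact-open topologies is immediate from \eqref{Lambda^gamma_gamma'}. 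For surjectivity it suffices that $\fz_t$ meet every $K_{t'}$-orbit in $\fz_{t'}$: given $\zeta\in\fz_{t'}$, a maximiser $k_0\in K_{t'}$ of $k\mapsto\lan t,k\zeta\ran$ satisfies $k_0\zeta\perp\fk_{t'}\cdot t$ by the first-order condition (using the skew-symmetry of the $\fk$-action for the invariant inner product), and $k_0\zeta\in\fz_{t'}=(\fk\cdot t')^\perp$, so by the splitting $k_0\zeta\perp\fk\cdot t$, i.e.\ $k_0\zeta\in\fz_t$; since $\ph^{t'}_{k\zeta,k\omega,\nu'}=\ph^{t'}_{\zeta,\omega,\nu'}$ for $k\in K_{t'}$, every element of $\Sigma^{t'}$ is then in the image of $\Lambda^t_{t'}$.

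The functoriality $\Lambda^t_{t''}=\Lambda^{t'}_{t''}\circ\Lambda^t_{t'}$ I would deduce from \eqref{inverse} applied twice: over $\ph^{t''}_{\zeta,\omega,\nu''}$ both maps have the same fibre $\{\ph^t_{\zeta,\omega,\nu}:V^t(\nu)\subset V^{t''}(\nu'')\}$, because each $V^{t''}(\nu'')$ is a union of spaces $V^{t'}(\nu')$ and each of those a union of spaces $V^t(\nu)$; alternatively it follows at once from \eqref{Lambda^gamma_gamma'} by Fubini, writing the average over $K_{t''}$ as successive averages over $K_{t'}\subset K_{t''}$.

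Finally, for the last assertion: $(K_{t'})_t=K_t$ because $K_t\subset K_{t'}$, and by the first paragraph $S_t\subset\fz_t$ is an open, $K_t$-invariant neighbourhood of $0$ inside the $K_{t'}$-normal space $\fz_t$ to the orbit of $t$ in $\fz_{t'}$; so, after shrinking $S_t$ if necessary, Theorem~\ref{Slice-Thm} makes it a slice at $t$ for the $K_{t'}$-action on $\fz_{t'}$ and Corollary~\ref{Slice-cor} applies. Then for $\zeta\in t+S_t$ one gets $K_\zeta\subset K_t\subset K_{t'}$ from Corollary~\ref{Slice-cor}(i), whence $(K_t)_{\zeta,\omega}=(K_{t'})_{\zeta,\omega}$ for every $\omega\in\fr_\zeta$ (both are the stabilizer of $\omega$ in $K_\zeta$), the two decompositions of $\cH_\zeta$ coincide, and \eqref{inverse} shows that $\Lambda^t_{t'}$ is injective on $(\Lambda^t)\inv(\tilde S_t)=\{\ph^t_{\zeta,\omega,\nu}:\zeta\in t+S_t\}$. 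I do not expect any serious obstacle here: everything is a transcription of the earlier proofs, and the only step that needs a moment's thought is the orthogonal splitting of the first paragraph, which is precisely what makes the surjectivity and the slice-transfer arguments run exactly as before.
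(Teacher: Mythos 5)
Your proposal is correct and takes the same route as the paper, which proves this corollary simply by observing that the proofs of Proposition~\ref{epi} and Corollary~\ref{bijection} go through verbatim with $K$ replaced by $K_{t'}$; your write-up is exactly that argument, with the orthogonal splitting $\fk\cdot t=(\fk\cdot t')\oplus(\fk_{t'}\cdot t)$ and the surjectivity step (every $K_{t'}$-orbit in $\fz_{t'}$ meets $\fz_t$) spelled out rather than left implicit. The only cosmetic deviation is your ``after shrinking $S_t$ if necessary'': the splitting's dimension count together with injectivity inherited from the $K$-slice already shows the original $S_t$ serves as a slice for the $K_{t'}$-action on $\fz_{t'}$, and in any case your injectivity argument on $(\Lambda^t)\inv(\tilde S_t)$ only invokes Corollary~\ref{Slice-cor}\,(i) for the original action, so nothing is lost.
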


The whole picture is represented by  the following ``dual'' commutative diagrams, representing the different ``levels of singularity'' of elements of $\fa$. We then refer to the elements of $\check\fz$ as the ``most singular'' elements of $\fa$. The sub- or super-script ``reg'' stands for regular elements.  

$$
\begin{array}[c]{ccccc} &&\fv\oplus\fa&&\\
&{\scriptstyle{\rm proj}_{\rm reg}}\nearrow&\cdots&\nwarrow\scriptstyle{{\rm proj}_{\rm reg}}&\\
&\fv\oplus\fz_t\phantom{A}&\cdots&\phantom{AA}\fv\oplus\fz_{t'}&\\
&\vdots\phantom{a}&&\phantom{a}\vdots&\\
&\fv\oplus\fz_{t''}\phantom{A}&&\phantom{AA}\fv\oplus\fz_{t'''}&\\
&{\scriptstyle{\rm proj}_{t''}}\nwarrow\phantom{a}&\cdots&\phantom{a}\nearrow\scriptstyle{\rm proj}_{t'''}&\\
&&\fv\oplus\fz&&\\
\end{array}
\qquad
\begin{array}[c]{ccccc} &&\Sigma^{\rm reg}&&\\
&{\scriptstyle \Lambda^{\rm reg}_t}\swarrow&\cdots&\searrow\scriptstyle\Lambda^{\rm reg}_{t'}&\\
&\Sigma^t\phantom{A}&\cdots&\phantom{AA}\Sigma^{t'}&\\
&\vdots\phantom{a}&&\phantom{a}\vdots&\\
&\Sigma^{t''}\phantom{A}&&\phantom{AA}\Sigma^{t'''}&\\
&{\scriptstyle\Lambda^{t''}}\searrow\phantom{A}&\cdots&\phantom{A}\swarrow\scriptstyle\Lambda^{t'''}&\\
&&\Sigma^{\check\fz}=\Sigma&&\\
\end{array}
$$

%\bigskip
%\subsection{Relations among spectra}\label{relations-spectra}\quad
%\medskip

Assuming that two homogeneous bases $\cD$, resp. $\cD_t$, of  $\bD(N)^K$, resp $\bD(N_t)^{K_t}$, have been fixed, we regard  the map $\Lambda^t$ in \eqref{Lambda^gamma} as a map from $\Sigma^t_{\cD_t}$ to $\Sigma_\cD$.

We want to realize $\Lambda^t$, and its local inverse on $\tilde S_t$, as  restrictions of  smooth maps on  open subsets of the ambient space\footnote{We do not require these extensions to be diffeomorphisms of such open sets.}. 
Notice that the existence of such smooth extensions is independent of the choice of $\cD$ and $\cD_t$, because of \eqref{changeD}.

Denote by $\cR^t$ the Radon transform \eqref{opradon*} adapted to the projection ${\rm proj}_t$ of $\fn$ onto $\fn_t$.
By \eqref{radon-eigenvalues}, the eigenvalue  $\xi(\cR^t D,\ph^t)$ is the same as the eigenvalue of $\xi(D,\Lambda^t\ph^t)$.

%For the proof, cf. \cite[Section 4]{FR} for the first part and use \eqref{Lambda^gamma} for the last part.

\begin{proposition}\label{Lambda-smooth}
Let $\cD$, resp. $\cD_t$, be homogeneous bases of $\bD(N)^K$, resp. $\bD(N_t)^{K_t}$, consisting of $d$, resp.~$d'$, elements. Then
$\Lambda^t$, regarded as a map from $\Sigma^t_{\cD_t}$ to $\Sigma_\cD$, is the restriction of a smooth map from~$\bR^d$ to~$\bR^{d'}$.
\end{proposition}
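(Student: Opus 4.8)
The plan is to express the eigenvalues $\xi(D_j,\Lambda^t\ph^t)$, for $D_j\in\cD$, as \emph{polynomials} in the eigenvalues $\xi(D'_k,\ph^t)$ of a suitable homogeneous basis $\cD_t$ of $\bD(N_t)^{K_t}$. By Proposition \ref{indipendence} (and the remark after \eqref{changeD}), the existence of a smooth extension of $\Lambda^t$ to the ambient spaces is independent of the choice of $\cD$ and $\cD_t$; so I am free to pick the bases conveniently. The natural choice is $\cD=\la'_N({\boldsymbol\rho})$ and $\cD_t=\la'_{N_t}({\boldsymbol\rho}^t)$ for minimal bi-homogeneous Hilbert bases ${\boldsymbol\rho}$, ${\boldsymbol\rho}^t$ of $(\fn,K)$ and $(\fn_t,K_t)$ as in Subsection \ref{relations-invariants}. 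With this choice, Proposition \ref{propradon*}(i) gives $\cR^t D_j=\la'_{N_t}\big({\rho_j}_{|_{\fn_t}}\big)$, and \eqref{radon-eigenvalues} gives
$$
\xi(D_j,\Lambda^t\ph^t)=\xi(\cR^t D_j,\ph^t)=\xi\big(\la'_{N_t}({\rho_j}_{|_{\fn_t}}),\ph^t\big)\ .
$$

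\textbf{Key step.} The point is then to feed in the explicit polynomial expressions \eqref{Q}, which write each ${\rho_{k,j}}_{|_{\fn_t}}$ as a polynomial in the ${\boldsymbol\rho}^t$ with coefficients that are polynomials in ${\boldsymbol\rho}^t_{(0)}={\boldsymbol\rho}^t_{\fz_t}$. Writing $\cR^t D_j=\la'_{N_t}\big(P_j({\boldsymbol\rho}^t)\big)$ for the corresponding polynomial $P_j$, I must check that the eigenvalue of $\la'_{N_t}\big(P_j({\boldsymbol\rho}^t)\big)$ on $\ph^t$ is $P_j\big(\xi(\cD_t,\ph^t)\big)$, i.e. that $\la'_{N_t}$ is, as far as eigenvalues go, compatible with composition of polynomials. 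This is \emph{not} true for arbitrary products of differential operators, but here the situation is favorable: the $\fz_t$-variables are central, so $\la'_{N_t}(q)=q(i^{-1}\nabla_{z_t})$ for $q\in\cP(\fz_t)$ and such operators commute with everything; combined with \eqref{rho_z}, which says $\xi_{\fz_t}(\ph^t)={\boldsymbol\rho}^t_{\fz_t}(\zeta)$, the coefficient polynomials $Q_{\cdot,\cdot}\big({\boldsymbol\rho}^t_{(0)}(z)\big)$ simply act as the scalars $Q_{\cdot,\cdot}\big(\xi_{\fz_t}(\ph^t)\big)$ on $\ph^t$. For the remaining bi-homogeneous factors $\rho^t_{k_0,\ell}$ and their products, I invoke that $\ph^t$ is a joint eigenfunction of all of $\bD(N_t)^{K_t}$ and that, by the multiplicity-free structure (Definition \ref{nGp}(iii)) together with \cite[Th. 5.2]{ADR2} / the spectral-resolution property from Proposition 3.1 of \cite{FR}, the product $\la'_{N_t}(q_1)\la'_{N_t}(q_2)$ has eigenvalue equal to the product of eigenvalues on each $\ph^t$; alternatively one can argue directly on $\Sigma^t_{\cD_t}$ using \eqref{changeD}, since any polynomial in the $\rho^t$ symmetrizes to a polynomial in the $D'_k$ up to lower-order corrections that are themselves in $\bD(N_t)^{K_t}$ and hence act by scalars. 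Either way, one concludes
$$
\xi(D_j,\Lambda^t\ph^t)=P_j\big(\xi_1(\ph^t),\dots,\xi_{d'}(\ph^t)\big)\ ,
$$
and since $P_j$ is a fixed polynomial on $\bR^{d'}$ (a fortiori smooth), the map $\Lambda^t\colon\Sigma^t_{\cD_t}\to\Sigma_\cD$ is the restriction to $\Sigma^t_{\cD_t}$ of the polynomial map $(P_1,\dots,P_d)\colon\bR^{d'}\to\bR^d$. (Note: here $d'\le d$ are the cardinalities in the statement; the roles of $d$ and $d'$ should be stated as in the Proposition.)

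\textbf{Main obstacle.} The delicate point is the claim that $\la'_{N_t}$ converts composition/multiplication of invariant polynomials into composition/multiplication of eigenvalues on $\ph^t$. Symmetrization is \emph{not} an algebra homomorphism, so $\la'_{N_t}(q_1 q_2)\ne \la'_{N_t}(q_1)\la'_{N_t}(q_2)$ in general; the discrepancy is a lower-order operator. What saves the argument is that on a joint eigenfunction only the scalar eigenvalue matters, and the relation \eqref{changeD} tells us precisely that $\Sigma^t_{\cD_t}$ is cut out so that polynomial identities among the $\xi_k$ reflect polynomial relations among the generating operators — so any polynomial in the $\rho^t$, once symmetrized, acts on $\ph^t$ by the value of that polynomial at $\xi(\cD_t,\ph^t)$, because the lower-order corrections lie in $\bD(N_t)^{K_t}$ and are themselves polynomials in $\cD_t$ evaluated at the same point. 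Making this bookkeeping precise — ideally by an induction on the bidegree of the polynomial, reducing at each stage via \eqref{changeD} — is the technical heart of the proof; everything else is the formal manipulation of \eqref{Q}, \eqref{radon-eigenvalues} and \eqref{rho_z}.
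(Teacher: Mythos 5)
Your argument is correct in substance but takes a more laborious route than the paper. The paper exploits the freedom in choosing $\cD_t$ much more aggressively: it simply declares $\cD_t=(\cR^t D_1,\dots,\cR^t D_d,D^t_{d+1},\dots,D^t_{d'})$, i.e., it takes the Radon transforms $\cR^t D_j$ (which lie in $\bD(N_t)^{K_t}$) as the first $d$ generators and completes them to a generating system. With that choice, \eqref{radon-eigenvalues} says $\Lambda^t$ is literally the coordinate projection $\bR^{d'}\to\bR^d$ onto the first $d$ coordinates — a linear map, and the proof is over. You instead fix $\cD_t=\la'_{N_t}(\boldsymbol\rho^t)$ for a bi-homogeneous minimal Hilbert basis and then invoke the explicit expansion \eqref{Q} to exhibit $\Lambda^t$ as a polynomial map. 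That works, but it buys you more than you need here; what you are really proving is closer to Proposition \ref{tildeQ}, which the paper establishes separately (and does need the \eqref{Q} machinery).

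Two minor points. First, your displayed identity $\xi(D_j,\Lambda^t\ph^t)=P_j\big(\xi_1(\ph^t),\dots,\xi_{d'}(\ph^t)\big)$ is not quite right as written: $\la'_{N_t}$ is not multiplicative, so $\la'_{N_t}\big(P_j(\boldsymbol\rho^t)\big)\neq P_j\big(\la'_{N_t}(\rho^t_1),\dots\big)$, and the difference contributes a correction (the paper pins it down in Proposition \ref{tildeQ} as a term linear in the degree-$k_0$ generators). You acknowledge this yourself in the last paragraph; the safe statement is that $\cR^t D_j\in\bD(N_t)^{K_t}$, hence by \eqref{changeD} it equals $\tilde P_j(\cD_t)$ for \emph{some} polynomial $\tilde P_j$, which is all that smoothness requires. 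In fact, once you make that observation you don't need \eqref{Q} at all, and your argument collapses to essentially the paper's. Second, you flag the $d$/$d'$ roles as possibly transposed — you are right that the proposition as stated has them the wrong way around (the map goes from a subset of $\bR^{d'}$ to a subset of $\bR^d$); this is a typo in the paper's statement, not a defect of your proof.
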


\begin{proof}
Let $\cD=(D_1,\dots,D_d)$. Since the statement does not depend on the choice of $\cD_t$, we construct one starting from $(\cR^t D_1,\dots,\cR^t D_d)$ and completing it into a generating system 
$$
\cD_t=(\cR^t D_1,\dots,\cR^t D_d,D^t_{d+1},\dots,D^t_{d'})
$$ 
of $\bD(N_t)^{K_t}$. By Proposition \ref{propradon*}, $\Lambda^t$ is the restriction to $\Sigma_{\cD_t}$ of the canonical projection from $\bR^{d'}$ to its first $d$-dimensional coordinate subspace. 
\end{proof}

From now on, we denote by $\cD$, resp. $\cD_t$, the system of differential operators  obtained by symmetrisation, resp. on $N$ and $N_t$, from  the Hilbert bases ${\boldsymbol\rho}$, resp. ${\boldsymbol\rho}^t$, of Section \ref{relations-invariants}. 
Then $\Sigma_\cD$ and $\Sigma^t_{\cD_t}$ are subsets of the same space $\bR^d$.

By Lemma \ref{Pi}, for each point $\xi_\fz\in{\boldsymbol\rho}_\fz(\fz)$, there is an associated subset of $\Sigma_\cD$, denoted by $\Pi\inv_{|_{\Sigma_\cD}}(\xi)$, whose elements $(\xi_\fz,\xi_\fv,\xi_{\fv,\fz_0})$ correspond to the spherical $\ph_{\zeta,\omega,\mu}$ in \eqref{trace} with ${\boldsymbol\rho}_\fz(\zeta)=\xi_\fz$. 
The same applies to $\Sigma^t_{\cD_t}$ and a point $\xi_{\fz_t}\in{\boldsymbol\rho}^t_{\fz_t}(\fz_t)$.

\begin{proposition}\label{tildeQ}
Let $t\in\fz$, and let $U\subset S_t$ be as in Proposition~\ref{Qinv}. 
Then $\Lambda^t$ coincides with a map $\tilde \cQ$, obtained from $\cQ$ in \eqref{Qcomponents} by adding a linear term in $\xi_{(k_0)}$ to the components $Q_{2k_0,j}$. Then $\tilde \cQ$ is still invertible on $\Sigma_{\cD_t}\cap \Pi\inv\big({\boldsymbol\rho}^t_{\fz_t}(U)\big)$, and $(\Lambda^t)\inv=\tilde \cQ\inv$ is the restriction of a smooth map $\tilde\Phi$, obtained by adding a linear term in $\xi_{(k_0)}$ to the map $\Phi$ of Proposition~\ref{Qinv}.
\end{proposition}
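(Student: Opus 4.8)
The strategy is to combine the two facts already in place: first, that on the ``$\fz$-side'' the spherical transform records only the value ${\boldsymbol\rho}_\fz(\zeta)$ (equations \eqref{rho_z} and Lemma~\ref{Pi}), and second, that the change-of-basis polynomials \eqref{Q} relating ${\boldsymbol\rho}_{|_{\fn_t}}$ and ${\boldsymbol\rho}^t$ are exactly the polynomials whose coefficients define $\cQ$ in \eqref{Qcomponents}. The new ingredient, compared to Proposition~\ref{Qinv}, is that $\Lambda^t$ is defined on \emph{spherical functions}, not on orbits in $\fv\oplus\fz$, so the linear-in-$\fv$ part of the eigenvalue $\xi_\fv$, $\xi_{\fv,\fz_0}$ is genuinely an eigenvalue of a Rockland operator and not a value of an invariant polynomial. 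Nevertheless, by Proposition~\ref{propradon*} the eigenvalue $\xi(D_j,\Lambda^t\ph^t)$ equals $\xi(\cR^t D_j,\ph^t)$, and by Proposition~\ref{propradon*}(i) the operator $\cR^t D_j$ is the symmetrisation $\la'_{\fn_t}\big((\rho_{k,j})_{|_{\fn_t}}\big)$. Expanding $(\rho_{k,j})_{|_{\fn_t}}$ via \eqref{Q} and applying $\la'_{\fn_t}$ term by term gives an expression of $\cR^t D_j$ as a (noncommutative) polynomial in the operators $D^t_{k,\ell}=\la'_{\fn_t}(\rho^t_{k,\ell})$; taking eigenvalues turns this into a relation among the $\xi^t$'s.

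\textbf{Key steps.} First I would write out, for each of the three cases $k=0,k_0,2k_0$ in \eqref{Q}, the differential operator $\cR^t D_{k,j}=\la'_{\fn_t}\big((\rho_{k,j})_{|_{\fn_t}}\big)$ and compute its eigenvalue on $\ph^t=\ph^t_{\zeta,\omega,\nu}$. For $k=0$ the polynomial lies in $\cP(\fz_t)$, so $\la'_{\fn_t}$ is just $Q_{0,j}(i^{-1}\nabla_z)$ and the eigenvalue is exactly $Q_{0,j}\big({\boldsymbol\rho}^t_{(0)}(\zeta)\big)$, reproducing $Q_{0,j}$. For $k=k_0$ the polynomial is a sum of products $Q_{k_0,j,\ell}({\boldsymbol\rho}^t_{(0)})\,\rho^t_{k_0,\ell}$ with the first factor depending only on the central variables; since symmetrisation of a product ``central polynomial $\times$ invariant'' behaves well (the central factors commute with everything, up to lower-order correction terms which here vanish by homogeneity and degree count, or are absorbed), the eigenvalue is $\sum_\ell Q_{k_0,j,\ell}(\xi^t_{(0)})\,\xi^t_{k_0,\ell}$, exactly $Q_{k_0,j}$. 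For $k=2k_0$ the same reasoning applies to the $Q_{2k_0,j,\ell}\rho^t_{2k_0,\ell}$ and $R_{2k_0,j,\ell,\ell'}\rho^t_{k_0,\ell}\rho^t_{k_0,\ell'}$ terms, \emph{but} symmetrising the product $\rho^t_{k_0,\ell}\rho^t_{k_0,\ell'}$ of two degree-$k_0$ invariants produces, besides the product $D^t_{k_0,\ell}D^t_{k_0,\ell'}$, a commutator correction which is a differential operator of degree $k_0$ in $\fv$ with coefficients in $\cP(\fz)$, i.e.\ a $K_t$-invariant operator of the same type as the $D^t_{k_0,\ell}$ themselves; hence its eigenvalue is a linear combination $\sum_\ell (\text{coeff depending on }\xi^t_{(0)})\,\xi^t_{k_0,\ell}$. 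This is precisely ``a linear term in $\xi_{(k_0)}$ added to $Q_{2k_0,j}$,'' which is the content of the statement. Having identified $\Lambda^t$ with $\tilde\cQ$, invertibility on $\Sigma_{\cD_t}\cap\Pi^{-1}\big({\boldsymbol\rho}^t_{\fz_t}(U)\big)$ follows from Corollary~\ref{bijection} (the map $\Lambda^t$ is a bijection there). Finally I would run the construction of Proposition~\ref{Qinv} with the corrected polynomials: the radialisation argument via G.~Schwarz's theorem and Proposition~2.1 of \cite{FRY2} is unchanged; one only needs to check that the extra linear-in-$\xi_{(k_0)}$ term in $\tilde\cQ$ forces a corresponding extra linear-in-$\xi_{(k_0)}$ term in the smooth inverse $\tilde\Phi$, which it does because inverting a map of the triangular form (i)--(iii) of Proposition~\ref{Qinv}, modified by a lower-triangular linear shift, stays within the same class.

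\textbf{Main obstacle.} The delicate point is the bookkeeping of the symmetrisation correction terms in the case $k=2k_0$: one must verify that when $\la'_{\fn_t}$ is applied to a sum of terms of the form $c(z)\,\rho^t_{k_0,\ell}(v,z)\rho^t_{k_0,\ell'}(v,z)$ (with $c$ polynomial in the central variables and $\rho^t$ bi-homogeneous), the difference between $\la'_{\fn_t}\big(c\,\rho^t_{k_0,\ell}\rho^t_{k_0,\ell'}\big)$ and $(\text{central multiplier})\cdot D^t_{k_0,\ell}D^t_{k_0,\ell'}$ is, up to operators that act as central multipliers, a \emph{linear} combination of the $D^t_{k_0,m}$ with central-polynomial coefficients — and in particular contains no genuinely new generator and no quadratic-in-$\xi_{(k_0)}$ contribution beyond what is already in $Q_{2k_0,j}$. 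This is where the structure of the symmetrisation map and the fact (Remark~\ref{rem_cardinality}(iii), Lemma~\ref{k-numbers}) that the only $\fv$-degrees present are $0,k_0,2k_0$ must be used: any correction term has $\fv$-degree $k_0$ and $K_t$-invariant coefficients, hence is expressible through ${\boldsymbol\rho}^t_{(k_0)}$ and ${\boldsymbol\rho}^t_{(0)}$ alone by \eqref{Q} applied on $\fn_t$ itself. Once this is settled, everything else is a transcription of Propositions~\ref{propradon*},~\ref{Qinv} and Corollary~\ref{bijection}.
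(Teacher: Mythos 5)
Your proof takes essentially the same route as the paper's: both apply Proposition~\ref{propradon*} to reduce the eigenvalues of $\Lambda^t\ph^t$ to those of $\cR^t D_\ell=\la'({\rho_\ell}_{|_{\fn_t}})$, expand ${\rho_\ell}_{|_{\fn_t}}$ via \eqref{Q}, and locate the only discrepancy with $\cQ$ in the symmetrisation of the products $\rho^t_{k_0,\ell}\rho^t_{k_0,\ell'}$, whose correction is absorbed into the generators of lower $\fv$-degree. Your degree-counting justification usefully fleshes out the paper's ``the conclusion follows easily''; just note that the correction term from a product of two degree-$k_0$ invariants may also have $\fv$-degree strictly less than $k_0$ (a two-commutator contribution), which adds a harmless $\xi_{(0)}$-dependent constant to $Q_{2k_0,j}$ rather than only a linear term, but this does not disturb the triangular invertibility of $\tilde\cQ$ nor the structure of $\tilde\Phi$.
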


\begin{proof} Take a point $\xi\in\Sigma_{\cD_t}$ with $(\xi_1,\dots,\xi_{d_\fz})\in{\boldsymbol\rho}_{\fz_t}(U)$, and let $\tilde\xi=\Lambda^t\xi\in\Sigma_\cD$.

If $\ph^t$ is the $K_t$-spherical function on $N_t$ associated with $\xi$, then $\ph=\Lambda^t\ph^t$, is the $K$-spherical function on $N$ associated with $\tilde\xi$.

By Proposition \ref{propradon*}, since $D_\ell\ph=\tilde\xi_\ell\ph$ for $D_\ell\in\cD$, we also have that
$(\cR^t D_\ell)\ph^t=\tilde\xi_\ell\ph_t$.
Moreover, 
$$
\cR^t D_\ell=\la'({\rho_\ell}_{|_{\fn_t}})=\la'\big(Q_\ell(\rho^t_1,\dots,\rho^t_d)\big)\ ,
$$
where $Q_\ell$ is the polynomial in \eqref{Q}. 

Notice that, by \eqref{Q}, $\la'\big(Q_\ell(\rho^t_1,\dots,\rho^t_d)\big)$ differs from $Q_\ell(D^t_1,\dots,D^t_d)$, i.e., $Q_\ell\big(\la'(\rho^t_1),\dots,\la'(\rho^t_d)\big)$, only for the components of degree $2k_0$, in the terms containing a product of two $\rho_{k_0,j}$.  It is sufficient for us to observe that 
$$
\la'(\rho^t_{k_0,\ell}\rho^t_{k_0,\ell'})=\la'(\rho^t_{k_0,\ell})\la'(\rho^t_{k_0,\ell'})+ \la'(\text {linear combination of elements of }{\boldsymbol\rho}_{(k_0)})\ .
$$

The conclusion follows easily.
\end{proof}

\vskip1cm
\section{ Extending Gelfand transforms: reduction to quotient pairs}
\label{sec_towards}
\bigskip

Denote by $\cS_0(N)$ the space of  function $F\in\cS(N)$ with vanishing moments of any order in the $\fz_0$-variables, i.e., such that
$$
\int_{\fz_0}z^\beta F(v,z,u)\,dz=0\ ,
$$
for every $\beta\in\bN^{d_{\fz_0}}$, $v\in\fv$, $u\in \check\fz$.

In this section we prove the following statement.

\begin{proposition}\label{S_0}
Let $(N,K)$ be one of the pairs in Table \ref{vinberg}, and assume that Property (S) holds for all its proper quotient pairs $(N_t,K_t)$.
If $F\in\cS_0(N)^K$, then its spherical transform, defined on $\Sigma_\cD$, can be extended to a function in $\cS(\bR^d)$ which vanishes with all its derivatives on $\{0\}\times\bR^{d_{\check\fz}}\times\bR^{d_\fv}\times\bR^{d_{\fv,\fz_0}}$. 
\end{proposition}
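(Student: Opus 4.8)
The plan is to combine the quotient-pair machinery from Sections \ref{quotient}--\ref{sec_smooth} with the partition-of-unity/bootstrapping scheme outlined in the introduction, using the local homeomorphisms $\Lambda^t$ to transport Schwartz data from the (already-solved) proper quotient pairs back to $(N,K)$. First I would fix the special Hilbert basis ${\boldsymbol\rho}=({\boldsymbol\rho}_{\fz_0},{\boldsymbol\rho}_{\check\fz},{\boldsymbol\rho}_\fv,{\boldsymbol\rho}_{\fv,\fz_0})$ of Subsection \ref{special} and the associated system $\cD$, so that $\Sigma_\cD\subset\bR^d$ carries the coordinates $\xi=(\xi_{\fz_0},\xi_{\check\fz},\xi_\fv,\xi_{\fv,\fz_0})$, and observe that, by Lemma \ref{dominant}(i), when $\xi_{\fz_0}=0$ one also has $\xi_{\fv,\fz_0}=0$; thus the ``target'' set $\{0\}\times\bR^{d_{\check\fz}}\times\bR^{d_\fv}\times\bR^{d_{\fv,\fz_0}}$ meets $\Sigma_\cD$ exactly in $\check\Sigma_\cD:=\{\xi\in\Sigma_\cD:\xi_{\fz_0}=0\}$, the set of most singular points.

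The core of the argument is local. For $t\in\fz$ with $t\notin\check\fz$ (so that $(N_t,K_t)$ is a \emph{proper} quotient pair, hence satisfies Property (S) by hypothesis), choose a slice $S_t$ and a relatively compact $K_t$-invariant neighbourhood $U$ of $t$ as in Proposition \ref{Qinv}. By Proposition \ref{tildeQ}, the map $\Lambda^t$ is, on $\Sigma_{\cD_t}\cap\Pi^{-1}\big({\boldsymbol\rho}^t_{\fz_t}(U)\big)$, the restriction of the smooth map $\tilde\cQ$ and has a smooth inverse $\tilde\Phi$ defined on a full neighbourhood of ${\boldsymbol\rho}_{(0)}(t)\times\bR^{d_{k_0}+d_{2k_0}}$ in $\bR^d$. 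Since $(N_t,K_t)$ satisfies Property (S), a $K$-invariant Schwartz function $F$ on $N$ gives rise, via the radialisation of Corollary \ref{Slice-cor}(iii) together with the Radon-transform formula \eqref{radon-eigenvalues} (compare the proof of Proposition \ref{central}), to a function whose spherical transform on $\Sigma^t_{\cD_t}$ is Schwartz; composing with $\tilde\Phi$ and multiplying by a cutoff supported in the image of a slightly smaller neighbourhood, one obtains a Schwartz function on $\bR^d$ that agrees with $\cG F$ on $\Sigma_\cD$ over that neighbourhood and whose Schwartz seminorms are controlled (with loss) by those of $F$. The key point here is that this local extension involves only \emph{bounded} coordinates $\xi$, because it lives near the compact set ${\boldsymbol\rho}_{(0)}(\overline U)$; there is no issue of decay in the $\fz_0$-directions on this piece.

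Next I would cover the ``regular'' part of $\Sigma_\cD$, i.e.\ the part with $\|\xi_{\fz_0}\|$ bounded below, by finitely many such local pieces: homogeneity under the dilations \eqref{dilations} reduces matters to the compact slab $\{1\le\|\xi_{\fz_0}\|\le 2\}$, which by the above is covered by finitely many neighbourhoods of the form coming from regular (or at least proper) $t$'s; rescaling and a dyadic sum then produce a single Schwartz function on $\{\xi_{\fz_0}\neq 0\}$ agreeing with $\cG F$ on $\Sigma_\cD$ there, with Schwartz control uniform in the scale. This is exactly the bootstrapping step, and it is where the hypothesis $F\in\cS_0(N)^K$ enters: the vanishing of all $\fz_0$-moments of $F$ forces $\cG F$ (and all derivatives transverse to $\check\Sigma_\cD$, computed via the Hadamard-type expansion implicit in the $\la'$-calculus) to vanish to infinite order on $\check\Sigma_\cD=\Sigma_\cD\cap\{\xi_{\fz_0}=0\}$, so that the Whitney jet to be assigned on the remaining region is the zero jet. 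Concretely, one checks that $F\in\cS_0(N)^K$ implies $\cG F$ vanishes rapidly as $\|\xi_{\fz_0}\|\to 0$ uniformly in the other variables — using that the derivatives $\de_z^\al$ in the $\fz_0$-directions act on $\cG F$ as multiplication by the corresponding polynomial, which vanishes on the moments — so that one may glue the regular-part extension with the identically-zero extension near $\check\Sigma_\cD$ by a partition of unity in the single variable $\|\xi_{\fz_0}\|$, obtaining a global Schwartz function on $\bR^d$ that restricts to $\cG F$ on $\Sigma_\cD$ and vanishes to infinite order on $\{\xi_{\fz_0}=0\}\supset$ the prescribed subspace.

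The main obstacle I anticipate is the uniformity of the Schwartz estimates across the infinitely many dyadic scales and across the (finitely many per scale, but scale-dependent in location) local charts: one must verify that the loss of derivatives in passing $F\rightsquigarrow(\text{its image on }N_t)\rightsquigarrow(\text{Schwartz transform})\rightsquigarrow(\text{composition with }\tilde\Phi)$ is bounded independently of the scale, and that summing the dyadic pieces (after multiplying by a homogeneous partition of unity on $\bR^d\setminus\{0\}$) converges in $\cS(\bR^d)$. This is a quantitative refinement of the qualitative statements in Propositions \ref{Qinv}, \ref{tildeQ} and \ref{Lambda-smooth}, and handling it cleanly — presumably by an argument modelled on the dyadic summation in the proof of Proposition \ref{product}, estimate \eqref{B_M-estimate}--\eqref{series_2-M} — is the technical heart of the proof; the decay hypothesis $F\in\cS_0(N)^K$ is precisely what makes the residual jet on $\check\Sigma_\cD$ trivial and thereby lets the gluing close without invoking a Whitney extension step.
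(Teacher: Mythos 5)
Your proposal matches the paper's proof in all essential respects: the dyadic covering of $\fz_0\setminus\{0\}$ by $K$-translates of slice neighbourhoods, the transfer of each dyadic piece $F*_{\fz_0}\psi_{t,j}$ to the proper quotient pair $(N_t,K_t)$ via the Radon transform $\cR^t$ (not radialisation, which is used only internally to build $\tilde\Phi$ and the partition of unity $\chi^{\#}_{t,j}$), the extension of $\cG_t(\cR^t F_{t,j})$ using Property (S), the composition with $\tilde\Phi_{r^jt}$, and the summation with scale-dependent Schwartz-norm parameters. The paper packages the role of $\cS_0$ through Lemma~\ref{S_0-equivalence}(v) — the rapid decay in $j$ of the Schwartz norms $\|F*_{\fz_0}\psi_{t,j}\|_{\cS(N),M}=o(r^{-q|j|})$ — which is exactly the quantitative refinement you correctly identified as the technical heart; once that decay and the polynomial growth of $\|\eta_{t',j'}g^{(M)}_{t,j}\|$ are in hand, a diagonal choice of $M_j$ makes the series converge in every Schwartz norm and vanish to infinite order on $\{\xi_{\fz_0}=0\}$ with no separate gluing-to-zero step needed.
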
 

The set $\{0\}\times\bR^{d_{\check\fz}}\times\bR^{d_\fv}\times\bR^{d_{\fv,\fz_0}}$ coincides with $\Pi\inv\big({\boldsymbol\rho}_\fz(\check\fz)\big)$. Its intersection with $\Sigma_\cD$ is the set $\Sigma_\cD^0$ of points with  ``highest level of singularity'', i.e., associated to trivial orbits in $\fz$, cf. \cite{FRY2}. Under the identification of $\Sigma$ with $\Sigma_\cD$, it corresponds to the set of spherical functions which are identically equal to 1 on $\fz_0$, or, equivalently, which are associated to representations of $N$ that are trivial on $\exp\fz_0$. 

To prove Proposition \ref{S_0}, we will work on the complement of $\Sigma^0_\cD$, i.e. at points which are ``regular'' or of ``intermediate singularity''. It is on  neighbourhoods of these points that we have  at our disposal the local identification of Proposition \ref{tildeQ} with spectra of quotient pairs.

\bigskip

\subsection{Dilations and partitions of unity}\label{dilations-partitions}\quad
\medskip

Coherently with the previous comments, in this section we restrict our attention to elements $t$ of $\fz$ which are not in $\check\fz$. Then $(N_t,K_t)$ will be a proper quotient pair of $(N,K)$.

The constructions in Sections \ref{relations-invariants} and \ref{relations-spectra} present a natural homogeneity with respect to the dilations on $\fz_0$, as well as a translation-invariance in $\check\fz$. Precisely, if $U\subset t+S_t$ is the neighbourhood of $t\ne0$ in $\fz_t$ for which Propositions \ref{Qinv} and \ref{tildeQ} hold, then the same hold for
\begin{enumerate}
\item[(i)]   the neighbourhood $\del U$ of $\del t$, for $\del>0$;
\item [(ii)]  the neighbourhood $ U+u$ of $t+u$, for $u\in\check\fz$.
\end{enumerate}

Also notice that, since $K$ acts trivially on $\check\fz$, the $\check\fz$-variables only appear in the components of ${\boldsymbol\rho}_{\check\fz}$ and ${\boldsymbol\rho}_{\check\fz_t}$. 

All this has the following implications on the maps of Propositions \ref{Qinv} and \ref{tildeQ}, which we denote now by $\cQ_t$, $\Phi_t$, $\tilde\cQ_t$, $\tilde\Phi_t$.
\begin{enumerate}
\item[(i)] The maps $\cQ_t$, $\Phi_t$, $\tilde\cQ_t$, $\tilde\Phi_t$  contain the identity function in the $\xi_{\check\fz}$-components, and all the other components do not involve the $\xi_{\check\fz}$-variables.
\item[(ii)]  In the statements of Propositions \ref{Qinv} and \ref{tildeQ} we may assume that $t\in\fz_0$ and choose $S_t$ and $U$ of the form $S_t=S_{0,t}+\check\fz$, with  $S_{0,t}=S_t\cap\fz_0$,   and $U=U_{0,t}+\check\fz$, for a neighbourhood $U_{0,t}$ of $t$ relatively compact in $t+S_{0,t}$.
\item[(iii)]  Let $D(\del)$, resp. $D^t(\del)$, be the dilations \eqref{dilations} on $\bR^d$, with exponents $\nu_j$, resp. $\nu^t_j$, equal to the degrees of homogeneity of the elements of ${\boldsymbol\rho}$, resp. ${\boldsymbol\rho}^t$. Then, for $\del>0$, $\cQ_{\del t}=\cQ_t$, $\tilde\cQ_{\del t}=\tilde\cQ_t$ and
$$
D(\del)\circ \cQ_t=\cQ_t\circ D^t(\del)\ ,\qquad D(\del)\circ \tilde\cQ_t= \tilde\cQ_t\circ D^t(\del)\ ,
$$
\item[(iv)] once $\Phi_t$, $\tilde\Phi_t$ have been chosen for $t$ with $|t|=1$, $\Phi_{\del t}$, $\tilde\Phi_{\del t}$ can be chosen, for $\del>0$,  as
\begin{equation}\label{delta-scaling}
\Phi_{\del t}=D^t(\del)\circ \Phi_t\circ D(\del\inv)\ ,\qquad \tilde\Phi_{\del t}=D^t(\del)\circ \tilde\Phi_t\circ D(\del\inv)\ .
\end{equation}
\end{enumerate}

Let  $T$ be a finite set of points on the unit sphere in $\fz_0$ such that $\{KU_{0,t}\}_{t\in T}$ covers the unit sphere. Then there is $r>1$  such that the annulus $\{z_0\in\fz_0:1\le|z_0|\le r\}$ is contained in $ \bigcup_{t\in T}KU_{0,t}$. Therefore $\{r^jKU_{0,t}\}_{t\in T\,,\,j\in\bZ}$ is a locally finite covering of $\fz_0\setminus\{0\}$.

For each $t\in T$ we choose $\chi_t\ge0$ in $C^\infty_c(U_{0,t})$ so that $\sum_t\chi_t>0$ on $\{z\in\fz_0:1\le|z|\le r\}$, and set 
$$
\chi^\#_{t,j}(z)=\chi_t^{\rm rad}(r^{-j}z)\ .
$$ 

Up to dividing each $\chi_{t,j}^\#$ by $\sum_{t\in T\,,\,j\in\bZ}\chi_{t,j}^\#$, we may assume that the
$\chi_{t,j}^\#$ form a partition of unity on $\fz_0\setminus\{0\}$  subordinated to the covering $\{r^jKU_{0,t}\}_{t\in T,j\in\bZ}$.

\begin{lemma}\label{xi-partition}
There exists a family  $\{\eta_{t,j}\}_{t\in T\,,\,j\in\bZ}$ of nonnegative functions on $\bR^{d_{\fz_0}}$ and a $D(\del)$-invariant neighbourhood $\Omega$ of ${\boldsymbol\rho}_{\fz_0}(\fz_0)\setminus\{0\}$ in $\bR^{d_{\fz_0}}$ such that
\begin{equation}\label{sum-eta}
\sum_{t\in T\,,\,j\in\bZ}\eta_{t,j}(\xi)=1 \ ,\qquad (\xi\in\Omega)\ , 
\end{equation}
and, for every $t,j$,
\begin{enumerate}
\item[\rm(i)]  $\eta_{t,j}\in C^\infty_c(\bR^{d_{\fz_0}}\setminus\{0\})$;
\item[\rm(ii)] $\eta_{t,j}(\xi)=\eta_{t,0}\big(D(r^{-j})\xi\big)$;
\item[\rm(iii)] $(\supp\eta_{t,j})\cap {\boldsymbol\rho}_{\fz_0}(\fz_0)\subset {\boldsymbol\rho}_{\fz_0}(r^jU_{0,t})$;
\item[\rm(iv)]  $\eta_{t,j}\big({\boldsymbol\rho}_{\fz_0}(z)\big)=\chi^\#_{t,j}(z)$.
\end{enumerate}
\end{lemma}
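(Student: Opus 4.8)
The plan is to \emph{lift} each normalised cutoff $\chi^\#_{t,j}$ through the Hilbert basis map ${\boldsymbol\rho}_{\fz_0}$ using G.~Schwarz's theorem \cite{Schw}, to multiply it by a Euclidean cutoff so as to make it compactly supported away from the origin, to propagate it by the $r$-dilations $D(r^{-j})$, and finally to renormalise the resulting locally finite sum.

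First I would record what the construction preceding the statement provides: $\chi^\#_{t,0}$ is a $K$-invariant function in $C^\infty_c(\fz_0\setminus\{0\})$ supported in $KU_{0,t}$ (being a quotient of $K$-invariant functions, $\chi_t^{\rm rad}$ over the locally finite, strictly positive sum $\sum_{t,j}\chi_{t,j}^\#$), with $\chi^\#_{t,j}(z)=\chi^\#_{t,0}(r^{-j}z)$ and $\sum_{t,j}\chi^\#_{t,j}\equiv1$ on $\fz_0\setminus\{0\}$. By Schwarz's theorem I write $\chi^\#_{t,0}=\psi_t\circ{\boldsymbol\rho}_{\fz_0}$ with $\psi_t\in C^\infty(\bR^{d_{\fz_0}})$. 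Next I use that ${\boldsymbol\rho}_{\fz_0}$ is a proper map identifying $\fz_0/K$ with a closed subset of $\bR^{d_{\fz_0}}$: since $KU_{0,t}$ is open and $K$-saturated, ${\boldsymbol\rho}_{\fz_0}(U_{0,t})={\boldsymbol\rho}_{\fz_0}(KU_{0,t})$ is relatively open in ${\boldsymbol\rho}_{\fz_0}(\fz_0)$, say ${\boldsymbol\rho}_{\fz_0}(U_{0,t})=O_t\cap{\boldsymbol\rho}_{\fz_0}(\fz_0)$ with $O_t\subseteq\bR^{d_{\fz_0}}$ open, and the compact set ${\boldsymbol\rho}_{\fz_0}(\supp\chi^\#_{t,0})$ lies inside $O_t\setminus\{0\}$. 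I then pick $\theta_t\in C^\infty_c(O_t\setminus\{0\})$ equal to $1$ on a neighbourhood of ${\boldsymbol\rho}_{\fz_0}(\supp\chi^\#_{t,0})$, set $\tilde\eta_{t,0}=\theta_t\,\psi_t\in C^\infty_c(\bR^{d_{\fz_0}}\setminus\{0\})$ and $\tilde\eta_{t,j}(\xi)=\tilde\eta_{t,0}\big(D(r^{-j})\xi\big)$. Using ${\boldsymbol\rho}_{\fz_0}(\del z)=D(\del){\boldsymbol\rho}_{\fz_0}(z)$ (homogeneity) and that $\theta_t\circ{\boldsymbol\rho}_{\fz_0}\equiv1$ on a neighbourhood of $\supp\chi^\#_{t,0}$, one checks directly that $\tilde\eta_{t,j}\circ{\boldsymbol\rho}_{\fz_0}=\chi^\#_{t,j}$ and that the $\tilde\eta_{t,j}$ satisfy (i), (ii) and the support bound $\supp\tilde\eta_{t,j}\cap{\boldsymbol\rho}_{\fz_0}(\fz_0)\subseteq{\boldsymbol\rho}_{\fz_0}(r^jU_{0,t})$ of (iii). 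Finally, since $T$ is finite and $\supp\tilde\eta_{t,j}=D(r^j)\supp\tilde\eta_{t,0}$ lies in a fixed spherical shell for the homogeneous norm (on which $\|D(\del)\xi\|=\del\|\xi\|$), the sum $S=\sum_{t,j}\tilde\eta_{t,j}$ is locally finite, hence smooth and $D(r)$-invariant on $\bR^{d_{\fz_0}}\setminus\{0\}$, and $S\equiv1$ on ${\boldsymbol\rho}_{\fz_0}(\fz_0)\setminus\{0\}$.

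To conclude I would renormalise. Fixing $\phi\in C^\infty([0,\infty))$ with $\phi\geq\frac12$ and $\phi(s)=s$ for $s\geq\frac34$, I set $\eta_{t,j}=\tilde\eta_{t,j}/(\phi\circ S)$; this is again in $C^\infty_c(\bR^{d_{\fz_0}}\setminus\{0\})$, still satisfies (ii) and (iii) (supports are unchanged), and satisfies (iv) because $\phi\circ S\equiv1$ on ${\boldsymbol\rho}_{\fz_0}(\fz_0)$. On the open $D(r)$-invariant set $\{S>\frac34\}$, which contains ${\boldsymbol\rho}_{\fz_0}(\fz_0)\setminus\{0\}$, one has $\sum_{t,j}\eta_{t,j}=S/(\phi\circ S)=1$. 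To obtain a neighbourhood invariant under the full dilation group, I take $\Omega=\bigcap_{1\leq\del\leq r}D(\del)\{S>\frac34\}$: this intersection over a compact set of parameters is open, it is $D(\del)$-invariant for every $\del>0$ by the $D(r)$-periodicity of $\{S>\frac34\}$, it contains the dilation-invariant set ${\boldsymbol\rho}_{\fz_0}(\fz_0)\setminus\{0\}$, and $\sum_{t,j}\eta_{t,j}=1$ on $\Omega$, which is \eqref{sum-eta}.

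The main obstacle is the bookkeeping behind the support condition (iii): one must know that ${\boldsymbol\rho}_{\fz_0}$ sends the $K$-saturation $KU_{0,t}$ onto a \emph{relatively open} piece of the image variety ${\boldsymbol\rho}_{\fz_0}(\fz_0)$, so that the Euclidean cutoff $\theta_t$ can be chosen with support disjoint from ${\boldsymbol\rho}_{\fz_0}(\fz_0)\setminus{\boldsymbol\rho}_{\fz_0}(U_{0,t})$; this rests on the homeomorphism $\fz_0/K\cong{\boldsymbol\rho}_{\fz_0}(\fz_0)$ and the properness of the Hilbert basis map. A secondary point requiring care is upgrading the $D(r)$-periodicity that the discrete sum over $j$ naturally produces to genuine invariance of $\Omega$ under all dilations $D(\del)$, which is precisely what the intersection trick in the last step accomplishes.
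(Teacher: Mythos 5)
Your proof follows essentially the same route as the paper's: lift $\chi^\#_{t,0}$ through ${\boldsymbol\rho}_{\fz_0}$ via Schwarz's theorem, use properness of ${\boldsymbol\rho}_{\fz_0}$ and the relative openness of ${\boldsymbol\rho}_{\fz_0}(KU_{0,t})$ in ${\boldsymbol\rho}_{\fz_0}(\fz_0)$ to cut the lift off in a compact set away from the origin and from ${\boldsymbol\rho}_{\fz_0}(\fz_0\setminus KU_{0,t})$, propagate by $D(r^{-j})$, and renormalise the resulting locally finite, $D(r)$-periodic sum. Your renormalisation by $\phi\circ S$ with $\phi\ge\tfrac12$ and the explicit choice $\Omega=\bigcap_{1\le\del\le r}D(\del)\{S>\tfrac34\}$ tidy up two points the paper passes over silently (the paper divides by $\sum_{t',j'}u_{t',j'}$, which can vanish off the image variety, and leaves the $D(\del)$-invariance of $\Omega$ implicit), but the underlying idea is the same.
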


\begin{proof}
There exist smooth functions $u_t$, $t\in T$, on $\bR^{d_{\fz_0}}$ such that 
$$
u_t\big(\boldsymbol\rho_{\fz_0}(z)\big)=\chi_t^{\rm rad}(z)=\chi^\#_{t,0}(z)\ .
$$

Setting $u_{t,j}=u_t\circ D(r^{-j})$, we have
$\chi_{t,j}^\#(z)=u_{t,j}\big({\boldsymbol\rho}_{\fz_0}(z)\big)$, 
for every $t$ and $j$.

Since ${\boldsymbol\rho}_{\fz_0}(\supp\chi^\#_{t,0})$ does not contain the origin, we may assume that each $u_t$, $t\in T$, is supported on a fixed compact set $E$ of $\bR^{d_{\fz_0}}$ not containing the origin. Moreover, since ${\boldsymbol\rho}_{\fz_0}$ is a proper map, and ${\boldsymbol\rho}_{\fz_0}(\fz_0\setminus KU_{0,t})$ is closed in $\bR^{d_{\fz_0}}$ and disjoint from ${\boldsymbol\rho}_{\fz_0}(\supp\chi^\#_{t,0})$, we may also assume  that $(\supp u_t)\cap{\boldsymbol\rho}_{\fz_0}(\fz_0)\subset {\boldsymbol\rho}_{\fz_0}(KU_{0,t})={\boldsymbol\rho}_{\fz_0}(U_{0,t})$.

Clearly,
$\sum_{t,j}u_{t,j}=1$ on ${\boldsymbol\rho}_{\fz_0}(\fz_0)\setminus\{0\}$. Therefore, if we set
$$
\eta_{t,j}=\frac{u_{t,j}}{\sum_{t',j'}u_{t',j'}}\ ,
$$
$\eta_{t,j}=u_{t,j}$ on ${\boldsymbol\rho}_{\fz_0}(\fz_0)$, and the sum of the $\eta_{t,j}$ remains equal to 1 where some $\eta_{t,j}$ is positive.  Then \eqref{sum-eta} and properties (i)-(iv) follow easily.
\end{proof}

\bigskip

\subsection{Characterisations of functions in $\cS_0(N)^K$}\label{S_0-characterization}\quad
\medskip

If  $g$ is an  integrable function  on $\fz_0$ and $F$ and integrable function on $N$, we set
\begin{equation}\label{*z_0}
F*_{\fz_0}g(v,z,u)=\int_{\fz_0} F(v,z-z',u)g(z')\,dz'\ .
\end{equation}

This  can be regarded as the convolution  on $N$ of $F$ and the finite measure $\del_0\otimes g$, where $\del_0$ is the Dirac delta at the origin in $\fv\oplus\check\fz$.
We use the symbol $\widehat{\phantom a}$ to denote Fourier transform in the $\fz_0$-variables.  For a  function on $N$ we then set
\begin{equation}\label{fourier}
\widehat F(v,\zeta,u)=\int_{\fz_0} F(v,z,u)e^{-i\lan z,\zeta\ran}\,dz\ ,
\end{equation}
for $v\in\fv$, $\zeta\in\fz_0$, $u\in\check\fz$. For $F$ and $g$  as in \eqref{*z_0}, 
$\widehat{F*_{\fz_0}g}(v,\zeta,u)=\widehat F(v,\zeta,u)\widehat g(\zeta)$.

Finally, we denote by $\psi_{t,j}$ the inverse Fourier transform of $\chi^\#_{t,j}$.

\begin{lemma}\label{S_0-equivalence}
The following are equivalent for a function $F\in\cS(N)$:
\begin{enumerate}
\item[\rm(i)] $F\in\cS_0(N)$;
\item[\rm(ii)] $\widehat F(v,\zeta,u)$ vanishes with all its derivatives for $\zeta=0$;
\item [\rm(iii)] for every $k\in\bN$, $F(v,z)=\sum_{|\al|=k}\de_z^\al G_\al(v,z)$, with $G_\al\in\cS(N)$ for every $\al$;
\item[\rm(iv)] the series $\sum_{t\in T\,,\,j\in\bZ}F*_{\fz_0}\psi_{t,j}$ converges to $F$ in every Schwartz norm;
\item[\rm(v)] for every Schwartz norm $\|\ \|_{\cS(N),M}$ and every $q\in\bN$, $\|F*_{\fz_0}\psi_{t,j}\|_{\cS(N),M}=o(r^{-q|j|})$ as $j\to\pm\infty$.
\end{enumerate}
\end{lemma}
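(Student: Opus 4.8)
The strategy is to establish the chain of implications (i) $\Leftrightarrow$ (ii) $\Leftrightarrow$ (iii) and (i) $\Rightarrow$ (v) $\Rightarrow$ (iv) $\Rightarrow$ (i), drawing on elementary facts about the Fourier transform in the $\fz_0$-variables together with the properties of the partition of unity $\{\chi^\#_{t,j}\}$ recorded in Lemma \ref{xi-partition} (in particular property (ii), which says $\chi^\#_{t,j}(z)=\chi^\#_{t,0}(r^{-j}z)$).

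\emph{First,} I would dispose of the equivalences (i) $\Leftrightarrow$ (ii) $\Leftrightarrow$ (iii). These are purely classical statements about $\cS(\fz_0)\cong\cS(\bR^{d_{\fz_0}})$ (with the extra parameters $v\in\fv$, $u\in\check\fz$ along for the ride, in a manner uniform in Schwartz seminorms). The equivalence (i) $\Leftrightarrow$ (ii) is the standard fact that a Schwartz function has all moments zero iff its Fourier transform vanishes to infinite order at the origin; differentiating $\widehat F$ in $\zeta$ and setting $\zeta=0$ produces exactly the moments $\int_{\fz_0} z^\beta F(v,z,u)\,dz$ up to constants. For (ii) $\Leftrightarrow$ (iii): if $\widehat F$ vanishes to order $k$ at $\zeta=0$, then by a Hadamard/Taylor-with-remainder argument $\widehat F(v,\zeta,u)=\sum_{|\al|=k}\zeta^\al \widehat{G_\al}(v,\zeta,u)$ with each $\widehat{G_\al}$ smooth and, crucially, Schwartz in $\zeta$ uniformly in $(v,u)$ — one divides by $\zeta^\al$ away from the origin and uses a smooth cutoff plus the integral form of the remainder near the origin; undoing the Fourier transform converts $\zeta^\al$ into $i^{-|\al|}\de_z^\al$, giving (iii). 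The converse direction of (ii) $\Leftrightarrow$ (iii) is immediate since $\widehat{\de_z^\al G_\al}(v,\zeta,u)=(i\zeta)^\al\widehat{G_\al}$, which vanishes to order $k$.

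\emph{Second,} the implications (i) $\Rightarrow$ (v) $\Rightarrow$ (iv) $\Rightarrow$ (i). Here the key computation is $\widehat{F*_{\fz_0}\psi_{t,j}}(v,\zeta,u)=\widehat F(v,\zeta,u)\,\chi^\#_{t,j}(\zeta)$, so $F*_{\fz_0}\psi_{t,j}$ is the piece of $F$ whose $\zeta$-frequencies lie in $\supp\chi^\#_{t,j}$, an annular region at distance $\sim r^j$ from the origin. Assuming (i), i.e. $\widehat F$ vanishes to infinite order at $\zeta=0$, the function $\widehat F$ together with all its derivatives is $O(|\zeta|^q)$ for every $q$ near $\zeta=0$; combined with the rapid decay of $\widehat F$ as $|\zeta|\to\infty$, this forces $\widehat F\cdot\chi^\#_{t,j}$ to have Schwartz seminorms $O(r^{-q|j|})$ for every $q$ as $j\to\pm\infty$ — for $j\to-\infty$ because the support shrinks toward the zero of infinite order, for $j\to+\infty$ because of Schwartz decay; estimating each $\zeta$-derivative of $\widehat F\chi^\#_{t,j}$ via the Leibniz rule and the scaling $\chi^\#_{t,j}(\zeta)=\chi^\#_{t,0}(r^{-j}\zeta)$ (which gives $\|\partial^\al\chi^\#_{t,j}\|_\infty\le C_\al r^{-j|\al|}$) and then transferring back to $F$-side Schwartz seminorms gives (v). Then (v) $\Rightarrow$ (iv) follows because $o(r^{-q|j|})$ bounds make $\sum_{t,j}F*_{\fz_0}\psi_{t,j}$ absolutely convergent in every Schwartz seminorm, and on the Fourier side the sum equals $\widehat F\cdot\sum_{t,j}\chi^\#_{t,j}=\widehat F$ since the $\chi^\#_{t,j}$ form a partition of unity on $\fz_0\setminus\{0\}$ and $\widehat F$ vanishes at the origin anyway — hence the sum is $F$ in $\cS(N)$. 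Finally (iv) $\Rightarrow$ (i): each summand $F*_{\fz_0}\psi_{t,j}$ lies in $\cS_0(N)$ because its $\fz_0$-Fourier transform vanishes near $\zeta=0$ (the support of $\chi^\#_{t,j}$ is bounded away from $0$), and $\cS_0(N)$ is closed in $\cS(N)$ (the conditions $\int z^\beta F\,dz=0$ are continuous linear constraints), so the limit $F$ lies in $\cS_0(N)$.

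\emph{Main obstacle.} The genuinely delicate point is the uniformity in the extra variables $v\in\fv$ and $u\in\check\fz$: everything must be controlled in the full Schwartz seminorms $\|\cdot\|_{\cS(N),M}$, not merely in seminorms on $\cS(\fz_0)$ for fixed $(v,u)$. Concretely, in the Hadamard-type division used for (ii) $\Rightarrow$ (iii) and in the decay estimate for (v), one needs the remainder terms and the frequency-localized pieces to be Schwartz \emph{jointly} in $(v,z,u)$ with seminorm bounds that only involve a fixed finite number of seminorms of $F$ — this is routine but requires writing the Taylor remainder in integral form and commuting $\de_v,\de_u$-derivatives and polynomial weights through the construction, keeping track that the cutoff near $\zeta=0$ is independent of $(v,u)$. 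I would handle this by treating the whole argument at the level of the Fréchet space $\cS(N)$ throughout, never fixing $(v,u)$, and invoking the closed graph / continuity of the elementary operations (Fourier transform in $\fz_0$, multiplication by a fixed Schwartz function of $\zeta$, division by $\zeta^\al$ after cutoff) as continuous maps on the appropriate Schwartz spaces.
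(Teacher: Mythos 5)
Your argument is correct and follows essentially the same route as the paper, whose proof is only a three-line sketch: (i)$\Leftrightarrow$(ii) directly from the definition of $\cS_0(N)$, (ii)$\Leftrightarrow$(iii) via Hadamard's lemma as in \cite{FRY2}, and the equivalence of (ii), (iv), (v) read off on the $\fz_0$-Fourier transform side using the partition of unity $\{\chi^\#_{t,j}\}$. Your write-up simply supplies the dyadic scaling estimates and the uniformity in $(v,u)$ that the paper leaves implicit.
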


\begin{proof}
The equivalence of (i) and (ii) is a direct consequence of the definition of $\cS_0(N)$. The equivalence of (ii) and (iii) follows from Hadamard's lemma, cf. \cite{FRY2}. Finally, the equivalence among (ii), (iv) and (v) can be easily seen on the $\fz_0$-Fourier transform side.
\end{proof}

\bigskip

\subsection{Radon transforms of $K$-invariant functions}\label{section-Radon}\quad
\medskip

Given $t\in \fz_0$ and $F\in\cS(N)^K$, let $\cR^t F\in\cS(N_t)^{K_t}$ be its Radon transform \eqref{radon*} defined on $N_t$.
By \eqref{Lambda^gamma}, for $\ph^t\in\Sigma^t$, we have, cf. \cite{FR},
$$
\int_{N_t} \cR^t F(v,z') \ph^t(v,z')\,dv\,dz'=\int_N F(v,z)\Lambda^t\ph^t (v,z)\,dv\,dz\ ,
$$
i.e.,
\begin{equation}\label{G-G_t}
\cG_t(\cR^t F)=(\cG F)\circ\Lambda^t\ .
\end{equation}

We are now able to prove Proposition \ref{S_0}.

\begin{proof}[Proof of Proposition \ref{S_0}]
Decompose $F$ according to Lemma \ref{S_0-equivalence} (iv). Then
$$
\cG F=\sum_{t\in T\,,\,j\in\bZ}\cG(F*_{\fz_0}\psi_{t,j})\ .
$$

Denoting by $\mu_{t,j}$ the measure $\del_0\otimes\psi_{t,j}$, where $\del_0$ is the Dirac delta at the origin in $\fv\oplus\check\fz$, $\cG(F*_{\fz_0}\psi_{t,j})$ is the product of $\cG F$ and $\cG\mu_{t,j}$. By \eqref{trace}, if $\zeta=\zeta_0+\check\zeta\in\fz_0\oplus\check\fz$, then 
$$
\begin{aligned}
\cG\mu_{t,j}(\ph_{\zeta,\omega,\mu})&=\int_{\fz_0}\psi_{t,j}(z)\ph_{\zeta\omega,\mu}(0,-z,0)\,dz\\
&=\int_{\fz_0}\int_K\psi_{t,j}(z)e^{-i\lan\zeta_0,kz\ran}\,dk\,dz\\
&=\chi_{t,j}^\#(\zeta_0)\ .
\end{aligned}
$$

Then, for $\xi=(\xi_{\fz_0},\xi_{\check\fz},\xi_\fv,\xi_{\fv,\fz_0})\in\Sigma_\cD$,
\begin{equation}\label{G(f*psi)}
\cG(F*_{\fz_0}\psi_{t,j})(\xi)=\cG F(\xi)\eta_{t,j}(\xi_{\fz_0})\ .
\end{equation}

Set $F_{t,j}=F*_{\fz_0}\psi_{t,j}$, and consider its Radon transform $\cR^tF_{t,j}$. Since we are assuming that Property (S) holds for $(N_t,K_t)$, we have $\cG_t(\cR^tF_{t,j})\in \cS(\Sigma^t_{\cD_t})$. Moreover, for every $M,q\in\bN$, 
$$
\|\cG_t(\cR^tF_{t,j})\|_{M,\cS(\Sigma^t_{\cD_t})}=o(r^{-q|j|})\ .
$$

For fixed $M$, there exist functions $h^{(M)}_{t,j}\in\cS(\bR^d)$ such that 
$$
{h^{(M)}_{t,j}}_{\Sigma^t_{\cD_t}}=\cG_t(\cR^tF_{t,j})\ ,\qquad \|h^{(M)}_{t,j}\|_{M,\cS(\bR^d)}\le 2\|\cG_t(\cR^tF_{t,j})\|_{M,\cS(\Sigma^t_{\cD_t})}\ .
$$

Let $\Phi_{r^jt}$ be the map in \eqref{delta-scaling}.
Since $\cG F_{t,j}$ is supported in $\Sigma_\cD\cap \Pi\inv\big({\boldsymbol\rho}_{\fz_0}(r^jU_{0,t})\big)$,  \eqref{G-G_t} and Proposition \ref{tildeQ} imply that the composition $g^{(M)}_{t,j}=h^{(M)}_{t,j}\circ\tilde\Phi_{r^jt}$ coincides with $\cG F_{t,j}$ on $\Sigma_{\cD}$.

Therefore, for every choice of the integers $M_j$, we can say that the series
$$
\sum_{t,t'\in T\,,\,j,j'\in\bZ}\eta_{t',j'}(\xi_{\fz_0})g^{(M_j)}_{t,j}(\xi)
$$
converges pointwise to $\cG F$ on $\Sigma_\cD$. In fact, many of the terms will vanish identically on $\Sigma_\cD$, and this surely occurs when ${\boldsymbol\rho}_{\fz_0}(r^{j'}U_{0,t'})\cap {\boldsymbol\rho}_{\fz_0}(r^jU_{0,t})=\emptyset$.  Therefore,  if $E_{t,j}=\{(t',j'):{\boldsymbol\rho}_{\fz_0}(r^{j'}U_{0,t'})\cap {\boldsymbol\rho}_{\fz_0}(r^jU_{0,t})\ne\emptyset\}$,
\begin{equation}\label{sum}
\sum_{t\in T\,,\,j\in\bZ}\sum_{(t',j')\in E_{t,j}}\eta_{t',j'}(\xi_{\fz_0})g^{(M_j)}_{t,j}(\xi)=\cG F(\xi)
\end{equation}
on $\Sigma_\cD$.
Notice that, by construction,  there is $A>0$ such that $|j-j'|\le A$ for $(t',j')\in E_{t,j}$. In particular, the sets $E_{t,j}$ are finite and their cardinalities have a uniform upper bound. 

We claim that, choosing the $M_j$ appropriately, we can make the series \eqref{sum} converge to the required Schwartz extension of $\cG F$.

 In order to estimate the Schwartz norms of $\eta_{t',j'}g^{(M)}_{t,j}$, for $(t',j')\in E_{t,j}$, observe that, by Proposition \ref{Qinv}, the $\fz_0$-variables are bounded to a compact set, and the other variables appear in $\Phi_t$ as linear or quadratic factors. Taking this into account, together with the scaling properties of $\Phi_{r^jt}$ and $\eta_{t',j'}$, cf.  \eqref{delta-scaling} and Lemma \ref{xi-partition} (ii), it is not hard to see that there is $p_M$ depending only on $M$ such that, for $(t',j')\in E_{t,j}$,
$$
\|\eta_{t',j'}g^{(p_M)}_{t,j}\|_{M,\cS(\bR^d)}\le Cr^{|j|p_M}\|h^{(p_M)}_{t,j}\|_{p_M,\cS(\bR^d)}\ .
$$

Hence,
$$
\|\eta_{t',j'}g^{(p_M)}_{t,j}\|_{M,\cS(\bR^d)}=o(r^{-q|j|})\ ,
$$
for every $q$ and $(t',j')\in E_{t,j}$.

By induction, we can then select a strictly increasing sequence $\{j_q\}_{q\in\bN}$ of integers, such that $j_0=0$ and
$$
\|\eta_{t',j'}g^{(p_q)}_{t,j}\|_{q,\cS(\bR^d)}\le r^{-q|j|}\ ,
$$
for $j\ge j_q$ and $(t',j')\in E_{t,j}$. For $j_q\le j< j_{q+1}$, we select $M_j=p_q$ and consider the following special case of \eqref{sum}:
$$
g(\xi)=\sum_{q=0}^\infty\,\sum_{\substack{j_q\le j<j_{q+1}\\ t\in T}}\,\sum_{(t',j')\in E_{t,j}}\eta_{t',j'}(\xi_{\fz_0})g^{(p_q)}_{t,j}(\xi)\ .
$$

Then the series converges in every Schwartz norm. Since each term vanishes identically on a neighbourhood of $\{0\}\times\bR^{d_{\check\fz}}\times\bR^{d_\fv}\times\bR^{d_{\fv,\fz_0}}$, the sum must have all derivatives vanishing on this set.
\end{proof}

\vskip1cm
\section{The pair $(\check N,K)$ and Taylor developments}
\label{section_checkN}
\bigskip

In this section, we complete the proof of Theorem \ref{main}, under the assumption that Property~(S) holds for all proper quotient pairs of $(N,K)$. In order to do so, we want to remove the restriction on $F$ in Proposition \ref{S_0} and assume $F\in\cS(N)^K$.
This means analysing $\cG F$  in a neighbourhood of $\check\Sigma_\cD=\Sigma_\cD\cap (\{0\}\times\bR^{d_{\check\fz}}\times\bR^{d_\fv}\times\bR^{d_{\fv,\fz_0}})$.

\subsection{The group $\check N$}\label{}\quad
\medskip

By Lemma \ref{Pi}, the spherical functions $\ph_{\zeta,\omega,\mu}$ associated to points in this set are the ones for which $\zeta\in\check\fz$. This means that the representation $\pi_{\zeta,\omega}$ factors to a representation of $\check N=N/\exp\fz_0$ and, correspondingly, $\ph_{\zeta,\omega,\mu}$ is a spherical function of the pair $(\check N,K)$ composed with the canonical projection.

Notice that, as we have implicitly admitted a few lines above, $(\check N,K)$ is a n.G.p., and that Property (S) is known to hold for it (in fact, $\check N$ is either a Heisenberg group, or its quaternionic analogue, with $\check\fn=\bH^n\oplus\IM\bH$).

We denote by $\cD_{\check\fz}$, $\cD_{\fz_0}$, $\cD_\fv$, $\cD_{\fv,\fz_0}$  the families of operators corresponding to the corresponding subfamilies of polynomials in ${\boldsymbol\rho}$.

The following properties are easily verified:
\begin{enumerate}
\item[(i)] for $D\in\cD_{\fz_0,\fv}$, we have $D\ph_{\zeta,\omega,\mu}=0$, i.e., whenever $\xi\in\Sigma_\cD$ has $\xi_{\fz_0}=0$, then also $\xi_{\fv,\fz_0}=0$, cf. Lemma \ref{dominant} (i);
\item[(ii)] if $\check \cR$ denotes the Radon transform \eqref{opradon*} mapping differential operators on $N$ into differential operators on $\check N$, then $\check \cR D=0$ for $D\in\cD_{\fz_0}\cup \cD_{\fv,\fz_0}$, and $\check \cD=\{\check\cR D: D\in\cD_{\check\fz}\cup\cD_\fv\}$, is a free homogeneous basis of $\bD(\check N)^K$;
\item[(iii)] if $\Sigma_{\check\cD}$ is the Gelfand spectrum of $(\check N,K)$, then
\begin{equation}\label{check-sigma}
\check\Sigma_\cD=\big\{(0,\xi_{\check\fz},\xi_\fv,0):(\xi_{\check\fz},\xi_\fv)\in\Sigma_{\check\cD}\big\}\ ;
\end{equation}
\item[(iv)]  denoting by  $\check\cG$ the Gelfand transform of $(\check N,K)$ and by $\check\cR$ the Radon transform \eqref{radon*} with the integral taken over $\fz_0$, we have the identity
\begin{equation}\label{gelfand-radon}
\check\cG(\check\cR F)(\xi_{\check\fz},\xi_\fv)=\cG F(0,\xi_{\check\fz},\xi_\fv,0)\ .
\end{equation}
\end{enumerate}

\bigskip

\subsection{Taylor developments on $\check\Sigma_\cD$}\label{}\quad
\medskip

For a multi-index $\al=(\al',\al'')\in\bN^{d_{\fz_0}}\times\bN^{d_{\fv,\fz_0}}$, we denote by  $[\al]$ the degree of   ${\boldsymbol\rho}_{\fz_0}^{\al'}{\boldsymbol\rho}_{\fv,\fz_0}^{\al''}$ in the $\fz_0$-variables.

For $\rho_j\in{\boldsymbol\rho}_{\fz_0}\cup{\boldsymbol\rho}_{\fv,\fz_0}$, let $\tilde D_j\in\bD(\check N)\otimes\cP(\fz_0)$ denote $(\la'_{\check N}\otimes I)(\rho_j)$, where $\la'_{\check N}$ is the symmetrisation operator \eqref{modsym}, and $\rho_j$ is regarded as an element of $\cP(\check\fn)\otimes\cP(\fz_0)$. 
If $\al=(\al',\al'')\in\bN^{d_{\fz_0}}\times\bN^{d_{\fv,\fz_0}}$, then
$$
\tilde D^\al={\boldsymbol\rho}_{\fz_0}^{\al'}\tilde D^{\al''}
$$
has degree $[\al]$ in the $\fz_0$-variables.

In \cite{FRY2} the following result was proved\footnote{In fact, \cite{FRY2} also gives the control of the Schwartz norms of $H_\al$ in terms of Schwartz norms of the $G_\gamma$, but this is not strictly needed here.}.

\begin{proposition}\label{prop_FRY2}
 Let $G$ be a $K$-invariant function on $\check N\times\fz_0$ of the form
\begin{equation}\label{G}
 G(v,z,u)=\sum_{|\gamma|=k}z^\gamma G_\gamma(v,u)\ ,
\end{equation}
 with $G_\gamma\in\cS(\check N)$. 
 Then there exist functions $H_{\al}\in\cS(\check N)^K$, for $[\al]=k$, such that
\begin{equation}\label{Hal}
 G=\sum_{[\al]= k}\frac1{\al!}\tilde D^{\al} H_{\al}\ .
\end{equation}
\end{proposition}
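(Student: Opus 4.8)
\textbf{Proof plan for Proposition \ref{prop_FRY2}.}

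The plan is to reduce the statement to a clean algebraic fact about invariant polynomials on $\check\fn\oplus\fz_0$ that decompose multiplicity-freely, which is precisely the content of Proposition 2.1 of \cite{FRY2} (the one invoked in the proof of Proposition \ref{Qinv}), and then transfer that fact through the symmetrisation map $\la'_{\check N}$. First I would observe that, since $K$ acts linearly on $\fz_0$ and trivially on $\check\fn/[\check\fn,\check\fn]$ and on $\check\fz$, the space of $K$-invariant polynomials on $\check N\times\fz_0$ that are homogeneous of degree $k$ in the $\fz_0$-variables, modulo the ones vanishing to higher order, is spanned by the monomials ${\boldsymbol\rho}_{\fz_0}^{\al'}{\boldsymbol\rho}_{\fv,\fz_0}^{\al''}$ with $[\al]=k$ \emph{when one allows smooth coefficients in the invariants of $\check\fn$}; concretely, the relevant statement is that every $K$-invariant, $\fz_0$-homogeneous-of-degree-$k$ element of $C^\infty(\check N)\otimes\cP^k(\fz_0)$ can be written as $\sum_{[\al]=k}(c_\al\circ{\boldsymbol\rho}_{\check\fn})\,\rho_{\fz_0}^{\al'}\rho_{\fv,\fz_0}^{\al''}$ with $c_\al$ smooth. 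This is the multiplicity-free decomposition of the $K$-representation on $\cP^k(\fz_0)$ appearing in the factor, combined with Schwarz's theorem applied on the $\check\fn$-side, and it is exactly the case-by-case fact referred to in the introduction.

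Next I would handle the passage from multiplication operators to the differential operators $\tilde D^\al$. The point is that $\tilde D_j=(\la'_{\check N}\otimes I)(\rho_j)$ acts on a product function $G(v,z,u)=z^\gamma G_\gamma(v,u)$ by differentiating only in the $\check N$-variables while leaving the explicit $z$-monomial alone, so $\tilde D^{\al''}$ lowers the $\check\fn$-part and $\rho_{\fz_0}^{\al'}$ simply multiplies by a $\fz_0$-polynomial; hence modulo terms of strictly higher order in $z$ the operator $\tilde D^\al$ acts on the relevant quotient space as multiplication by ${\boldsymbol\rho}_{\fz_0}^{\al'}{\boldsymbol\rho}_{\fv,\fz_0}^{\al''}$ with coefficients involving lower-order invariants. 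Writing $G=\sum_{|\gamma|=k}z^\gamma G_\gamma$, for each fixed $\gamma$ I would expand the coefficient $G_\gamma$ in the invariants ${\boldsymbol\rho}_{\check\fn}$ (Schwartz extension) and assemble, via the span statement of the previous paragraph, a first-order candidate $\sum_{[\al]=k}\frac1{\al!}\tilde D^\al H_\al^{(0)}$ that agrees with $G$ up to a $K$-invariant remainder which is still of the form \eqref{G} but with $\check N$-components whose invariant-expansion coefficients have one more vanishing derivative; iterating (a Borel-type summation, using that each $H_\al$ can be taken to depend continuously and with controlled Schwartz norms on the $G_\gamma$) and summing the corrections into honest Schwartz functions $H_\al\in\cS(\check N)^K$ gives \eqref{Hal}.

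The main obstacle, as the authors flag, is the purely representation-theoretic input: that the tensor products ${\mathcal P}^k(\fz_0)\otimes({\rm stuff})$ decompose without multiplicities, so that the monomials ${\boldsymbol\rho}_{\fz_0}^{\al'}{\boldsymbol\rho}_{\fv,\fz_0}^{\al''}$ with $[\al]=k$ actually span the degree-$k$ part of the invariants with smooth $\check\fn$-coefficients, and that no extra relations force a loss. For the pairs in Table \ref{vinberg} this is exactly where $\check N$ is a Heisenberg or quaternionic-Heisenberg group and $\fz_0$ carries a specific $K$-module structure, and it must be checked case by case; since this verification is carried out in \cite{FRY2}, here I would only state that we invoke it, and the remaining work — the symmetrisation bookkeeping and the iterative summation into Schwartz functions — is the routine part. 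The secondary technical point to be careful about is that the iteration converges: each correction step strictly raises the order of vanishing of the invariant coefficients on the $\check\fn$-side while keeping the $\fz_0$-degree equal to $k$, so a standard Borel/Whitney summation across the countably many steps produces the final $H_\al$ with the asserted regularity.
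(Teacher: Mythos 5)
The first thing to say is that the paper does not prove this proposition at all: it is quoted verbatim from \cite{FRY2}, and the text explicitly says ``we only show how Proposition \ref{prop_FRY2} implies Proposition \ref{hadamard}.'' So the comparison has to be with the structure of the argument in \cite{FRY2}, which the present paper sketches through the parallel Lemma \ref{last-hadamard} and Lemma \ref{heisenberg}.

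Your first step is on target: the decomposition of $G$ into irreducible pieces $p_{V_1,V_2}\,g$ (your ``span statement with smooth $\check\fn$-coefficients,'' via Schwarz's theorem and the multiplicity-free representation-theoretic input) is indeed the right starting point and is indeed case-by-case. Where the proposal goes wrong is in the claim that passing from the multiplication operators $p^\al$ to the symmetrised operators $\tilde D^\al$ is ``routine bookkeeping'' fixable by an iterative Borel-type summation. There is no iteration parameter available. Both $G$ and every $\tilde D^\al H_\al$ with $[\al]=k$ are homogeneous of degree \emph{exactly} $k$ in the $\fz_0$-variable, so there is no ``strictly higher order in $z$'' hierarchy. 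If you pick $H^{(0)}_\al$ so that $\sum_\al p^\al H^{(0)}_\al = G$, the remainder is $\sum_\al (p^\al-\tilde D^\al)H^{(0)}_\al$, still of $z$-degree exactly $k$, and there is no reason at all that its invariant-expansion coefficients ``have one more vanishing derivative'': $(p^\al - \tilde D^\al)$ is a nonzero differential operator in the $\check N$-variables, not a contraction. The scheme does not converge, and a Borel--Whitney summation cannot rescue it because there is no sequence of strictly increasing orders of vanishing being produced.

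What the argument actually requires, as one can read off from the paper's proof of Lemma \ref{last-hadamard}, is a genuine \emph{inversion} of the operators $M_V = \la'_{\check N}(p_{V_1,V_2})$ on $\cS(\check N)^K$. This has two nontrivial ingredients, neither of which appears in your proposal: (a) a representation-theoretic verification that $d\pi(M_V)\ne0$ on precisely those $K_\pi$-irreducible subspaces of the Fock space where $\pi(p_V g)$ can live -- the multiplicity-free decomposition is used here a second time, not merely for the span statement; and (b) a division theorem at the level of the Gelfand spectrum (Proposition 5.2 of \cite{FRY2}): one forms the self-adjoint operator $U_V = M_V^* M_V$, identifies $\check\cG(M_V^* G_V)$ as a Schwartz function on $\Sigma_{\check\cD}$ vanishing on the appropriate polynomial zero set, and factors out the polynomial $u_V$ to produce a Schwartz $\psi$ and hence $H_V\in\cS(\check N)^K$. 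That division step is the crux, it lives entirely on the spherical-transform side, and it is what makes the output land in the Schwartz class. Your proposal stops one ingredient short of the decomposition into irreducibles and treats the rest as formal; in fact the rest is where all the work is.
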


We complete, giving all details, the argument sketched in \cite{FRY2}, which derives the following Hadamard-type formula from Proposition \ref{prop_FRY2}.

\begin{proposition}\label{hadamard}
Let $F\in\cS(N)^K$, and assume that 
$$
F(v,z,u)=\sum_{|\gamma|=k}\de_z^\gamma R_\gamma(v,z,u)\ ,
$$
with $R_\gamma\in\cS(N)$ for every $\gamma$. Then, for every $\al$ with $[\al]=k$, there exists a function $F_\al\in\cS(N)^K$ such that
$$
\cG F_\al(\xi)=h_\al(\xi_{\check\fz},\xi_\fv)\ ,\qquad (\xi\in\Sigma_\cD)\ ,
$$
with $h_\al\in\cS(\bR^{d_{\check \fz}+d_{\fv}})$, and
$$
F(v,z,u)=\sum_{[\al]=k}\frac1{\al!}D^\al F_\al(v,z,u)+\sum_{|\gamma'|=k+1}\de_z^\beta R'_{\gamma'}(v,z,u)\ ,
$$
with $R'_{\gamma'}\in\cS(N)$ for every $\gamma'$.
\end{proposition}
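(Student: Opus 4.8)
The plan is to pass to the partial Fourier transform $\widehat{\phantom a}$ in the $\fz_0$-variables (see \eqref{fourier}), to extract the leading $\fz_0$-jet of $F$, to feed it into Proposition \ref{prop_FRY2}, and then to lift the resulting functions on $\check N$ back to $N$ by means of Theorem \ref{hulanicki}. By the finite-order version of Hadamard's lemma (the finite-order analogue of the equivalence (ii)$\Leftrightarrow$(iii) in Lemma \ref{S_0-equivalence}), the hypothesis $F=\sum_{|\gamma|=k}\de_z^\gamma R_\gamma$, $R_\gamma\in\cS(N)$, is equivalent to saying that $\widehat F(v,\zeta,u)$ vanishes to order $k$ at $\zeta=0$. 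Let
$$
\widehat P(v,\zeta,u)=\sum_{|\beta|=k}\frac{\zeta^\beta}{\beta!}\big(\de_\zeta^\beta\widehat F\big)(v,0,u)
$$
be its homogeneous part of degree $k$. Read as a function $G(v,z,u):=\widehat P(v,z,u)$ on $\check N\times\fz_0$, it is $K$-invariant (being the degree-$k$ Taylor block of the $K$-invariant function $\widehat F$) and of the form $\sum_{|\gamma|=k}z^\gamma G_\gamma(v,u)$ with $G_\gamma\in\cS(\check N)$. Proposition \ref{prop_FRY2} then supplies functions $H_\al\in\cS(\check N)^K$, $[\al]=k$, with $G=\sum_{[\al]=k}\frac1{\al!}\tilde D^\al H_\al$; specialising the polynomial $\fz_0$-argument to $\zeta$ gives the identity $\widehat P(v,\zeta,u)=\sum_{[\al]=k}\frac1{\al!}\tilde D^\al_\zeta H_\al(v,u)$, where $\tilde D^\al_\zeta$ denotes the differential operator on $\check N$ — homogeneous of degree $[\al]=k$ in $\zeta$ — obtained from $\tilde D^\al$ by setting its $\fz_0$-argument equal to $\zeta$.

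Next I build the $F_\al$. Since $(\check N,K)$ satisfies Property (S) (it is of Heisenberg type), $\check\cG H_\al\in\cS(\Sigma_{\check\cD})$ and $\check\cG$ is an isomorphism of $\cS(\check N)^K$ onto $\cS(\Sigma_{\check\cD})$; choose $h_\al\in\cS(\bR^{d_{\check\fz}+d_\fv})$ with $h_\al|_{\Sigma_{\check\cD}}=\check\cG H_\al$. By Lemma \ref{dominant}(ii) the coordinates $\xi_{\fz_0}$ and $\xi_{\fv,\fz_0}$ of a point of $\Sigma_\cD$ are bounded by a power of $\|\xi_\fv\|$; multiplying $\xi\mapsto h_\al(\xi_{\check\fz},\xi_\fv)$ by a smooth cutoff that is $1$ on an $\bR^d$-neighbourhood of this region and decays fast enough in the remaining directions, one obtains $\bar h_\al\in\cS(\bR^d)$ with $\bar h_\al|_{\Sigma_\cD}(\xi)=h_\al(\xi_{\check\fz},\xi_\fv)$. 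By Theorem \ref{hulanicki} applied to $(N,K)$ there is $F_\al\in\cS(N)^K$ with $\cG F_\al=\bar h_\al|_{\Sigma_\cD}$, which is the asserted property of $F_\al$ (for this $h_\al$). Restricting to $\check\Sigma_\cD$ and using \eqref{check-sigma} and \eqref{gelfand-radon}, one gets $\check\cG(\check\cR F_\al)=\cG F_\al|_{\check\Sigma_\cD}=h_\al|_{\Sigma_{\check\cD}}=\check\cG H_\al$; injectivity of $\check\cG$ forces $\check\cR F_\al=H_\al$, that is $\widehat{F_\al}(v,0,u)=H_\al(v,u)$.

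It remains to check that $\Psi:=F-\sum_{[\al]=k}\frac1{\al!}D^\al F_\al$, which lies in $\cS(N)$ since each $D^\al$ preserves $\cS(N)$, has the form $\sum_{|\gamma'|=k+1}\de_z^{\gamma'}R'_{\gamma'}$ with $R'_{\gamma'}\in\cS(N)$; by the finite-order Hadamard lemma this is equivalent to $\widehat\Psi(v,\zeta,u)$ vanishing to order $k+1$ at $\zeta=0$. This is the technical core. A direct computation from \eqref{modsym} and the Baker--Campbell--Hausdorff formula shows that the $\fz_0$-Fourier transform intertwines each generator $D_j$ attached to $\rho_j\in{\boldsymbol\rho}_{\fz_0}\cup{\boldsymbol\rho}_{\fv,\fz_0}$ with the operator $\tilde D_{j,\zeta}$ up to a correction vanishing to one order higher in $\zeta$, the correction coming from the phase $e^{\frac i2\langle\zeta,[v,\cdot]_{\fz_0}\rangle}$ produced by the $\fz_0$-component of the cocycle. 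Since $D^\al$ commutes with the central derivations $\de_{z_0}$, iterating this and keeping track of the orders of vanishing gives $\widehat{D^\al F_\al}(v,\zeta,u)=\tilde D^\al_\zeta\widehat{F_\al}(v,\zeta,u)+O(|\zeta|^{k+1})$; as $\tilde D^\al_\zeta$ is homogeneous of degree $k$ in $\zeta$, $\widehat{D^\al F_\al}$ then vanishes to order $k$ at $\zeta=0$ with homogeneous degree-$k$ part $\tilde D^\al_\zeta H_\al(v,u)$. Summing with weights $1/\al!$ and using the identity from Proposition \ref{prop_FRY2}, $\sum_{[\al]=k}\frac1{\al!}\widehat{D^\al F_\al}$ vanishes to order $k$ at $\zeta=0$ with degree-$k$ part $\widehat P$ — the same as that of $\widehat F$ — so $\widehat\Psi$ vanishes to order $k+1$, as required.

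I expect the main obstacle to be precisely that last compatibility between the differential calculi on $N$ and on $\check N$: making precise, and proving, that the $\fz_0$-Fourier transform carries $D_j$ to $\tilde D_{j,\zeta}$ modulo higher-order vanishing in $\zeta$, and then controlling the accumulation of these errors through the products $D^\al$ so that the $k$-jet along $\fz_0=0$ of $\sum_{[\al]=k}\frac1{\al!}D^\al F_\al$ is exactly the object produced by Proposition \ref{prop_FRY2}. A secondary, more routine but slightly delicate point is the cutoff construction of the global Schwartz extension $\bar h_\al$, where the rapid decay of $h_\al$ in $\xi_\fv$ must be seen to beat the polynomial spread of $\Sigma_\cD$ in the $\xi_{\fz_0}$ and $\xi_{\fv,\fz_0}$ directions permitted by Lemma \ref{dominant}. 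Everything else reduces to standard manipulations with Fourier transforms and Schwartz seminorms.
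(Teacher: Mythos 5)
Your proposal follows essentially the same approach as the paper's proof: pass to the $\fz_0$-Fourier transform, feed the degree-$k$ jet into Proposition~\ref{prop_FRY2}, lift the $H_\al$ to $F_\al$ via Property~(S) for $(\check N,K)$, Lemma~\ref{dominant}(ii) and Theorem~\ref{hulanicki}, and then absorb the mismatch between the two differential calculi into the order-$(k+1)$ remainder. The paper handles that last compatibility by undoing the Fourier transform and comparing the left-invariant vector fields $X_{v_0}$ on $N$ with $\bar X_{v_0}$ on $\bar N=\check N\times\fz_0$ (the difference being a combination of $\de_{z_j}$'s), whereas you phrase the same fact on the Fourier side ($D_j$ intertwines with $\tilde D_{j,\zeta}$ up to one extra order of vanishing in $\zeta$); these are the same computation.
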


\begin{proof}
Consider the Fourier transform \eqref{fourier} of $F$ in the $\fz_0$-variables. Then
$$
\widehat F(v,\zeta,u)=i^k\sum_{|\gamma|=k}\zeta^\gamma\widehat R_\gamma(v,\zeta,u)\ .
$$

Setting $G_\gamma(v,u)=i^k\widehat R_\gamma(v,0,u)\in\cS(\check N)$, we have
$$
\widehat F(v,\zeta,u)=\sum_{|\gamma|=k}\zeta^\gamma G_\gamma(v,u)+\sum_{|\gamma'|=k+1}\zeta^{\gamma'} S_{\gamma'}(v,\zeta,u)\ ,
$$

Since $G=\sum_\gamma \zeta^\gamma G_\gamma$ is $K$-invariant, we can apply Proposition \ref{prop_FRY2} to obtain that there exist $H_\al\in\cS(\check N)^K$ such that
$$
\widehat F(v,\zeta,u)=\sum_{[\al]= k}\frac1{\al!}(\tilde D^{\al} H_{\al})(v,\zeta,u)+\sum_{|\gamma'|=k+1}\zeta^{\gamma'} S_{\gamma'}(v,\zeta,u)\ ,
$$

By Property (S)  for the pair $(\check N,K)$, for every $\al$ there is $h_\al\in\cS(\bR^{d_{\check\fz}+d_\fv})$ such that $\check\cG H_\al={h_\al}_{|_{\Sigma_{\check\cD}}}$. By Lemma \ref{dominant} (ii),  $|\xi_\fz|$ and $|\xi_{\fv,\fz_0}|$ are controlled by powers of $|\xi_\fv|$ on $\Sigma_\cD$. We then have that
$$
\tilde h_\al(\xi)=h_\al(\xi_{\check\fz},\xi_{\fv,\fz_0})\in\cS(\Sigma_\cD)\ .
$$

If $F_\al\in\cS(N)^K$ is the function such that $\cG H_\al$ equals $\tilde h_\al$ on $\Sigma_\cD$, it follows from \eqref{gelfand-radon} that $H_\al=\check\cR F_\al$ for every $\al$. This is equivalent to saying that
$\widehat F_\al(v,0,u)=H_\al(v,u)$. By Hadamard's lemma,
$$
\widehat F_\al(v,\zeta,u)-H_\al(v,u)=\sum_{j=1}^{d_{\fz_0}}\zeta_j K_j(v,\zeta,u)\ ,
$$
with $K_j$ smooth. Therefore,
$$
\begin{aligned}
\widehat F(v,\zeta,u)&=\sum_{[\al]= k}\frac1{\al!}(\tilde D^{\al} \widehat F_{\al})(v,\zeta,u)-\sum_{[\al]= k}\sum_{j=1}^{d_{\fz_0}}\frac1{\al!}\zeta_j\tilde D^{\al} K_j(v,\zeta,u) +\sum_{|\gamma'|=k+1}\zeta^{\gamma'} S_{\gamma'}(v,\zeta,u)\\
&=\sum_{[\al]= k}\frac1{\al!}(\tilde D^{\al} \widehat F_{\al})(v,\zeta,u)+\sum_{|\gamma'|=k+1}\zeta^{\gamma'} S'_{\gamma'}(v,\zeta,u)\ .
\end{aligned}
$$

Notice that $S'=\sum_{|\gamma'|=k+1}\zeta^{\gamma'} S'_{\gamma'}\in\cS(\check N\times\fz_0)$ because it is the difference of two Schwartz functions. Since the derivatives of order up to $k$ vanish for $\zeta=0$, it follows from Hadamard's lemma that the same sum $S'$ can be obtained by   replacing each  $S'_{\gamma'}$ by a function $S''_{\gamma'}\in\cS(\check N\times\fz_0)$.

Now we can undo the Fourier transform. In doing so, the monomials in $\zeta$ are turned into derivatives in the $\fz_0$-variables, and each $\tilde D_j$ is turned into the differential operator $\lambda'_{\tilde N}(\rho_j)$, where the symmetrisation is taken on the direct product $\bar N=\check N\times\fz_0$. We denote by $\bar D_j$ this operator.

We then have
$$
F=\sum_{[\al]= k}\frac1{\al!}\bar D^{\al} F_{\al}+\sum_{|\gamma'|=k+1}\de_z^{\gamma'} U_{\gamma'}\ ,
$$
where $F_\al$ satisfies the stated requirements and $U_{\gamma'}\in\cS(\bar N)$ for every $\gamma'$.

It only remains to replace each operator $\bar D^\al$ with the corresponding $D^\al$. In order to do this, it is sufficient to compare the left-invariant vector field $X_{v_0}$ on $N$ corresponding to an element $v_0\in\fv$ with the  left-invariant vector field $\bar X_{v_0}$ on $\bar N$ corresponding to the same $v_0$. Clearly, the difference $X_{v_0}-\bar X_{v_0}$ is a linear combination $\sum_{j=1}^{d_{\fz_0}}\ell_{j,v_0}(v)\de_{z_j}$ where the $\ell_{j,v_0}$ are linear functionals on $\fv$. Therefore, any difference $D^\al-\bar D^\al$ is a sum of terms, each of which contains at least $k+1$ derivatives in the $\fz_0$-variables. The corresponding term $(D^\al-\bar D^\al)F_\al$ is absorbed by the remainder term.
\end{proof}

\begin{proposition}\label{F-G}
Given $F\in\cS(N)^K$, there is $g\in\cS(\bR^d)$ such that, calling $G\in\cS(N)^K$ the function such that $\cG G=g_{|_{\Sigma_\cD}}$, then $F-G\in\cS_0(N)$.
\end{proposition}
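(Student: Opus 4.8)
The plan is to use the Hadamard-type formula of Proposition~\ref{hadamard} iteratively to pin down the Taylor jet that any Schwartz extension of $\cG F$ must carry along $\check\Sigma_\cD=\Sigma_\cD\cap\big(\{0\}\times\bR^{d_{\check\fz}}\times\bR^{d_\fv}\times\{0\}\big)$, to realise this jet by a Schwartz function $g$ on $\bR^d$ via a Whitney/Borel extension, and finally to read off $F-G\in\cS_0(N)$ from the fact that $\cG(F-G)$ then has vanishing jet along $\check\Sigma_\cD$. First I would iterate Proposition~\ref{hadamard}: $F$ has the form required there with $k=0$, and feeding the remainder back at each stage produces functions $F_\al\in\cS(N)^K$, indexed by $\al=(\al',\al'')\in\bN^{d_{\fz_0}}\times\bN^{d_{\fv,\fz_0}}$, and Schwartz functions $h_\al\in\cS(\bR^{d_{\check\fz}+d_\fv})$ with $\cG F_\al(\xi)=h_\al(\xi_{\check\fz},\xi_\fv)$ on $\Sigma_\cD$, such that for every $m$
\[
F=\sum_{[\al]\le m}\frac1{\al!}\,D^\al F_\al+R_m\ ,
\]
where $R_m\in\cS(N)^K$ has $\fz_0$-Fourier transform vanishing to order $m+1$ at $\zeta=0$, $D^\al$ is the product of the operators of $\cD$ attached to ${\boldsymbol\rho}_{\fz_0}\cup{\boldsymbol\rho}_{\fv,\fz_0}$, and $[\al]$ is as in Section~\ref{section_checkN}. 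Since every such generator has strictly positive $\fz_0$-degree, the set $\{[\al]\le m\}$ is finite, the $F_\al,h_\al$ stabilise, and (by the quantitative form of Proposition~\ref{prop_FRY2}) their Schwartz norms are controlled by those of $F$. Because $\cG(D^\al F_\al)(\xi)=\xi_{\fz_0}^{\al'}\xi_{\fv,\fz_0}^{\al''}h_\al(\xi_{\check\fz},\xi_\fv)$ on $\Sigma_\cD$, the family $\{h_\al\}$ is precisely the transversal Taylor jet of $\cG F$ (in the $\xi_{\fz_0},\xi_{\fv,\fz_0}$ directions) along $\check\Sigma_\cD$.

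Next I would take, by the Schwartz-class Whitney/Borel extension theorem (\cite{Wh2}; concretely, the usual Borel sum $\sum_\al\frac1{\al!}\xi_{\fz_0}^{\al'}\xi_{\fv,\fz_0}^{\al''}h_\al\,\chi(\eps_\al\,\cdot)$ with cut-off $\chi\equiv1$ near $0$ and $\eps_\al\searrow0$ fast enough to converge in $\cS(\bR^d)$), a function $g\in\cS(\bR^d)$ whose transversal derivatives $\de_{\xi_{\fz_0}}^{\al'}\de_{\xi_{\fv,\fz_0}}^{\al''}g$ at $\xi_{\fz_0}=\xi_{\fv,\fz_0}=0$ equal $h_\al(\xi_{\check\fz},\xi_\fv)$ for all $\al$. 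Theorem~\ref{hulanicki} then supplies $G\in\cS(N)^K$ with $\cG G=g_{|_{\Sigma_\cD}}$, and this is the pair $(g,G)$ sought.

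Finally I would verify $F-G\in\cS_0(N)$ via Lemma~\ref{S_0-equivalence}: it suffices that for every $k$ the $\fz_0$-Fourier transform of $F-G$ vanishes to order $\ge k$ at $\zeta=0$. Fix $k$ and a large $m$; then $F-G=\big(\sum_{[\al]\le m}\frac1{\al!}D^\al F_\al-G\big)+R_m$, and $R_m$ already vanishes to order $m+1$ in this sense. On $\Sigma_\cD$ the Gelfand transform of the bracketed term is the difference of $g$ and the degree-$m$ part of the Borel series; by the jet condition on $g$ this coincides, up to cut-off error terms which vanish identically near $\check\Sigma_\cD$ (hence, once pulled back by the spherical transform, lie in $\cS_0(N)$) and which are controlled by Schwartz-norm estimates as in the proof of Proposition~\ref{product}, with the restriction to $\Sigma_\cD$ of a genuinely Schwartz function vanishing to order $\lfloor m/W\rfloor+1$ at $\{\xi_{\fz_0}=\xi_{\fv,\fz_0}=0\}$, $W$ being the largest $\fz_0$-degree of a generator. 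Applying the Schwartz-class Hadamard lemma, Theorem~\ref{hulanicki}, and the structural fact that each operator of $\cD$ attached to ${\boldsymbol\rho}_{\fz_0}\cup{\boldsymbol\rho}_{\fv,\fz_0}$ raises the order of vanishing at $\zeta=0$ of the $\fz_0$-Fourier transform by its positive $\fz_0$-degree, one identifies the bracketed term, modulo $\cS_0(N)$, with a Schwartz function whose $\fz_0$-Fourier transform vanishes to order $\ge\lfloor m/W\rfloor+1$ at $\zeta=0$. By injectivity of the spherical transform on $L^1(N)^K$ these agree on $N$, so $F-G$ has $\fz_0$-Fourier transform vanishing to order $\ge\lfloor m/W\rfloor+1$ at $\zeta=0$; letting $m\to\infty$ gives $F-G\in\cS_0(N)$.

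The step I expect to be the main obstacle is this last one: reconciling the weighted order of vanishing along $\check\Sigma_\cD$ on the spectral side (graded by $[\,\cdot\,]$, i.e.\ by $\fz_0$-degree) with the plain order of vanishing in the $\fz_0$-variables on the group side, and—above all—keeping every intermediate object in the Schwartz class while passing back and forth through the spherical transform, which is what forces the quantitative handling of the Borel cut-off errors in the spirit of the proof of Proposition~\ref{product}. The structural input that the generators coming from ${\boldsymbol\rho}_{\fz_0}$ and ${\boldsymbol\rho}_{\fv,\fz_0}$ strictly increase the order of vanishing at $\zeta=0$ (readily checked on the $\fz_0$-Fourier transform side) is what makes the two orders match up; the iteration of Proposition~\ref{hadamard} and the Borel/Whitney extension are otherwise routine.
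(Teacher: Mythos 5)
Your proposal follows the same overall strategy as the paper's proof: iterate Proposition~\ref{hadamard} to obtain the transversal jet $\{h_\al\}$ along $\check\Sigma_\cD$, realise it by a Whitney/Borel extension $g\in\cS(\bR^d)$, take $G$ with $\cG G=g_{|_{\Sigma_\cD}}$ by Theorem~\ref{hulanicki}, and then argue that $F-G\in\cS_0(N)$. The first two stages are fine and match the paper. The issue is the third stage, which you correctly flag as ``the main obstacle'' but then only sketch, and the sketch contains two genuine gaps.

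First, you invoke without proof the claim that cut-off error terms which vanish identically on a Euclidean neighbourhood of $\check\Sigma_\cD$ ``once pulled back by the spherical transform, lie in $\cS_0(N)$''. This is not obvious, and is in fact of the same difficulty as the estimate it is meant to replace. On $\Sigma_\cD$, making $\xi_{\fz_0}$ small does \emph{not} force $(\xi_{\fz_0},\xi_{\fv,\fz_0})$ into the vanishing region: by Lemma~\ref{dominant}~(i), for small $\xi_{\fz_0}$ but large $\xi_\fv$ the coordinate $\xi_{\fv,\fz_0}$ can still be of order one. So the function does not vanish on $\{\xi_{\fz_0}\ \text{small}\}\cap\Sigma_\cD$, only on a smaller set, and establishing that its pullback satisfies one of the characterisations in Lemma~\ref{S_0-equivalence} already requires the kind of quantitative partition-of-unity estimate you are trying to avoid. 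Second, the reconciliation of the weighted grading $[\al]$ (which indexes the iterated Hadamard expansion) with the plain $\fz_0$-order of vanishing on the group side is asserted via the ``structural fact'' and the division by $W$, but the Borel remainder is naturally controlled in the plain $|\al|$-grading, and the translation between the two (which you correctly name as the crux) is never actually carried out.

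The paper closes exactly this step by a different mechanism: rather than going through characterisation~(ii) of Lemma~\ref{S_0-equivalence}, the Schwartz-class Hadamard lemma, and the degree-raising property of the operators, it uses characterisation~(i) directly and shows, for each fixed $\beta$, that $\int(F-G)z^\beta\,dz=0$. Concretely it fixes $k$ large in the expansion~\eqref{expansion}, notes that the remainder $R_m$ trivially has vanishing $\beta$-moment, and then shows that the series $\sum_{t,j}\bigl(G-\sum_{[\al]\le k}\tfrac1{\al!}D^\al F_\al\bigr)*_{\fz_0}\psi_{t,j}$ converges in the $(\cS(N),|\beta|)$-norm. Since each term has $\fz_0$-Fourier transform supported away from the origin, each has vanishing $\beta$-moment, and continuity of integration against $z^\beta$ in that norm finishes the argument. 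The convergence estimate is where the real work is: it compares $r_k$ with the plain Taylor remainder $s_{k'}$, bounds $\|s_{k'}\eta_{t,j}\|_{\cS(\bR^d),m}$ using the scaling of $\eta_{t,j}$, the decay of $g$ and $h_\al$, and the dominance inequalities~\eqref{dominant'}, and then absorbs the difference $r_k-s_{k'}$ by the same estimates. You would need to supply a comparably quantitative version of your two claims to make your route complete; as written, the proposal is a plan rather than a proof.
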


\begin{proof}
Consider the restriction of $\cG F$ to the set $\check\Sigma_\cD$ in \eqref{check-sigma}, which equals $\check\cG(\check\cR F)$ by \eqref{gelfand-radon}. If $h_0\in\cS(\bR^{d_{\check\fz}+d_\fv})$  extends  $\check\cG(\check\cR F)$, let $F_0\in\cS(N)^K$ be the function such that $\cG F_0(\xi)=h_0(\xi_{\check\fz},\xi_\fv)$.

Then $\check\cR(F-F_0)=0$, which implies that $F-F_0=\sum_{j=1}^{d_{\fz_0}}\de_{z_j}G_j$, with $G_j\in\cS(N)$. By iterated application of Proposition \ref{hadamard}, we find a family $\{F_\al\}_{\al\in\bN^{d_{\fz_0}}\times\bN^{d_{\fv,\fz_0}}}$ such that, for every $k$,
\begin{equation}\label{expansion}
F=\sum_{[\al]\le k}\frac1{\al!}D^\al F_\al+\sum_{|\gamma|=k+1}\de_z^\gamma R_\gamma\ ,
\end{equation}
and, for every $\al$, $\cG F_\al(\xi)=h_\al(\xi_{\check\fz},\xi_\fv)$, with $h_\al\in\cS(\bR^{d_{\check\fz}+d_\fv})$.

By Whitney's extension theorem \cite{Mal} (see \cite{ADR2} for a proof in the Schwartz setting), there is a function $g\in\cS(\bR^d)$ such that, for every $\al=(\al',\al'')\in\bN^{d_{\fz_0}}\times\bN^{d_{\fv,\fz_0}}$,
$$
\de^{\al'}_{\xi_{\fz_0}}\de^{\al''}_{\xi_{\fv,\fz_0}}(0,\xi_{\check\fz},\xi_\fv,0)=h_\al(\xi_{\check\fz},\xi_\fv)\ .
$$

We take as $G$ the function in $\cS(N)^K$  such that $\cG G=g_{\Sigma_\cD}$. We must then prove that $G-F\in\cS_0(N)^K$.

Take a monomial $z^\beta$   on $\fz_0$. 
Given an integer $k\ge|\beta|$,   decompose $F$ as in \eqref{expansion} and observe that
$$
\int_{\fz_0}\big(G(v,z,u)-F(v,z,u)\big)z^\beta\,dz=\int_{\fz_0}\Big(G-\sum_{[\al]\le k}\frac1{\al!}D^\al F_\al\Big)(v,z,u)z^\beta\,dz\ ,
$$
since the remainder term gives integral 0 by integration by parts.

We set
$$
r_k(\xi)=g(\xi)-\sum_{[\al]\le k}\frac1{\al!}h_\al(\xi_{\check\fz},\xi_\fv)\xi_{\fz_0}^{\al'}\xi_{\fv,\fz_0}^{\al''}
$$
so that
$r_k=\cG\Big(G-\sum_{[\al]\le k}\frac1{\al!}D^\al F_\al\Big)$ on $\Sigma_\cD$.

Then Lemma \ref{xi-partition} gives the pointwise identity on $\Sigma_\cD$
$$
\cG\Big(G-\sum_{[\al]\le k}\frac1{\al!}D^\al F_\al\Big)(\xi)=
\sum_{t\in T\,,\,j\in\bZ}r_k(\xi)\eta_{t,j}(\xi_{\fz_0})\ .
$$

We claim that $k$ can be chosen large enough so that, undoing the Gelfand transform, the series
$$
\sum_{t\in T\,,\,j\in\bZ}\Big(G-\sum_{[\al]\le k}\frac1{\al!}D^\al F_\al\Big)*_{\fz_0}\psi_{t,j}
$$
 converges  in the $(\cS(N),|\beta|)$-norm  (in which case it converges to $G-\sum_{[\al]\le k}\frac1{\al!}D^\al F_\al$).

By the continuity of $\cG\inv$, there are $m\in\bN$ and $C>0$, depending on $|\beta|$, such that, for every $t,j,k$,
$$
\Big\|\Big(G-\sum_{[\al]\le k}\frac1{\al!}D^\al F_\al\Big)*_{\fz_0}\psi_{t,j}\Big\|_{\cS(N),|\beta|}\le C\|r_k\eta_{t,j}\|_{\cS(\bR^d),m}\  .
$$

Hence we look for $k$ such that $\|r_k\eta_{t,j}\|_{\cS(\bR^d),m}=O(r^{-|j|})$ for every $t\in T$. We first do so with $r_k$ replaced by the remainder $s_{k'}$ in Taylor's formula,
$$
s_{k'}(\xi)=g(\xi)-\sum_{|\al|\le k'}\frac1{\al!}h_\al(\xi_{\check\fz},\xi_\fv)\xi_{\fz_0}^{\al'}\xi_{\fv,\fz_0}^{\al''}\ .
$$

Then, for $\gamma\in\bN^d$ with $|\gamma|\le m$ and for every $M\in\bN$,
$$
\big|\de^\gamma (s_{k'}\eta_{t,j})(\xi)\big|\le C_{m,M}(1+2^{-\tilde mj})(1+|\xi_\fv|)^{-M}\big(|\xi_{\fz_0}|+|\xi_{\fv,\fz_0}|\big)^{k'+1-m}\ ,
$$
where $\tilde m$ only depends on $m$.

From Lemma~\ref{dominant} we obtain that there are positive exponents $a,b,c$, $a',b',c'$ such that, on  $\Sigma_\cD$,
\begin{equation}\label{dominant'}
\text{for $|\xi|$ small: }\begin{cases}|\xi_{\fz_0}|\lesssim |\xi_\fv|^a\\ |\xi_{\fv,\fz_0}|\lesssim |\xi_\fv|^b|\xi_{\fz_0}|^c\ ,\end{cases} \qquad \text{for $|\xi|$ large: }\begin{cases}|\xi_{\fz_0}|\lesssim |\xi_\fv|^{a'}\\ |\xi_{\fv,\fz_0}|\lesssim |\xi_\fv|^{b'}|\xi_{\fz_0}|^{c'}\ .\end{cases} 
\end{equation}

Since the inequalities of Lemma \ref{dominant} remain valid in a $D(\del)$-invariant neighbourhood of $\Sigma_\cD$, we may  assume that \eqref{dominant'} hold uniformly on the support of each $s_{k'}\eta_{t,j}$.

Since there are $\tau,\tau'>0$ such that, on the support of $\eta_{t,j}$, $|\xi_{\fz_0}|\le r^{\tau j}$ if $j\le0$, and $|\xi_{\fz_0}|\le r^{\tau' j}$ if $j>0$, we can choose $k'$ such that  $\|s_{k'}\eta_{t,j}\|_{\cS(\bR^d),m}=O(r^{-|j|})$.

Finally, we choose $k=\max\{[\al]:|\al|\le k'\}$. Since $k>k'$,
$$
r_k-s_{k'}=\sum_{|\al|> k'\,,\,[\al]\le k}\frac1{\al!}h_\al(\xi_{\check\fz},\xi_\fv)\xi_{\fz_0}^{\al'}\xi_{\fv,\fz_0}^{\al''}\ .
$$

The estimates on $\|(r_k-s_{k'})\eta_{t,j}\|_{\cS(\bR^d),m}$ are basically the same.

Now, $\Big(G-\sum_{[\al]\le k}\frac1{\al!}D^\al F_\al\Big)*_{\fz_0}\psi_{t,j}\in\cS_0(N)^K$, because its $\fz_0$-Fourier transform vanishes for $\zeta\in\fz_0$ close to the origin.  To conclude, observe that integration against $z^\beta$ is a continuous operation in the $(\cS(N),|\beta|)$-norm.
\end{proof}

\begin{corollary}\label{induction}
Let $(N,K)$ be one of the pairs in Table \ref{vinberg}, and assume that Property (S) holds on all its proper quotient pairs. Then Property (S) holds on $(N,K)$.
\end{corollary}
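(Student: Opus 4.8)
The plan is to assemble Corollary \ref{induction} from the machinery built up through Section \ref{section_checkN}, using Proposition \ref{F-G} as the decisive step and Theorem \ref{hulanicki} together with Proposition \ref{S_0} to finish. Fix one of the pairs $(N,K)$ in Table \ref{vinberg} and assume Property (S) holds for all of its proper quotient pairs. By Theorem \ref{hulanicki} we already have the continuous inclusion $\cG\big(\cS(N)^K\big)\supseteq\cS(\Sigma_\cD)$, and by \cite{ADR2} the inverse of $\cG$ is continuous on its image; so what remains is to show that for every $F\in\cS(N)^K$ the spherical transform $\cG F$ extends to a function in $\cS(\bR^d)$, with continuous dependence on $F$.

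First I would invoke Proposition \ref{F-G}: given $F\in\cS(N)^K$, there is $g\in\cS(\bR^d)$ and a corresponding $G\in\cS(N)^K$ with $\cG G=g_{|_{\Sigma_\cD}}$ such that $F-G\in\cS_0(N)^K$. Next, apply Proposition \ref{S_0} to the function $F-G$: since Property (S) is assumed on all proper quotient pairs, $\cG(F-G)$ extends to a Schwartz function $\tilde g\in\cS(\bR^d)$ that moreover vanishes to infinite order on $\{0\}\times\bR^{d_{\check\fz}}\times\bR^{d_\fv}\times\bR^{d_{\fv,\fz_0}}$. Then $g+\tilde g\in\cS(\bR^d)$ restricts to $\cG G+\cG(F-G)=\cG F$ on $\Sigma_\cD$, which exhibits the desired Schwartz extension and proves the set-theoretic surjectivity $\cG\big(\cS(N)^K\big)=\cS(\Sigma_\cD)$. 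Injectivity of $\cG$ on $\cS(N)^K$ is automatic since $\cG$ is injective on $L^1(N)^K$.

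It remains to upgrade this to an isomorphism of Fr\'echet spaces, i.e.\ to check that $F\mapsto$ (a chosen Schwartz extension of $\cG F$) can be made continuous. Here I would trace the continuity through the two propositions: the construction in Proposition \ref{F-G} produces $g$ from $F$ via finitely many applications of the Hadamard-type formula (Proposition \ref{hadamard}) and of Property (S) on $(\check N,K)$, each of which is continuous, followed by a Whitney extension, which is continuous in the Schwartz topology by \cite{ADR2}; the construction in Proposition \ref{S_0} produces $\tilde g$ from $F-G$ by a convergent series whose $M_0$-th Schwartz norm is bounded by a constant times $\|F-G\|_{\cS(N),B_{M_0}}$, as recorded at the end of that proof. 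Combining these estimates gives, for each $M_0$, a bound $\|g+\tilde g\|_{\cS(\bR^d),M_0}\le C_{M_0}\|F\|_{\cS(N),B'_{M_0}}$ for suitable $B'_{M_0}$, so the map $\cG:\cS(N)^K\to\cS(\Sigma_\cD)$ is a continuous bijection with continuous inverse, hence an isomorphism.

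The main obstacle is not in Corollary \ref{induction} itself, which is essentially a bookkeeping assembly once Propositions \ref{S_0} and \ref{F-G} are in hand; rather, the real content was already spent in proving those two results (and in Proposition \ref{hadamard}, which rests on the multiplicity-free tensor product analysis of \cite{FRY2}). The only point requiring genuine care in the present step is making sure that the two Schwartz extensions $g$ and $\tilde g$ are built with \emph{uniform} control so that their sum has Schwartz norms dominated by a fixed Schwartz norm of $F$; this is where one must keep track of the indices $B_M$ from Proposition \ref{S_0} and the finitely many loss-of-derivative steps in Proposition \ref{F-G}, but no new idea is needed beyond the estimates already established there.
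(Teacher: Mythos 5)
Your assembly of the Schwartz extension is exactly the paper's: apply Proposition \ref{F-G} to get $g$ and $G$ with $F-G\in\cS_0(N)^K$, apply Proposition \ref{S_0} to extend $\cG(F-G)$, and add. Where you diverge is in the upgrade to an isomorphism. The paper dispatches continuity in one line: since $\cG^{-1}:\cS(\Sigma_\cD)\to\cS(N)^K$ is already known to be continuous by Theorem \ref{hulanicki}, and the construction above shows it is surjective (hence a continuous linear bijection between Fr\'echet spaces), the open mapping theorem \cite{T} immediately gives that it is an isomorphism. You instead attempt to extract a quantitative bound $\|g+\tilde g\|_{\cS(\bR^d),M_0}\le C_{M_0}\|F\|_{\cS(N),B'_{M_0}}$ by tracing through Propositions \ref{hadamard}, \ref{F-G} and \ref{S_0}. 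This is more laborious, and as written it is not actually supported: the proof of Proposition \ref{S_0} selects the thresholds $j_q$ (and hence the $M_j$) in a manner that depends on $F$, and no uniform norm estimate for the extension is recorded at the end of that proof, contrary to your claim; likewise Proposition \ref{F-G} involves $F$-dependent choices (the cutoff $k$, the Whitney extension). One could try to make the constants genuinely uniform, but the open mapping theorem makes all of that unnecessary, and this is the route the paper takes. You should invoke it rather than chase the estimates.
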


\begin{proof}
Given $F\in\cS(N)^K$, let $G$ be as in Proposition \ref{F-G}. By Proposition \ref{S_0}, $\cG(F-G)$ admits a Schwartz extension $h$. Then $g+h$ is a Schwartz extension of $\cG F$. This shows that the map $\cG\inv:\cS(\Sigma_\cD)\longrightarrow\cS(N)^K$, which is continuous by Theorem \ref{hulanicki}, is also surjective. By the open mapping theorem for Fr\'echet spaces \cite{T}, it is an isomorphism.
\end{proof}

 An inductive application of Corollary~\ref{induction} gives us Theorem~\ref{main} for pairs in the first two blocks of Table~\ref{vinberg}.
In fact, the classification of quotient pairs given in the appendix (Section~\ref{appendix}) shows that this set of pairs  is essentially self-contained, in the sense of Remark~\ref{QP-sec5}. 

The lowest-rank summands that need to be considered in order to start the induction are the following:
\begin{itemize}
 \item line 1: the trivial pair $(\bR,\{1\})$ and $(H_1,{\rm SO_2})$;
 \item line 2: $(H_1,{\rm U}_1)\cong(H_1,{\rm SO_2})$;
 \item line 3: the quaternionic Heisenberg group with Lie algebra $\bH\oplus\IM\bH$, and with ${\rm Sp}_1$ acting nontrivially only on $\fv=\bH$;
 \item lines 4, 5,  6: $(\bC,\{1\})$, $(H_1,{\rm U}_1)$, and $(\bR,\{1\})$, respectively.
 \end{itemize}

Since in all these cases Property (S) is either trivial or proved in \cite{ADR1}, 
we can state the following restricted version of Theorem~\ref{main}.

\begin{corollary}\label{blocks12}
Property (S) holds for the pairs in the first two blocks of Table \ref{vinberg}.
\end{corollary}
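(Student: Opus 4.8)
The plan is to obtain Corollary \ref{blocks12} as a direct consequence of Corollary \ref{induction}, by performing an induction over the pairs in the first two blocks of Table \ref{vinberg}, organised by a suitable notion of size (essentially the rank $n$, together with the dimension of $N$). The engine is the implication ``Property (S) for all proper quotient pairs of $(N,K)$'' $\Longrightarrow$ ``Property (S) for $(N,K)$'' supplied by Corollary \ref{induction}; what remains is purely bookkeeping: to verify that this induction actually closes, i.e., that every proper quotient pair encountered is either already known (by the results quoted in Section \ref{spectra}, namely \cite{ADR1, ADR2, FR, FRY1}, or by central reduction via Proposition \ref{central}) or is strictly smaller and lies again among the pairs of the first two blocks (possibly as a product, handled by Proposition \ref{product}).

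First I would fix the ordering. For each pair $(N,K)$ in blocks 1--2, assign the pair $(n,\dim N)$ lexicographically, where $n$ is the rank parameter appearing in Table \ref{vinberg}. By Remark \ref{QP-sec5} and the classification of quotient pairs in Section \ref{appendix}, every proper quotient pair $(N_t,K_t)$ of such a pair is, up to a central reduction, a product of pairs drawn from the same two blocks, each summand having strictly smaller rank (or the same rank but strictly smaller group, which cannot actually occur for proper quotients here) — concretely: line 1 produces lower-rank line-1 pairs together with line-2 pairs; lines 2 and 3 are self-contained under taking quotients; lines 4, 5, 6 produce line-3 pairs and lower-rank pairs in their own family. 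In every case the summands are strictly below $(N,K)$ in the lexicographic order.

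Next I would run the induction. The base cases are exactly the minimal summands listed just before the statement — $(\bR,\{1\})$, $(\bC,\{1\})$, $(H_1,\mathrm{SO}_2)\cong(H_1,\mathrm U_1)$, and the quaternionic Heisenberg pair $(\bH\oplus\IM\bH,\mathrm{Sp}_1)$ — for each of which Property (S) is trivial or is established in \cite{ADR1, ADR2}. For the inductive step, let $(N,K)$ be a pair in blocks 1--2 and assume Property (S) holds for all pairs in blocks 1--2 of strictly smaller size and for the pairs already settled in \cite{ADR1, ADR2, FR, FRY1}. Each proper quotient pair $(N_t,K_t)$ is, up to central reduction, a product $\prod_i(N^{(i)},K^{(i)})$ of such smaller pairs; by the inductive hypothesis each factor satisfies Property (S), so by Proposition \ref{product} the product does, and then by Proposition \ref{central} the central reduction $(N_t,K_t)$ does as well. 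Thus all proper quotient pairs of $(N,K)$ satisfy Property (S), and Corollary \ref{induction} yields Property (S) for $(N,K)$, completing the induction.

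The main obstacle is not analytic but combinatorial: one must check, case by case against the list in Section \ref{appendix}, that the quotient pairs arising from blocks 1--2 genuinely stay within blocks 1--2 (modulo central reductions and products) and strictly decrease in size, so that the recursion is well-founded; this is precisely the ``self-contained'' property asserted in Remark \ref{QP-sec5}. Once that inspection is granted, the rest is the formal induction above. (The pairs of the third block are deliberately excluded here because their quotient pairs violate condition (V), and are treated separately in Section \ref{third-block}.)
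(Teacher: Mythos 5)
Your proposal is correct and matches the paper's own argument: the paper proves Corollary~\ref{blocks12} exactly by inductively applying Corollary~\ref{induction}, invoking Remark~\ref{QP-sec5} and the Appendix to confirm that quotient pairs of blocks 1--2 remain (up to products and central reductions) within blocks 1--2 and strictly decrease in rank, using Propositions~\ref{product} and~\ref{central} to close those reductions, and anchoring the recursion in the same list of lowest-rank base cases settled by \cite{ADR1, ADR2}. The only cosmetic difference is that you make the lexicographic well-ordering explicit, which the paper leaves implicit.
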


\vskip1cm
\section{The third block of Table \ref{vinberg}}\label{third-block}
\bigskip

The pairs in the third block of Table \ref{vinberg} must be treated separately because their quotient pairs (cf. Table \ref{quotient-invariants}) do not satisfy Vinberg's condition. In order to apply Corollary \ref{induction}, we need to prove Property (S) for each of the quotient pairs listed in Sections \ref{lines7,8}, \ref{line9},~\ref{line10}.  We call them ``first-generation'' quotient pairs.

Let $(N,K)$ be one of these  quotient pairs. The group $N$ is a direct product, $N_1\times N_2$, with $\fn_j=\fv_j\oplus\fz_j$, $j=1,2$, and $\fv_1=\fv_2$.
The group $K$ is the direct product of three simple factors, $K=K_1\times K_{1,2}\times K_2$, where the subscript indicates on which of the two factors of $N$ the given factor of $K$ acts nontrivially. In particular, $K_{1,2}$ acts by its defining representation on both $\fv_1$ and $\fv_2$, and trivially on $\fz_1$ and $\fz_2$. Moreover, when  $K_j={\rm Sp}_1$, it acts on $\fz_j=\IM\bH\cong \fs\fp_1$ by the adjoint representation.

To prove Property (S), we  adapt the paradigm used so far, and this involves the following steps:
\begin{enumerate}
\item[(i)] identify a critical subset $\check\Sigma_\cD$ of the spectrum such that its complement can be locally identified with spectra of certain  ``second-generation'' quotient pairs;
\item[(ii)] prove that Property (S) holds for each second-generation quotient pair\footnote{This will require a repetition of the paradigm involving some ``third-and-last-generation'' quotient pairs, for which Property (S) is known to hold.};
\item[(iii)] deduce the analogue of Proposition \ref{S_0} for the first-generation quotient pairs;
\item[(iv)] find a  group $\check N$ such that the spectrum of $(\check N,K)$ can be identified with $\check\Sigma_\cD$ and prove a corresponding form of Proposition \ref{hadamard}.
\end{enumerate}

{\scriptsize
\begin{table}[htdp]
\begin{center}
\begin{tabular}{|c|l|l||l|l|l|}
\hline
&$K_1\times K_{1,2}\times K_2$&$\fn=\fn_1\oplus\fn_2$&${\boldsymbol\rho}_\fz$&${\boldsymbol\rho}_\fv={\boldsymbol\rho}_{\fv_1}\cup{\boldsymbol\rho}_{\fv_2}\cup{\boldsymbol\rho}_{\fv_1.\fv_2}$&${\boldsymbol\rho}_{\fv,\fz}$\\
\hline
$a$&${\rm U}_1\times{\rm SU}_n\times{\rm U}_1$&$\fh_n\oplus\fh_n$&$z_1\,,\,z_2$&$\begin{array}{l}|v_1|^2\,,\,|v_2|^2\\|v_1v_2^*|^2\text{ (if }n\ge2)\end{array}$&\\
\hline
$b$&$\begin{array}{l}{\rm U}_1\times{\rm Sp}_n\times{\rm U}_1\\(n\ge2)\end{array}$&$\fh_{2n}\oplus\fh_{2n}$&$z_1\,,\,z_2$&$\begin{array}{l}|v_1|^2\,,\,|v_2|^2\\|v_1v_2^*|^2\\|v_1J\trans v_2|^2\end{array}$&\\
\hline
$c$&${\rm U}_1\times{\rm Sp}_n\times{\rm Sp}_1$&$(\bH^n\oplus\bR)\oplus(\bH^n\oplus\IM\bH)$&$z_1\,,\,|z_2|^2$&$\begin{array}{l}|v_1|^2\,,\,|v_2|^2\\|v_1v_2^*|^2\text{ (if }n\ge2)\end{array}$&$\RE\big(i(v_1v_2^*)z_2(v_2v_1^*)\big)$\\
\hline
\hline
\hline
&$K_1\times K_{1,2}\times K_2$&$\fn_1\oplus\fv_2$&${\boldsymbol\rho}_{\fz_1}$&${\boldsymbol\rho}_\fv$&${\boldsymbol\rho}_{\fv,\fz}$\\
\hline
$a'$&${\rm U}_1\times{\rm SU}_n\times{\rm U}_1$&
$\fh_n\oplus\bC^n$&$z_1$&same as line $a$&\\
\hline
$b'$&$\begin{array}{l}{\rm U}_1\times{\rm Sp}_n\times{\rm U}_1\\(n\ge2)\end{array}$
&
$\fh_{2n}\oplus\bC^{2n}$
&$z_1$&same as line $b$&\\
\hline
$c'$&${\rm U}_1\times{\rm Sp}_n\times{\rm Sp}_1$&
$(\bH^n\oplus\bR)\oplus \bH^n$&$z_1$&same as line $c$&\\
\hline
\end{tabular}
\end{center}
\bigskip
\caption{First- and second-generation quotient pairs for lines 7--10 of Table \ref{vinberg}}
\label{quotient-invariants}
\end{table}
}

The first-generation quotient pairs are shown at lines (a), (b), (c) of Table \ref{quotient-invariants} with their fundamental invariants, and  the second-generation  pairs are shown at lines (a'), (b'), (c'). When $v_1,v_2$ have several components, they must be understood as row-vectors. At lines $a, b, a', b'$, the vectors $v_1,v_2$ are complex, while they are quaternionic  at lines $c$ and $c'$. At lines $c, c'$, the factor ${\rm U}_1$ in $K$ acts by (left) scalar multiplication by $e^{i\theta}$ on $\fv_1=\bH^n$.

For a better organisation of this material, we start with the second-generation quotient pairs, prove Property (S) for them, and then pass to the first generation.

\bigskip

\subsection{Second-generation quotient pairs}\label{second}\quad
\medskip

The pairs involved at this stage, listed  in the second block of Table \ref{quotient-invariants}, will be the ones obtained from the first-generation pairs by factoring out of $\fn$ the centre of the second summand. The group $K$ remains unchanged. 

In this subsection,
we  prove the following statement.
\begin{proposition}
\label{prop_quotient}
The nilpotent Gelfand pairs $(N_1\times\fv_2,K)$ at lines $a', b', c'$ of Table \ref{quotient-invariants} satisfy Property (S).
\end{proposition}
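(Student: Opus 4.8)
The plan is to apply the same inductive machinery developed in Sections \ref{sec_towards} and \ref{section_checkN} to each of the three pairs $(N_1\times\fv_2,K)$ at lines $a',b',c'$, but now with respect to the center $\fz_1$ of the first factor, which is the only surviving piece of the center and on which $K$ acts nontrivially only through the factor $K_1$ (either ${\rm U}_1$ or ${\rm Sp}_1$). Concretely, I would first set up the analogue of the splitting $\fz=\fz_0\oplus\check\fz$: here the whole center is $\fz_1$, with $K$ acting on it via $K_1$. For lines $a',b'$ (where $K_1={\rm U}_1$ acting on $\fz_1=\bR$ trivially, since ${\rm U}_1$ fixes $\bR$) one has $\fz_1=\check\fz$ entirely, so the center is $K$-fixed and the argument of Proposition \ref{S_0} is vacuous; the whole spectrum plays the role of $\check\Sigma_\cD$. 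For line $c'$ (where $K_1={\rm U}_1$ acts trivially but we must be careful: $\fz_1=\bR$ is again $K$-fixed) the same holds. So in fact $\fz_0=\{0\}$ for all three pairs, and what remains is to identify $(\check N,K)$ — here $\check N$ is obtained by factoring out nothing, i.e.\ $\check N=N_1\times\fv_2$ itself — and to recognize it directly as a product (or near-product) of known pairs.

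The key observation, to be extracted by case-by-case inspection of Table \ref{quotient-invariants}, is that each second-generation pair $(N_1\times\fv_2,K)$ decomposes, after possibly a normal extension (Proposition \ref{normal}) and a central reduction (Proposition \ref{central}), into a product of pairs already covered by Corollary \ref{blocks12} together with an abelian pair on $\fv_2$. Indeed $N_1$ is a Heisenberg group $\fh_n$ or $\fh_{2n}$ (lines $a',b'$) or the product $\bH^n\oplus\bR$ (line $c'$), for which Property (S) is known; the extra factor $\fv_2=\bC^n,\bC^{2n},\bH^n$ carries only the linear action of $K_{1,2}\times K_2$ and is an abelian pair, for which Property (S) follows from Schwarz's theorem via \cite{ADR2}. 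The mixing between the two factors occurs only through the cross-invariants $|v_1v_2^*|^2$, $|v_1J\,\trans v_2|^2$, and (at line $c'$) $\RE(i(v_1v_2^*)z_2(v_2v_1^*))$; these are $K$-invariant polynomials on the product, so what we really need is that the pair $(\fv_1\oplus\fv_2, K_1\times K_{1,2}\times K_2)$, viewed as an abelian pair (with $\fv_1$ now also treated as a vector space since the Heisenberg center contributes separately), satisfies Property (S), and then splice in the Heisenberg center via Proposition \ref{product} applied to $(N_1,K_1\times K_{1,2})\times(\fv_2,K_2)$ after checking commutativity.

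More carefully, I would argue as follows. First verify that each $(N_1\times\fv_2,K)$ is genuinely a nilpotent Gelfand pair — this is already asserted in the excerpt. Then, using Proposition \ref{product}, reduce to proving Property (S) for $(N_1,K\cap({\rm aut}\,N_1))$ on one side — which is a Heisenberg or $\bH^n\oplus\bR$ pair, already known — and for the residual action on $\fv_2$; but since $\fv_1=\fv_2$ and $K_{1,2}$ acts diagonally, the product decomposition is not literal, and the correct statement is that $(N_1\times\fv_2,K)$ is itself on the list of pairs to which Corollary \ref{blocks12} or the abelian case of \cite{ADR2} applies, once one performs the allowed reductions. The cleanest route is: show $\fn=\fn_1\oplus\fv_2$ with $[\fn_1,\fv_2]=0$, so $N_1\times\fv_2$ is a central product; apply Proposition \ref{central} to remove the Heisenberg center of $\fn_1$, landing on the abelian pair $(\fv_1\oplus\fv_2,K)$; this abelian pair appears in Vinberg-type lists and satisfies Property (S) by \cite{ADR2, Schw}; then reverse — but reversing central reduction is not automatic, so instead one should use Proposition \ref{product} directly on $(\fh,{\rm U}_1)$ or the quaternionic analogue times the abelian pair.

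The main obstacle, and where the real work lies, is exactly this last point: Proposition \ref{product} requires the two factors to be \emph{independent} pairs, whereas here $\fv_1$ and $\fv_2$ share the diagonal $K_{1,2}$-action. So one cannot literally write $(N_1\times\fv_2,K)=(N_1,K_1\times K_{1,2})\times(\fv_2,K_2)$. The resolution I anticipate is to instead recognize $(N_1\times\fv_2,K)$ directly: the underlying group is $\fh_n\times\bC^n$ (resp.\ $\fh_{2n}\times\bC^{2n}$, resp.\ $(\bH^n\oplus\bR)\times\bH^n$), which is a Heisenberg-type group with a central reduction already performed on the $\fv_2$-part, and the pair is a \emph{central reduction} of a pair in the first block of Table \ref{vinberg} — namely lines 7, 8, 10 with an extra Heisenberg center, or more precisely of the pairs $({\rm U}_2\times{\rm SU}_n, \bC^2\otimes\bC^n)$-type where one treats the $\bC^2$ factor as $\fv_1\oplus\fv_2$ with the Heisenberg structure on $\fv_1$. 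Running Proposition \ref{central} in the allowed direction (quotient, not lift) from such a parent pair — which by Corollary \ref{blocks12} satisfies Property (S) — yields Property (S) for the second-generation pair. I would therefore close the proof by exhibiting, for each of lines $a',b',c'$, an explicit parent pair in the first two blocks of Table \ref{vinberg} of which it is a central reduction, and invoke Proposition \ref{central} together with Corollary \ref{blocks12}. The verification that the parent pairs are as claimed is a matter of comparing centers and $K$-actions, routine but case-dependent, and is the only genuinely new input needed here.
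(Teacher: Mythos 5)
Your proposal does not go through, and the failure is at its load-bearing step. First, the structural setup is off: in $\fn_1\oplus\fv_2$ the abelian summand $\fv_2$ is itself central, and it is precisely the part of the centre on which $K$ acts nontrivially (through $K_{1,2}\times K_2$). So it is $\fz_1\oplus\fv_2$, not $\fz_1$ alone, that plays the role of $\check\fz\oplus\fz_0$; your conclusion that ``$\fz_0=\{0\}$, the whole spectrum is $\check\Sigma_\cD$, and Proposition \ref{S_0} is vacuous'' discards exactly the directions along which the pair is nontrivial. The actual argument slices along $K$-orbits in $\fv_2$, producing third-generation quotient pairs $(N_1\times\bR,K_t)$ (Heisenberg, hence known), runs the analogue of Proposition \ref{S_0} with $\fv_2$ in the role of $\fz_0$, and then must prove a new Hadamard-type statement (the analogue of Proposition \ref{hadamard}, resting on a lemma expressing $K$-invariant functions of the form $\sum_{|\gamma|=2k}w_2^\gamma G_\gamma$ through the operators $\la'$ of the mixed invariants). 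That last step is where essentially all the work lies — multiplicity-one decompositions of $\cH^{m,m}$ under ${\rm Sp}_n$, lowest-weight-vector computations, and nonvanishing of the relevant operators in the Bargmann representations — and your proposal contains no substitute for it.

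Second, your fallback reduction is circular or impossible. You correctly note that Proposition \ref{product} cannot be applied because $K_{1,2}$ acts diagonally on $\fv_1$ and $\fv_2$, but the replacement you offer — exhibiting each pair at lines $a',b',c'$ as a central reduction of a parent in the first two blocks of Table \ref{vinberg} and invoking Corollary \ref{blocks12} — cannot succeed: central reduction (Proposition \ref{central}) changes only the centre and leaves $(\fv,K)$ untouched, while every pair in Table \ref{vinberg} satisfies condition (V), i.e.\ has $K$ acting irreducibly on $\fv$. The second-generation pairs have $\fv=\fv_1\oplus\fv_2$ reducible under $K$, so no such parent exists. The only natural parents are the first-generation pairs at lines $a,b,c$ (e.g.\ $\fh_n\oplus\fh_n$ with the same $K$), but Property (S) for those is not yet available — it is deduced \emph{afterwards} (Sections \ref{first-ab}, \ref{first-c}) using the very proposition you are trying to prove, so quoting them here would be circular. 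In short, the proposal misses the genuinely new ingredient (the adapted Hadamard expansion in the $\fv_2$-variables) and replaces it with reductions that either do not apply or presuppose the result.
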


As it was mentioned before, the proof involves the introduction of third-generation quotient pairs, which are constructed in analogy with Section \ref{quotient}. Here we let  $\fz_1\oplus\fv_2$, the centre of $\fn_1\oplus\fv_2$, play the r\^ole that was of $\check\fz\oplus\fz_0$ in Section \ref{quotient}. 

Given $t\ne0$ in $\fv_2$, we factor out the tangent space $t^\perp$ from $\fv_2$, i.e. the tangent space to the $K$-orbit in $\fv_2$. The resulting quotient Lie algebra, $\fn_t=\fn_1\oplus\bR$, is either  $\fh_n\oplus\bR$ (line $a'$), or $\fh_{2n}\oplus\bR$ (lines $b',c'$), with 
\begin{equation}\label{K_t}
K_t=\begin{cases}
{\rm U}_1\times{\rm U}_{n-1}&{\rm line}\ a'\\ {\rm U}_1^2\times{\rm Sp}_{n-1}&{\rm line}\ b'\\{\rm U}_1\times({\rm Sp}_1\times{\rm Sp}_{n-1})&{\rm line}\ c'\ ,
\end{cases}
\end{equation}
where the first two actions are related to decompositions of $\fv_1$ as $\bC^n=\bC\oplus\bC^{n-1}$, $\bC^{2n}=\bC^2\oplus\bC^{2n-2}$, and $\bH^n=\bH\oplus\bH^{n-1}$ respectively. 

\begin{lemma}
The third-generation quotient pairs $(N_t,K_t)$ satisfy Property (S).
\end{lemma}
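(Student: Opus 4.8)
The plan is to realise each of the three pairs $(N_t,K_t)$, whose stabilisers are listed in \eqref{K_t}, as a product pair of a Heisenberg pair with a trivial pair, and then to quote Proposition~\ref{product}.

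First I would determine $\fn_t$ explicitly. In each second-generation pair $(N_1\times\fv_2,K)$ the summand $\fv_2$ is abelian and central in $\fn=\fn_1\oplus\fv_2$, and $K$ acts on it with orbits equal to full spheres; hence, for $t\ne0$, one has $\fk\cdot t=t^\perp\subset\fv_2$, so that $\fn_t=\fn_1\oplus\bR t$ with $\bR t$ central. Since the bracket of $\fn$ maps $\fv_1\times\fv_1$ into $\fz_1$ only, the line $\bR t$ is not met by the bracket, hence it splits off as a Lie-algebra direct summand: $\fn_t\cong\fh_m\oplus\bR$, where $m=n$ at line $a'$ and $m=2n$ at lines $b'$ and $c'$, and $\fh_m$ denotes the Heisenberg algebra. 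Correspondingly $N_t\cong H_m\times\bR$. Since $K_t$ is the stabiliser of $t$, it acts trivially on the factor $\bR t$, so, as nilpotent Gelfand pairs,
$$
(N_t,K_t)\;\cong\;(H_m,K_t)\times(\bR,\{1\})\ ,
$$
the $K_t$-action on $H_m$ being the one recorded in \eqref{K_t}.

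It then remains to check Property~(S) for the two factors. For $(\bR,\{1\})$ it is trivial. The pair $(H_m,K_t)$ is itself a nilpotent Gelfand pair, either because quotient pairs of nilpotent Gelfand pairs are again such (\cite{C,V1}), or directly because commutativity of $L^1(H_m\times\bR)^{K_t}$ forces commutativity of $L^1(H_m)^{K_t}$ (write the former as a tensor product over the abelian $\bR$-factor, on which $K_t$ acts trivially). Its underlying group being a Heisenberg group, Property~(S) holds for $(H_m,K_t)$ by the Heisenberg case recalled in the Introduction, i.e., \cite{ADR1,ADR2}. Applying Proposition~\ref{product} to the displayed product then gives Property~(S) for $(N_t,K_t)$. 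I do not expect any genuine obstacle here; the two points requiring care are verifying that $\bR t$ really splits off as a direct factor, which is precisely where it matters that we have already passed to the second-generation pairs (so that $\fv_2$ is abelian), and the bookkeeping identifying the restricted $K_t$-action on $H_m$ with the list in \eqref{K_t}.
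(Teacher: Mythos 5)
Your argument is correct and is essentially the paper's own proof: the paper, too, notes that $N_t$ is a Heisenberg group times $\bR$ and concludes by citing \cite{ADR2} for the Heisenberg factor together with Proposition~\ref{product} for the product with the trivial pair. The extra verifications you spell out (that $\bR t$ splits off as a central Lie-algebra direct factor since $\fv_2$ is abelian, that $K_t$ fixes $\bR t$ pointwise, and that $(H_m,K_t)$ is itself a n.G.p.) are exactly the details the paper leaves implicit.
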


\begin{proof} Since $N_t$ is a Heisenberg group, the statement follows from  \cite{ADR2} and Proposition \ref{product}. 
\end{proof}

Let $(N_1\times\fv_2,K)$ be one of the second-generation pairs, and $(N_1\times\bR,K_t)$ the corresponding third-generation pair. 

A homogeneous Hilbert basis $\boldsymbol\rho_t$ on $\fn_t$ is given by the norm squared on the irreducible components of $\fv_1$ and by the two coordinate functions on $\fz_1$ and $\bR$. We let $\cD_t$ be the system of their symmetrisations on $N_1\times\bR$.

Let $\cD$ be the systems of symmetrisations on $N_1\times\fv_2$ of the invariants in Table \ref{quotient-invariants}. To points $\xi\in\Sigma_\cD$ we assign coordinates $(\xi_{\fz_1},\xi_{\fv_1},\xi_{\fv_2},\xi_{\fv_1,\fv_2})$.

Let $\Sigma_\cD$, resp. $\Sigma_{\cD_t}$, the two embedded Gelfand spectra. In analogy with Lemma \ref{Pi} and Proposition \ref{epi}, we have surjections
$$
\Sigma_{\cD_t}\overset\Lambda \longrightarrow\Sigma_\cD\overset\Pi\longrightarrow \boldsymbol\rho_{\fv_2}(\fv_2)=[0,+\infty)\ .
$$

We set
$$
\check\Sigma_\cD=\Pi\inv(0)=\{\xi:\xi_{\fv_2}=\xi_{\fv_1,\fv_2}=0\}\subset\Sigma_\cD\ .
$$

This set represents the spherical functions that do not depend on the $\fv_2$-variable. Hence it is naturally identified with the spectrum of $(N_1,K)$.

Away from $\Lambda\inv(\check\Sigma_\cD)$, $\Lambda$ is a homeomorphism, and both $\Lambda$ and $\Lambda\inv$ are restrictions of smooth maps (cf. Propositions \ref{Lambda-smooth} and \ref{tildeQ}). Therefore, setting
$$
\cS_0(N_1\times\fv_2)=\big\{F\in\cS(N_1\times\fv_2):\int_{\fv_2} F(v_1,z_1,v_2)\,dv_2=0\ \ \forall\,(v_1,z_1)\big\}\ ,
$$
 the spherical transform $\cG F$ of any $F\in\cS_0(N_1\times\fv_2)^K$ can be extended to a Schwartz function vanishing with all derivatives where $\xi_{\fv_2}=0$ (cf. Proposition \ref{S_0}).

The next task is then to establish the following analogue of Proposition \ref{hadamard} for the second-generation pairs. The different formulation that we give below is required by the fact that, due to the presence of the abelian factor $\fv_2$, $\fv_1$ does not generate $\fn$. This implies that smooth multipliers of operators in $\la'_N(\fn_1)$ cannot have Schwartz kernels on $N_1\times\fv_2$. 

Let $\boldsymbol\rho'=\boldsymbol\rho_{\fv_2}\cup\boldsymbol\rho_{\fv_1,\fv_2}\subset\boldsymbol\rho$, $\cD'=\la'_{N_1\times\fv_2}(\boldsymbol\rho')$ and $d'$ its cardinality. For $\al\in\bN^{d'}$, by $[\al]$ we denote the order of differentiation in the $\fv_2$-variables of the operator $D^\al$, as a monomial in the elements of $\cD'$.

The analogue of Proposition \ref{hadamard} is as follows.

\begin{proposition}\label{hadamard2}
Let $F\in\cS(N_1\times\fv_2)^K$, and assume that 
$$
F(v_1,z_1,v_2)=\sum_{|\gamma|=k}\de_{v_2}^\gamma R_\gamma(v_1,z_1,v_2)\ ,
$$
with $R_\gamma\in\cS(N_1\times\fv_2)$ for every $\gamma$. Let also $\Psi\in \cS(\fv_2)^K$, with Fourier transform $\widehat\Psi$ equal to 1 on a neighbourhood of the origin.

Then, for every $\al$ with $[\al]=k$, there exists a function $F_\al\in\cS(N_1)^K$ such that
$$
F(v_1,z_1,v_2)=\sum_{[\al]=k}\frac1{\al!}D^\al \big(F_\al(v_1,z_1)\Psi(v_2)\big)+\sum_{|\gamma'|=k+1}\de_{v_2}^{\gamma'} R'_{\gamma'}(v_1,z_1,v_2)\ ,
$$
with $R'_{\gamma'}\in\cS(N_1\times\fv_2)$ for every $\gamma'$.
\end{proposition}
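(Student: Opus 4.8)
The strategy mirrors the proof of Proposition \ref{hadamard}, transported from the setting ``quotient out $\fz_0$, landing on $\check N$'' to the setting ``quotient out the abelian factor $\fv_2$, landing on $N_1$''. First I would take the Fourier transform of $F$ in the $\fv_2$-variables; call it $\widehat F(v_1,z_1,\eta)$ for $\eta\in\fv_2$. The hypothesis $F=\sum_{|\gamma|=k}\de_{v_2}^\gamma R_\gamma$ translates into $\widehat F(v_1,z_1,\eta)=i^k\sum_{|\gamma|=k}\eta^\gamma\widehat R_\gamma(v_1,z_1,\eta)$. Freezing the $\eta$-dependence of the amplitude at $\eta=0$ gives a Taylor-type splitting
$$
\widehat F(v_1,z_1,\eta)=\sum_{|\gamma|=k}\eta^\gamma G_\gamma(v_1,z_1)+\sum_{|\gamma'|=k+1}\eta^{\gamma'}S_{\gamma'}(v_1,z_1,\eta)\ ,
$$
with $G_\gamma(v_1,z_1)=i^k\widehat R_\gamma(v_1,z_1,0)\in\cS(N_1)$ and $S_{\gamma'}\in\cS(N_1\times\fv_2)$. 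The leading term $G=\sum_\gamma\eta^\gamma G_\gamma$ is $K$-invariant (on $N_1\times\fv_2$, with $\fv_2$ now carrying the $\eta$-variable), so Proposition \ref{prop_FRY2} applies — with the roles adapted so that $\fv_2$ plays the role of the ``$\fz_0$'' there and $N_1$ the role of ``$\check N$'' — producing $K$-invariant Schwartz functions $H_\al\in\cS(N_1)^K$, $[\al]=k$, with $G=\sum_{[\al]=k}\frac1{\al!}\tilde D^\al H_\al$, where $\tilde D_j=(\la'_{N_1}\otimes I)(\rho_j)$ for $\rho_j\in\boldsymbol\rho'$. Here I should check that the invariant theory hypothesis of Proposition \ref{prop_FRY2} (the multiplicity-free tensor product decomposition) indeed holds for the pairs of lines $a',b',c'$; this is the ``case by case'' input flagged after Proposition \ref{prop_FRY2}, and the relevant decompositions are the same ones already used for the Heisenberg and quaternionic-Heisenberg pairs.

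Next I would recognise $H_\al$ as a spherical-transform datum on the third-generation pair. By the preceding Lemma, $(N_t,K_t)$ satisfies Property (S), so there is $h_\al\in\cS(\bR^{\dim\cD_t})$ whose restriction to $\Sigma_{\cD_t}$ equals the spherical transform of $H_\al$ for that pair. Using the dominance inequalities (the analogue of Lemma \ref{dominant} for the pair $(N_1\times\fv_2,K)$: the $\fv_1$-invariants control the $\fz_1$- and $\fv_1,\fv_2$-invariants near the origin and at infinity), one produces $\tilde h_\al\in\cS(\Sigma_\cD)$ agreeing with $\cG(F_\al\otimes\Psi)$, where $F_\al\in\cS(N_1)^K$ is the function whose $(N_1,K)$-spherical transform matches, and $\Psi$ is the prescribed cutoff with $\widehat\Psi\equiv1$ near $0$. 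The point of tensoring with $\Psi$ is exactly that smooth multipliers of $\la'_{N}(\fn_1)$ alone do not have Schwartz kernels on $N_1\times\fv_2$, so one must multiply by a fixed Schwartz function in $v_2$; choosing $\widehat\Psi=1$ near $0$ guarantees $\widehat{F_\al\otimes\Psi}(v_1,z_1,\eta)=\widehat F_\al(v_1,z_1)$ for $\eta$ near $0$, i.e. $H_\al$ and $\widehat{F_\al\otimes\Psi}$ agree to infinite order at $\eta=0$ (in fact on a neighbourhood).

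Then I would reassemble: writing $\widehat{F_\al\otimes\Psi}(v_1,z_1,\eta)-H_\al(v_1,z_1)=\sum_j\eta_j K_j(v_1,z_1,\eta)$ by Hadamard's lemma (the difference vanishing near $\eta=0$, hence to all orders), substitute back,
$$
\widehat F(v_1,z_1,\eta)=\sum_{[\al]=k}\frac1{\al!}\bigl(\tilde D^\al\,\widehat{F_\al\otimes\Psi}\bigr)(v_1,z_1,\eta)+\sum_{|\gamma'|=k+1}\eta^{\gamma'}S'_{\gamma'}(v_1,z_1,\eta)\ ,
$$
absorb the error terms (they form a Schwartz function on $N_1\times\fv_2$ all of whose $\eta$-derivatives up to order $k$ vanish at $\eta=0$, so by Hadamard it rewrites as $\sum_{|\gamma'|=k+1}\eta^{\gamma'}S''_{\gamma'}$), invert the Fourier transform so that $\eta$-monomials become $\de_{v_2}$-derivatives and $\tilde D_j$ becomes $\bar D_j=\la'_{\bar N}(\rho_j)$ on $\bar N=N_1\times\fv_2$. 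Finally replace each $\bar D^\al$ by the genuine $D^\al=\la'_{N_1\times\fv_2}(\rho_j)^\al$: the discrepancy $X_{v_0}-\bar X_{v_0}$ between the left-invariant vector fields (for $v_0\in\fv_1$) is a combination $\sum_j \ell_{j,v_0}(v_1)\de_{v_2,j}$ of $\fv_2$-derivatives — because in $N_1\times\fv_2$ only the $N_1$-Lie bracket is present and it lands in $\fz_1$, wait, one must check which directions the extra derivatives point; in fact $\bar D^\al-D^\al$ carries at least one extra $\fv_2$-derivative beyond the $k=[\al]$ already there, so $(\bar D^\al - D^\al)(F_\al\otimes\Psi)$ is absorbed into the remainder. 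This last bookkeeping step — verifying the vector-field comparison and the degree count that forces the correction terms into the $(k+1)$-st remainder — is the main technical obstacle, together with the case-by-case verification that Proposition \ref{prop_FRY2}'s multiplicity-free hypothesis applies to lines $a',b',c'$; everything else is a faithful transcription of the proof of Proposition \ref{hadamard}.
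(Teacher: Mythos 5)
Your skeleton (partial Fourier transform in the $\fv_2$-variables, freeze the amplitude at the origin, apply a Hadamard-type division statement to the leading term, reassemble and invert the Fourier transform) is indeed the paper's skeleton, but the step you dispose of in one sentence is precisely where the substance lies, and as written it is a genuine gap. You propose to apply Proposition \ref{prop_FRY2} "with the roles adapted so that $\fv_2$ plays the role of $\fz_0$", flagging the multiplicity-free hypothesis as something "to check" and asserting that "the relevant decompositions are the same ones already used for the Heisenberg and quaternionic-Heisenberg pairs". Proposition \ref{prop_FRY2} is a statement proved case by case in \cite{FRY2} for the pairs of Table \ref{vinberg} with the centre $\fz_0$; it cannot be cited for the second-generation pairs of lines $a',b',c'$, where the abelian factor $\fv_2$ replaces $\fz_0$ and the relevant operators are built from the mixed invariants $\boldsymbol\rho_{\fv_1,\fv_2}$. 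The paper therefore formulates and proves a new statement (Lemma \ref{hadamard3}, with $\tilde D_j$ replaced by $\widehat D_j=\la'_{N_1}(\rho_j)$), and its proof is the bulk of the argument: one needs the decomposition of $\cH^{m,m}(\bC^{2n})$ under ${\rm Sp}_n$ into the $m+1$ irreducibles $V_{m,i}$ (Lemma \ref{irred-Sp}) — so for line $b'$ the harmonic space is \emph{not} $K$-irreducible and several inequivalent mixed invariants $p_{V_{m,i}}$ occur, unlike the Heisenberg case — together with the verification of hypotheses (i) and (ii) of Lemma \ref{heisenberg}, in particular the nonvanishing of $d\pi(M_{V_{m,i}})$ on $\cP^{s,0}(\fv_1)$ for $s\ge m$, which the paper obtains via lowest-weight vectors $\ell_{1,0}^{m-i}\ell_{1,1}^{i}$, a triangular change of basis relying on the freeness of the Hilbert basis, and an explicit computation of $d\sigma(C_1)^{m-i}d\sigma(C_2)^{i}$ on $\zeta_1^s$. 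None of this is supplied, or correctly anticipated, by your proposal.

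Two further points. Your detour through Property (S) of the third-generation pairs $(N_t,K_t)$ to manufacture $F_\al$ is both unnecessary and confused: the $H_\al$ produced by the division lemma already lie in $\cS(N_1)^K$, and since $\widehat\Psi\equiv1$ near the origin one may simply take $F_\al=H_\al$, so that $\widehat{F_\al\otimes\Psi}-H_\al$ vanishes identically near $\eta=0$ and Hadamard's lemma applies; the spectra $\Sigma_{\cD_t}$ (which belong to a pair with the smaller group $K_t$) have no role inside this proof — Property (S) for $(N_t,K_t)$ is used elsewhere in Section \ref{second}, for the analogue of Proposition \ref{S_0}, not here. Finally, your closing "bookkeeping" about $X_{v_0}-\bar X_{v_0}$ is vacuous in this setting: the group is already the direct product $N_1\times\fv_2$, so $\la'_{N_1\times\fv_2}(\rho_j)$ intertwines exactly with $\widehat D_j$ under the partial Fourier transform (formula \eqref{tildeD}) and there is no correction term to absorb; the hesitation in that sentence suggests you were transcribing the proof of Proposition \ref{hadamard} without noticing this simplification, which is harmless here but does not compensate for the missing proof of the Lemma \ref{hadamard3} analogue.
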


 \begin{proof}[{\it Proof}]
As in the proof of Proposition~\ref{hadamard}, we begin by taking partial Fourier transforms  in the $\fv_2$-variables (denoted by $\widehat F$, $\widehat R_\gamma$ etc.).
We have
$$
\begin{aligned}
\widehat F(v_1,z_1,w_2)&=i^k\sum_{|\gamma|=k}w_2^\gamma \widehat R_\gamma(v_1,z_1,w_2)\\
&=i^k\widehat\Psi(w_2)\sum_{|\gamma|=k}w_2^\gamma \widehat R_\gamma(v_1,z_1,0)\\
&\qquad +i^k\sum_{|\gamma|=k}w_2^\gamma \widehat\Psi(w_2)\big(\widehat R_\gamma(v_1,z_1,w_2)-\widehat R_\gamma(v_1,z_1,0)\big)\\
&\qquad +i^k\sum_{|\gamma|=k}w_2^\gamma \big(1-\widehat\Psi(w_2)\big)\widehat R_\gamma(v_1,z_1,w_2)\ .
\end{aligned}
$$

The functions $\widehat\Psi(w_2)\big(\widehat R_\gamma(v_1,z_1,w_2)-\widehat R_\gamma(v_1,z_1,0)\big)$ and $\big(1-\widehat\Psi(w_2)\big)\widehat R_\gamma(v_1,z_1,w_2)$ are Schwartz and vanish for $w_2=0$. Applying Hadamard's lemma, we have
\begin{equation}\label{hatF}
\widehat F(v_1,z_1,w_2)=i^k\widehat\Psi(w_2)\sum_{|\gamma|=k}w_2^\gamma \widehat R_\gamma(v_1,z_1,0)+i^{k+1}\sum_{|\gamma'|=k+1}w_2^{\gamma'} \widehat {R'}_{\gamma'}(v_1,z_1,w_2)\ ,
\end{equation}
with $R'_{\gamma'}\in\cS(N_1\times\fv_2)$. We set
\begin{equation}\label{rem}
G(v_1,z_1,w_2)=\sum_{|\gamma|=k}w_2^\gamma \widehat R_\gamma(v_1,z_1,0)\ ,
\end{equation}
and 
$$
\widehat D_j=\la'_{N_1}(\rho_j)\in\bD(N_1)\otimes\cP(\fv_2)\ ,
$$
so that
\begin{equation}\label{tildeD}
\widehat{D_jF}=\widehat  D_j\widehat F\ ,
\end{equation}
for $F\in\cS(N_1\times\fv_2)$.

 We prove the analogue of Proposition \ref{prop_FRY2}, with $\tilde D_j$ replaced by $\widehat D_j$. 

By Lemma 4.1 of \cite{FRY2}, the function $G$ in \eqref{rem} can be expanded as a finite sum,
\begin{equation}\label{gm}
G(v_1,z_1,w_2)=\sum_{m,\al}|w_2|^{2m} p^\al(v_1,w_2)g_\al(v_1,z_1)\ ,
\end{equation}
where the $p^\al$ are products of elements of $\boldsymbol\rho_{\fv_1,\fv_2}$ of degree at least $k-2m$ in $w_2$ and the $g_\al$ are in $\cS(H_n)^K$.
Since the elements in $\boldsymbol\rho_{\fv_1,\fv_2}$ have degree 2 in $w_2$, we may assume that the integer $k$ in \eqref{rem} is even\footnote{In fact, if \eqref{rem} holds with $k$ odd, it holds as well with $k+1$ instead of $k$.} and that $2|\al|+2m=k$ in \eqref{gm}.

  We need at this point the following statement, whose proof will take the second part of this subsection.

\begin{lemma}\label{hadamard3}
Given
\begin{equation}\label{G}
G(v_1,z_1,w_2)=\sum_{|\gamma|=2k}w_2^\gamma G_\gamma(v_1,z_1)\in \big(\cS(H_n)\otimes\cP^{2k}(\fv_2)\big)^K\ ,
\end{equation}
 then
\begin{equation}\label{GtoH}
G(v_1,z_1,w_2)=\sum_{|\al|\le k}|w_2|^{2(k-|\al|)}\makebox{$\widehat  D$}^{\al} H_\al(v_1,z_1)\ ,
\end{equation}
where $\makebox{$\widehat  D$}^{\al}=\prod_{D_j\in\la'(\boldsymbol\rho_{\fv_1,\fv_2})}\makebox{$\widehat  D_j$}^{\al_j}$ and $H_\al\in\cS(H_n)^K$ for every $\al$.
\end{lemma}

 Taking Lemma \ref{hadamard3} for granted, we insert \eqref{GtoH} into \eqref{hatF} and undo the Fourier transform to obtain the conclusion of Proposition \ref{hadamard2}.
\end{proof}

Before giving the proof of Lemma \ref{hadamard3}, we recall some notation from \cite{FRY2} and quote two preliminary results.

If $V$ is a real vector space, $\cH^k(V)$ denotes the subspace of $\cP^k(V)$ consisting of harmonic polynomials, i.e., orthogonal to $|v|^2$. If $V$ is also a complex space, we refer to the holomorphic-antiholomorphic bigrading by replacing the single exponent $k$ by a double exponent.

The first of the two statements we alluded to %% is taken from 
can be found, for example, in \cite{knop-mf}.

\begin{lemma}\label{irred-Sp}
Under the action of ${\rm Sp}_n$, $\cH^{m,m}(\bC^{2n})$ decomposes into irreducibles as
\begin{equation}\label{Sp_n}
\cH^{m,m}(\bC^{2n})=\sum_{i=0}^m V_{m,i}\ ,\qquad V_{m,i}\cong R\big(2(m-i)\varpi_1+i\varpi_2\big)\ ,
\end{equation}
where $\varpi_1$ and $\varpi_2$  denote, respectively, the highest weights of the defining representation and of the adjoint representation of ${\rm Sp}_n$.

Under the action of ${\rm Sp}_1\times{\rm Sp}_n$, $\cH^{2m}(\bH^n)$ decomposes into irreducibles as
\begin{equation}\label{Sp_1xSp_n}
\cH^{2m}(\bH^n)=\sum_{i=0}^m W_{m,i}\ ,
\end{equation}
with $W_{m,i}\cong S^{2(m-i)}\otimes V_{m,i}$, $S^j$ denoting the $(j+1)$-dimensional irreducible representation of~${\rm Sp}_1$.
In particular, ${\rm Sp}_1$ acts trivially on $W_{m,m}=V_{m,m}$.
\end{lemma}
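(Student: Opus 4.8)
The plan is to prove Lemma \ref{irred-Sp} via classical invariant-theoretic / representation-theoretic facts about the harmonic decomposition of polynomial spaces under the symplectic groups, rather than by direct computation. I recall that for any compact group $G$ acting orthogonally on a Euclidean space, $\cP^k = \bigoplus_{j\ge 0} |v|^{2j}\cH^{k-2j}$, so the content of the statement is exactly the decomposition of the harmonic part $\cH^{m,m}(\bC^{2n})$ under ${\rm Sp}_n$ and of $\cH^{2m}(\bH^n)$ under ${\rm Sp}_1\times{\rm Sp}_n$ into irreducibles.

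\textbf{Step 1: the complex case.} First I would treat $\cH^{m,m}(\bC^{2n})$ under ${\rm Sp}_n$. The group ${\rm U}_{2n}$ acts on $\cH^{m,m}(\bC^{2n})$ irreducibly with highest weight $m\varpi_1 + m\varpi_{2n-1}$ (in ${\rm U}_{2n}$-notation), and the restriction of this representation to ${\rm Sp}_n\subset {\rm U}_{2n}$ is what we need. The standard tool here is the theory of spherical harmonics for the Hermitian symmetric space ${\rm U}_{2n}/{\rm Sp}_n$ (equivalently, ${\rm Sp}_n$ is the fixed-point group of an involution of ${\rm U}_{2n}$), combined with branching; alternatively one invokes the explicit ${\rm Sp}_n$-decomposition of $\cP^{m,m}(\bC^{2n})$ via the symplectic form $\omega\in (\bC^{2n})^*\otimes(\bC^{2n})^*$, which generates the invariants. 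Contracting with $\omega$ gives the ${\rm Sp}_n$-equivariant surjection $\cP^{m,m}\to\cP^{m-1,m-1}$, whose kernel is (after taking the ${\rm U}_{2n}$-harmonic part) exactly the sum in \eqref{Sp_n}; an induction on $m$, using that $\cP^{m,m}(\bC^{2n})=\bigoplus_{a\ge0}\omega^a\,(\text{primitive part})$ and Littlewood's branching rules ${\rm GL}_{2n}\downarrow{\rm Sp}_n$ restricted to the relevant rectangular partition, isolates the irreducibles $R(2(m-i)\varpi_1+i\varpi_2)$, $0\le i\le m$. I would cite \cite{knop-mf} for the multiplicity-free statement and pin down the highest weights by a weight-counting or a dimension check at small $m$.

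\textbf{Step 2: the quaternionic case.} For $\cH^{2m}(\bH^n)$ under ${\rm Sp}_1\times{\rm Sp}_n$, I would use the identification $\bH^n\cong\bC^2\otimes_{\bC}\bC^{2n}$ as ${\rm Sp}_1\times{\rm Sp}_n$-module (with ${\rm Sp}_1$ acting on $\bC^2$, ${\rm Sp}_n$ on $\bC^{2n}$). Then $\cP(\bH^n)=\cP(\bC^2\otimes\bC^{2n})$ decomposes by the ${\rm GL}_2\times{\rm GL}_{2n}$ Cauchy formula as $\bigoplus_\lambda S_\lambda(\bC^2)\otimes S_\lambda(\bC^{2n})$ over partitions $\lambda$ with at most two parts; tracking the total degree $2m$ forces $\lambda=(m+i,m-i)$ for $0\le i\le m$ after passing to the harmonic (i.e. $\cO(4n)$-harmonic, equivalently ${\rm Sp}_1\cdot{\rm Sp}_n$-harmonic) part. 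Restricting $S_{(m+i,m-i)}(\bC^2)$ to ${\rm Sp}_1={\rm SU}_2$ gives $S^{2(m-i)}$ (the $(2(m-i)+1)$-dimensional irreducible, since $S_\lambda(\bC^2)$ as an ${\rm SL}_2$-module is the one of highest weight $\lambda_1-\lambda_2$), and restricting $S_{(m+i,m-i)}(\bC^{2n})$ to ${\rm Sp}_n$ yields, by Littlewood's modification rule for two-row partitions, exactly $V_{m,i}\cong R(2(m-i)\varpi_1+i\varpi_2)$, matching Step 1. This gives \eqref{Sp_1xSp_n} with $W_{m,i}\cong S^{2(m-i)}\otimes V_{m,i}$, and in particular $i=m$ gives the ${\rm Sp}_1$-trivial piece $W_{m,m}=V_{m,m}$.

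The main obstacle I anticipate is the bookkeeping in the branching rules: one must verify that no partition with more than two rows contributes (which follows because $\bC^2$ has dimension $2$ in the Cauchy pairing) and that Littlewood's ${\rm GL}_{2n}\downarrow{\rm Sp}_n$ modification rule introduces no spurious cancellations or extra summands for the specific two-row shapes $(m+i,m-i)$ — this is where the condition $n\ge 2$ (so that $2n$ is large enough for the partitions involved, i.e. the restriction rule is "stable") quietly enters. Since the lemma is quoted from the literature and the paper explicitly says the proof "can be found, for example, in \cite{knop-mf}," I expect the write-up to be short: assemble the Cauchy decomposition, apply the two classical branching rules, and read off the highest weights, citing \cite{knop-mf} for the multiplicity-freeness that makes the identification of summands unambiguous.
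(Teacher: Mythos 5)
The decisive gap is in Step 2, in the passage from the Cauchy decomposition of $\cP^{2m}(\bC^2\otimes\bC^{2n})$ to the decomposition of the harmonic subspace $\cH^{2m}(\bH^n)$. With $\lambda=(m+i,m-i)$ a two-row partition, Littlewood's ${\rm GL}_{2n}\!\downarrow{\rm Sp}_n$ rule does not yield a single irreducible: the modification partitions $\beta$ with even column lengths and $\beta\subseteq\lambda$ are the rectangles $(b,b)$ with $0\le b\le \lambda_2$, giving $S_\lambda(\bC^{2n})\!\downarrow{\rm Sp}_n=\bigoplus_{b=0}^{\lambda_2}R\big((\lambda_1-\lambda_2)\varpi_1+(\lambda_2-b)\varpi_2\big)$, a sum of $\lambda_2+1$ symplectic irreducibles. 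Asserting that this restriction is "exactly $V_{m,i}$" therefore conflates the decomposition of $\cP^{2m}$ with that of $\cH^{2m}$: the Cauchy-plus-Littlewood computation decomposes the whole polynomial space, and the terms with $b>0$ are precisely the constituents of $|v|^{2j}\cH^{2m-2j}$ for $j>0$. To isolate $\cH^{2m}$ you still need a harmonic-subtraction step, e.g.: multiplication by $|v|^2=\omega_2\otimes\omega_{2n}$ is an injective ${\rm Sp}_1\times{\rm Sp}_n$-map $\cP^{2m-2}\hookrightarrow\cP^{2m}$, so the multiplicity of $S^{2a}\otimes R(2a\varpi_1+c\varpi_2)$ in $\cH^{2m}$ is its multiplicity in $\cP^{2m}$ minus that in $\cP^{2m-2}$, which by the above count is $1$ iff $c=m-a$ and $0$ otherwise. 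That calculation — the actual content of the lemma — is absent from your write-up.

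There is also an internal parametrization slip: with $\lambda=(m+i,m-i)$ the ${\rm SL}_2$-module $S_\lambda(\bC^2)$ has highest weight $\lambda_1-\lambda_2=2i$, i.e. it is $S^{2i}$, not $S^{2(m-i)}$ as you state, and correspondingly the surviving (i.e. $b=0$) ${\rm Sp}_n$-constituent is $R(2i\varpi_1+(m-i)\varpi_2)$, not $R(2(m-i)\varpi_1+i\varpi_2)$. Your parameter $i$ is $m-i$ relative to the lemma's; after relabeling everything reconciles, but as written the claimed highest weights do not match the chosen $\lambda$. Neither issue alone is fatal, but combined with the missing harmonic-subtraction step the argument is not complete. (For the record, the paper gives no proof of this lemma and simply cites \cite{knop-mf}; your Cauchy/Littlewood route is a reasonable way to derive it, once the two points above are repaired.)
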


The second statement is extracted from Sections 4 and 5 of \cite{FRY2}.

\begin{lemma}\label{heisenberg}
Let $N_1=H_n$, $K\subseteq{\rm U}_n$,  and  $\cP^{s,0}(\fv_1)$  irreducible with respect to $K$ for every~$s$.

Let $p_{V_1,V_2}$ be the polynomial in \eqref{p_VW}, with $V_1$, $V_2$ equivalent, irreducible representation spaces of $K$, with $V_1\subset \cH^{m,m}(\fv_1)$, $V_2\subset\cP^k(\fv_2)$. Set $M_{V_1,V_2}=\la'_{N_1}(p_{V_1,V_2})\in \big(\bD(N_1)\otimes V_2\big)^K$.

Assume that 
\begin{enumerate}
\item[\rm(i)] $\cP^{s,0}(\fv_1)$ is contained, as a representation space of $K$, inside $\cP^{s,0}(\fv_1)\otimes V_2$ if and only if $s\ge m$, and in this case with multiplicity one;
\item[\rm(ii)] $d\pi(M_{V_1,V_2})\ne0$ on $\cP^{s,0}(\fv_1)$ for every $s\ge m$. 
\end{enumerate}
Then, given $g\in\cS(H_n)^K$, there exists $H\in\cS(H_n)^K$ such that $p_{V_1,V_2}g=M_{V_1,V_2}H$.
\end{lemma}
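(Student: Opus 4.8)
The plan is to argue on the representation side of the group Fourier transform of $N_1=H_n$, following the pattern of \cite[Sections~4--5]{FRY2}. For $\lambda\in\bR\setminus\{0\}$ let $\pi_\lambda$ be the Schr\"odinger representation of $H_n$, realised on the Fock space $\cF_\lambda$ of holomorphic functions on $\fv_1\cong\bC^n$, so that $\cF_\lambda=\bigoplus_{s\ge 0}\cP^{s,0}(\fv_1)$ as a $K$-module, the grading being intrinsic. The first step is to observe that, since each $\cP^{s,0}(\fv_1)$ is $K$-irreducible and the spaces $\cP^{s,0}(\fv_1)$ are pairwise inequivalent --- which is immediate in all the cases that actually occur --- Schur's lemma forces $\pi_\lambda(g)$ to act as a scalar $c_s(g,\lambda)$ on each summand $\cP^{s,0}(\fv_1)$, for every $g\in\cS(H_n)^K$; and, conversely, by the functional calculus on the Heisenberg group of \cite{ADR1,ADR2}, the map $g\mapsto\big(c_s(g,\cdot)\big)_{s\in\bN}$ identifies $\cS(H_n)^K$ with the space of multiplier sequences that extend to a Schwartz function of the two spectral variables $\xi_1=|\lambda|(2s+n)$ and $\xi_2=\lambda$.

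The second step is to compute the symbols of the two sides of $M_{V_1,V_2}H=p_{V_1,V_2}g$. Since $p_{V_1,V_2}$ is bi-homogeneous of bidegree $(m,m)$ in the $v_1$-variable, the $V_2$-valued operator $d\pi_\lambda(M_{V_1,V_2})$ has weight $0$ for the Fock grading, hence preserves each $\cP^{s,0}(\fv_1)$; by $K$-equivariance it acts there through $\operatorname{Hom}_K\!\big(\cP^{s,0}(\fv_1),\cP^{s,0}(\fv_1)\otimes V_2\big)$, which by hypothesis~(i) vanishes for $s<m$ and is one-dimensional for $s\ge m$. Fixing the canonical generator $\iota_s$ of that line when $s\ge m$, one writes $d\pi_\lambda(M_{V_1,V_2})\big|_{\cP^{s,0}(\fv_1)}=\mu_s(\lambda)\,\iota_s$, and $\mu_s(\lambda)\neq 0$ for all $s\ge m$, $\lambda\neq0$, is precisely hypothesis~(ii); a normal-ordering computation in the Fock model gives $\mu_s(\lambda)=|\lambda|^{a}\mu_s(\operatorname{sgn}\lambda)$ for a fixed $a$, with $\mu_s(\pm1)$ polynomial in $s$ and vanishing (by the same dimension count) at $s=0,\dots,m-1$. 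The same bookkeeping, applied to $g\mapsto p_{V_1,V_2}g$ --- which again yields, after $\pi_\lambda$, a $V_2$-valued operator of weight $0$ whose restriction to $\cP^{s,0}(\fv_1)$ is $\nu_s(\lambda)\,c_s(g,\lambda)\,\iota_s$, with $\nu_s$ of the same essentially-polynomial shape and vanishing at $s=0,\dots,m-1$ --- shows that $M_{V_1,V_2}H=p_{V_1,V_2}g$ decouples along the grading: it is $0=0$ on $\cP^{s,0}(\fv_1)$ for $s<m$, and reads $\mu_s(\lambda)\,c_s(H,\lambda)=\nu_s(\lambda)\,c_s(g,\lambda)$ for $s\ge m$.

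The third step is to produce $H$. I would take $H$ to be the $K$-invariant function whose multiplier sequence is $c_s(H,\lambda)=R(\xi_1,\xi_2)\,c_s(g,\lambda)$, where $R$ is a fixed smooth function on $\bR^2$ agreeing with $\nu_s(\lambda)/\mu_s(\lambda)$ on the part of the joint spectrum with $s\ge m$ --- such $R$ exists because $\nu_s/\mu_s$, viewed as a function of $(\xi_1,\xi_2)$, is rational, regular on $\{s\ge m\}$ (this is where hypothesis~(ii) enters) and of at most polynomial growth there, so it can be cut off away from its poles without altering it on $\{s\ge m\}$. Since the components of $H$ over $\cP^{s,0}(\fv_1)$ with $s<m$ are annihilated by $M_{V_1,V_2}$, they do not affect $M_{V_1,V_2}H$, so the identity $M_{V_1,V_2}H=p_{V_1,V_2}g$ holds exactly; and since the product of a Schwartz multiplier with the symbol $R$ is again a Schwartz multiplier, $H\in\cS(H_n)^K$, as required.

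The step I expect to be the main obstacle is the verification, within the third step, that $\nu_s/\mu_s$ is genuinely a tame symbol on the spectrum --- i.e. the quantitative control of $\mu_s(\lambda)$ from below on $\{s\ge m\}$ and of its derivatives from above, uniformly after the dilation rescaling, matched against the precise Schwartz-class criterion for $\cS(H_n)^K$ in multiplier form. This is exactly what the estimates of \cite[Sections~4--5]{FRY2} supply; by contrast the representation-theoretic skeleton --- diagonality of $\pi_\lambda(g)$, multiplicity-one intertwiners, vanishing of both symbols for $s<m$ --- is, once Schur's lemma and hypotheses (i)--(ii) are in place, essentially formal.
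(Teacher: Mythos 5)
There is a genuine gap in the second step. You claim that $\pi_\lambda\big(p_{V_1,V_2}\,g\big)\big|_{\cP^{s,0}(\fv_1)}=\nu_s(\lambda)\,c_s(g,\lambda)\,\iota_s$ with a \emph{universal} (i.e.\ $g$-independent) scalar $\nu_s(\lambda)$, and then produce $H$ by dividing multipliers pointwise. This factorisation is false: unlike the differential side, where $d\pi_\lambda(M_{V_1,V_2})\pi_\lambda(H)$ genuinely factors since $d\pi_\lambda(M_{V_1,V_2})$ is a fixed block-diagonal operator and $\pi_\lambda(H)$ is scalar on each block, the operation $g\mapsto p_{V_1,V_2}\,g$ on the group side does \emph{not} descend to a pointwise operation on the Laguerre/Fock symbol sequence. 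Multiplication by a coordinate function $v_j$ (resp.\ $\bar v_j$) transforms under $\pi_\lambda$ into a combination of left- and right-compositions with $d\pi_\lambda$ of the raising/lowering fields; iterating $m$ times for a polynomial in $\cH^{m,m}(\fv_1)$, the block of $\pi_\lambda(p_{V_1,V_2}\,g)$ on $\cP^{s,0}$ is a \emph{finite-difference} expression in $\{c_{s'}(g,\lambda)\}_{|s'-s|\le m}$, not a multiple of $c_s(g,\lambda)$ alone. (Already for $n=1$ and $p=|v_1|^2$ the tridiagonal Laguerre recursion makes this explicit.) Consequently the quotient you want to take is not of the form $R(\xi_1,\xi_2)\,c_s(g,\lambda)$ with $R$ a fixed rational symbol, and your argument for its tameness — rational, regular, polynomially bounded off the zero set, then multiply a Schwartz multiplier — has no basis.

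What the reference \cite{FRY2}, and the paper's own proof of the analogous Lemma~\ref{last-hadamard} in Section~\ref{first-c}, actually do is avoid the left-hand-side symbol computation entirely, by ``squaring''. One forms $M_{V_1,V_2}^*\,(p_{V_1,V_2}\,g)$, which is a genuine scalar element of $\cS(H_n)^K$ and whose Gelfand transform is therefore a bona-fide Schwartz function on $\Sigma_{\check\cD}$ by Property~(S) for the Heisenberg pair; by hypothesis~(i) this transform vanishes on the part of the spectrum with $s<m$. Independently, $U_m:=M_{V_1,V_2}^*M_{V_1,V_2}\in\bD(H_n)^K$ corresponds to an explicit polynomial $u_m(\xi)$ whose zero set, by~(i) and~(ii), is exactly that same singular subset. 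The division lemma \cite[Prop.~5.2]{FRY2} then gives $\cG(M^*_{V_1,V_2}(p_{V_1,V_2}g))=u_m\psi$ with $\psi\in\cS(\bR^2)$, one sets $\cG H=\psi|_{\Sigma_{\check\cD}}$, and recovers $p_{V_1,V_2}g=M_{V_1,V_2}H$ from $M^*_{V_1,V_2}(p_{V_1,V_2}g-M_{V_1,V_2}H)=0$ using the non-vanishing in~(ii). Your representation-theoretic skeleton — Fock grading, irreducibility of $\cP^{s,0}$, multiplicity one, weight zero, simultaneous vanishing for $s<m$ — is correct and is indeed the skeleton of the \cite{FRY2} argument; the missing ingredient is to route the division through $u_m$ rather than through a direct (and unavailable) factorisation of the left-hand symbol, since it is the division lemma that supplies the uniform control on derivatives and decay that your ``tame rational symbol'' claim was supposed to provide.
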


\begin{proof}[ Proof of Lemma \ref{hadamard3}]
In the case $n=1$ (which is admitted at lines $a'$ and $c'$) there is nothing to prove, because $\boldsymbol\rho'=\{|w_2|^2\}$ and $\widehat{\la'(|w_2|^2)}=|w_2|^2$. Hence \eqref{GtoH} and \eqref{rem} coincide. Therefore, we assume that $n\ge2$. Following the procedure of \cite{FRY2}, we want to split the factor $\cP^{2k}(\fv_2)$ in the tensor product into  irreducible components, and select those components $V$ for which $\big(\cS(H_n)\otimes V\big)^K$ is nontrivial.

From \eqref{gm}, we isolate a single summand. Disregarding the terms containing a positive power of $|w_2|^2$, which can be dealt with by induction, we may then assume that
\begin{equation}\label{G-p^al}
G=p^\al(v_1,w_2)g_\al(v_1,z_1)\ ,
\end{equation}  
with  $k=|\al|$. Now, 
$$
p^\al\in\big(\cP^{2k}(\fv_1)\otimes \cP^{2k}(\fv_2)\big)^K=\sum_{i,j\le k}|v_1|^{2(k-i)}|w_2|^{2(k-j)}\big(\cH^{2i}(\fv_1)\otimes\cH^{2j}(\fv_2)\big)^K\ .
$$

For $i\ne j$, any polynomial in $\big(\cP^{2i}(\fv_1)\otimes\cP^{2j}(\fv_2)\big)^K$ must be divisible by a power of $|v_1|^2$ or $|w_2|^2$. Hence $\big(\cH^{2i}(\fv_1)\otimes\cH^{2j}(\fv_2)\big)^K$ is trivial for $i\ne j$.

As in \cite{FRY2}, given $V_1\subset\cP(\fv_1)$, $V_2\subset\cP(\fv_2)$, $K$-invariant, irreducible, and equivalent, we 
set
\begin{equation}\label{p_VW}
p_{V_1,V_2}(v_1,w_2)=\sum_\ell a_\ell(v_1)\overline{b_\ell(w_2)}\in (V_1\otimes V_2)^K\ ,
\end{equation} 
where $\{a_\ell\}$, $\{b_\ell\}$ are equivalent orthonormal bases of $V_1$ and $V_2$ respectively. We say that $p_{V_1,V_2}$ is an {\it irreducible mixed invariant} on $\fv_1\oplus\fv_2$. For every $\al$ with $|\al|=k$, $p^\al$ can be  decomposed as a finite sum
\begin{equation}\label{p-decomposition}
p^\al=\sum_jc_j|v_1|^{2(k-m_j)}|w_2|^{2(k-m_j)}p_{V_{1,j},V_{2,j}}\ ,
\end{equation}
with $V_{1,j}\subset\cH^{2m_j}(\fv_2)$, $V_{2,j}\subset\cH^{2m_j}(\fv_2)$, and the function $G$ in \eqref{G-p^al}

We can then further restrict the study of \eqref{gm} to the case 
\begin{equation}\label{reducedG}
G=p_{V_1,V_2}(v_1,w_2)g(v_1,z_1)\ ,
\end{equation}
with $V_1\subset\cH^{2m}(\fv_1)$, $V_2\subset\cH^{2m}(\fv_2)$, equivalent $K$-invariant, irreducible subspaces.

Obviously, if $V_1$ and $V_2$ are as above, then $K_1$ acts trivially on $V_1$ and $K_2$ acts trivially on $V_2$. In particular, if $K_i={\rm U}_1$, in which case $\fv_i$ has a $K$-invariant complex structure, we have $V_i\subset \cH^{m,m}(\fv_i)$.

 It follows from Lemma \ref{irred-Sp} that the irreducible mixed invariants $p_{V_1,V_2}$ in $\big(\cH^{2m}(\fv_1)\otimes\cH^{2m}(\fv_2)\big)^K$ correspond to the following pairs of subspaces
\begin{equation}\label{mixed-irred}
\begin{array}{lll}
\text{\rm line}\ a':& \fv_1=\fv_2=\bC^n\ ,& V_1=V_2=\cH^{m,m}\ ;\\
\text{\rm line}\ b':& \fv_1=\fv_2=\bC^{2n}\ ,& V_1=V_2=V_{m,i}\ ,\ i=0,\dots,m\ ;\\
\text{\rm line}\ c':& \fv_1=\fv_2=\bH^n\ ,& V_1=V_2=V_{m,m}\ .\\
\end{array}
\end{equation}

Suppose therefore that $G\in \big(\cS(H_n)\otimes V\big)^K$ is the function in \eqref{reducedG}, with $V=\cH^{m,m}(\fv_2)$, $V_{m,i}(\fv_2)$, or $V_{m,m}(\fv_2)$, depending on the case. 

Setting $M_V=\la'_{N_1}(p_{V(\fv_1),V(\fv_2)})$
we prove that $G=M_VH$ for some $H\in\cS(N_1)^K$. This will give the conclusion, for the following reason. The operator $D_V=\la'_{N_1\times\fv_2}(p_{V(\fv_1),V(\fv_2)})\in\bD(N_1\times\fv_2)^K$ is a polynomial in the elements of $\cD=\cD'\cup\{L,i\inv T\}$, where $L$ and $T$ are, respectively, the sublaplacian and the central derivative on $N_1$. Hence $M_V=\widehat D_V$ is a polynomial in $|w_2|^2$, $L$, $T$ and the $\widehat D_j$ in \eqref{tildeD}. In each monomial, the powers of $L$ and $T$ can be incorporated in the function $H$, leading to a sum of the form \eqref{GtoH}.

Let then $\pi$ be a Bargmann representation of $N_1=H_n$, acting on the Fock space $\cF(\fv_1)$. By $\sigma$ we denote the natural representation of $K$ (effectively of $K_1\times K_{1,2}$) on $\cF(\fv_1)$, such that $\sigma(k)$ intertwines $\pi$ with $\pi\circ k$ for $k\in K$. Since $K_{1,2}$ is either the unitary or the symplectic group on $\fv_1$, the subspaces $\cP^{s,0}(\fv_1)$, $s\in\bN$, are the irreducible components of $\sigma$.

We verify the hypotheses of Lemma \ref{heisenberg} in our cases.
By Lemma 4.4 of \cite{FRY2}, $\cP^{s,0}$ is contained inside $\cP^{s,0}\otimes V_2$ if and only if the same is true for $V_2$ inside $\cP^{s,0}\otimes\cP^{0,s}=\cP^{s,s}$, and the two multiplicities are the same. Since $\cP^{s,s}=\sum_{i=0}^s |v_1|^{2(s-i)}\cH^{i,i}$, condition (i) is easily verified on the basis of \eqref{Sp_n} and \eqref{Sp_1xSp_n}. 

Condition (ii) for the pairs at line $a'$ is already contained in Proposition 4.10 of \cite{FRY2}. We consider then the pairs of line $b'$, with $V_2=V_{m,i}$. 

We first replace the elements of $\boldsymbol\rho_{\fv_1,\fv_2}$ in Table~\ref{quotient-invariants} with  $p_{V_{1,0}(\fv_1),V_{1,0}(\fv_2)}$ and $p_{V_{1,1}(\fv_1),V_{1,1}(\fv_2)}$, which we simply write $p_{V_{1,0}}$ and $p_{V_{1,1}}$.

We then expand $p_{V_{1,0}}$ and $p_{V_{1,1}}$ in terms of equivalent orthonormal bases $\{\ell_{0,j}\}$ and $\{\ell_{1,j'}\}$ of $V_{1,0}$ and $V_{1,1}$ respectively, as in \eqref{p_VW}.
Then, 
$$
\span_\bC\big\{\ell_{j,0},\ell_{j',1}\big\}_{j,j'}=\cH^{1,1}=\big\{v_1Cv_1^*:C\in\fs\fl_{2n}(\bC)\big\}\ . 
$$

For $C\in\fs\fl_{2n}(\bC)$, define $\ell_C(v)=vCv^*$ on $\bC^{2n}$, and $L_C=\la'_{N_1}(\ell_C)$. According to \cite[Lemma 4.11]{FRY2}, the restriction of the operators
$d\pi(L_C)$ to $\cP^{s,0}(\fv_1)$ coincides, up to a nonzero scalar factor depending on $\pi$, with $d\sigma(C)$, the differential of the natural action of ${\rm SL}_{2n}(\bC)$.

We choose the orthonormal bases so that $\ell_{1,0}=\ell_{C_0}$, $\ell_{1,1}=\ell_{C_1}$ are lowest-weight vectors, respectively in $V_{1,0}$ of weight $-2\varpi_1$, and in $V_{1,1}$ of weight $-\varpi_2$. 
Given  $D=\sum_r D_r\otimes q_r\in\bD(N_1)\otimes\cP^k(\fv_2)$ and $\ell\in\cP^k(\fv_2)$, the notation $\lan D,\ell\ran$ stands for $\sum_r\lan q_r,\ell\ran D_r\in \bD(N_1)$, and similarly with differential operators replaced by  their images $\pi(D_r)$ in the Bargmann representations, or by polynomials on $\fv_1$.

We claim that
\begin{enumerate}
\item[({\sf i})] for every $m,i$, $i\le m$, $\ell_{1,0}^{m-i}\ell_{1,1}^i\in V_{m,i}$;
\item[({\sf ii})] $\lan M_{m,i},\ell_{1,0}^{m-i}\ell_{1,1}^i\ran$ equals, up to a nonzero scalar,  $\lan\widehat{D_{1,0}^{m-i}D_{1,1}^i},\ell_{1,0}^{m-i}\ell_{1,1}^i\ran$;
\item[({\sf iii})] for $s\ge m$, $\big\lan d\pi(M_{V_{m,i}}), \ell_{1,0}^{m-i}\ell_{1,1}^i\big\ran\in\cL\big(\cP^{s,0}(\fv_1)\big)$ is nonzero\footnote{The scalar product is taken in the second factor, $V_{m,i}$, of the tensor product.}. 
\end{enumerate}

To prove ({\sf i}), notice first that each $\ell_{1,0}^{m-i}\ell_{1,1}^i$ 
is a lowest-weight vector vector and its weight, $-2(m-i)\varpi_1-i\varpi_2$, does not appear 
among the lowest-weights 
in the lower-degree harmonic spaces. 
Therefore $\ell_{1,0}^{m-i}\ell_{1,1}^i\in\cH^{m,m}$. Being a lower-weight vector means 
that the one-dimensional spaces that it generates is invariant under the action of a Borel subalgebra of $\fs\fl_{2n}$. 
Then the same is true for $\ell_{1,0}^{m-i}\ell_{1,1}^i$. Since, by \eqref{Sp_n}, $V_{m,i}$ is the only irreducible subspace of $\cH^{m,m}$ with the highest weight $2(m-i)\varpi_1+i\varpi_2$,  $\ell_{1,0}^{m-i}\ell_{1,1}^i$ must be the lowest-weight vector in $V_{m,i}$.

To prove ({\sf ii}), we first observe that the statement is true with $M_{m,i}$ and $\widehat{D_{1,0}^{m-i}D_{1,1}^i}$ replaced by $p_{V_{m,i}}$  and $p_{V_{1,0}}^{m-i}p_{V_{1,1}}^i$ respectively. To see this, it suffices to replace $p_{V_{1,0}}^{m-i}p_{V_{1,1}}^i$ with its component $q_{m,i}$   in $\cH^{m,m}(\fv_1)\otimes\cH^{m,m}(\fv_2)$, which is a linear combination of the $p_{V_{m,j}}$ with $j=0,\dots,m$. On the other hand, comparing the expansions  \eqref{p_VW} of $p_{V_{m,i}}$, $p_{V_{1,0}}$ and $p_{V_{1,1}}$ with bases consisting of weight vectors, we must have a triangular set of linear relations
$$
\begin{array}{rl}
q_{m,m}=&a_mp_{V_{m,m}}\\
q_{m,m-1}=&b_{m,m-1}p_{V_{m,m}}+a_{m-1}p_{V_{m,m-1}}\\
\cdots=&\cdots
\end{array}
$$
just because the weights $2(m-j)\varpi_1+j\varpi_2$ are decreasing in $j$ (i.e., $2\varpi_1-\varpi_2$ is a positive root). The fact that the homogeneous Hilbert bases in Table \ref{quotient-invariants} is free \cite{knop-mf} implies that each coefficient $a_j$ is nonzero. It is then clear that 
$$
\lan p_{V_{1,0}}^{m-i}p_{V_{1,1}}^i, \ell_{1,0}^{m-i}\ell_{1,1}^i\ran= a_i\lan p_{V_{m,i}},\ell_{1,0}^{m-i}\ell_{1,1}^i\ran\ .
$$

It is now sufficient to apply the symmetrisation $\la'_{N_1}$ to both sides and observe that $\la'_{N_1}$ is multiplicative up to terms containing factors in $\fz_1$, which will necessarily have lower degrees in the vector fields in $\fv_1$, and hence in $\fv_2$.

To prove ({\sf iii}), we use the identity
$$
\begin{aligned}
\big\lan d\pi(M_{m,i}), \ell_{1,1}^{m-i}\ell_{2,1}^i\big\ran&=\big\lan d\pi(\widehat{D_1^{m-i}D_2^i}), \ell_{1,1}^{m-i}\ell_{2,1}^i\big\ran\\
&=d\pi(L_{C_1}^{m-i}L_{C_2}^i)\\
&=d\sigma(C_1)^{m-i}d\sigma(C_2)^i\ .
\end{aligned}
$$

In an appropriate coordinate system $(\zeta_1,\dots,\zeta_{2n})$ on $\fv_1$ (recall that $n\ge 2$), we can take 
$$
d\sigma(C_1)=\zeta_{n+1}\de_{\zeta_1}\ ,\qquad d\sigma(C_2)=\zeta_{n+2}\de_{\zeta_1}-\zeta_{n+1}\de_{\zeta_2}\ ,
$$
so  that $d\sigma(C_1)^{m-i}d\sigma(C_2)^i$ does not vanish on $\cP^{s,0}$ for $s\ge m$ (e.g., check the action on $\zeta_1^s$).

Finally, the case of line $c'$ leads to essentially the same situation, with $i=m$. This concludes the proof of Lemma \ref{hadamard3}.
\end{proof}

\bigskip

\subsection{First-generation quotient pairs: proof of Property (S) for pairs at lines $a$ and $b$}\label{first-ab}\quad
\medskip

As usual, we denote by $\cD$ the homogeneous basis of $\bD(N)^K$ obtained from the invariants in Table \ref{quotient-invariants}, with $D_1=\la'_N(z_1)$, $D_2=\la'_N(z_2)$. Given a point $\xi\in\Sigma_\cD$ with $\xi_1\xi_2\ne0$ (a {\it regular point}), we set $(\xi_1,\xi_2)= \la u_\theta$, with $u_\theta=(\cos\theta,\sin\theta)$. Then the corresponding spherical function $\ph_\xi$ factors to the Heisenberg group $N_\theta$ with Lie algebra $\fn_\theta=(\fv_1\oplus\fv_2)\oplus\big(\fz/u_\theta^\perp\big)\cong(\fv_1\oplus\fv_2)\oplus\bR u_\theta$. 

Notice that the pairs $(N_\theta,K)$ are all isomorphic to the Heisenberg pair $(H_m,K)$ with $m=\dim_\bC(\fv_1\oplus\fv_2)$. The map
$(v_1,v_2,t)\longmapsto(v_1,v_2,tu_\theta)$
is an isomorphism from
 $H_m$ to $N_\theta$.

On $H_m$ we keep the invariants in $\boldsymbol\rho_\fv$ of Table \ref{quotient-invariants}, adding to them the coordinate function $t$ on the centre. Calling $\cD_H$ the resulting system of differential operators, the point of $\Sigma_{\cD_H}$ corresponding to $\ph_\xi$ has coordinates\footnote{The coordinate $\xi_6$ is only present  in the pairs of line $b$, $n\ge2$.}
$$
\Psi(\xi)=\Big(\la\,,\,\frac{\xi_3}{\cos\theta}\,,\,\frac{\xi_4}{\sin\theta}\,,\,\frac{\xi_5}{\sin\theta\cos\theta}\,,\,\frac{\xi_6}{\sin\theta\cos\theta}\Big)\ .
$$

For $F\in \cS(N)^K$, $\cG F(\xi)=\cG_H (\cR_\theta F)\big(\Psi(\xi)\big)$, with
$$
\cR_\theta F(v_1,v_2,t)=\int_{u_\theta^\perp}F(v_1,v_2,tu_{\theta+\pi/2})\,dt\in\cS(H_m)^K\ .
$$

Let $\eta(\theta)$ be a $C^\infty$ function supported on a compact interval $I\subset (-\pi,\pi)\setminus\{0,\pm\frac\pi2\}$ and equal to 1 on a neighbourhood of a point $\theta_0$. Then the  function $F^\#(v_1,v_2,t,s)=\cF\inv_\theta\big(\eta(\theta)\cR_\theta(v_1,v_2,t) \big)(s)$ is $K$-invariant and Schwartz on $N^\#=H_m\times\bR$. With respect to the system $\cD^\#=\cD_H\cup\{i\inv\de_s\}$, the spherical transform of $F^\#$ on $\Sigma_{\cD^\#}=\Sigma_{\cD_H}\times\bR$ is 
$$
\cG^\#(F^\#)\big(\Psi(\xi),\theta\big)=\cG_H (\cR_\theta F)\big(\Psi(\xi)\big)=\cG F(\xi)\ .
$$

By Proposition \ref{product} and \cite{ADR2}, $\cG^\#(F^\#)\big(\Psi(\xi),\theta\big)$ admits a Schwartz extension from $\Sigma_{\cD^\#}$ to the ambient space. Since the map $\xi\longmapsto \big(\Psi(\xi),\theta\big)$ is a diffeomorphism on the set of $\xi$ with $\la>0$ and $\theta\in I$, we conclude that $\cG F$ admits a smooth extension to any set of $\xi\in\bR^d$ ($d=5$ or 6) with $(\xi_1,\xi_2)$ varying in a compact set with $\xi_1\xi_2\ne0$, with derivatives of any order decaying rapidly in the remaining variables.

Denote by $\cS_{0,0}(N)$, resp.  $\cS_0(N)$, the subspaces of $\cS(N)$ whose elements have vanishing moments of any order in $z_1$ and $z_2$ separately, i.e.
$$
\int_{\fz_1}F(v_1,v_2,z_1,z_2)z_1^k\,dz_1=\int_{\fz_2}F(v_1,v_2,z_1,z_2)z_2^k\,dz_2=0\ ,
$$
for every $k$ and every choice of the non-integrated variables,
resp. vanishing moments of any order in $z_1$ and $z_2$ jointly, i.e.
$$
\int_{\fz_1\oplus\fz_2}F(v_1,v_2,z_1,z_2)z_1^{k_1}z_2^{k_2}\,dz_1\,dz_2=0\ ,
$$
for every $k_1,k_2$ and every choice of $v_1,v_2$. Clearly, $\cS_{0,0}(N)\subset\cS_0(N)$. 

Then $\cS_{0,0}(N)^K$ consists of the functions $F\in\cS(N)^K$ with $\cG F$ vanishing of infinite order on the singular set (where $\xi_1\xi_2=0$), and $\cS_0(N)^K$  of the functions $F\in\cS(N)^K$ with $\cG F$ vanishing of infinite order where $\xi_1=\xi_2=0$.

Since the derivatives of $\Psi$ grow at most polynomially as $\xi_1$ or $\xi_2$ tends to 0, we can say, using an appropriate partition of unity on the regular set as in the proof of Proposition \ref{S_0}, that, for every $F\in\cS_{0,0}(N)^K$, $\cG F$ admits a Schwartz extension to $\bR^d$.

We invoke now the following analogue of Proposition \ref{hadamard}.

\begin{lemma}\label{new-hadamard}\quad
\begin{enumerate}
\item[\rm(i)] Let $F\in\cS(N)^K$, and assume that
$$
F(v_1,v_2,z_1,z_2)=\de_{z_2}^kR_k(v_1,v_2,z_1,z_2)\ ,
$$
with $R_k\in\cS(N)^K$. Then there is a function $F_k\in\cS(N)^K$ such that
$\cG F_k$ does not depend on $\xi_2$ and
$$
F=\de_{z_2}^kF_k+\de_{z_2}^{k+1}R_{k+1}\ ,
$$
with $R_k\in\cS(N)^K$.
\item[\rm(ii)] Let $F\in\cS(N)^K$, and assume that
$$
F(v_1,v_2,z_1,z_2)=\sum_{i+j=k}\de_{z_1}^i\de_{z_2}^jR_{i,j}(v_1,v_2,z_1,z_2)\ ,
$$
with $R_{i,j}\in\cS(N)^K$ for every $i,j$. Then there are functions $F_{i,j}\in\cS(N)^K$, with $i+j=k$ with spherical transforms
$\cG F_{i,j}$ that do not depend on $\xi_1,\xi_2$ and
$$
F=\de_{z_1}^i\de_{z_2}^jF_{i,j}+\sum_{r+s=k+1}\de_{z_1}^r\de_{z_2}^sR_{r,s}\ .
$$
with $R_{r,s}\in\cS(N)^K$ for every $r,s$.
\end{enumerate}
\end{lemma}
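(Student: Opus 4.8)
The plan is to prove Lemma \ref{new-hadamard} as a special case of the method underlying Proposition \ref{hadamard}, which itself derives from the representation-theoretic splitting in Proposition \ref{prop_FRY2}. The key point is that for the pairs at lines $a$ and $b$ the centre $\fz = \fz_1 \oplus \fz_2$ is two-dimensional (indeed $\fz_1,\fz_2 \cong \bR$) and $K$ acts trivially on all of $\fz$, while the invariants ${\boldsymbol\rho}_{\fv,\fz}$ are empty (see Table \ref{quotient-invariants}). Thus, unlike the general situation of Section \ref{section_checkN}, here there are no mixed invariants to worry about, and the analogue of Proposition \ref{prop_FRY2} is almost trivial: if $G(v_1,v_2,z_2) = z_2^k G_k(v_1,v_2)$ with $G_k \in \cS(N_1 \times N_2')^K$ (where $N_2'$ denotes $N_2$ with its centre factored out, so $\check N$ in the earlier notation becomes $\check N = H_m$ via the two successive reductions), then already $z_2^k$ is the symmetrised symbol of $(i^{-1}\partial_{z_2})^k = D_2^k$ up to lower-order corrections in the $\fv$-vector fields, because $z_2$ is a coordinate function on a central line on which $K$ is trivial.

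First I would set up part (i). Take the $\fz_2$-Fourier transform $\widehat F(v_1,v_2,z_1,\zeta_2) = i^k \zeta_2^k \widehat R_k(v_1,v_2,z_1,\zeta_2)$, set $G_k(v_1,v_2,z_1) = i^k \widehat R_k(v_1,v_2,z_1,0) \in \cS(N_1 \times \fv_2)^K$ (it is $K$-invariant and Schwartz in all variables, now independent of $\zeta_2$), and write $\widehat F = \zeta_2^k G_k + \zeta_2^{k+1} S_{k+1}$ with $S_{k+1}$ Schwartz. Since $K$ acts trivially on $\fz_2$, $G_k$ is a $K$-invariant function on the group $N_1 \times N_2''$ with $N_2'' = N_2/\exp\fz_2$, i.e.\ on a pair for which Property (S) holds (it is a product of a pair at line $a'$ or $b'$ of Table \ref{quotient-invariants} with a trivial/Heisenberg factor, and Proposition \ref{prop_quotient} together with Proposition \ref{product} applies). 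Hence there is $F_k \in \cS(N)^K$ with $\cG F_k$ independent of $\xi_2$ and whose $\fz_2$-Radon transform equals $G_k$; equivalently $\widehat F_k(v_1,v_2,z_1,0) = G_k(v_1,v_2,z_1)$. Then $\widehat F - \zeta_2^k \widehat F_k$ has its first $k$ $\zeta_2$-derivatives vanishing at $\zeta_2 = 0$, so by Hadamard's lemma it equals $\zeta_2^{k+1}$ times a Schwartz function; undoing the Fourier transform and comparing $(i^{-1}\partial_{z_2})^k$ with $D_2^k$ (which differ only by lower-order terms in the $\fv$-directions, absorbable into the remainder since they carry at least $k+1$ derivatives in $z_2$ — exactly as at the end of the proof of Proposition \ref{hadamard}) gives the claimed expansion $F = \partial_{z_2}^k F_k + \partial_{z_2}^{k+1} R_{k+1}$.

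For part (ii) I would iterate the same idea jointly in $(z_1,z_2)$. Starting from $F = \sum_{i+j=k} \partial_{z_1}^i \partial_{z_2}^j R_{i,j}$, the two-variable Fourier transform gives $\widehat F(v_1,v_2,\zeta_1,\zeta_2) = \sum_{i+j=k} (i\zeta_1)^i (i\zeta_2)^j \widehat R_{i,j} = \sum_{i+j=k} \zeta_1^i \zeta_2^j G_{i,j}(v_1,v_2) + \text{(order }k+1\text{ in }\zeta)$, where $G_{i,j}(v_1,v_2) = i^k \widehat R_{i,j}(v_1,v_2,0,0) \in \cS(N_1' \times N_2')^K = \cS(H_m)^K$. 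Each $G_{i,j}$ lives on the fully reduced (Heisenberg) pair $(\check N,K) = (H_m,K)$, for which Property (S) holds by \cite{ADR1, ADR2} (the pairs at lines $a,b$ reduce to Heisenberg pairs $(H_m,{\rm U}_1\times{\rm SU}_n\times{\rm U}_1)$, resp.\ $(H_m,{\rm U}_1\times{\rm Sp}_n\times{\rm U}_1)$, after factoring out both central directions). So we get $F_{i,j} \in \cS(N)^K$ with $\cG F_{i,j}$ independent of $(\xi_1,\xi_2)$ and $\widehat F_{i,j}(v_1,v_2,0,0) = G_{i,j}$. Subtracting $\sum_{i+j=k} \zeta_1^i \zeta_2^j \widehat F_{i,j}$ kills all $\zeta$-derivatives of order $\le k$ at the origin; Hadamard's lemma then writes the difference as $\sum_{r+s=k+1}\zeta_1^r\zeta_2^s$ times Schwartz functions, and inverting the Fourier transform plus the usual replacement of $\partial_{z_1}^i\partial_{z_2}^j$ by $D_1^iD_2^j$ modulo higher-order remainders completes the argument.

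The main obstacle, such as it is, is purely bookkeeping: one must verify that the reduced pairs $(\check N,K)$ that arise really are covered by Property (S) — that is, that factoring out $\fz_1$ and/or $\fz_2$ from the pairs at lines $a,b$ lands in the list of pairs already handled (Heisenberg pairs from \cite{ADR1,ADR2}, or products thereof via Proposition \ref{product}, or the second-generation pairs of Proposition \ref{prop_quotient}) — and that the symbol-replacement step ($\partial_{z_\nu} \leftrightarrow D_\nu$ up to lower order) works when both central directions are involved simultaneously. Both are immediate here because $K$ acts trivially on $\fz_1 \oplus \fz_2$ and there are no mixed invariants, so $D_1 = i^{-1}\partial_{z_1}$, $D_2 = i^{-1}\partial_{z_2}$ up to terms carrying strictly more $\fv$-derivatives; there is genuinely less to prove than in Proposition \ref{hadamard}. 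The only mild care needed is that, in part (ii), the commutators $[X_{v},\partial_{z_\nu}]$-type corrections from passing between symmetrisations on $N$ and on the reduced group all carry at least one extra $z$-derivative and hence feed into the $(k+1)$-st order remainder, which is exactly the phenomenon already handled in the last paragraph of the proof of Proposition \ref{hadamard}.
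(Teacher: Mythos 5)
Your proof follows essentially the same strategy as the paper's: integrate out the central variable (equivalently, restrict the partial Fourier transform to $\zeta_2=0$, resp.\ $\zeta_1=\zeta_2=0$), invoke Property (S) for the reduced pair to produce the $F_k$ (resp.\ $F_{i,j}$), and then apply Hadamard's lemma to absorb the error into a remainder of one higher order. The paper phrases this via the Radon transform $R_k^\flat=\int_{\fz_2}R_k\,dz_2$ and Theorem~\ref{hulanicki}; you phrase it via $\widehat R_k(\cdot,0)$ and an implicit lift --- these are the same argument.

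There is, however, a concrete misidentification in your treatment of part (ii). Factoring out \emph{both} central lines $\fz_1$ and $\fz_2$ from $\fn=\fn_1\oplus\fn_2$ (lines $a,b$ of Table~\ref{quotient-invariants}) kills the entire bracket, since $[\fv,\fv]\subset\fz_1\oplus\fz_2$. The fully reduced group is therefore the \emph{abelian} pair $(\fv_1\oplus\fv_2,K)$, not a Heisenberg pair $(H_m,K)$; your phrase ``the pairs at lines $a,b$ reduce to Heisenberg pairs $(H_m,\ldots)$ after factoring out both central directions'' is false. (The Heisenberg pair $(H_m,K)$ that does appear in Section~\ref{first-ab} arises from factoring out only a \emph{one-dimensional} subspace $u_\theta^\perp$ of $\fz$, which is a different operation used at a different point of the argument.) The slip is harmless here --- Property (S) for abelian pairs is covered by G.~Schwarz's theorem via~\cite{ADR2}, so the lift and the Hadamard step still go through --- but the reduced pair should be named correctly. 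Two smaller points: for lines $a,b$ the identity $D_j=i^{-1}\partial_{z_j}$ is exact (a central coordinate has no $\fv$-correction in $\la'$), so the ``symbol-replacement up to lower order'' caveat you carry through is vacuous; and the second-generation pair $(N_1\times\fv_2,K)$ \emph{is} precisely line $a'$ (resp.\ $b'$) of Table~\ref{quotient-invariants}, not a product of that pair with some extra factor, so the appeal to Proposition~\ref{product} in that place is unneeded.
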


This is essentially Geller's lemma as cited in \cite{ADR1}, cf. Theorem 2.2 and Lemma 5.2 therein. We sketch the proof for completeness.

\begin{proof} Let $R_k^\flat(v_1,v_2,z_1)=\int_{\fz_2}R_k(v_1,v_2,z_1,z_2)\,dz_2$. Then $R_k^\flat\in\cS(N_1\times\fv_2)^K$ and the spherical transform $\cG'R_k^\flat(\xi_1,\xi_3,\cdots)$ -- for the pair $(N_1\oplus\fv_2,K)$ -- coincides with $\cG R_k(\xi_1,0,\xi_3,\cdots)$. By Proposition \ref{prop_quotient}, $\cG'R_k^\flat$ extends to a Schwartz function $g_k(\xi_1,\xi_3,\cdots)$. 

Setting $g_k^\sharp(\xi_1,\xi_2,\xi_3,\cdots)=g_k(\xi_1,\xi_3,\cdots)$, the restriction of $g_k^\sharp$ to the Gelfand spectrum $\Sigma_\cD$ for the pair $(N,K)$ is in $\cS(\Sigma_\cD)$. By Theorem \ref{hulanicki}, there is $G_k\in\cS(N)^K$ such that $\cG G_k={g_k^\sharp}_{|_{\Sigma_\cD}}$. Then $\cG(R_k-G_k)$ vanishes for $\xi_2=0$, and this implies, by Hadamard's lemma, that $R_k-G_k$ is the $z_2$-derivative of a $K$-invariant Schwartz function $R_{k+1}$.

This proves (i). The proof of (ii) is similar.
\end{proof}

Using part (i) of Lemma \ref{new-hadamard}, we can repeat the proof of Proposition \ref{F-G} to prove that the spherical transforms of functions in $\cS_0(N)^K$ admit a Schwartz extension, and next, using part (ii), that the same is true for general functions in $\cS(N)^K$.

\bigskip

\subsection{First-generation quotient pairs: proof of Property (S) for pairs at line $c$}\label{first-c}\quad
\medskip

In this last case, $K$ acts nontrivially on one component, $\fz_2$, of the centre. As in Section \ref{quotient}, we reduce part of the proof to a quotient pair $(N',K')$, factoring $\fz_2$ by (any) two-dimensional subspace and taking $K'$ as the stabilizer of the factored subspace in $K$. This quotient pair is isomorphic to the pair at line $b$ with the same $n$, only with $K_2={\rm U}_1\rtimes\bZ_2$. For simplicity, we factor out the subspace orthogonal to $i$ in $\IM\bH$.

Denoting by $p_1,\dots,p_6$, resp. $p'_1,\dots,p'_6$, the invariants in Table \ref{quotient-invariants} on $\fn$ and $\fn'$ respectively (in the same order and with $p'_2=z_2^2$ to take into account the extra $\bZ_2$), we have the following relations:
\begin{equation}\label{restrictions-p}
{p_j}_{|_{\fn'}}=p'_j\ ,\quad (1\le j\le 5)\ ,\qquad {p_6}_{|_{\fn'}}=\sqrt{p'_2}(p'_6-p'_5)\ .
\end{equation}

Denoting by $\cD$, $\cD'$ the symmetrisations of the systems  $\{p_j\}$ and $\{p'_j\}$ respectively, and by $\Sigma_\cD$, 
$\Sigma_{\cD'}$ the corresponding spectra, we have a continuous surjection of $\Sigma_{\cD'}$ onto $\Sigma_\cD$ (cf. Proposition \ref{Lambda^gamma}) which is a diffeomorphism from the complement of the singular set $\xi'_2=0$ in $\Sigma_{\cD'}$ to the complement of the singular set $\xi_2=0$ in $\Sigma_\cD$ (cf. Proposition \ref{tildeQ}).

By Proposition \ref{normal}, the pair $(N',K')$ satisfies Property (S),   hence a repetition of the arguments used in Section~\ref{sec_towards} implies that the spherical transforms of functions $F\in\cS_0(N)^K$ admit a Schwartz extension to $\bR^6$, where $\cS_0(N)$ is the space of Schwartz functions with
$$
\int_{\fz_2}F(v_1,v_2,z_1,z_2)z_2^\al\,dz_2=0\ ,
$$
for every $\al\in\bN^3$ and every $v_1,v_2,z_1$.

At this point, we are back at the situation of Section \ref{section_checkN}, with $\fz_1$ and $\fz_2$ playing the r\^ole that was, respectively, of $\check\fz$ and $\fz_0$. Setting, as in Section \ref{section_checkN}, $\check N=N_1\times\fv_2$, we observe that, once again, the singular part of $\Sigma_\cD$,
$$
\check\Sigma_\cD=\{\xi\in\Sigma_\cD:\xi_2=0\}\ ,
$$
is naturally identified with the spectrum $\Sigma_{\check\cD}$ of the pair $(\check N, K)$, i.e., the pair at line $c'$ of Table \ref{quotient-invariants}, with $\check\cD$ constructed from the invariants in the same table.

Following again the procedure of Section~\ref{section_checkN}, we reduce matters to proving an adapted version of Proposition~\ref{prop_FRY2}. 

We set $\widetilde D_j=\la'_{\check N}(p_j)$, where $p_1,\dots,p_6$ are the invariants at line $c$ of Table \ref{quotient-invariants}.
Notice that  only $p_2=|z_2|^2$ and $p_6=\RE\big(i(v_1v_2^*)z_2(v_2v_1^*)\big)$ contain the variable $z_2$. 

\begin{lemma}\label{last-hadamard}
Let $G$ be a $K$-invariant function on $\check N\times\fz_2$ of the form
\begin{equation}\label{Gnew}
 G(v_1,v_2,z_1,z_2)=\sum_{|\al|=k}z_2^\al G_\al(v_1,v_2,z_1)\ ,
\end{equation}
 with $G_\al\in\cS(\check N)$ and $z_2\in\fz_2=\IM\bH$. 
 There exist functions $H_{j,m}\in\cS(\check N)^K$, for $2j+m=k$, such that
\begin{equation}\label{Halnew}
 G=\sum_{2j+m= k}\frac1{j!m!}|z_2|^{2j}\widetilde D_6^m H_{j,m}\ .
\end{equation}
\end{lemma}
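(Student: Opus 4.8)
The plan is to follow the same decomposition-into-isotypic-components strategy used for Lemma \ref{hadamard3} in the preceding subsection, now tailored to the action of $K={\rm U}_1\times{\rm Sp}_n\times{\rm Sp}_1$ on $\check N\times\fz_2$ with $\fz_2=\IM\bH\cong\fs\fp_1$. First I would observe that it suffices to treat a single homogeneous summand: writing $G=\sum_{|\al|=k}z_2^\al G_\al$, the polynomial part $\sum_{|\al|=k}z_2^\al G_\al(v_1,v_2,z_1)$ lies in $\big(\cS(\check N)\otimes\cP^k(\fz_2)\big)^K$, and since $K$ acts on $\fz_2$ through ${\rm Sp}_1$ by the adjoint (i.e.\ the $3$-dimensional) representation, we can decompose $\cP^k(\fz_2)=\bigoplus_{i}|z_2|^{2(k-i)}\cH^i(\fz_2)$, where $\cH^i(\fz_2)$ is the ${\rm Sp}_1$-irreducible $S^{2i}$. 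Terms carrying a factor $|z_2|^{2}$ are handled by induction on $k$ (they drop the degree by $2$ and reappear with the leading $|z_2|^{2j}$ factor in \eqref{Halnew}), so we reduce to $G$ of the form $G=p\cdot g$ with $p\in\big(\cP^{2m'}(\fv_1\oplus\fv_2)\otimes\cH^{m}(\fz_2)\big)^K$ for suitable $m,m'$ and $g\in\cS(\check N)^K$, and moreover to $p$ an irreducible mixed invariant $p_{V,W}$ with $V\subset\cP(\fv_1\oplus\fv_2)$, $W=\cH^{m}(\fz_2)=S^{2m}$.

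Next I would identify which such mixed invariants actually occur, and match them with the differential operator $\widetilde D_6=\la'_{\check N}(p_6)$, where $p_6=\RE\big(i(v_1v_2^*)z_2(v_2v_1^*)\big)$. The key point is that $p_6$, viewed as a polynomial-valued object with values in $\cP^1(\fz_2)=S^2$, is itself an irreducible mixed invariant $p_{V_0,S^2}$, where $V_0\subset\cP^{2,2}(\fv_1)\otimes\cP^{0}(\fv_2)$-type bidegree data coming from the quadratic dependence on $v_1v_2^*$; its $m$-th power then has a component in $\cP(\fv_1\oplus\fv_2)\otimes\cH^{m}(\fz_2)=\cP(\cdot)\otimes S^{2m}$ which, because $S^{2m}$ is the unique ${\rm Sp}_1$-irreducible of its highest weight in $\cP^{m}(\fz_2)$ and because $\cP^{m}(S^2)\supset S^{2m}$ with multiplicity one, must be (a nonzero scalar times) the relevant mixed invariant $p_{V,S^{2m}}$. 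So the strategy is: show $p_{V,S^{2m}}$ is proportional to the harmonic component of $p_6^{\,m}$ on the $\fv_1,v_2$-side; then invoke Lemma \ref{heisenberg} (with the roles adapted: here $\check N=H_{2n}$-type with $K$ acting, and one checks conditions (i), (ii) using Lemma \ref{irred-Sp}, exactly as in the proof of Lemma \ref{hadamard3} at line $c'$, where only $i=m$ survives) to produce $H\in\cS(\check N)^K$ with $p_{V,S^{2m}}\,g = M_{V}H$, $M_V=\la'_{\check N}(p_{V,S^{2m}})$. Finally, expand $\widetilde D_6^{\,m}$ and $M_V$ in terms of the generators in $\cD$ (the ones on $\check N$ together with $|z_2|^2$): since $M_V$ is $\widetilde D_6^{\,m}$ plus lower-order corrections supported on products of $\widetilde D_6$'s of smaller exponent and powers of $|z_2|^2$, iterating absorbs everything into the form \eqref{Halnew}, with the $|z_2|^{2j}$ factors accounting for the harmonic decomposition and the induction.

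The third step is the bookkeeping that turns "$p_{V,S^{2m}}g=M_VH$" into \eqref{Halnew}: one must track that $\la'_{\check N}$ is multiplicative up to terms with extra $\fz_1$-vector fields (hence lower $\fv$-degree) — this is the same observation invoked twice already in the proof of Lemma \ref{hadamard3} — and that replacing $p_6^{\,m}$ by its harmonic projection introduces only the lower-order $|z_2|^{2j}\widetilde D_6^{\,m'}$ contributions with $2j+m'<k$ (but still $2j+m'\equiv k$ in the appropriate sense after one more Hadamard step), which the induction hypothesis on $k$ then handles. The factorials $\frac{1}{j!m!}$ arise exactly as in Proposition \ref{prop_FRY2} from the multinomial coefficients in the expansion of $z_2^\al$ into $|z_2|^{2j}\cdot(\text{harmonic})$ and from symmetrisation.

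I expect the main obstacle to be condition (ii) of Lemma \ref{heisenberg}: one must verify that $d\pi\big(M_{V}\big)\neq 0$ on every $\cP^{s,0}(\fv_1)$ with $s\geq m$. For line $c$ the relevant ${\rm Sp}_n$-module structure is that of $\cH^{2m}(\bH^n)$, and one needs a concrete nonvanishing computation analogous to the "$d\sigma(C_1)^{m-i}d\sigma(C_2)^i$ acting on $\zeta_1^s$" argument at the end of the proof of Lemma \ref{hadamard3} — here with $i=m$, so it reduces to showing a single operator $d\sigma(C)^m$ (coming from the $v_1v_2^*$-part of $p_6$) does not annihilate $\cP^{s,0}$ for $s\geq m$, which should follow by the same explicit choice of coordinates on $\bC^{2n}$, $n\geq 2$. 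A secondary subtlety is the presence of the extra ${\rm U}_1$ factor acting by scalars on $\fv_1=\bH^n$: one checks it acts trivially on the relevant $V_i\subset\cH^{2m}$ (exactly as noted in the line $c'$ discussion), so it does not obstruct the multiplicity-one statement.
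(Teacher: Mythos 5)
Your proposal is in the right spirit, but it contains a genuine gap that the paper explicitly flags and works around. The crucial point you miss is that in this subsection $\check N = N_1 \times \fv_2$ is \emph{not} a Heisenberg group: it is the product of the Heisenberg group $N_1 = H_{2n}$ (with Lie algebra $\bH^n\oplus\bR$) and the \emph{abelian} factor $\fv_2 = \bH^n$. You describe it as ``$\check N = H_{2n}$-type,'' and propose to ``invoke Lemma \ref{heisenberg} with the roles adapted, exactly as in the proof of Lemma \ref{hadamard3}.'' But Lemma \ref{heisenberg} requires $N_1 = H_n$ a pure Heisenberg group with $K\subseteq {\rm U}_n$ acting so that $\cP^{s,0}(\fv_1)$ is $K$-irreducible. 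Here the infinite-dimensional irreducible representations of $\check N$ are the tensor products $\pi_\la \otimes \chi_\omega$ of a Bargmann representation and a character of $\fv_2$, and the stabilizer of a generic such representation (with $\omega\ne 0$) is the proper subgroup $K_\omega \cong {\rm U}_1\times{\rm Sp}_1\times{\rm Sp}_{n-1}$, under which $\cP^{s,0}(\fv_1)$ is \emph{not} irreducible --- the multiplicity-free decomposition is instead $\cF(\fv_1) = \bigoplus_{s_1,s_2} V^\omega_{s_1,s_2}$ with $V^\omega_{s_1,s_2}=\cP^{s_1,0}(\bH)\otimes\cP^{s_2,0}(\bH^{n-1})$. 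So Lemma \ref{heisenberg} does not apply as stated, nor can it be ``adapted'' by a cosmetic change of notation. The paper acknowledges exactly this when it says, just before the lemma, that one must adapt the argument ``to the new situation where $\check N$ has an abelian factor.''

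What the paper does instead, after verifying the multiplicity-one inclusion ($V^\omega_{s_1,s_2}\subset V^\omega_{s_1,s_2}\otimes\cH^m(\fz_2)$ iff $s_1\geq m$) and the nonvanishing of $d(\pi_\la\otimes\chi_\omega)(M_m)$ for $s_1\geq m$, is a division argument carried out directly on the four-dimensional spectrum $\Sigma_{\check\cD'}$, parametrized explicitly via $\la$, $|\omega|^2$, $s_1$, $s_2$. It forms $U_m=M_m^*M_m\in\bD(\check N)^K$, identifies the corresponding polynomial $u_m$ on the spectrum and its explicit product factorization vanishing precisely on the ``bad'' set $E_m$ (where $s_1<m$), then applies the division/extension theorem of \cite[Proposition 5.2]{FRY2} to write $\check\cG(M_m^*G_m)=u_m\psi$ with $\psi$ Schwartz, and concludes by invertibility of $d(\pi_\la\otimes\chi_\omega)(M_m)$ on the spaces where it is nonzero. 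Your proposal correctly identifies the reduction to harmonic components, the role of $\widetilde p_m$ (and the nonvanishing of the harmonic component of $p_6^m$), and the need for a nonvanishing computation --- those are all present in the paper's proof --- but it substitutes for the crucial division argument an appeal to Lemma \ref{heisenberg} whose hypotheses are not satisfied here.
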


Once this is proved\footnote{Once again, we do not need to worry about control of the Schwartz norms of $H_{j,m}$ in terms of Schwartz norms of the $G_\al$.}, we can repeat the proofs of Proposition~\ref{hadamard} and Proposition~\ref{F-G} to obtain the conclusion. To prove the lemma, we must adapt part of the proof of Proposition~\ref{prop_FRY2}, given in \cite[Section 5]{FRY2} to the new situation where $\check N$ has an abelian factor.

\begin{proof}[Proof of Lemma \ref{last-hadamard}] Repeating arguments used before, it is easy to see that,
for every $m$, there is a unique $K$-invariant polynomial $\widetilde p_m$ on $\fn$, belonging to $\big(\cH^{m,m}(\fv_1)\otimes\cH^{2m}(\fv_2)\otimes\cH^m(\fz_2)\big)^K$, and that  the $\big(\cH^{m,m}(\fv_1)\otimes\cH^{2m}(\fv_2)\otimes\cH^m(\fz_2)\big)$-component of $p_6^m$ is nonzero.
Then, every $K$-invariant polynomial $p$ on $\fn$ can be uniquely expressed as a finite sum
$$
p=\sum_{i,m} |z_2|^{2i}\,\widetilde p_m\,q_{i,m}(v_1,v_2,z_1)\ ,
$$
with $q_{i,m}\in\cP(\check \fn)^K$.

We  decompose $G\in\big(\cS(\check N)\otimes\cP^k(\fz_2)\big)^K$ as
$$
G(v_1,v_2,z_1,z_2)=\sum_{2j+m=k}|z_2|^{2j}G_m(v_1,v_2,z_1,z_2)\ ,
$$
with $G_m\in\big(\cS(\check N)\otimes\cH^m(\fz_2)\big)^K$. It will suffice to show that
\begin{equation}\label{M_6}
G_m=M_m H_m\ ,
\end{equation}
where $M_m=\la'_{\check N}(\widetilde p_m)\in\big(\bD(\check N)\otimes\cH^m(\fz_2)\big)^K$ and $H_m\in\cS(\check N)^K$.

Since $\check N$ is the product of the Heisenberg group $N_1$ and the abelian group $\fv_2$, the infinite dimensional irreducible unitary representations of $\check N$ are the tensor products $\pi_\la\otimes \chi_\omega$, where $\pi_\la$ is a Bargmann representation of $N_1$ on $\cF(\fv_1)$, and $\chi_\omega$ is the character $e^{i\lan\cdot,\omega\ran}$ of $\fv_2$, with $\omega\in\fv_2$ (cf. Lemma \ref{representations}). 

In the generic case $\omega\ne0$, the stabiliser $K_\omega$ of $\pi_\la\otimes\chi_\omega$ in $K$ is isomorphic to ${\rm U}_1\times {\rm Sp}_1\times {\rm Sp}_{n-1}$, with actions on $\fv_1=\bH\omega\oplus(\bH\omega)^\perp\cong\bH\oplus\bH^{n-1}$ and on $\fz_2=\IM\bH$ given by
$$
{\rm U}_1\times {\rm Sp}_1\times {\rm Sp}_{n-1}\ni(e^{i\theta},k,k')\ :\begin{cases}(v,v')\longmapsto e^{i\theta}(vk\inv,v'{k'}\inv) &\text{ for }(v,v')\in \bH\oplus\bH^{n-1}\ ,\\ z_2\longmapsto kz_2k\inv&\text { for }z_2\in\IM\bH\ .
\end{cases}
$$

Then the $K_\omega$-invariant irreducible subspaces of $\cF(\fv_1)$ are  the tensor products $V^\omega_{s_1,s_2}=\cP^{s_1,0}(\bH)\otimes\cP^{s_2,0}(\bH^{n-1})$. By  \cite[Proposition 4.5]{FRY2}, $\pi_\la\otimes\chi_\omega(G)$ can be nonzero only if
$V^\omega_{s_1,s_2}$ is contained, as a representation space of $K_\omega$, in  $V^\omega_{s_1,s_2}\otimes\cH^m(\fz_2)$. This is equivalent to saying that $\cP^{s_1,0}(\bH)$ is contained (and with the same multiplicity) in $\cP^{s_1,0}(\bH)\otimes\cH^m(\fz_2)$ as a representation space of ${\rm U}_1\times {\rm Sp}_1$. By \cite[Proposition~4.6]{FRY2}, this happens if and only if $s_1\ge m$, and with multiplicity 1. 

We show next that, for $s_1\ge m$, $d(\pi_\la\otimes\chi_\omega)(M_m)$ is nonzero. Observe that
$$
d(\pi_\la\otimes\chi_\omega)(M_m)=i^md\pi_\la\big(\la'_{N_1}\big(\widetilde p_m(v_1,\omega, z_1)\big)\big)\ ,
$$
and that 
$$
p_m(\,\cdot\,,\,\omega\,,\, \cdot\,)\in \big(\cH^{m,m}(\fv_1)\otimes\cH^m(\fz_1)\big)^{K_\omega}=  \big(\cH^{m,m}(\bH\omega)\otimes\cH^m(\fz_1)\big)^{{\rm U}_1\times {\rm Sp}_1}\ .
$$

This is one of the cases considered in the proof of  \cite[Proposition 4.10]{FRY2}, and we conclude that $d(\pi_\la\otimes\chi_\omega)(M_m)\ne0$.

So we are in the following situation: the spectrum $\Sigma_{\check\cD}$ of $(\check N,K)$ is the closure in $\bR^4$   of the set of points $\xi$ with
$$
\xi_1=\la\ ,\quad\xi_2=2|\la|(s_1+s_2+n)\ ,\quad\xi_3=|\omega|^2\ ,\quad\xi_4=2|\omega|^2|\la|(s_1-s_2-n+2)\ .
$$

It is convenient to replace $\xi_4$ by 
$$
\xi'_4=\xi_4+\xi_2\xi_3=4|\omega|^2|\la|(s_1+1)\ ,
$$
and set $\xi'=(\xi_1,\xi_2,\xi_3,\xi'_4)$. Correspondingly, we replace $\check D_4\in\check\cD$ by $\check D'_4=\check D_4+\check D_2\check D_3$ and call $\check \cD'$ the resulting system of differential operators.

The operator $U_m=M_m^*M_m$ is in $\bD(\check N)^K$, so there is a polynomial $u_m$ such that 
$U_m=u_m(\check\cD')$, and 
$$
d(\pi_\la\otimes\chi_\omega)(U_m)_{V^\omega_{s_1,s_2}}=u_m(\xi')\ .
$$

Then $u_m$ vanishes on the set $E_m\subset \Sigma_{\check\cD'}$ of points $\xi'$ corresponding to $s_1=0,\dots,m-1$, i.e. where $\xi_4=j|\xi_1|\xi_3$, with $j=1,\dots,m$. Repeating the proof of  \cite[Lemma 5.1]{FRY2}, we conclude that 
$$
u_m(\xi)=c_m\prod_{j=1}^m(\xi'_4-j\xi_1\xi_3)\,\prod_{j=1}^m(\xi'_4+j\xi_1\xi_3)\ ,
$$
with $c_m\ne0$.

Consider now $M_m^*G_m\in\cS(\check N)^K$. Then $\check\cG(M_m^*G_m)$ vanishes on $E_m$. We can then apply  \cite[Proposition 5.2]{FRY2} to conclude that $\check\cG(M_m^*G_m)$ admits an extension to $\bR^4$ of the form $u_m\psi$ with $\psi$ Schwartz. If $H_m\in\cS(\check N)^K$ is such that $\check\cG H_m=\psi_{|_{\Sigma_{\check\cD'}}}$, we have $M_m^*G_m=U_mH_m$. Repeating the conclusion   of \cite[Section 5]{FRY2}, we then have
$$
\big(d(\pi_\la\otimes\chi_\omega)(M_m)\big)^*\,(\pi_\la\otimes\chi_\omega)(M_mH_m-G)=0\ .
$$

The conclusion follows by the invertibility of $d(\pi_\la\otimes\chi_\omega)(M_m)$ on the spaces $V^\omega_{s_1,s_2}$ where $(\pi_\la\otimes\chi_\omega)(M_mH_m-G)$ might not vanish.
\end{proof}

\bigskip
\section{Appendix: quotient pairs generated by pairs in Table \ref{vinberg}}\label{appendix}\quad
\bigskip

%We refer to Table \ref{cartan} for the choice of $\fa$.

Let  $I_p$ (resp $0_p$) be the identity (resp. zero) $p\times p$ matrix and
$$
J_p:={\rm diag}(\underbrace{J,J,\dots,J}_{\mbox{$p$ times}})\ .
$$

The Lie bracket $[\ ,\ ]$ is understood as a map from $\fv\times\fv$ to $\fz$, elements of $\bR^n,\,\bC^n$ etc. as column vectors.

\bigskip

\subsection{The pair $(\bR^n\oplus \fs\fo_n,{\rm SO}_n)$}\label{line1}\quad
\medskip

The Lie bracket is
$$
[v,v']=\half(v\,\trans v'-v'\,\trans v)\ .
$$

Up to conjugation by an element of $K$, we may assume that $t$ has the form
$$
t={\rm diag}(t_1J_{p_1},\dots,t_kJ_{p_k},0_q)\ ,
$$
with $2p_1+\cdots+2p_k+q=n$ and $t_i\ne t_j\ne0$ for every $i\ne j$. Then we have
$$
\begin{aligned}
K_t&={\rm U}_{p_1}\times\cdots\times {\rm U}_{p_k}\times{\rm SO}_q\ ,\\
\fz_t&=\fu_{p_1}\oplus\cdots\oplus \fu_{p_k}\oplus\fs\fo_q\ ,\\
\fn_t&=(\bC^{p_1}\oplus\fu_{p_1})\oplus\cdots\oplus (\bC^{p_1}\oplus\fu_{p_k})\oplus(\bR^q\oplus\fs\fo_q)\ .
\end{aligned}
$$

%%% Then $(\fn_t,K_t)$ is the product of $k$ pairs of line 2 and one of line 1.

%%\medskip

\subsection{The pair $(\bC^n\oplus \fu_n,{\rm U}_n)$}\label{line2}\quad
\medskip

The Lie bracket is
$$
[v,v']=\half(v{v'}^*-v'v^*)\ .
$$

For 
$$
t={\rm diag}(it_1I_{p_1},\dots,it_kI_{p_k})\ ,
$$
with $p_1+\cdots+p_k=n$ and $t_i\ne t_j$ for every $i\ne j$, we have
$$
\begin{aligned}
K_t&={\rm U}_{p_1}\times\cdots\times {\rm U}_{p_k}\ ,\\
\fz_t&=\fu_{p_1}\oplus\cdots\oplus \fu_{p_k}\ ,\\
\fn_t&=(\bC^{p_1}\oplus\fu_{p_1})\oplus\cdots\oplus (\bC^{p_1}\oplus\fu_{p_k})\ .
\end{aligned}
$$

%% Then $(\fn_t,K_t)$ is the product of $k$ pairs of line 2.

%% \bigskip

\subsection{The pair $\big(\bH^n\oplus (HS^2_0\bH^n\oplus\IM\bH),{\rm Sp}_n\big)$}\label{line3}\quad
\medskip

The Lie bracket is
$$
\begin{aligned}[]
[v,v']&=\half\Big(vi{v'}^*-v'iv^*-\frac1n\tr(vi{v'}^*-v'iv^*)I_n\Big)\oplus\IM(v^*v')\\
&=\Big(\half (vi{v'}^*-v'iv^*)-\frac1n\IM\!_i(v^*v')I_n\Big)\oplus\IM(v^*v')\ ,
\end{aligned}
$$
where $\IM\!_i$ denotes the $i$-component of the argument.

For 
$$
t={\rm diag}(t_1I_{p_1},\dots,t_kI_{p_k})\oplus (u_1i+u_2j+u_3k)\ ,
$$
with $p_1+\cdots+p_k=n$ and $t_i\ne t_j$ for every $i\ne j$, we have
$$
\begin{aligned}
K_t&={\rm Sp}_{p_1}\times\cdots\times {\rm Sp}_{p_k}\ ,\\
\fz_t&=(HS^2\bH^{p_1}\oplus\cdots\oplus HS^2\bH^{p_k})_0\oplus\IM\bH\ .
\end{aligned}
$$

Decomposing $v\in\bH^n$ as $v_1\oplus\cdots\oplus v_k$ with $v_j\in\bH^{p_j}$, the Lie bracket in  $\fn_t$  is 
$$
[v,v']_t=\begin{pmatrix}\half(v_1i{v'}_1^*-v'_1iv_1^*)-\frac1n\IM\!_i(v^*v')I_{p_1}&&\\&\ddots&\\&&v_ki{v'}_k^*-v'_kiv_k^*-\frac1n\IM\!_i(v^*v')I_{p_k}\end{pmatrix}\oplus\IM(v^*v')\ .
$$

If we consider, for $1\le j\le k$, the subalgebra $\fh_j$ of $\fn_t$ generated by $\bH^{p_j}$,
$$
\fh_j=\bH^{p_j}\oplus (HS^2\bH^{p_j}\oplus\IM\bH)\big)=\big(\bH^{p_j}\oplus (HS^2_0\bH^{p_j}\oplus\IM\bH)\big)\oplus\bR\ ,
$$
we easily see that it is $K_t$-invariant and  only the factor ${\rm Sp}_{p_j}$ of $K_t$ acts nontrivially on it.  Since $\fh_j$ commutes with $\fh_{j'}$ for $j\ne j'$, it follows that $\fn_t$ is the quotient, modulo a central ideal, of the product of the $\fh_j$.

We conclude that $(\fn_t,K_t)$ is a central reduction of the product of the pairs $(\fh_j,{\rm Sp}_{p_j})$, where, in turn, each $(\fh_j,{\rm Sp}_{p_j})$ is the product of $\big(\bH^{p_j}\oplus (HS^2_0\bH^{p_j}\oplus\IM\bH),{\rm Sp}_{p_j}\big)$ and the trivial pair $(\bR,\{1\})$.

%% \bigskip

\subsection{The pairs $(\bC^{2n+1}\oplus \Lambda^2\bC^{2n+1},{\rm SU}_{2n+1})$ and $\big(\bC^{2n+1}\oplus (\Lambda^2\bC^{2n+1}\oplus\bR)\big),{\rm U}_{2n+1})$}\label{lines45}\quad
\medskip

To fix the notation, we consider the second family of pairs, the other being analogous and simpler. The Lie bracket is
\begin{equation}\label{bracketline5}
\begin{aligned}[]
[v,v']&=\half(v\trans v'-v'\trans v)\oplus \IM(v^*v')\ .
\end{aligned}
\end{equation}

For 
$$
t={\rm diag}(t_1J_{p_1},\dots,t_kJ_{p_k},0_{2q+1})\oplus u\ ,
$$
with $p_1+\cdots+p_k+q=n$ and $t_i\in\bR$, $t_i\ne t_j\ne0$ for $i\ne j$, we have
$$
\begin{aligned}
K_t&={\rm Sp}_{p_1}\times\cdots\times {\rm Sp}_{p_k}\times {\rm U}_{2q+1}\\
\fz_t&=HS^2\bH^{p_1}J_{p_1}\oplus\cdots\oplus HS^2\bH^{p_k}J_{p_k}\oplus \Lambda^2\bC^{2q+1}\oplus\bR\ .
\end{aligned}
$$

Like in the previous case, we split $\bC^{2n+1}$ as $\bC^{2p_1}\oplus\cdots\oplus \bC^{2p_k}\oplus\bC^{2q+1}$, and set
$$
\fh_j=\bC^{2p_j}\oplus(HS^2\bH^{p_j}J_{p_j}\oplus\bR)\ ,\quad(j=1,\dots,k)\ ,\qquad \fh_{k+1}=\bC^{2q+1}\oplus  (\Lambda^2\bC^{2q+1}\oplus\bR)\ ,
$$
i.e., the subalgebra generated by the $j$-th summand in $\bC^{2n+1}$. Then, for $1\le j\le k$,
$$
\fh_j\cong \bH^{p_j}\oplus(HS^2_0\bH^{p_j}\oplus\bR^2)\ ,
$$
and $(\fh_j,{\rm Sp}_{p_j})$ is isomorphic to a central reduction of the pair in subsection \ref{line3}. Finally, $(\fn_t,K_t)$ is isomorphic to a central reduction of the product of $k$ pairs of this kind and the pair $(\fh_{k+1},{\rm U}_{2q+1})$.

%% \bigskip

\subsection{The pair $\big(\bC^{2n}\oplus (\Lambda^2\bC^{2n}\oplus\bR),{\rm SU}_{2n}\big)$}\label{line6}\quad
\medskip

The Lie bracket is given by \eqref{bracketline5}. Any element $z$ of $\fz$  is conjugate, modulo an element of ${\rm U}_{2n}$, to an element of the form
%of $\fa$ in \eqref{anonpolar}
\begin{equation}\label{diag}
t={\rm diag}(t_1J_{p_1},\dots,t_kJ_{p_k},0_{2q})\oplus u\ ,
\end{equation}
with $p_1+\cdots+p_k+q=n$ and $t_i\in\bR$,  $t_i\ne t_j$ for $i\ne j$. 

If $q>0$, then $z$ and $t$ are also conjugate under ${\rm SU}_{2n}$. If $q=0$, then there exists $e^{i\theta}$, unique up to a $2n$-th root of unity, such that $z$ is conjugate to 
$$
t_\theta={\rm diag}(t_1e^{i\theta}J_{p_1},\dots,t_ke^{i\theta}J_{p_k})\oplus u\ .
$$

Then
$$
K_{e^{i\theta}t}=K_t\ ,\qquad
\fz_{e^{i\theta}t}=e^{i\theta}\fz_t\ .
$$

For an element $t$ as in \eqref{diag}, we have
$$
\begin{aligned}
K_t&={\rm Sp}_{p_1}\times\cdots\times {\rm Sp}_{p_k}\times {\rm SU}_{2q}\\
\fz_t&=\begin{cases}HS^2\bH^{p_1}J_{p_1}\oplus\cdots\oplus HS^2\bH^{p_k}J_{p_k}\oplus \Lambda^2\bC^{2q}\oplus\bR &\text{ if } q\ne0\ ,\\
HS^2\bH^{p_1}J_{p_1}\oplus\cdots\oplus HS^2\bH^{p_k}J_{p_k}
\oplus i\bR\, 
{\rm diag}(  t_1^{-1} J_{p_1}, \ldots, t_k^{-1} J_{p_k})
\oplus\bR &\text{ if } q=0\ .
\end{cases}
\end{aligned}
$$

The discussion proceeds as in subsection \ref{lines45}.

%% \bigskip

\subsection{The pairs $(\bC^2\otimes\bC^n\oplus \fu_2,{\rm U}_2\times{\rm SU}_n)$, $(\bC^2\otimes\bC^{2n}\oplus \fu_2,{\rm U}_2\times{\rm Sp}_n)$}\label{lines7,8}\quad
\medskip

Realizing the elements of $\fv$ as $n\times 2$ (resp. $2n\times2$) complex matrices, the Lie bracket is
$$
[v,v']=\half(v^*v'-{v'}^*v)\ .
$$

If $t={\rm diag}(it_1,it_2)$, then $K_t=K$, $\fn_t=\fn$ if $t_1=t_2$. 

If $t_1\ne t_2$,
we have
$$
(\fn_t,K_t)=\begin{cases}(\fh_n\oplus\fh_n,{\rm U}_1\times{\rm SU}_n\times {\rm U}_1)&\\
(\fh_{2n}\oplus\fh_{2n},{\rm U}_1\times{\rm Sp}_n\times {\rm U}_1)&,\\
\end{cases}
$$
where $\fh_n=\bC^n\oplus\bR$ is the $2n+1$-dimensional Heisenberg algebra, the factor ${\rm SU}_n$, resp. ${\rm Sp}_n$, acts simultaneously on the two summands $\bC^n$, resp. $\bC^{2n}$ of $\fv$, and the two copies of ${\rm U}_1$ act independently each on one summand.

%% \bigskip

\subsection{The pair $(\bR^2\otimes\bO\oplus (\IM\bO\oplus\bR),{\rm U}_1\times{\rm Spin}_7)$}\label{line9}\quad
\medskip

Realizing $\fv$ as $\bO^2$, the Lie bracket is
$$
\big[(v_1,v_2),(v'_1,v'_2)\big]=\IM(v_1\overline{v'_1}+v_2\overline{v'_2})\oplus\RE(v_1\overline{v'_2}-\overline{v'_1}v_2)\ .
$$

Take $t=t_1i\oplus a\in\fa$, where $i$ is an imaginary unit in $\bO$. If $t_1=0$, then $K_t=K$ and $\fn_t=\fn$.

If $t_1\ne0$, then $K_t={\rm SO}_2\times{\rm Spin}_6$, where ${\rm Spin}_6\cong  {\rm SU}_4$ consists of the elements $k$ of ${\rm Spin}_7$ whose action on $\fv$ commutes with left multiplication by $i$. Taking this as a complex structure on $\fv$, we can now realize $\fv$ as $\bC^4\oplus\bC^4$.
So, $\fn_t=(\bC^4\oplus\bC^4)\oplus\fa$, with Lie bracket
 $$
\big[(v_1,v_2),(v'_1,v'_2)\big]_t=\IM(v_1\overline{v'_1}+v_2\overline{v'_2})\oplus\RE(v_1\overline{v'_2}-\overline{v'_1}v_2)\ ,
$$
where conjugation and imaginary part are meant now in the complex sense. 

Also, ${\rm SO}_2$ acts by two conjugate characters on the two subspaces $\fv_\pm=\{(v,\pm iv):v\in\bC^4\}$ of $\fv$, which are also ${\rm SU}_4$-invariant. Noticing that $[\fv_+,\fv_-]_t=\{0\}$, it is easy to verify that $\fn_t$ is the direct product
$$
\fn_t=(\fv_+\oplus\bR)\oplus(\fv_-\oplus\bR)\cong\fh_4\oplus\fh_4\ .
$$

We can finally add an extra torus ${\rm U}_1$ to $K_t$, acting on $\fv$ by scalar multiplication, without changing the orbits. In conclusion,
$$
(\fn_t,K_t)=(\fh_4\oplus\fh_4,{\rm U}_1\times {\rm SU}_4\times{\rm U}_1)\ ,
$$
where ${\rm SU}_4$ acts simultaneously on each factor and the two copies of ${\rm U}_1$ act independently each on one summand.

%% \bigskip

\subsection{The pair $(\bH^2\otimes\bH^n\oplus \fs\fp_2,{\rm Sp}_2\times{\rm Sp}_n)$}\label{line10}\quad
\medskip

Realizing the elements of $\fv$ as $n\times 2$ quaternionic matrices, the Lie bracket is
$$
[v,v']=\half(v^*v'-{v'}^*v)\ .
$$

If $t={\rm diag}(it_1,it_2)$, then $K_t=K$, $\fn_t=\fn$ if $t_1=t_2=0$. 
The proper quotient pairs that appear in the other cases are as follows:
\begin{enumerate}
\item[(i)]
$t_1=t_2\ne0$: then $K_t= {\rm U}_2\times{\rm Sp}_n$ and $\fz_t= \fu_2$, where ${\rm U}_2$, resp. $\fu_2$, is embedded in ${\rm Sp}_2$, resp. in $\fs\fp_2$, as the stabilizer of $iI_2$. So $(\fn_t,K_t)$ is one of the pairs at line 8 of Tables \ref{vinberg} and \ref{invariants}.
\item[(ii)]
$t_1\ne t_2=0$: then $(\fn_t,K_t)=\big((\bH^n\oplus\bR)\oplus(\bH^n\oplus\IM\bH),{\rm U}_1\times{\rm Sp}_n\times {\rm Sp}_1\big)$.
\item[(iii)] $t_1\ne t_2$, $t_1t_2\ne0$: then $(\fh_{2n}\oplus\fh_{2n},{\rm U}_1\times{\rm Sp}_n\times {\rm U}_1)$.
\end{enumerate}

%\newpage

{\scriptsize

%\newpage

  \begin{table}[htdp]
\begin{center}
\begin{tabular}{|r||l|l|l||c|c|c|}
\hline
&$K$&$\fv$&$\fz$&${\boldsymbol\rho}_\fz$&${\boldsymbol\rho}_\fv$&${\boldsymbol\rho}_{\fv,\fz_0}$\\
\hline
\hline
1a&${\rm SO}_{2n}$&$\bR^{2n}$&$\fs\fo_{2n}$&$\begin{matrix}\tr(z^{2k})\\ (1\le k\le n-1)\\{\rm Pf}(z)\end{matrix}$&$|v|^2$&$\begin{matrix}\trans vz^{2k}v\\ (1\le k\le n-1)\end{matrix}$\\
\hline
1b&${\rm SO}_{2n+1}$&$\bR^{2n+1}$&$\fs\fo_{2n+1}$&$\begin{matrix}\tr(z^{2k})\\ (1\le k\le n)\end{matrix}$&$|v|^2$&$\begin{matrix}\trans vz^{2k}v\\ (1\le k\le n-1)\\{\rm Pf}(z|v)\end{matrix}$\\
\hline
2&${\rm U}_n$&$\bC^n$&$\fu_n$&$\begin{matrix}\tr\big(iz)^k\big)\\ (1\le k\le n)\end{matrix}$&$|v|^2$&$\begin{matrix}v^*(iz)^kv\\ (1\le k\le n{-}1)\end{matrix}$\\
\hline
3&$\text{Sp}_n$&$\bH^n$&$HS^2_0\bH^n\oplus\IM\bH$ &$\begin{matrix}\check z_1\,,\,\check z_2\,,\,\check z_3\,,\,\tr z_0^k\\ (2\le k\le n)\end{matrix}$&$|v|^2$&$\begin{matrix}v^*z_0^kv\\ (1\le k\le n{-}1)\end{matrix}$\\
\hline
\hline
4&${\rm SU}_{2n+1}$&$\bC^{2n+1}$&$\Lambda^2\bC^{2n+1}$&$\begin{matrix}\tr\big((\bar zz)^k\big)\\ (1\le k\le n)\end{matrix}$&$|v|^2$&$\begin{matrix}v^*(\bar zz)^kv\\ (1\le k\le n-1)\\ {\rm Pf}(z|v)\ ,\ \overline{{\rm Pf}(z|v)} \end{matrix}$ \\
\hline
5&${\rm U}_{2n+1}$&$\bC^{2n+1}$&$\Lambda^2\bC^{2n+1}\oplus\bR$&$\begin{matrix}\check z\,,\,\tr\big((\bar z_0z_0)^k\big)\\ (1\le k\le n)\end{matrix}$&$|v|^2$&$\begin{matrix}v^*(\bar z_0z_0)^kv\\ (1\le k\le n)\end{matrix}$ \\
\hline
6& ${\rm SU}_{2n}$& $\bC^{2n}$&$\Lambda^2\bC^{2n}\oplus\bR$&$\begin{matrix}\,\check z\,,\tr\big((\bar z_0z_0)^k\big)\\ (1\le k\le n{-}1)\\ {\rm Pf}(z)\,,\, \overline{{\rm Pf}(z)}\end{matrix}$&$|v|^2$&$\begin{matrix}v^*(\bar z_0z_0)^kv\\ (1\le k\le n{-}1)\end{matrix}$\\
\hline
\hline
7&$\begin{matrix}{\rm U}_2{\times} {\rm SU}_n\\  (n\ge2)\end{matrix}$&$\bC^2{\otimes}\bC^n$&$\fu_2$&$\begin{matrix}\tr\big((iz)^k\big)\\  (k=1,2)\end{matrix}$&$\begin{matrix}\tr\big((vv^*)^k\big)\\  (k=1,2)\end{matrix}$&$i\tr(v^*zv)$\\
\hline
8&$\begin{matrix}{\rm U}_2{\times} {\rm Sp}_n\\  (n\ge2)\end{matrix}$&$\bC^2{\otimes}\bC^{2n}$&$\fu_2$&$\begin{matrix}\tr\big((iz)^k\big)\\  (k=1,2)\end{matrix}$&$\begin{matrix}\tr\big((vv^*)^k\big)\\  (k=1,2)\\ |x|^2|y|^2-(\trans xy)^2\end{matrix}$&$i\tr(v^*zv)$\\
\hline
9&${\rm U}_1{\times}\text{Spin}_7$&$\bC{\otimes}\bO$&$\IM\bO\oplus\bR$&$|z_0|^2\,,\,\check z$&$\begin{matrix}|v|^2\\  |v_1|^2|v_2|^2-\big(\RE(v_1\bar v_2)\big)^2\end{matrix}$&$\RE\big(z_0(v_1\bar v_2)\big)$\\
\hline
10a&$\text{Sp}_2\times\text{Sp}_1$&$\bH^2$&$\fs\fp_2$&$\begin{matrix}\tr(z^{2k})\\  (k=1,2)\end{matrix}$&$|v|^2$&$\begin{matrix}\tr\big(zv(zv)^*\big)\\  \tr\big((zvv^*-vv^*z)^2\big)\end{matrix}$\\
\hline
10b&$\begin{matrix}{\rm Sp}_2{\times} {\rm Sp}_n\\  (n\ge2)\end{matrix}$&$\bH^2\otimes\bH^n$&$\fs\fp_2$&$\begin{matrix}\tr(z^{2k})\\  (k=1,2)\end{matrix}$&$\begin{matrix}\tr\big((vv^*)^k\big)\\  (k=1,2)\end{matrix}$&$\begin{matrix}\tr\big(zv(zv)^*\big)\\  \tr\big((zvv^*-vv^*z)^2\big)\end{matrix}$\\
\hline
\end{tabular}
\end{center}
\bigskip
\caption{Systems of invariants on $\fv\oplus\fz$\\ 
({\it Legenda.} Lines 3,5,6,9: $z_0\in\fz_0,\check z\in\check \fz$. 
Lines 7,8: find $n=1$ at l.2,7 resp.)
}

\label{invariants}
\end{table}

}

%\newpage

\quad

\newpage

\vskip1cm

\end{document}